\documentclass[11pt,a4paper]{amsart}
\usepackage{amsmath,amsfonts,amssymb,amsthm}
\usepackage{graphicx}
\usepackage{color}
\usepackage{url}
\usepackage{hyperref}
\usepackage{tikz}
\usetikzlibrary{3d}

\setlength{\topmargin}{-24pt}
\setlength{\textwidth}{16cm}   
\setlength{\textheight}{24cm}  
\setlength{\oddsidemargin}{0cm}
\setlength{\evensidemargin}{0cm}
\setlength{\marginparwidth}{2.4cm}

\definecolor{lightgrey}{rgb}{0.5,0.5,0.5}
\definecolor{mediumgreen}{rgb}{0.0,0.7,0.0}
\definecolor{darkgreen}{rgb}{0.0,0.35,0.0}
\definecolor{darkblue}{rgb}{0.0,0.0,0.8}

\newcommand{\darkblue}[1]{{\color{blue}#1}}

\newcommand{\diag}{\mathrm{diag}}

\newcommand{\meas}{\mathrm{meas}}
\newcommand{\range}{\mathrm{range}}

\newcommand{\opcurl}{\mathop{\textnormal{\bf curl}}}
\newcommand{\opdiv}{\mathop{\textnormal{div}}}

\newcommand{\cardinality}{\#}
\newcommand{\complexi}{\textnormal{i}}

\newcommand{\productspace}{\prod}

\renewcommand{\Re}{\textnormal{Re}}
\renewcommand{\Im}{\textnormal{Im}}



\newcommand{\varphiv}{\boldsymbol{\varphi}}


\newcommand{\jv}{{\bf j}}
\newcommand{\nv}{{\bf n}}

\newcommand{\uv}{{\bf u}}
\newcommand{\vv}{{\bf v}}
\newcommand{\wv}{{\bf w}}


\newcommand{\Hv}{{\bf H}}
\newcommand{\Lv}{{\bf L}}

\newcommand{\re}{\textnormal{re}}
\newcommand{\im}{\textnormal{im}}


\newcommand{\normal}{\nu}

\renewcommand{\top}{\mathsf{T}}
\newcommand{\herm}{\mathsf{H}}

\theoremstyle{plain} 
\newtheorem{lemma}{Lemma}[section]
\newtheorem{theorem}[lemma]{Theorem}
\newtheorem{proposition}[lemma]{Proposition}
\newtheorem{corollary}[lemma]{Corollary}

\theoremstyle{definition}
\newtheorem{definition}[lemma]{Definition}
\newtheorem{remark}[lemma]{Remark}
\newtheorem{example}[lemma]{Example}

\newtheorem{assumption}{Assumption}

\newtheorem{assumptb}{Assumption}

\newtheorem{assumptc}{Assumption}

\numberwithin{equation}{section}

\begin{document}

\title[A unified theory of non-overlapping Robin-Schwarz methods]
{A unified theory of\\ non-overlapping Robin-Schwarz methods --- \\ continuous and discrete, including cross points}

\author[C.~Pechstein]{Clemens Pechstein$^*$}

\thanks{$^*$ Dassault Syst\`emes Austria GmbH, Wienerbergerstr.~51, 1120 Wien, Austria, \texttt{clemens.pechstein@3ds.com}}


\begin{abstract}
  Non-overlapping Schwarz methods with generalized Robin transmission conditions
	were originally introduced by B.~Despr\'es for time-harmonic wave propagation problems
	and have largely developed over the past thirty years.
	The aim of the paper is to provide both a review of the available formulations and methods
	as well as a consistent theory applicable to more general cases than studied until to date.
	An abstract variational framework is provided reformulating the original problem by the well-known form involving a scattering operator
	and an interface exchange operator, and the equivalence between the formulations is discussed thoroughly.
	The framework applies to a series of wave propagation problems throughout the de Rham complex,
	such as the scalar Helmholtz equation, Maxwell's equations,
	a dual formulation of the Helmholtz equation in H(div), as well as any conforming finite element discretization thereof,
	and it applies also to coercive problems.
	Three convergence results are shown. The first one (using compactness) and the second one (based on absorbtion)
	generalize Despr\'es' early findings and apply as well to the FETI-2LM formulation
	(a discrete method introduced by de La Bourdonnaye, Farhat, Macedo, Magoul\'es, and Roux).
	The third result, oriented on the work by Collino, Ghanemi, and Joly, establishes a convergence rate
	and covers cases with cross points, while not requiring any regularity of the solution.
	The key ingredient is a global interface exchange operator, proposed originally by X.~Claeys and further developed by Claeys and Parolin,
	here worked out in full generality.
  The third type of convergence theory is applicable at the discrete level as well, where the exchange operator is allowed to be even local.
	The resulting scheme can be viewed as a generalization of the 2-Lagrange-multiplier method introduced by S.~Loisel,
	and connections are drawn to another technique proposed by Gander and Santugini.
\end{abstract}

\maketitle

\section{Introduction}

Domain decomposition (DD) methods \cite{SmithBjorstadGropp:Book,QuarteroniValli:Book,ToselliWidlund:Book,Mathew:book,DoleanJolivetNataf:Book}
can be classified according to their \emph{formulation complexity} (see Table~\ref{tab:DDClass}).
In the simplest case, such as for the overlapping Schwarz method,
one has a \emph{preconditioner} (based on domain decomposition) for the original standard finite element system.
The Neumann-Neumann and the BDDC methods are preconditioners for the Schur complement formulation, eliminating interior degrees of freedom (dofs)
that are not associated with the interface.
In dual iterative substructuring, such as for classical FETI and FETI-DP methods,
the original problem is reformulated even more, involving function spaces that allow discontinuities across subdomain interfaces.
At such a stage, the domain decomposition plays an essential role in the formulation, even before any preconditioning.

\begin{table}
\begin{center}
\begin{tabular}{cccc}
  overlapping      & primal                   & dual                     & non-overlapping \\
  additive Schwarz & iterative substructuring & iterative substructuring & Robin-Schwarz \\
\hline
	$A u = f$                & $S u_\Gamma = g$            & $F \lambda = d$           & $(I - \mathcal{X} \underline{S}) \underline{\lambda} = \underline d$
\end{tabular}

\smallskip

\caption{\label{tab:DDClass}
  Examples of some DD methods and their underlying (re-)formulation. Increasing formulation complexity when moving to the right.
}
\end{center}
\end{table}

Schwarz methods with Robin transmission conditions, often found under the name \emph{optimized Schwarz methods},
are classically formulated as an iterative process involving spaces with the same discontinuity property.
The transmission conditions, making the solution and its associated flux continuous, are only reached at convergence.
These \emph{Robin-Schwarz} methods are among the most successful DD methods for wave propagation problems,
and it is a major goal of this article to provide a fundamental understanding of
the underlying formulation, the iterative process, and the convergence theory---in the continuous as well as in the discrete case.
There will be a certain emphasis on wave propagation problems, but the framework includes the coercive (positive definite) case.

\subsection{History and literature review}

The first non-overlapping Schwarz methods with Robin transmission conditions were independently proposed and analyzed around 1990 by
Pierre-Louis Lions \cite{Lions:DD03} for the Laplace equation
and by Bruno Despr\'es for the Helmholtz equation \cite{Despres:1990a,Despres:1991a,Despres:PhD}
(early results on the time-harmonic Maxwell equations can be found in \cite{Despres:PhD,DespresJolyRoberts:1992a}).
There, the original variational problem is reformulated using a decomposition of the domain into non-overlapping subdomains,
where the coupling across the interfaces happens via impedance traces (classical Robin traces,
instead of Dirichlet and Neumann traces, see \cite{ToselliWidlund:Book,DoleanJolivetNataf:Book} and references therein).
The proposed scheme can be seen as a fixed point iteration and was shown to converge.
Lions' and Despr\'es' original proofs both use compactness arguments and energy estimates,
in the Helmholtz case based on the novel concept of \emph{pseudo-energy}.
Another milestone was the classical paper by Collino, Ghanemi, and Joly \cite{CollinoGhanemiJoly:2000a} from 2000,
proving that
(i) the damped Schwarz iteration converges and
(ii) if there are no \emph{junctions} (i.e., any interface between two subdomains is either a closed manifold or empty)
and if special \emph{impedance operators} are used (leading to generalized Robin transmission conditions),
then the convergence is \emph{geometric}, i.e., the error in the $k$-th iteration can be bounded by $\rho^k$ compared to the initial error,
with a convergence rate $\rho < 1$.
Recently, this kind of convergence result was investigated in more depth in
\cite{CollinoJolyLecouvezStupfel:2014a,CollinoJolyLecouvez:2020a,Parolin:PhD}
using \emph{non-local} impedance operators based on integral operators with singular kernels,
see also the early paper \cite{CollinoDelbueJolyPiacentini:1997a}.
For an early work using a local but non-trivial impedance operator based on the surface Laplace-Beltrami operator see \cite{PiacentiniRosa:1998a}.

While in \cite{Lions:DD03,Despres:PhD,CollinoGhanemiJoly:2000a}, the method was analyzed on the continuous level 
involving Sobolev spaces,
at the very end of the 20th century
a huge development started around computational methods of Schwarz type using finite elements.
This development was greatly influenced by the finite element tearing and interconnecting (FETI) method, introduced by
Farhat and
Roux
for static structural mechanics \cite{FarhatRoux:1990a,FarhatRoux:1991a}.
Two early approaches for the Helmholtz equation are the FETI-H method \cite{FarhatMacedoTezaur:DD11,FarhatMacedoLesoinne:2000a,FarhatMacedoLesoinneRouxMagoulesDeLaBourdonnaie:2000a},
introduced by Farhat, Macedo, Tezaur, and Lesoinne,
and the FETI-2LM method \cite{Bourdonnaye:DD10,FarhatMacedoMagoulesRoux:USNCCM,FarhatMacedoLesoinneRouxMagoulesDeLaBourdonnaie:2000a}
by de La Bourdonnaye, Farhat, Macedo, Magoul\`es, and Roux.
Both methods use Robin transmission conditions on the discrete level as well as Lagrange multipliers in addition to the separated subdomain degrees of freedom (dof).
As the article at hand will demonstrate, the formulation behind FETI-2LM can be seen as one
out of many possible discrete counterparts of Despr\'es' original method, and it has a broader spectrum of applicability than the FETI-H method.
In the early works on FETI-H and FETI-2LM, the focus lay rather on the efficient parallel computation than on the analysis.
It is worth mentioning that the reformulated problem was typically solved using Krylov acceleration.
In the long run, due to the complex symmetric (but non-Hermitian) structure of the system matrix, GMRES became the iterative method of choice.
Using carefully chosen ``coarse'' modes (typically plane waves on the subdomains, inspired from a careful spectral analysis of the two-subdomain case),
it was demonstrated numerically that two-level schemes can lead to rather fast convergence.
Although such two-level approaches have been a very important topic until today (as of 2021),
they are not pursued in the paper at hand.

In 1994, Nataf, Rogier, and de Sturler \cite{NatafRogierDeSturler:1994a}
showed for an overlapping Schwarz method that it is possible to construct \emph{optimal}
transmission conditions involving non-local operators (e.g., Dirichlet-to-Neumann maps)
such that a Krylov method would converge after $N$ steps,
where $N$ subdomains cover the original domain in a strip-like fashion.
Starting with Caroline Japhet \cite{Japhet:1998a},
a huge development began following the paradigm of approximating the optimal non-local operators by parametrized local ones and then
\emph{optimizing} the parameters with respect to the rate of convergence, typically on the continuous level, often the PDE-level, using Fourier analysis for the case of two subdomains sharing a common face (often two half-spaces).
For a comprehensive survey on \emph{optimized Schwarz methods} see \cite{Gander:2006a,GanderZhang:2019a},
for the case of wave propagation see in particular
\cite{DoleanGanderLanteriLeePeng:2015,DoleanGanderGerardoGiorda:2009a,DoleanGanderVenerousZhang:DD23,
  GanderHalpernMagoules:2007a,GanderMagoulesNataf:2002a,AlonsoRodriguezGerardoGiorda:2006a}.

Around 2005, more work appeared on non-overlapping (optimized) Robin-Schwarz methods in the discrete case,
with special emphasis on \emph{cross points}, i.e., points shared by more than two subdomains.
Two approaches were inspired from the FETI-DP method \cite[Ch.~6]{ToselliWidlund:Book}.
The method by Bendali and Boubendir \cite{BendaliBoubendir:2005a} follows Despr\'es' method,
but it maintains continuity of all dofs shared by more than two subdomains
and can thus be regarded as a dual-primal version of FETI-2LM.
The FETI-DPH method \cite{FarhatAveryTezaurLi:2005a} is a generalization of the FETI-H method, keeping continuity of certain dofs, e.g., at subdomain vertices,
and was demonstrated numerically to be very efficient with regard to the problem size, the number of subdomains, and the wave number.

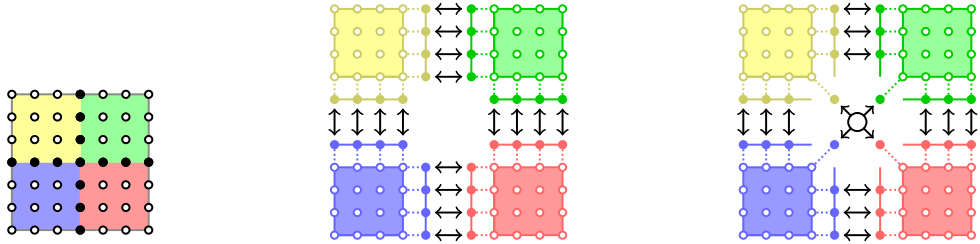
\begin{figure}
\begin{center}
  \begin{tikzpicture}
		\pgftransformscale{0.6}
		
	  \definecolor{dred}{rgb}{0.8, 0.0, 0.0}
		\definecolor{mdred}{rgb}{1.0, 0.4, 0.4}
		\definecolor{lred}{rgb}{1.0, 0.8, 0.8}
		\definecolor{mlred}{rgb}{1.0, 0.6, 0.6}
		
		\definecolor{dgreen}{rgb}{0.0, 0.4, 0.0}
		\definecolor{mdgreen}{rgb}{0.0, 0.8, 0.0}
		\definecolor{lgreen}{rgb}{0.8, 1.0, 0.8}
		\definecolor{mlgreen}{rgb}{0.6, 1.0, 0.6}
		
		\definecolor{dblue}{rgb}{0.0, 0.0, 0.8}
		\definecolor{mdblue}{rgb}{0.4, 0.4, 1.0}
		\definecolor{lblue}{rgb}{0.8, 0.8, 1.0}
		\definecolor{mlblue}{rgb}{0.6, 0.6, 1.0}
		
		\definecolor{dyellow}{rgb}{0.4, 0.4, 0.0}
		\definecolor{mdyellow}{rgb}{0.8, 0.8, 0.4}
		\definecolor{lyellow}{rgb}{1.0, 1.0, 0.8}
		\definecolor{mlyellow}{rgb}{1.0, 1.0, 0.6}
	
		\definecolor{dgrey}{rgb}{0.4, 0.4, 0.4}
		\definecolor{mdgrey}{rgb}{0.55, 0.55, 0.55}
		\definecolor{lgrey}{rgb}{0.9, 0.9, 0.9}
		\definecolor{mlgrey}{rgb}{0.75, 0.75, 0.75}
	
	  \node (A0) at (1,1) {};
	  \node (A1) at (4,1) {};
	  \node (A2) at (4,4) {};
	  \node (A3) at (1,4) {};
		\node (A01) at (2.5,1) {};
		\node (A12) at (4,2.5) {};
		\node (A23) at (2.5,4) {};
		\node (A03) at (1,2.5) {};
		\node (AM)  at (2.5,2.5) {};

		\draw[line width=0pt,color=mlblue,fill=mlblue] (A0.center)--(A01.center)--(AM.center)--(A03.center)--(A0.center);
		\draw[line width=0pt,color=mlred,fill=mlred] (A01.center)--(A1.center)--(A12.center)--(AM.center)--(A01.center);
		\draw[line width=0pt,color=mlgreen,fill=mlgreen] (AM.center)--(A12.center)--(A2.center)--(A23.center)--(AM.center);
		\draw[line width=0pt,color=mlyellow,fill=mlyellow] (A03.center)--(AM.center)--(A23.center)--(A3.center)--(A03.center);
		
		\draw[line width=0.75pt,color=mdgrey] (A0.center)--(A1.center)--(A2.center)--(A3.center)--(A0.center);

		\foreach \x in {1,1.5,...,4}
		{
		  \foreach \y in {1,1.5,...,4}
		  {
		    \draw[line width=0.74pt,color=black,fill=white] (\x,\y) circle (0.08);
      }
		}
		
		\foreach \x in {1,1.5,...,4}
		{
		  \draw[line width=0.74pt,color=black,fill=black] (\x,2.5) circle (0.08);
		  \draw[line width=0.74pt,color=black,fill=black] (2.5,\x) circle (0.08);
    }
		
  \end{tikzpicture}
  \hspace{2cm}
  \begin{tikzpicture}
		\pgftransformscale{0.6}
		
	  \definecolor{dred}{rgb}{0.8, 0.0, 0.0}
		\definecolor{mdred}{rgb}{1.0, 0.4, 0.4}
		\definecolor{lred}{rgb}{1.0, 0.8, 0.8}
		\definecolor{mlred}{rgb}{1.0, 0.6, 0.6}
		
		\definecolor{dgreen}{rgb}{0.0, 0.4, 0.0}
		\definecolor{mdgreen}{rgb}{0.0, 0.8, 0.0}
		\definecolor{lgreen}{rgb}{0.8, 1.0, 0.8}
		\definecolor{mlgreen}{rgb}{0.6, 1.0, 0.6}
		
		\definecolor{dblue}{rgb}{0.0, 0.0, 0.8}
		\definecolor{mdblue}{rgb}{0.4, 0.4, 1.0}
		\definecolor{lblue}{rgb}{0.8, 0.8, 1.0}
		\definecolor{mlblue}{rgb}{0.6, 0.6, 1.0}
		
		\definecolor{dyellow}{rgb}{0.4, 0.4, 0.0}
		\definecolor{mdyellow}{rgb}{0.8, 0.8, 0.4}
		\definecolor{lyellow}{rgb}{1.0, 1.0, 0.8}
		\definecolor{mlyellow}{rgb}{1.0, 1.0, 0.6}
		
		\definecolor{dgrey}{rgb}{0.4, 0.4, 0.4}
		\definecolor{mdgrey}{rgb}{0.55, 0.55, 0.55}
		\definecolor{lgrey}{rgb}{0.9, 0.9, 0.9}
		\definecolor{mlgrey}{rgb}{0.75, 0.75, 0.75}
	
		\draw[line width=0.75pt,color=mdblue,fill=mlblue] (0,0)--(1.5,0)--(1.5,1.5)--(0,1.5)--(0,0)--(1.5,0);
		\draw[line width=0.75pt,color=mdyellow,fill=mlyellow] (0,3.5)--(1.5,3.5)--(1.5,5)--(0,5)--(0,3.5)--(1.5,3.5);
		\draw[line width=0.75pt,color=mdred,fill=mlred] (3.5,0)--(3.5,1.5)--(5,1.5)--(5,0)--(3.5,0)--(3.5,1.5);
		\draw[line width=0.75pt,color=mdgreen,fill=mlgreen] (3.5,3.5)--(3.5,5)--(5,5)--(5,3.5)--(3.5,3.5)--(3.5,5);

		\draw[line width=0.75pt,color=mdblue] (0,2)--(1.5,2);
		\draw[line width=0.75pt,color=mdblue] (2,0)--(2,1.5);

		\draw[line width=0.75pt,color=mdyellow] (0,3)--(1.5,3);
		\draw[line width=0.75pt,color=mdyellow] (2,3.5)--(2,5);
		
		\draw[line width=0.75pt,color=mdred] (3,0)--(3,1.5);
		\draw[line width=0.75pt,color=mdred] (3.5,2)--(5,2);

		\draw[line width=0.75pt,color=mdgreen] (3,3.5)--(3,5);
		\draw[line width=0.75pt,color=mdgreen] (3.5,3)--(5,3);
		
		\foreach \x in {0,0.5,...,1.5}
		{
		  \draw[line width=0.74pt,color=mdblue,fill=mdblue] (\x,2) circle (0.08);
		  \draw[line width=0.74pt,color=mdblue,fill=mdblue] (2,\x) circle (0.08);

		  \draw[line width=0.74pt,color=mdred,fill=mdred] (3.5+\x,2) circle (0.08);
		  \draw[line width=0.74pt,color=mdred,fill=mdred] (3,\x) circle (0.08);

		  \draw[line width=0.74pt,color=mdyellow,fill=mdyellow] (\x,3) circle (0.08);
		  \draw[line width=0.74pt,color=mdyellow,fill=mdyellow] (2,3.5+\x) circle (0.08);

		  \draw[line width=0.74pt,color=mdgreen,fill=mdgreen] (3.5+\x,3) circle (0.08);
		  \draw[line width=0.74pt,color=mdgreen,fill=mdgreen] (3,3.5+\x) circle (0.08);

			\draw[line width=0.74pt,color=mdblue,fill=mdblue,densely dotted] (1.5,\x)--(2,\x);
			\draw[line width=0.74pt,color=mdblue,fill=mdblue,densely dotted] (\x,2)--(\x,1.5);

			\draw[line width=0.74pt,color=mdred,fill=mdred,densely dotted] (3,\x)--(3.5,\x);
			\draw[line width=0.74pt,color=mdred,fill=mdred,densely dotted] (3.5+\x,2)--(3.5+\x,1.5);

			\draw[line width=0.74pt,color=mdgreen,fill=mdgreen,densely dotted] (3,3.5+\x)--(3.5,3.5+\x);
			\draw[line width=0.74pt,color=mdgreen,fill=mdgreen,densely dotted] (3.5+\x,3)--(3.5+\x,3.5);

			\draw[line width=0.74pt,color=mdyellow,fill=mdyellow,densely dotted] (1.5,3.5+\x)--(2,3.5+\x);
			\draw[line width=0.74pt,color=mdyellow,fill=mdyellow,densely dotted] (\x,3)--(\x,3.5);
			
			\draw[line width=0.74pt,color=black,<->] (2.2,\x)--(2.8,\x);

			\draw[line width=0.74pt,color=black,<->] (\x,2.2)--(\x,2.8);
			
			\draw[line width=0.74pt,color=black,<->] (2.2,3.5+\x)--(2.8,3.5+\x);

			\draw[line width=0.74pt,color=black,<->] (3.5+\x,2.2)--(3.5+\x,2.8);

    }
		
		\foreach \x in {0,0.5,...,1.5}
		{
		  \foreach \y in {0,0.5,...,1.5}
		  {
		    \draw[line width=0.74pt,color=mdblue,fill=white] (\x,\y) circle (0.08);
		    \draw[line width=0.74pt,color=mdred,fill=white] (3.5+\x,\y) circle (0.08);
		    \draw[line width=0.74pt,color=mdyellow,fill=white] (\x,3.5+\y) circle (0.08);
		    \draw[line width=0.74pt,color=mdgreen,fill=white] (3.5+\x,3.5+\y) circle (0.08);
      }
		}
		
  \end{tikzpicture}
	\hspace{2cm}
  \begin{tikzpicture}
		\pgftransformscale{0.6}
		
	  \definecolor{dred}{rgb}{0.8, 0.0, 0.0}
		\definecolor{mdred}{rgb}{1.0, 0.4, 0.4}
		\definecolor{lred}{rgb}{1.0, 0.8, 0.8}
		\definecolor{mlred}{rgb}{1.0, 0.6, 0.6}
		
		\definecolor{dgreen}{rgb}{0.0, 0.4, 0.0}
		\definecolor{mdgreen}{rgb}{0.0, 0.8, 0.0}
		\definecolor{lgreen}{rgb}{0.8, 1.0, 0.8}
		\definecolor{mlgreen}{rgb}{0.6, 1.0, 0.6}
		
		\definecolor{dblue}{rgb}{0.0, 0.0, 0.8}
		\definecolor{mdblue}{rgb}{0.4, 0.4, 1.0}
		\definecolor{lblue}{rgb}{0.8, 0.8, 1.0}
		\definecolor{mlblue}{rgb}{0.6, 0.6, 1.0}
		
		\definecolor{dyellow}{rgb}{0.4, 0.4, 0.0}
		\definecolor{mdyellow}{rgb}{0.8, 0.8, 0.4}
		\definecolor{lyellow}{rgb}{1.0, 1.0, 0.8}
		\definecolor{mlyellow}{rgb}{1.0, 1.0, 0.6}
		
		\definecolor{dgrey}{rgb}{0.4, 0.4, 0.4}
		\definecolor{mdgrey}{rgb}{0.55, 0.55, 0.55}
		\definecolor{lgrey}{rgb}{0.9, 0.9, 0.9}
		\definecolor{mlgrey}{rgb}{0.75, 0.75, 0.75}
	
		\draw[line width=0.75pt,color=mdblue,fill=mlblue] (0,0)--(1.5,0)--(1.5,1.5)--(0,1.5)--(0,0)--(1.5,0);
		\draw[line width=0.75pt,color=mdyellow,fill=mlyellow] (0,3.5)--(1.5,3.5)--(1.5,5)--(0,5)--(0,3.5)--(1.5,3.5);
		\draw[line width=0.75pt,color=mdred,fill=mlred] (3.5,0)--(3.5,1.5)--(5,1.5)--(5,0)--(3.5,0)--(3.5,1.5);
		\draw[line width=0.75pt,color=mdgreen,fill=mlgreen] (3.5,3.5)--(3.5,5)--(5,5)--(5,3.5)--(3.5,3.5)--(3.5,5);

		\draw[line width=0.75pt,color=mdblue] (0,2)--(1.5,2);
		\draw[line width=0.75pt,color=mdblue] (2,0)--(2,1.5);

		\draw[line width=0.75pt,color=mdyellow] (0,3)--(1.5,3);
		\draw[line width=0.75pt,color=mdyellow] (2,3.5)--(2,5);
		
		\draw[line width=0.75pt,color=mdred] (3,0)--(3,1.5);
		\draw[line width=0.75pt,color=mdred] (3.5,2)--(5,2);

		\draw[line width=0.75pt,color=mdgreen] (3,3.5)--(3,5);
		\draw[line width=0.75pt,color=mdgreen] (3.5,3)--(5,3);

		\draw[line width=0.74pt,color=mdblue,fill=mdblue,densely dotted] (1.5,1.5)--(2,2);
		\draw[line width=0.74pt,color=mdblue,fill=mdblue] (2,2) circle (0.08);

		\draw[line width=0.74pt,color=mdred,fill=mdred,densely dotted] (3.5,1.5)--(3,2);
		\draw[line width=0.74pt,color=mdred,fill=mdred] (3,2) circle (0.08);

		\draw[line width=0.74pt,color=mdyellow,fill=mdyellow,densely dotted] (1.5,3.5)--(2,3);
		\draw[line width=0.74pt,color=mdyellow,fill=mdyellow] (2,3) circle (0.08);

		\draw[line width=0.74pt,color=mdgreen,fill=mdgreen,densely dotted] (3.5,3.5)--(3,3);
		\draw[line width=0.74pt,color=mdgreen,fill=mdgreen] (3,3) circle (0.08);
		
		\foreach \x in {0,0.5,1}
		{
		  \draw[line width=0.74pt,color=mdblue,fill=mdblue] (\x,2) circle (0.08);
		  \draw[line width=0.74pt,color=mdblue,fill=mdblue] (2,\x) circle (0.08);

		  \draw[line width=0.74pt,color=mdred,fill=mdred] (4+\x,2) circle (0.08);
		  \draw[line width=0.74pt,color=mdred,fill=mdred] (3,\x) circle (0.08);

		  \draw[line width=0.74pt,color=mdyellow,fill=mdyellow] (\x,3) circle (0.08);
		  \draw[line width=0.74pt,color=mdyellow,fill=mdyellow] (2,4+\x) circle (0.08);

		  \draw[line width=0.74pt,color=mdgreen,fill=mdgreen] (4+\x,3) circle (0.08);
		  \draw[line width=0.74pt,color=mdgreen,fill=mdgreen] (3,4+\x) circle (0.08);

			\draw[line width=0.74pt,color=mdblue,fill=mdblue,densely dotted] (1.5,\x)--(2,\x);
			\draw[line width=0.74pt,color=mdblue,fill=mdblue,densely dotted] (\x,2)--(\x,1.5);

			\draw[line width=0.74pt,color=mdred,fill=mdred,densely dotted] (3,\x)--(3.5,\x);
			\draw[line width=0.74pt,color=mdred,fill=mdred,densely dotted] (4+\x,2)--(4+\x,1.5);

			\draw[line width=0.74pt,color=mdgreen,fill=mdgreen,densely dotted] (3,4+\x)--(3.5,4+\x);
			\draw[line width=0.74pt,color=mdgreen,fill=mdgreen,densely dotted] (4+\x,3)--(4+\x,3.5);

			\draw[line width=0.74pt,color=mdyellow,fill=mdyellow,densely dotted] (1.5,4+\x)--(2,4+\x);
			\draw[line width=0.74pt,color=mdyellow,fill=mdyellow,densely dotted] (\x,3)--(\x,3.5);
			
			\draw[line width=0.74pt,color=black,<->] (2.2,\x)--(2.8,\x);

			\draw[line width=0.74pt,color=black,<->] (\x,2.2)--(\x,2.8);
			
			\draw[line width=0.74pt,color=black,<->] (2.2,4+\x)--(2.8,4+\x);

			\draw[line width=0.74pt,color=black,<->] (4+\x,2.2)--(4+\x,2.8);

    }
		
		\foreach \x in {0,0.5,...,1.5}
		{
		  \foreach \y in {0,0.5,...,1.5}
		  {
		    \draw[line width=0.74pt,color=mdblue,fill=white] (\x,\y) circle (0.08);
		    \draw[line width=0.74pt,color=mdred,fill=white] (3.5+\x,\y) circle (0.08);
		    \draw[line width=0.74pt,color=mdyellow,fill=white] (\x,3.5+\y) circle (0.08);
		    \draw[line width=0.74pt,color=mdgreen,fill=white] (3.5+\x,3.5+\y) circle (0.08);
      }
		}

		\draw[line width=0.74pt,color=black,<->] (2.14,2.14)--(2.86,2.86);
		\draw[line width=0.74pt,color=black,<->] (2.14,2.86)--(2.86,2.14);
		\draw[line width=0.74pt,color=black,fill=white] (2.5,2.5) circle (0.2);
		
  \end{tikzpicture}  \caption{\label{fig:LagrIllustrCrossp}%
	  \emph{Left:} Example of subdomain decomposition:
		$\bullet$~global interface dofs,
		$\circ$~remaining dofs.
		The dof at the center is a cross point dof and shared by four subdomains.
		\emph{Middle/right:}
	  Illustration of Lagrange multiplier layout for FETI-2LM (middle) and Loisel's method (right):
		$\circ$~local subdomain dofs,
		$\bullet$~dofs on local trace space,
		$\rightarrow$~each arrow tip indicates one Lagrange multiplier.
	}
\end{center}
\end{figure}

In the FETI-2LM method \cite{Bourdonnaye:DD10,FarhatMacedoMagoulesRoux:USNCCM,FarhatMacedoLesoinneRouxMagoulesDeLaBourdonnaie:2000a},
each dof on a \emph{facet} (a subdomain interface of codimension one, shared by two subdomains) generates two Lagrange multipliers.
For a cross point dof in 2D shared by four subdomains, this means that due to the four facets, there are 8 Lagrange multipliers,
see Fig.~\ref{fig:LagrIllustrCrossp} (left, middle); details to be shown in Sect.~\ref{sect:discreteFacetSystems}.
A different paradigm was introduced by S\'ebastien Loisel \cite{Loisel:2013a}
(therein called \emph{2-Lagrange multiplier method} 
and worked out for a finite element discretization of the Laplace equation,
later for heterogeneous diffusion \cite{LoiselNguyenScheichl:2015a}).
For this method, the number of Lagrange multipliers associated with an original dof is equal to the number of sharing subdomains (in the case above, four instead of eight, see Fig.~\ref{fig:LagrIllustrCrossp} (right).
This is achieved using a projection operator for each group of separated subdomain dofs that simply averages the values using the reciprocal multiplicity as weights.
Such averaging operators have been used early on in substructuring methods,
e.g., the balancing Neumann-Neumann methods \cite{Mandel:1993a,MandelBrezina:1996a},
and play a principal role in FETI and BDDC methods with heterogeneous coefficients
(see \cite{ToselliWidlund:Book,Pechstein:FETIBook} and references therein).
For cross point dofs the FETI-2LM leads to redundancy (the Lagrange multipliers for the solution are not unique),
whereas Loisel's method is a non-redundant formulation.
Apart from this difference, both methods are oriented on Despr\'es' method and iterate on the Lagrange multipliers.
Gander and Kwok \cite{GanderKwok:2012a} investigated the choice of the Robin parameter at cross points in order
to obtain a convergence order that is comparable to the case without cross points.

From another perspective, discrete non-overlapping Robin-Schwarz methods for cross points were investigated by Gander and Santugini
\cite{GanderSantugini:2016a}
(therein for the finite element discretization of a positive definite problem).
The authors propose two variants: \emph{discrete optimized Schwarz with auxiliary variables} and \emph{complete communication}.
As we shall see in the paper at hand, the first variant follows the FETI-2LM paradigm,
whereas the second variant is closely related to Loisel's method.
As a main (but minor) difference,
FETI-2LM and Loisel's method iterate purely on the Lagrange multipliers,
whereas the methods in \cite{GanderSantugini:2016a}
iterate on the primal subdomain dofs or on both sets of variables.

For standard nodal $H^1$-conforming finite elements, a geometric cross point always leads to a dof shared by more than two subdomains.
However, this is not true for every discretization.
Monk, Sinwel, and Sch\"oberl \cite{MonkSinwelSchoeberl:2010a} consider a dual formulation of the Helmholtz equation set up in $H(\textnormal{div})$
discretized by Raviart-Thomas elements in 2D or N\'ed\'elec face elements in 3D.
As every dof is face-based, from a discrete perspective there are no ``cross points'',
to be more precise, no cross point dofs and no redundancy.
The authors of \cite{MonkSinwelSchoeberl:2010a} explore connections between the global finite element formulation, the ultra-weak variational formulation
(UWVF) introduced by Cessenant and Despr\'es \cite{CessenantDespres:1998a},
and a novel hybridized formulation (which can be interpreted as a hybrid discontinuous Galerkin (DG) scheme but is equivalent to the original formulation).
The latter technique was further investigated and extended to a discretization scheme for the time-harmonic Maxwell equations by
M.~Huber, A.~Pechstein (n\'ee Sinwel), and J.~Sch\"oberl \cite{HuberPechsteinSchoeberl:DD20}, see also Huber's doctoral thesis \cite{Huber:PhD}.

A further contribution from the engineering community for electromagnetic wave propagation is the
FETI-2$\lambda$ method proposed by Vouvakis \cite{Vouvakis:PhD}
(see also \cite{VouvakisLee:CopperMountain8,LeeVouvakisLee:2005a,VouvakisCendesLee:2006a,Vouvakis:2015a})
and further investigated numerically by Paraschos \cite{Paraschos:PhD}.
The basic scheme follows again the FETI-2LM paradigm but the authors focus on non-matching meshes.
Related FETI-type schemes involving (generalized) Robin interface conditions can also be found in
\cite{PengLee:2010a,RawatLee:2010a}.

Non-overlapping domain decompositions naturally involve \emph{broken} spaces, in particular, broken \emph{trace spaces}
that also appear in boundary integral equations \cite{McLean:Book,Steinbach:Book2008}.
Indeed, many of the techniques above were applied to integral equations or boundary element techniques.
A boundary element counterpart to FETI-H, applied both for the Helmholtz as well as the time-harmonic Maxwell equations,
was introduced by Windisch \cite{Windisch:PhD}.
Independently, \emph{local multi-trace methods} were introduced by Hiptmair and Jerez-Hanckes \cite{HiptmairJerezHanckes:2012a}
and \emph{global multi-trace methods} by Claeys and Hiptmair \cite{ClaeysHiptmair:2013a,ClaeysHiptmair:2012a},
see also \cite{Claeys:2015a,ClaeysHiptmairJerezHanckes:2013a,ClaeysHiptmairJerezHanckesPintarelli:2015a,HiptmairJerezHanckesLeePeng:DD21}.
These formulations involve layer potentials or boundary integral operators,
often make use of Cald\'eron identities, and can be put in to the framework of operator preconditioning \cite{Hiptmair:2006a}.
Furthermore, they use exchange operators between subdomain interfaces, in some cases, similar to those from Despr\'es' method.
A connection between Schwarz methods and local multi-trace formulations was pointed out in \cite{ClaeysDoleanGander:2019a}.

Motivated from the techniques of multi-trace formulations, Xavier Claeys \cite{Claeys:2021a}
recently suggested
an interface exchange operator that is completely different to the one used so far
which simply swaps pairs of traces between subdomains.
The novel exchange operator is non-local in the sense that it involves a projection step
where a function from the global multi-trace space is projected to the single-trace space.
Albeit this operator is computationally equivalent to solving a global, coercive (positive definite) problem,
the analysis in \cite{Claeys:2021a}
shows geometric convergence, even for the case of cross points,
and does not need any regularity assumptions anymore.
In a joint work by X.~Claeys and E.~Parolin \cite{ClaeysParolin:Preprint2020},
geometric convergence was also shown for a discretization of the Helmholtz equation,
where the rate of convergence is independent of the mesh parameter, again in presence of cross points.
Parolin's doctoral thesis \cite{Parolin:PhD} includes the case of Maxwell's equations as well.

Lastly, it should be mentioned that the original purpose of Robin boundary conditions is the approximation of the exterior PDE,
and there exist improved ways to do so, which leads to a more goal-oriented construction of impedance operators.
Recently, quite some work has appeared
\cite{BoubendirAntoineGeuzaine:2012a,DespresNicolopoulosThierry:Preprint2020,DespresNicolopoulosThierry:Preprint2021,
  ElBouajajiThierryAntoineGeuzaine:2015a,ModaveGeuzaineAntoine:2020a,ModaveRoyerGeuzaineAntoine:2020a,StupfelChanaud:2018a}
that use such \emph{generalized impedance boundary conditions} (GIBC),
\emph{high order absorbing boundary conditions} (HABC),
or \emph{high order transmission conditions} (HOTC)
for domain decomposition methods, in many cases also considering cross points.

\subsection{Purpose and structure of this work}

The paper at hand provides an abstract theoretical framework for non-overlapping Schwarz methods
with Robin transmission conditions. Whereas Parolin \cite{Parolin:PhD} has already presented a high amount of abstraction
by treating the Helmholtz and Maxwell equations in a common framework, the theory in here goes one step further and
works in general Hilbert spaces with wave equations in operator or matrix form.
Not only does this improve the generality of the theory substantially,
but makes visible the essential properties. Using a convenient and compact notation, the continuous and
the discrete case can be handled to a large extent uniformly.
Instead of using the PDE level or the variational level, the whole description is \emph{operator-based},
which is close to an algorithm-oriented matrix-based notation, but more precise and independent of any chosen bases.
Furthermore, connections are drawn between many existing variants of Robin-Schwarz,
in particular
Despr\'es' method,
the FETI-2LM method \cite{Bourdonnaye:DD10,FarhatMacedoMagoulesRoux:USNCCM,FarhatMacedoLesoinneRouxMagoulesDeLaBourdonnaie:2000a},
Loisel's method \cite{Loisel:2013a},
and the two variants proposed by Gander and Santugini \cite{GanderSantugini:2016a}.
The convergence analysis is provided for three cases:
\begin{enumerate}
\item[(i)] In the general case, convergence is guaranteed, but with no information on the speed.
  In the continuous case, compactness and a regularity condition are required;
  in the discrete case, the Lagrange multipliers may be non-unique.
\item[(ii)] Assuming strong absorbtion in the problem, the compactness assumption can be dropped.
\item[(iii)] In a special case one obtains geometric convergence.
  The theory covers the classical situation where no cross points are present \cite[Sect.~4.2]{CollinoGhanemiJoly:2000a}
	as well as Claeys' choice of a global interface exchange operator in the presence of cross points,
	cf.\ \cite{Claeys:2021a,ClaeysParolin:Preprint2020}.
	It is also shown how this operator can be localized in the discrete case.
\end{enumerate}
Another achievement of the paper at hand is the precise display of the set of \emph{equations}
behind many methods that are often formulated as an iterative process,
and the clarification under which conditions these reformulations are equivalent to the original problem.
The theoretical framework is built upon a minimal set of assumptions that play the role of \emph{axioms}
and provide more generality than in previous publications, much in the spirit of the abstract overlapping Schwarz theory \cite[Ch.~2]{ToselliWidlund:Book}.
As a side product, the paper explains many methods and variants using the same compact notation,
which allows to see more clearly the differences and common building blocks.
In addition to the Robin-Schwarz variants, some related techniques are included that involve Robin transmission conditions as well.
The author hopes that this piece of work will serve as a good reference for other scientists
and be of benefit for future developments in the field.

\medskip

The remainder of this paper is organized as follows.
Section~\ref{sect:fundamentals} introduces the global problem, some abstract domain decomposition,
and a fundamental reformulation in terms of traces.
Section~\ref{sect:facetSystems} examines various choices of trace operators based on \emph{facets}.
On the one hand, this section is very technical and may initially bypassed.
On the other hand, it will be very helpful for understanding the different variants of Robin-Schwarz methods
proposed in the literature.
Section~\ref{sect:interfaceFluxes} introduces a formulation using interface fluxes
and discusses thoroughly the equivalence with the original formulation.
Section~\ref{sect:RobinTC} deals with reformulations involving generalized Robin transmission conditions.
One particular formulation is of fixed point form and involves only one set of \emph{impedance traces}
for all subdomains. At this point, some of the prominent methods are classified.
In Section~\ref{sect:convergence}, the convergence of the associated fixed point method is analyzed in the general case,
the absorbtive case, and in the special case (iii) described above.
Section~\ref{sect:interfaceExchange} shows how to construct the trace operators, spaces,
and the interface exchange operator depending on a fixed impedance operator
(with localization treated in Sect.~\ref{sect:globLocImp}),
such that the assumptions leading to the stronger convergence result are fulfilled.
Some related formulations that involve transmission conditions of Robin kind are briefly discussed in Sect.~\ref{sect:relatedFormulations},
and some technical results are contained in an appendix.

\section{Fundamental non-overlapping domain decomposition formulations}
\label{sect:fundamentals}

Before the development of the general framework (starting with Sect.~\ref{sect:globalProblem}),
let us begin with a closer look at Despr\'es' original method and fix some basic notation.

\subsection{Motivation}
\label{sect:motivation}

In his seminal thesis \cite{Despres:PhD}, Bruno Despr\'es considered the Helmholtz equation in a bounded domain with a Robin boundary condition,
\begin{align}
\label{eq:HelmholtzDespresOrigPDE}
\begin{alignedat}{2}
  -\Delta u - \kappa^2 u & = f && \qquad \text{in } \Omega \subset \mathbb{R}^d,\\
	\complexi \kappa u + \tfrac{\partial}{\partial \normal} u & = 0 && \qquad \text{on } \partial\Omega,
\end{alignedat}
\end{align}
where $u$ is the unknown \emph{phasor}\footnote{In a large part of literature, $U(x, t) = u(x) e^{-\complexi \omega t}$ is used
   (see e.g., \cite{ClaeysParolin:Preprint2020,Monk:2003a}, opposed to \cite{CollinoGhanemiJoly:2000a,GanderMagoulesNataf:2002a}),
   which would lead to a replacement of $\complexi$ by $-\complexi$ throughout this paper.}
of a time-harmonic field $U(x, t) = u(x) e^{\complexi \omega t}$ that solves the wave equation $\partial^2 U /\partial t^2 - c^2 \Delta U = F$,
with $c$~being the speed of sound, $\omega$ the angular frequency, and $\kappa = \omega/c > 0$ denoting the wave number,
$f \in L^2(\Omega)$ is a given source term such that $F(x, t) = f(x) e^{\complexi \omega t}$,
and $\normal$ is the outward unit normal on $\partial\Omega$.
The absorbing boundary condition (ABC) is an approximation of a radiation condition at infinity:
if $u$ is a plane wave propagating in direction $\pm \normal$, then $u$ can only be \emph{outgoing} with respect to $\Omega$.

For a non-overlapping decomposition $\overline\Omega = \bigcup_{i=1}^N \overline\Omega_i$ with $\Omega_i \cap \Omega_j = \emptyset$, $i \neq j$
and for a suitably chosen initial guess $(u_i^{(0)})_{i=1}^N$, the method proposed by Despr\'es is as follows.
In each iteration (index $n=0,1,\ldots$), a \emph{local} Helmholtz problem is solved on each subdomain $\Omega_i$,
where the Robin boundary data comes from the previous step and from the neighboring subdomains:
\begin{align}
\label{eq:HelmholtzDespresOrigIter}
\begin{alignedat}{2}
  -\Delta u_i^{(n+1)} - \kappa^2 u_i^{(n+1)}                       & = f && \qquad \text{in } \Omega_i\,,\\
	\complexi \kappa u_i^{(n+1)} + \tfrac{\partial}{\partial \normal_i} u_i^{(n+1)} & = \complexi \kappa u_j^{(n)} - \tfrac{\partial}{\partial \normal_j} u_j^{(n)}
	   && \qquad \text{on } \Sigma_{ij} := \partial\Omega_i \cap \partial\Omega_j\,,\\
	\complexi \kappa u_i^{(n+1)} + \tfrac{\partial}{\partial \normal_i} u_i^{(n+1)} & = 0 && \qquad \text{on } \partial\Omega_i \cap \partial\Omega,
\end{alignedat}
\end{align}
where $\normal_i$ is the unit normal on $\partial\Omega_i$, outward w.r.t.\ $\Omega_i$.
Despr\'es showed that the iterates converge to the solution of the \emph{global} problem~\eqref{eq:HelmholtzDespresOrigPDE}
subdomain-wise in $H^1$ (under assumptions that will be discussed below).
The particular choice of this method is motivated by the following characteristics:
\begin{enumerate}
\item[(i)] The global problem can be solved iteratively by solving a sequence of local problems that can be solved independently
  of each other, i.e.\ in parallel.
\item[(ii)] The communication between the subdomains is only across the interfaces $\Sigma_{ij}$ of dimension $(d-1)$.
\item[(iii)] The local problems are Robin boundary value problems and as such free of internal resonances
  (regardless of the wave number $\kappa$).
\end{enumerate}
As it is well known, the solution $u$ of \eqref{eq:HelmholtzDespresOrigPDE} satisfies the Dirichlet and Neumann transmission conditions
\begin{align}
\begin{alignedat}{2}
  u_i & = u_j \qquad && \text{on } \Sigma_{ij}\,,\\
	\tfrac{\partial}{\partial \normal_i} u_i & = - \tfrac{\partial}{\partial \normal_j} u_j
	\qquad && \text{on } \Sigma_{ij}\,,
\end{alignedat}
\end{align}
where $u_i$ denotes the restriction of $u$ to the subdomain $\Omega_i$.
Linear combination of these conditions yields the Robin transmission conditions
\begin{align}
\label{eq:RobinClassical}
  \complexi\kappa u_i \pm \tfrac{\partial}{\partial \normal_i} u_i = \complexi\kappa u_j \mp \tfrac{\partial}{\partial \normal_j} u_j
	\qquad \text{on } \Sigma_{ij}\,,
\end{align}
from which we eventually see that the solution of \eqref{eq:HelmholtzDespresOrigPDE} is a fixed point of \eqref{eq:HelmholtzDespresOrigIter}.
Here, a special role is played by the two \emph{impedance traces} $\complexi\kappa u_i \pm \tfrac{\partial}{\partial \normal_i} u_i$ of $u_i$.
To get a feel for the meaning of these traces, suppose that $\Sigma_{ij}$ is a planar face in 3D or a straight interface line in 2D such that the normal vector $\normal_i$ is constant
and $(\normal_i \cdot x)$ is constant for $x \in \Sigma_{ij}$.
If $u_i$ is a combination of an \emph{incoming} and an \emph{outgoing} wave with respect to $\Omega_i$, i.e.,
$u_i(x) = c_\text{in} e^{\complexi \kappa (\normal_i \cdot x)} + c_\text{out} e^{-\complexi \kappa (\normal_i \cdot x)}$,
then
\begin{align}
\label{eq:impedanceTraceForPlaneWave}
\begin{aligned}
  \complexi\kappa u_i + \tfrac{\partial}{\partial \normal_i} u_i & = 2 \complexi\kappa c_\text{in}  e^{\complexi \kappa (\normal_i \cdot x)},\\
	\complexi\kappa u_i - \tfrac{\partial}{\partial \normal_i} u_i & = 2 \complexi\kappa c_\text{out} e^{-\complexi \kappa (\normal_i \cdot x)},
\end{aligned}
\end{align}
i.e., the impedance trace $\complexi\kappa u_i + \tfrac{\partial}{\partial \normal_i} u_i$ is essentially $c_\text{in}$, the amplitude of the \emph{incoming} wave,
whereas the impedance trace $\complexi\kappa u_i - \tfrac{\partial}{\partial \normal_i} u_i$ is essentially $c_\text{out}$, the amplitude of the \emph{outgoing} wave.

\medskip

For the Laplace equation, the same methodology was developed independently by Pierre-Louis Lions \cite{Lions:DD03}:
Setting $\kappa = 0$ in the PDEs of \eqref{eq:HelmholtzDespresOrigPDE} and \eqref{eq:HelmholtzDespresOrigIter},
replacing the outer boundary condition by a more suitable one (e.g.\ a homogeneous Dirichlet condition),
and replacing the imaginary factor $\complexi\kappa$ in the transmission conditions of \eqref{eq:HelmholtzDespresOrigIter} by a positive factor $\gamma > 0$,
one obtains the method proposed by Lions, which works with the (real-valued) \emph{Robin traces} $\gamma u_i \pm \tfrac{\partial}{\partial \normal_i} u_i$.

\medskip

As the attentive reader might have noticed in \eqref{eq:HelmholtzDespresOrigIter},
the normal derivative $\tfrac{\partial}{\partial \normal_i} u_i^{(n)}$
of a general function $u_i^{(n)} \in H^1(\Omega_i)$ is not necessarily well-defined.
Accordingly, Despr\'es assumed that the normal derivative of the solution $u$ and of the initial functions $u_i^{(0)}$ is in $L^2$,
cf.~\cite[Lem.~4.3]{Despres:PhD}.
Then, one can guarantee that all further normal derivatives appearing in \eqref{eq:HelmholtzDespresOrigIter}
are in $L^2$ as well (see also \cite[Sect.~2.3]{CollinoGhanemiJoly:2000a}) and that the iterates $(u^{(n)}_i)$ converge.
Such kind of regularity condition can certainly be an obstacle and rises questions about possible discrete counterparts.
Collino, Ghanemi, and Joly \cite{CollinoGhanemiJoly:2000a} were able to relax the regularity condition by modifying the method
in two ways.

(i) The Robin transmission conditions are generalized to
\begin{align}
\label{eq:RobinMij}
  \complexi M_{ij} u_i \pm \tfrac{\partial}{\partial \normal_i} u_i = \complexi M_{ij} u_j \mp \tfrac{\partial}{\partial \normal_j} u_j \qquad \text{on } \Sigma_{ij}\,,
\end{align}
where the \emph{impedance operator}\footnote{In \cite{CollinoGhanemiJoly:2000a} this operator is called \emph{transmission operator}
and denoted by $T_{ij}$.}
$M_{ij}$ is an isomorphism from $H^s(\Sigma_{ij})$ to its dual, where $s \in [0, \tfrac{1}{2})$,
and fulfills a symmetry and coercivity property such that it induces a norm.
Under the condition that the normal derivatives of the solution and of the initial functions $u_i^{(0)}$
are in the dual of $H^s(\Sigma_{ij})$,
the \emph{damped} Schwarz method is well-defined and can be shown to converge.

(ii) Assume that (a) the impedance operator in \eqref{eq:RobinMij} is chosen with $s = \tfrac{1}{2}$
and (b) the subdomain partition has \emph{no junctions} \cite[Eqn.~(29)]{ClaeysParolin:Preprint2020}:
\begin{align}
\label{eq:noJunctions}
  \text{for each } (i, j),\ \Sigma_{ij} \text{ is either empty or a closed manifold of dimension } (d-1),
\end{align}
in the sense that $\Sigma_{ij}$ has no boundary, cf.\ \cite[Sect.~4.2]{CollinoGhanemiJoly:2000a}.
To obtain a well-defined iterative process, the normal derivatives of the initial functions $u_i^{(0)}$ only need to be in $H^{-1/2}(\Sigma_{ij})$,
which is a natural condition and poses no further restriction, see e.g.\ \cite{McLean:Book}.
Under these stronger conditions,
the damped Schwarz method converges geometrically with a convergence rate $< 1$.
The assumption of no junctions, however, is a severe limitation and
has only been overcome recently \cite{Claeys:2021a,ClaeysParolin:Preprint2020}.

\medskip

In the following sections, Despr\'es' and Lions' method is put into a strict functional framework in (finite- or infinite-dimensional)
Hilbert spaces using the variational level rather than the PDE level, which allows treating the case of finite element discretization too.
All involved operations will be displayed precisely, in particular the restriction to a subdomain, the action of the normal derivative,
and the exchange of data across subdomain interfaces.
The framework applies to rather general wave propagation problems and to coercive problems.

\medskip

\noindent
\emph{Notation}:
Given a (real or complex) Banach space $V$, its \emph{dual} $V^*$ is the space
of bounded linear functionals\footnote{In the literature,
  the dual of a complex Banach space is sometimes defined as the space of bounded \emph{anti-linear}
  forms and, correspondingly, sesquilinear forms are used.
	This article features linear and bilinear forms because they better correspond to the matrix-vector setting.}
with the standard dual norm.
The duality pairing is denoted by $\langle \cdot, \cdot \rangle_{V^* \times V}$,
where the subscript is omitted whenever clear from context.
All vector spaces in this paper are assumed to be (real or complex) Hilbert spaces and as such reflexive,
which permits us to simply identify the bidual $(V^*)^*$ with $V$.
However, we will not identify $V^*$ with $V$.
Given a linear operator $B \colon V \to W$, its \emph{transpose} $B^\top \colon W^* \to V^*$
is defined by $\langle B^\top \psi, v \rangle = \langle \psi, B v \rangle$ for $\psi \in W^*$, $v \in V$.
Recall that $B^\top$ is bounded if and only if $B$ is bounded.
When $V$ and $W$ are Euclidean spaces ($\mathbb{R}^n$ or $\mathbb{C}^n$),
$B$ is identified with its matrix representation, and $B^T \colon W \to V$
denotes the transpose matrix (which is, up to possible conjugation, the adjoint with respect to the Euclidean inner products).
The inner product $(\cdot,\cdot)$ of a complex Hilbert space is a sesquilinear form, including conjugation of the second argument.
In $\mathbb{C}^n$, however, we use the expression $\vv \cdot \wv = \sum_{i=1}^n v_i w_i$ (without conjugation).

\subsection{General problem setting}
\label{sect:globalProblem}

Our starting point is the linear operator equation
\begin{align}
\label{eq:global}
  \text{find } \widehat u \in \widehat U \colon \qquad
  \widehat A \widehat u = \widehat f,
\end{align}
where $\widehat U$ is a finite- or infinite-dimensional Hilbert space,
$\widehat A \colon \widehat U \to \widehat U^*$ a bounded linear operator,
and $\widehat f \in \widehat U^*$ a bounded linear functional.

Throughout the paper, the following basic properties are assumed:
\begin{enumerate}
\item[(i)] $\widehat U$ is either Hilbert space over the field of real numbers, or a \emph{complexified} Hilbert space
  of the form $\widehat U = \widehat U_\text{re} + \complexi \widehat U_\text{re}$, where $\widehat U_\text{re}$ is a real Hilbert space
	and where the inner product on $\widehat U_\re$ is extended to one on $\widehat U$.
	In the latter case, $\widehat U$ enjoys complex conjugation.
\item[(ii)] $\ker(\widehat A) = \{ 0 \}$ and $\range(\widehat A) = \widehat U^*$, i.e., Problem~\eqref{eq:global} is well-posed.
\end{enumerate}
The inner product and norm in $\widehat U$ are denoted by $(\cdot,\cdot)_{\widehat U}$ and $\|\cdot\|_{\widehat U}$, respectively.

\medskip

\begin{example}[variational formulation of the Helmholtz equation]
\label{ex:modelProblemHelmholtz}
Consider the following boundary value problem for the Helmholtz equation in strong form,
\begin{align}
\begin{alignedat}{2}
  -\Delta \widehat u - \kappa^2 \widehat u & = g_\Omega, \quad && \text{in } \Omega,\\[-1ex]
	\widehat u & = 0 && \text{on } \Gamma_D\,, \qquad
	\frac{\partial \widehat u}{\partial \normal} = g_N \quad \text{on } \Gamma_N\,, \qquad
	\frac{\partial \widehat u}{\partial \normal} + \complexi \eta \widehat u = g_R \quad \text{on } \Gamma_R\,,
\end{alignedat}
\end{align}
where $\Omega \subset \mathbb{R}^d$ is a bounded Lipschitz domain with its boundary composed of three disjoint parts $\Gamma_D$, $\Gamma_N$, $\Gamma_R$
($\Gamma_D$ and/or $\Gamma_N$ are allowed to be empty).
Let $\widehat U := H^1_D(\Omega)$ denote the subspace of functions in the complex-valued space $H^1(\Omega)$ vanishing on the Dirichlet boundary $\Gamma_D$.
Then the weak formulation reads: find $\widehat u \in \widehat U$ such that
\begin{align}
\label{eq:weakFormHelmholtz}
  \underbrace{
  \int_\Omega \nabla\widehat u \cdot \nabla\widehat v - \kappa^2 \widehat u\, \widehat v \, dx + \complexi \int_{\Gamma_R} \!\! \eta\, \widehat u\, \widehat v \, ds
	}_{\langle \widehat A \widehat u, \widehat v \rangle}
	= \underbrace{ \int_\Omega g_\Omega\, \widehat v \, dx + \int_{\Gamma_N} \!\! g_N\, \widehat v \, ds
	  + \int_{\Gamma_R} \!\! g_R\, \widehat v \, ds }_{\langle \widehat f, \widehat v \rangle}
	\quad \forall \widehat v \in \widehat U.
\end{align}
With the standard assumptions that $g_\Omega \in L^2(\Omega)$, $g_R \in L^2(\Gamma_R)$,
$\kappa \in L^\infty(\Omega)$ and $\kappa > 0$ uniformly,
$\eta \in L^\infty(\Gamma_R)$ and $\eta > 0$ uniformly,
this formulation can be easily cast into the form~\eqref{eq:global}. The energy space $\widehat U$ fulfills the basic property~(i),
and---provided that $\Gamma_R$ has positive surface measure---the system operator $\widehat A$
fulfills the basic property~(ii), cf.\ e.g.\ \cite[Sect.~4.5]{Hiptmair:2015a}.
We can also choose $\widehat U$ as a suitable conforming finite element subspace of $H^1_D(\Omega)$
such that \eqref{eq:weakFormHelmholtz} becomes a Galerkin discretization, while it is still of form~\eqref{eq:global}.
Such a setup will be referred to as a \emph{discrete case} in contrast to the previously described \emph{continuous case}.
For standard choices of finite element space with small enough mesh size, the discrete problem stays well-posed,
see e.g.\ \cite[Sect.~4.7.2]{Hiptmair:2015a}.
After having fixed a finite element basis, the operator equation can be rewritten in matrix form. Note that the resulting equation
is again of form~\eqref{eq:global} where $\widehat A$ is the stiffness matrix, $\widehat f$ the load vector,
and $\widehat U = \mathbb{C}^n$ ($n$ being the dimension of the finite element space).
\end{example}

\subsection{Abstract domain decomposition}
\label{sect:abstractDD}

The theory in this paper works on an abstract level and does not require any \emph{geometric} description of subdomains.
Rather, an \emph{algebraic} description specifies how subdomain operators (or linear functionals)
\emph{assemble} the global operator (or functional, respectively).
Nevertheless, the abstract assumptions will be accompanied by examples involving the geometric setup.
Throughout the paper, the two following definitions will be used extensively.

\begin{definition}
\label{def:abstractDD}
  An \emph{abstract domain decomposition} of $\widehat U$ is described by
\begin{enumerate}
\item[(i)] \emph{Local spaces} $U_i$, $i=1,\ldots,N$, that are assumed to be Hilbert spaces,
  with inner products $(\cdot,\cdot)_{U_i}$ and norms $\| \cdot \|_{U_i}$.
  If $\widehat U$ is finite-dimensional, infinite-dimensional, real, or complexified
  then each space $U_i$ has the corresponding property, respectively.
	We define the associated product space
	\[
	  U := \productspace_{i=1}^N U_i\,,
	\]
	also referred to as the \emph{broken space}. The $i$-th component of $u \in U$ is denoted by $u_i$,
	the inner product is $(u, v)_U := \sum_{i=1}^N (u_i, v_i)_{U_i}$, and the associated norm is denoted by $\| u \|_U$.
\item[(ii)] Bounded linear \emph{restriction} operators $R_i \colon \widehat U \to U_i$, assumed to be real-valued\footnote{Here
  real-valued means that either
  (a) $\widehat U$ and $U_i$ are real Hilbert spaces, or
	(b) $\widehat U$ and $U_i$ are complexified and $R_i$ has the form $R_i (u_\re + \complexi u_\im) = R_{i,\re} u_\re + \complexi R_{i,\re} u_\im$
  for an operator $R_{i,\re}$ acting on the real Hilbert spaces.},
  which altogether define the \emph{collective restriction operator}
	$R \colon \widehat U \to U$,
	\[
	  R \widehat u := (R_i \widehat u)_{i=1}^N\,.
	\]
\end{enumerate}
\end{definition}

\begin{definition}
\label{def:assemblingProp}
	The local bounded linear operators $A_i \colon U_i \to U_i^*$ and local functionals $f_i \in U_i^*$, $i=1,\ldots,N$
  fulfill the \emph{assembling property} iff
  \begin{align}
	\label{eq:assemblingProp}
    \widehat A = \sum_{i=1}^N R_i^\top A_i R_i, \qquad \widehat f = \sum_{i=1}^N R_i^\top f_i\,.
  \end{align}	
\end{definition}

From here on, we assume a given abstract domain decomposition $(U, R)$ as well as the existence of $A_i$, $f_i$, $i=1,\ldots,N$
fulfilling the assembling property.
	For a more compact notation,
	we define the block-diagonal operator $A = \text{diag}(A_i)_{i=1}^N \colon U \to U^*$
	and the linear functional $f = (f_i)_{i=1}^N \in U^*$ acting on the product space,
	such that \eqref{eq:assemblingProp} simply reads $\widehat A = R^\top A R$ and $\widehat f = R^\top f$,
	see also Fig.~\ref{fig:assemblingProp}.

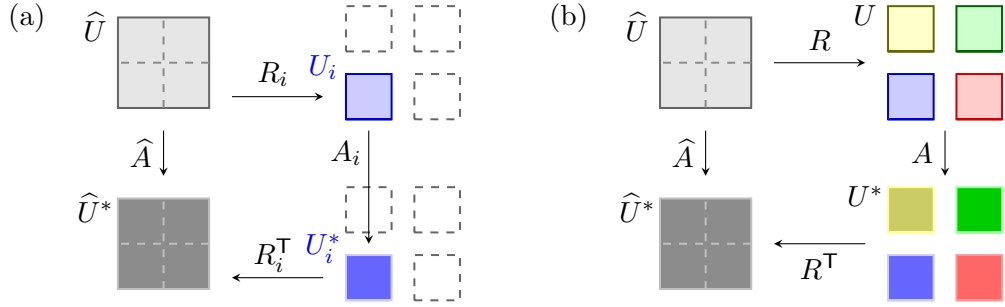
\begin{figure}
\begin{center}
  \begin{tikzpicture}
		\pgftransformscale{0.6}
		
	  \definecolor{dred}{rgb}{0.8, 0.0, 0.0}
		\definecolor{mdred}{rgb}{1.0, 0.4, 0.4}
		\definecolor{lred}{rgb}{1.0, 0.8, 0.8}
		\definecolor{mlred}{rgb}{1.0, 0.6, 0.6}
		
		\definecolor{dgreen}{rgb}{0.0, 0.4, 0.0}
		\definecolor{mdgreen}{rgb}{0.0, 0.8, 0.0}
		\definecolor{lgreen}{rgb}{0.8, 1.0, 0.8}
		\definecolor{mlgreen}{rgb}{0.6, 1.0, 0.6}
		
		\definecolor{dblue}{rgb}{0.0, 0.0, 0.8}
		\definecolor{mdblue}{rgb}{0.4, 0.4, 1.0}
		\definecolor{lblue}{rgb}{0.8, 0.8, 1.0}
		\definecolor{mlblue}{rgb}{0.6, 0.6, 1.0}
		
		\definecolor{dyellow}{rgb}{0.4, 0.4, 0.0}
		\definecolor{mdyellow}{rgb}{0.8, 0.8, 0.4}
		\definecolor{lyellow}{rgb}{1.0, 1.0, 0.8}
		\definecolor{mlyellow}{rgb}{1.0, 1.0, 0.6}
		
		\definecolor{dgrey}{rgb}{0.4, 0.4, 0.4}
		\definecolor{mdgrey}{rgb}{0.55, 0.55, 0.55}
		\definecolor{lgrey}{rgb}{0.9, 0.9, 0.9}
		\definecolor{mlgrey}{rgb}{0.75, 0.75, 0.75}
	
	  \node (A0) at (1,1) {};
	  \node (A1) at (3,1) {};
	  \node (A2) at (3,3) {};
	  \node (A3) at (1,3) {};
		\node (A01) at (2,1) {};
		\node (A12) at (3,2) {};
		\node (A23) at (2,3) {};
		\node (A03) at (1,2) {};
		
		\draw[line width=0.75pt,color=mlgrey,fill=mdgrey] (A0.center)--(A1.center)--(A2.center)--(A3.center)--(A0.center)--(A1.center);
		\draw[line width=0.75pt,color=mlgrey,dashed] (A01.center)--(A23.center);
		\draw[line width=0.75pt,color=mlgrey,dashed] (A12.center)--(A03.center);
		
		\node at (0.5,2.8) {$\widehat U^*$};
		
		\node (B0) at (1,5) {};
	  \node (B1) at (3,5) {};
	  \node (B2) at (3,7) {};
	  \node (B3) at (1,7) {};
		\node (B01) at (2,5) {};
		\node (B12) at (3,6) {};
		\node (B23) at (2,7) {};
		\node (B03) at (1,6) {};
		
		\draw[line width=0.75pt,color=dgrey,fill=lgrey] (B0.center)--(B1.center)--(B2.center)--(B3.center)--(B0.center)--(B1.center);
		\draw[line width=0.75pt,color=mdgrey,dashed] (B01.center)--(B23.center);
		\draw[line width=0.75pt,color=mdgrey,dashed] (B12.center)--(B03.center);
		
		\node at (0.5,6.8) {$\widehat U$};
		
		\draw[->, >=stealth] (2,4.5)--(2,3.5);
		\node at (1.5,4) {$\widehat A$};

		
	  \node (C0) at (6,0.75) {};
	  \node (C1) at (7,0.75) {};
	  \node (C2) at (7,1.75) {};
	  \node (C3) at (6,1.75) {};
		
		\draw[line width=0.75pt,color=lblue,fill=mdblue] (C0.center)--(C1.center)--(C2.center)--(C3.center)--(C0.center)--(C1.center);
		
		\node (D0) at (7.5,0.75) {};
	  \node (D1) at (8.5,0.75) {};
	  \node (D2) at (8.5,1.75) {};
	  \node (D3) at (7.5,1.75) {};
		
		\draw[line width=0.75pt,color=dgrey,dashed] (D0.center)--(D1.center)--(D2.center)--(D3.center)--(D0.center);
		
		\node (E0) at (6,2.25) {};
	  \node (E1) at (7,2.25) {};
	  \node (E2) at (7,3.25) {};
	  \node (E3) at (6,3.25) {};
		
		\draw[line width=0.75pt,color=dgrey,dashed] (E0.center)--(E1.center)--(E2.center)--(E3.center)--(E0.center);
		
		\node (F0) at (7.5,2.25) {};
	  \node (F1) at (8.5,2.25) {};
	  \node (F2) at (8.5,3.25) {};
	  \node (F3) at (7.5,3.25) {};
		
		\draw[line width=0.75pt,color=dgrey,dashed] (F0.center)--(F1.center)--(F2.center)--(F3.center)--(F0.center);
		
		
	  \node (G0) at (6,4.75) {};
	  \node (G1) at (7,4.75) {};
	  \node (G2) at (7,5.75) {};
	  \node (G3) at (6,5.75) {};
		
		\draw[line width=0.75pt,color=dblue,fill=lblue] (G0.center)--(G1.center)--(G2.center)--(G3.center)--(G0.center)--(G1.center);
		
		\node (H0) at (7.5,4.75) {};
	  \node (H1) at (8.5,4.75) {};
	  \node (H2) at (8.5,5.75) {};
	  \node (H3) at (7.5,5.75) {};
		
		\draw[line width=0.75pt,color=dgrey,dashed] (H0.center)--(H1.center)--(H2.center)--(H3.center)--(H0.center);
		
		\node (I0) at (6,6.25) {};
	  \node (I1) at (7,6.25) {};
	  \node (I2) at (7,7.25) {};
	  \node (I3) at (6,7.25) {};
		
		\draw[line width=0.75pt,color=dgrey,dashed] (I0.center)--(I1.center)--(I2.center)--(I3.center)--(I0.center);
		
		\node (J0) at (7.5,6.25) {};
	  \node (J1) at (8.5,6.25) {};
	  \node (J2) at (8.5,7.25) {};
	  \node (J3) at (7.5,7.25) {};
		
		\draw[line width=0.75pt,color=dgrey,dashed] (J0.center)--(J1.center)--(J2.center)--(J3.center)--(J0.center);
		
		
		\node at (5.5,5.9) {$\darkblue{U_i}$};
		\node at (5.5,1.9) {$\darkblue{U_i^*}$};
		
		\draw[->, >=stealth] (6.5,4.5)--(6.5,2);
		\node at (6.0,4) {$A_i$};
		
		\draw[->, >=stealth] (3.5,5.25)--(5.5,5.25);
		\node at (4.4,5.75) {$R_i$};
		
		\draw[<-, >=stealth] (3.5,1.25)--(5.5,1.25);
		\node at (4.4,1.75) {$R_i^\top$};

		\node at (-1,7) {(a)};
		
	\end{tikzpicture}
	\hspace{4ex}
  \begin{tikzpicture}
		\pgftransformscale{0.6}
		
	  \definecolor{dred}{rgb}{0.8, 0.0, 0.0}
		\definecolor{mdred}{rgb}{1.0, 0.4, 0.4}
		\definecolor{lred}{rgb}{1.0, 0.8, 0.8}
		\definecolor{mlred}{rgb}{1.0, 0.6, 0.6}
		
		\definecolor{dgreen}{rgb}{0.0, 0.4, 0.0}
		\definecolor{mdgreen}{rgb}{0.0, 0.8, 0.0}
		\definecolor{lgreen}{rgb}{0.8, 1.0, 0.8}
		\definecolor{mlgreen}{rgb}{0.6, 1.0, 0.6}
		
		\definecolor{dblue}{rgb}{0.0, 0.0, 0.8}
		\definecolor{mdblue}{rgb}{0.4, 0.4, 1.0}
		\definecolor{lblue}{rgb}{0.8, 0.8, 1.0}
		\definecolor{mlblue}{rgb}{0.6, 0.6, 1.0}
		
		\definecolor{dyellow}{rgb}{0.4, 0.4, 0.0}
		\definecolor{mdyellow}{rgb}{0.8, 0.8, 0.4}
		\definecolor{lyellow}{rgb}{1.0, 1.0, 0.8}
		\definecolor{mlyellow}{rgb}{1.0, 1.0, 0.6}
		
		\definecolor{dgrey}{rgb}{0.4, 0.4, 0.4}
		\definecolor{mdgrey}{rgb}{0.55, 0.55, 0.55}
		\definecolor{lgrey}{rgb}{0.9, 0.9, 0.9}
		\definecolor{mlgrey}{rgb}{0.75, 0.75, 0.75}
	
	  \node (A0) at (1,1) {};
	  \node (A1) at (3,1) {};
	  \node (A2) at (3,3) {};
	  \node (A3) at (1,3) {};
		\node (A01) at (2,1) {};
		\node (A12) at (3,2) {};
		\node (A23) at (2,3) {};
		\node (A03) at (1,2) {};
		
		\draw[line width=0.75pt,color=mlgrey,fill=mdgrey] (A0.center)--(A1.center)--(A2.center)--(A3.center)--(A0.center)--(A1.center);
		\draw[line width=0.75pt,color=mlgrey,dashed] (A01.center)--(A23.center);
		\draw[line width=0.75pt,color=mlgrey,dashed] (A12.center)--(A03.center);
		
		\node at (0.5,2.8) {$\widehat U^*$};
		
		\node (B0) at (1,5) {};
	  \node (B1) at (3,5) {};
	  \node (B2) at (3,7) {};
	  \node (B3) at (1,7) {};
		\node (B01) at (2,5) {};
		\node (B12) at (3,6) {};
		\node (B23) at (2,7) {};
		\node (B03) at (1,6) {};
		
		\draw[line width=0.75pt,color=dgrey,fill=lgrey] (B0.center)--(B1.center)--(B2.center)--(B3.center)--(B0.center)--(B1.center);
		\draw[line width=0.75pt,color=mdgrey,dashed] (B01.center)--(B23.center);
		\draw[line width=0.75pt,color=mdgrey,dashed] (B12.center)--(B03.center);
		
		\node at (0.5,6.8) {$\widehat U$};
		
		\draw[->, >=stealth] (2,4.5)--(2,3.5);
		\node at (1.5,4) {$\widehat A$};

		
	  \node (C0) at (6,0.75) {};
	  \node (C1) at (7,0.75) {};
	  \node (C2) at (7,1.75) {};
	  \node (C3) at (6,1.75) {};
		
		\draw[line width=0.75pt,color=lblue,fill=mdblue] (C0.center)--(C1.center)--(C2.center)--(C3.center)--(C0.center)--(C1.center);
		
		\node (D0) at (7.5,0.75) {};
	  \node (D1) at (8.5,0.75) {};
	  \node (D2) at (8.5,1.75) {};
	  \node (D3) at (7.5,1.75) {};
		
		\draw[line width=0.75pt,color=mlred,fill=mdred] (D0.center)--(D1.center)--(D2.center)--(D3.center)--(D0.center)--(D1.center);
		
		\node (E0) at (6,2.25) {};
	  \node (E1) at (7,2.25) {};
	  \node (E2) at (7,3.25) {};
	  \node (E3) at (6,3.25) {};
		
		\draw[line width=0.75pt,color=mlyellow,fill=mdyellow] (E0.center)--(E1.center)--(E2.center)--(E3.center)--(E0.center)--(E1.center);
		
		\node (F0) at (7.5,2.25) {};
	  \node (F1) at (8.5,2.25) {};
	  \node (F2) at (8.5,3.25) {};
	  \node (F3) at (7.5,3.25) {};
		
		\draw[line width=0.75pt,color=mlgreen,fill=mdgreen] (F0.center)--(F1.center)--(F2.center)--(F3.center)--(F0.center)--(F1.center);
		
		
	  \node (G0) at (6,4.75) {};
	  \node (G1) at (7,4.75) {};
	  \node (G2) at (7,5.75) {};
	  \node (G3) at (6,5.75) {};
		
		\draw[line width=0.75pt,color=dblue,fill=lblue] (G0.center)--(G1.center)--(G2.center)--(G3.center)--(G0.center)--(G1.center);
		
		\node (H0) at (7.5,4.75) {};
	  \node (H1) at (8.5,4.75) {};
	  \node (H2) at (8.5,5.75) {};
	  \node (H3) at (7.5,5.75) {};
		
		\draw[line width=0.75pt,color=dred,fill=lred] (H0.center)--(H1.center)--(H2.center)--(H3.center)--(H0.center)--(H1.center);
		
		\node (I0) at (6,6.25) {};
	  \node (I1) at (7,6.25) {};
	  \node (I2) at (7,7.25) {};
	  \node (I3) at (6,7.25) {};
		
		\draw[line width=0.75pt,color=dyellow,fill=lyellow] (I0.center)--(I1.center)--(I2.center)--(I3.center)--(I0.center)--(I1.center);
		
		\node (J0) at (7.5,6.25) {};
	  \node (J1) at (8.5,6.25) {};
	  \node (J2) at (8.5,7.25) {};
	  \node (J3) at (7.5,7.25) {};
		
		\draw[line width=0.75pt,color=dgreen,fill=lgreen] (J0.center)--(J1.center)--(J2.center)--(J3.center)--(J0.center)--(J1.center);
		
		
		\node at (5.5,7.05) {$U$};
		\node at (5.5,3.05) {$U^*$};
		
		\draw[->, >=stealth] (7.25,4.5)--(7.25,3.5);
		\node at (6.75,4) {$A$};
		
		\draw[->, >=stealth] (3.5,6)--(5.5,6);
		\node at (4.5,6.5) {$R$};
		
		\draw[<-, >=stealth] (3.5,2)--(5.5,2);
		\node at (4.5,1.5) {$R^\top$};

		\node at (-1,7) {(b)};
		
	\end{tikzpicture}
	\caption{\label{fig:assemblingProp}%
	  (a) Illustration of the local space $U_i$.
		(b) Illustration of the broken space $U$ and of the assembling property $\widehat A = R^\top A R$.
	}
\end{center}
\end{figure}

\begin{example}[assembling property]
\label{ex:modelHelmholtzDD}
  For the Helmholtz formulation of Example~\ref{ex:modelProblemHelmholtz},
	let $\{ \Omega_i \}_{i=1}^N$ be a non-overlapping decomposition of $\Omega$ into Lipschitz subdomains $\Omega_i$ such that
	$\bigcup_{i=1}^N \overline\Omega_i = \overline\Omega$ and $\Omega_i \cap \Omega_j = \emptyset$ for $i \neq j$.
	In the continuous case,
	the \emph{local space} $U_i$ is chosen (by default) as $U_i = H^1(\Omega_i)$ if $\meas_{d-1}(\partial\Omega_i \cap \Gamma_D) = 0$
	and $U_i = H^1_D(\Omega_i) = \{ v \in H^1(\Omega_i) \colon v_{|\Gamma_D} = 0 \}$ otherwise.
	The \emph{restriction operator} $R_i$ simply restricts a function in $H^1_D(\Omega)$ to the subdomain $\Omega_i$.
	The definition of the local operators $A_i$ and linear functionals $f_i$ follows that of $\widehat A$ and $\widehat f$,
	replacing $\Omega$ by $\Omega_i$, $\Gamma_R$ by $\Gamma_R \cap \partial\Omega_i$, etc.\
	(see also Table~\ref{tab:variationalExamples} for an example with $g_\Omega = 0$, $g_N = 0$),
	and they altogether fulfill the assembling property (Def.~\ref{def:assemblingProp}).
	If $\widehat U$ is a finite element subspace of $H^1_D(\Omega)$ based on a mesh that resolves the subdomain decomposition,
	then we define $U_i$ as the restriction of $\widehat U$ to the elements of $\Omega_i$. The assembling property holds again.
\end{example}

\begin{remark}
\label{rem:localDirichlet}
  If Dirichlet conditions are present in the original space	then they need not necessarily be inherited in the local spaces.
	Suppose for Example~\ref{ex:modelProblemHelmholtz} that we have a subdomain
	$\Omega_i$ with $\meas_{d-1}(\partial\Omega \cap \Gamma_D) > 0$.
	Then---in contrast to the default choice $U_i = H^1_D(\Omega_i)$---we	are allowed to use $U_i = H^1(\Omega_i)$,
	see Fig.~\ref{fig:detachDirichlet}.
	However, in the latter case, $\range(R_i) \subsetneq U_i$.
\end{remark}

\begin{figure}
\begin{center}
  \begin{tikzpicture}
		\pgftransformscale{0.6}
		
	  \definecolor{dred}{rgb}{0.8, 0.0, 0.0}
		\definecolor{mdred}{rgb}{1.0, 0.4, 0.4}
		\definecolor{lred}{rgb}{1.0, 0.8, 0.8}
		\definecolor{mlred}{rgb}{1.0, 0.6, 0.6}
		
		\definecolor{dgreen}{rgb}{0.0, 0.4, 0.0}
		\definecolor{mdgreen}{rgb}{0.0, 0.8, 0.0}
		\definecolor{lgreen}{rgb}{0.8, 1.0, 0.8}
		\definecolor{mlgreen}{rgb}{0.6, 1.0, 0.6}
		
		\definecolor{dblue}{rgb}{0.0, 0.0, 0.8}
		\definecolor{mdblue}{rgb}{0.4, 0.4, 1.0}
		\definecolor{lblue}{rgb}{0.8, 0.8, 1.0}
		\definecolor{mlblue}{rgb}{0.6, 0.6, 1.0}
		
		\definecolor{dyellow}{rgb}{0.4, 0.4, 0.0}
		\definecolor{mdyellow}{rgb}{0.8, 0.8, 0.4}
		\definecolor{lyellow}{rgb}{1.0, 1.0, 0.8}
		\definecolor{mlyellow}{rgb}{1.0, 1.0, 0.6}
		
		\definecolor{dgrey}{rgb}{0.4, 0.4, 0.4}
		\definecolor{mdgrey}{rgb}{0.55, 0.55, 0.55}
		\definecolor{lgrey}{rgb}{0.9, 0.9, 0.9}
		\definecolor{mlgrey}{rgb}{0.75, 0.75, 0.75}
	
	  \node (A0) at (1,1) {};
	  \node (A1) at (3,1) {};
	  \node (A2) at (3,3) {};
	  \node (A3) at (1,3) {};
		\node (A01) at (2,1) {};
		\node (A12) at (3,2) {};
		\node (A23) at (2,3) {};
		\node (A03) at (1,2) {};
		
		\draw[line width=0.75pt,color=dgrey,fill=lgrey] (A0.center)--(A1.center)--(A2.center)--(A3.center)--(A0.center)--(A1.center);
		\draw[line width=0.75pt,color=dgrey,dashed] (A01.center)--(A23.center);
		\draw[line width=0.75pt,color=dgrey,dashed] (A12.center)--(A03.center);
		
		\foreach \x in {0,0.2,...,2}
		{
		  \draw[line width=0.75pt,color=dgrey] (1 + \x, 1)--(1 + \x - 0.2, 1 - 0.2);
		}


		\node at (0.5,2.8) {$\widehat U$};
		
		
	  \node (C0) at (6,0.75) {};
	  \node (C1) at (7,0.75) {};
	  \node (C2) at (7,1.75) {};
	  \node (C3) at (6,1.75) {};
		
		\draw[line width=0.75pt,color=dblue,fill=lblue] (C0.center)--(C1.center)--(C2.center)--(C3.center)--(C0.center)--(C1.center);
		
		\node (D0) at (7.5,0.75) {};
	  \node (D1) at (8.5,0.75) {};
	  \node (D2) at (8.5,1.75) {};
	  \node (D3) at (7.5,1.75) {};
		
		\draw[line width=0.75pt,color=dred,fill=lred] (D0.center)--(D1.center)--(D2.center)--(D3.center)--(D0.center)--(D1.center);
		
		\node (E0) at (6,2.25) {};
	  \node (E1) at (7,2.25) {};
	  \node (E2) at (7,3.25) {};
	  \node (E3) at (6,3.25) {};
		
		\draw[line width=0.75pt,color=dyellow,fill=lyellow] (E0.center)--(E1.center)--(E2.center)--(E3.center)--(E0.center)--(E1.center);
		
		\node (F0) at (7.5,2.25) {};
	  \node (F1) at (8.5,2.25) {};
	  \node (F2) at (8.5,3.25) {};
	  \node (F3) at (7.5,3.25) {};
		
		\draw[line width=0.75pt,color=dgreen,fill=lgreen] (F0.center)--(F1.center)--(F2.center)--(F3.center)--(F0.center)--(F1.center);
		
		
		\node at (5.5,3.05) {$U$};
		
		\draw[->, >=stealth] (3.5,2)--(5.5,2);
		\node at (4.5,2.5) {$R$};

		\foreach \x in {0,0.2,...,2}
		{
		  \draw[line width=0.75pt,color=dgrey] (6.25 + \x, 0.25)--(6.25 + \x - 0.2, 0.25 - 0.2);
		}
		\draw[line width=0.75pt,color=dgrey] (6.25, 0.25)--(8.25, 0.25);

	\end{tikzpicture}
\end{center}
\caption{\label{fig:detachDirichlet}%
   Illustration of detaching Dirichlet boundary (diagonal hatching): none of the local spaces has an inbuilt Dirichlet condition,
	 see Remark~\ref{rem:localDirichlet}.
}
\end{figure}
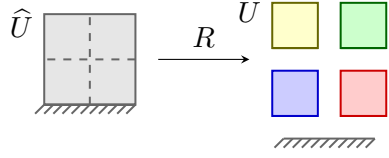

Our next assumption is on the collective restriction operator.

\medskip

\noindent\fbox{%
\begin{minipage}[c]{0.985\textwidth}
\begin{assumption}
\label{ass:A1}
{\qquad}
\begin{itemize}
\item $\ker(R) = \{ 0 \}$ \emph{(coverage property),}
\item $\range(R)$ is closed \emph{(reconstruction property).}
\end{itemize}
\end{assumption}
\end{minipage}}

\medskip

  The coverage property is equivalent to
  \[
    \big( \forall i=1,\ldots,N \colon R_i \widehat v = 0 \big) \implies \widehat v = 0,
  \]
  and is fulfilled for the typical examples as long as $\bigcup_{i=1}^N \overline\Omega_i = \overline\Omega$.
  The reconstruction property implies that $R$ has a unique bounded right inverse defined on $\range(R)$,
  so given the restrictions $R_i \widehat u$, one can always reconstruct the original function $\widehat u$.
  In other words, $R \colon \widehat U \to \range(R)$ is an isomorphism.
	In particular, $\| R \widehat v \|_U$ is an equivalent norm to $\| \widehat v \|_{\widehat U}$;
  in the typical cases, one even has $(\widehat u, \widehat v)_{\widehat U} = \sum_{i=1}^N (R_i \widehat u, R_i \widehat v)_{U_i} = (R \widehat u, R \widehat v)_U$.

\begin{example}[reconstruction property]
\label{ex:HelmholtzReconstruction}
  Let us continue with Example~\ref{ex:modelHelmholtzDD}. Any function $\widehat u \in \widehat U = H^1_D(\Omega)$
	fulfills
	\begin{align}
	\label{eq:nablaGrad}
	  \sum_{i=1}^N \int_{\Omega_i} \nabla u_i \cdot \varphiv\, dx = - \sum_{i=1}^N \int_{\Omega_i} u_i (\opdiv \varphiv)\, dx \qquad \forall \varphiv \in C^\infty_0(\Omega)^d,
	\end{align}
	where $u_i := R_i \widehat u = \widehat u_{|\Omega_i}$. Conversely, given a \emph{broken} function $u \in U = \productspace_{i=1}^N H^1_D(\Omega_i)$,
	we can form the \emph{patchwork} function $\widehat u$ in $L^2(\Omega)$ by the piecewise definition $\widehat u_{|\Omega_i} = u_i$ for $i=1,\ldots,N$.
	Apparently, $\widehat u$ is in $H^1(\Omega)$ if and only if
	\eqref{eq:nablaGrad} holds. Finally, since $u_{i|\Gamma_D} = 0$ for each $i$, it follows that $\widehat u_{|\Gamma_D} = 0$, so $\widehat u \in \widehat U$.
	To summarize, $u \in \range(R)$ if and only if \eqref{eq:nablaGrad} holds. All expressions in \eqref{eq:nablaGrad} are continuous w.r.t.\ to $u \in U$,
	and so $\range(R)$ must be closed. The analogous statement holds for the choice $U_i = H^1(\Omega_i)$, see Remark~\ref{rem:localDirichlet},
	only that the local Dirichlet boundary condition has to be added to \eqref{eq:nablaGrad}.
\end{example}

The assumptions made so far hold for a large variety of variational problems, such as the Helmholtz equation and the time-harmonic Maxwell equations,
see Table~\ref{tab:variationalExamples}, as well as for strongly coercive (``positive definite'') problems in the stated function spaces
(e.g., formed by replacing $A_i$ in Table~\ref{tab:variationalExamples} by $A_{i,0} + A_{i,1} + A_{i,2}$).
They also hold for the Galerkin discretization of these problems by standard finite elements.

\begin{table}
\begin{center}
  \begin{tabular}{c||c|c|c}
	            & Helmholtz & Maxwell & Helmholtz dual \\
	\hline
	  \rule{0pt}{3ex}
	  $A_{i,0}$
		  & $\int_{\Omega_i} \nabla u \cdot \nabla v \, dx$
		  & $\int_{\Omega_i} \mu^{-1} \opcurl \uv \cdot \opcurl \vv \, dx$
		  & $\int_{\Omega_i} \kappa^{-2} \opdiv \uv\, \opdiv \vv \, dx$ \\
		\rule{0pt}{3ex}
		$+\complexi A_{i,1}$
		  & $+\complexi \int_{\partial\Omega_i \cap \Gamma_R} \eta\,u\,v\, ds$
		  & $+\complexi\omega \big[ \int_{\Omega_i} \sigma \uv \cdot \vv \, dx + \int_{\partial\Omega_i \cap \Gamma_R} \eta\,\uv_\tau \cdot \vv_\tau \, ds \big]$
		  & $+\complexi \int_{\partial\Omega_i \cap \Gamma_R} \eta^{-1} \uv_n \vv_n \, ds$ \\
		\rule{0pt}{3ex}
		$- A_{i,2}$
		  & $-\int_{\Omega_i} \kappa^2 u\,v\, dx$
			& $-\omega^2 \int_{\Omega_i} \varepsilon \uv \cdot \vv \, dx$
			& $-\int_{\Omega_i} \uv \cdot \vv \, dx$ \\[0.5em]
		\hline
		\rule{0pt}{3ex}
		$f_i$
		  & $\int_{\partial\Omega_i \cap \Gamma_R} g_R\, v \, ds$
			& $-\complexi\omega \int_{\partial\Omega_i \cap \Gamma_R} \jv_S \cdot \vv_\tau \, ds$
			& $\int_{\partial\Omega_i \cap \Gamma_R} \eta^{-1} g_R \vv_n \, ds$ \\[0.5em]
  \end{tabular}
	\vspace{0.7ex}
	\caption{\label{tab:variationalExamples}%
	  Examples of variational formulations with $A_i = A_{i,0} + \complexi A_{i,1} - A_{i,2}$
		with operators $A_{i,k}$ corresponding to bilinear forms $\langle A_{i,k} \cdot, \cdot \rangle$ as indicated.
		Above, $\Omega_i$ is the $i$-th subdomain, $\Gamma_R$ the Robin boundary, $\kappa$ the wave number, $\omega$ the angular frequency.
		The subdomain energy space $U_i$ is a suitable subspace of $H^1(\Omega_i)$, $\Hv(\opcurl,\Omega_i)$, $\Hv(\opdiv,\Omega_i)$,
		and $\uv_\tau$ denotes the tangential trace and $\uv_n$ the (scalar) normal trace,
		such that $\uv = \uv_\tau + \uv_n \nv$ for smooth functions.
		}
\end{center}
\end{table}

\begin{remark}
	Definitions~\ref{def:abstractDD}, \ref{def:assemblingProp} and Assumption~\ref{ass:A1}
	generalize the assumptions that are usually
	made in BDDC methods, cf.\ \cite{MandelDohrmannTezaur:2005a,PechsteinDohrmann:2017a}.
  Note also that Definition~\ref{def:abstractDD}
	is similar to but different from the assumptions made in the classical abstract Schwarz theory,
	see \cite[Sect.~2.2]{ToselliWidlund:Book}. In that theory, \emph{prolongation/extension} operators $E_i \colon U_i \to \widehat U$
	are needed in the first place and their transposed operators $E_i^\top \colon \widehat U^* \to U_i^*$
	\emph{restrict} dual quantities.
	In the present theory, we need the \emph{restriction} operators $R_i \colon \widehat U \to U_i$ in the first place
	and \emph{assemble} dual quantities using the transposes $R_i^\top \colon U_i^* \to \widehat U^*$.
	According to \cite[(2.3)]{ToselliWidlund:Book}, the extension operators from the Schwarz theory 
  must fulfill the coverage property $\sum_{i=1}^N E_i(U_i) = \widehat U$, which is quite different from Assumption~\ref{ass:A1}.
\end{remark}

\subsubsection{Discrete case -- matrix notation}
\label{sect:discreteMatrixNotation}

In the discrete case, we can identify all operators with their associated matrices without changing the notation.
Then $\widehat A$ is the global stiffness matrix, $A_i$ the subdomain stiffness matrix,
$\widehat f$ the global load vector, and $f_i$ the subdomain load vector.
In most situations, each local degree of freedom (dof) of a subdomain $i$ can be associated with a global dof.
Suppose that $\widehat U = \mathbb{R}^n$ or $\mathbb{C}^n$ and that for each $i=1,\ldots,N$,
the local space of subdomain dofs is given by $U_i = \mathbb{R}^{n_i}$ or $\mathbb{C}^{n_i}$,
respectively.
Then $R_i$ must select the local dofs of subdomain $i$ out of the global dofs,
i.e., $R_i \colon \widehat U \to U_i$ is an \emph{incidence matrix} of the form
\begin{align}
\label{eq:RiIncidence}
  (R_i)_{\ell k} = \begin{cases} 1 & \text{if } \mathsf{g}_i(\ell) = k \\ 0 & \text{otherwise,} \end{cases}
\end{align}
where $\mathsf{g}_{i} \colon \{ 1,\ldots,n_i \} \to \{ 1,\ldots,n \}$ is an injective mapping, the \emph{local-to-global} mapping.
From these properties, one derives that
\begin{align}
\label{eq:RiRiTId}
  R_i R_i^T = I = \diag(1)_{\ell=1}^{n_i}, \qquad R_i^T R_i = \diag(\mu_k^{(i)})_{k=1}^n,
\end{align}
where $\mu_k^{(i)} \in \{0, 1\}$ indicates whether the global dof $k$ is shared by subdomain $i$ or not.
Based on that, we can define for each global dof $k=1,\ldots,n$ its \emph{multiplicity} $\mu_k := \sum_{i=1}^N \mu_k^{(i)}$
and the \emph{set of sharing subdomains} $\mathcal{N}_k := \{ i = 1,\ldots, N \colon \mu_k^{(i)} = 1 \}$.
The coverage property from Assumption~\ref{ass:A1} holds if and only if the minimal multiplicity is $\ge 1$.
Regarding the \emph{maximal dof multiplicity} $\mu_{\max} := \max_{k=1,\ldots,n} \mu_k$,
we distinguish three cases:
\begin{itemize}
\item $\mu_{\max} = 1$, a degenerate case (either one subdomain or no coupling between subdomains),
\item $\mu_{\max} = 2$, a special case, where some formulations below turn out to be \emph{non-redundant},
\item $\mu_{\max} > 2$, the general case, where some formulations below involve redundancy.
\end{itemize}
The last case occurs typically (but not necessarily) when the \emph{geometric} domain decomposition has \emph{cross points}.
A cross point is a \emph{geometric} point in $\overline\Omega$
that lies on at least three subdomains boundaries, cf.\ \cite{GanderSantugini:2016a}.
In \cite{GanderKwok:2012a}, a \emph{dof} $k$ with $\mu_k > 2$ is called cross point as well.
Note also that the notions of \emph{cross points} and \emph{junctions} (cf.\ \eqref{eq:noJunctions}) are slightly different, see also Fig.~\ref{fig:junctionCrossPoint}.
Decompositions without cross points are sometimes called \emph{1D} or \emph{one-way decompositions}, those without junctions are also called \emph{onion-like}.

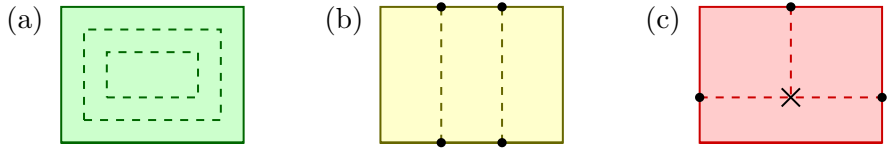
\begin{figure}
\begin{center}
	\begin{tikzpicture}
		\pgftransformscale{0.6}
		
	  \definecolor{dred}{rgb}{0.8, 0.0, 0.0}
		\definecolor{mdred}{rgb}{1.0, 0.4, 0.4}
		\definecolor{lred}{rgb}{1.0, 0.8, 0.8}
		\definecolor{mlred}{rgb}{1.0, 0.6, 0.6}
		
		\definecolor{dgreen}{rgb}{0.0, 0.4, 0.0}
		\definecolor{mdgreen}{rgb}{0.0, 0.8, 0.0}
		\definecolor{lgreen}{rgb}{0.8, 1.0, 0.8}
		\definecolor{mlgreen}{rgb}{0.6, 1.0, 0.6}
		
		\definecolor{dblue}{rgb}{0.0, 0.0, 0.8}
		\definecolor{mdblue}{rgb}{0.4, 0.4, 1.0}
		\definecolor{lblue}{rgb}{0.8, 0.8, 1.0}
		\definecolor{mlblue}{rgb}{0.6, 0.6, 1.0}
		
		\definecolor{dyellow}{rgb}{0.4, 0.4, 0.0}
		\definecolor{mdyellow}{rgb}{0.8, 0.8, 0.4}
		\definecolor{lyellow}{rgb}{1.0, 1.0, 0.8}
		\definecolor{mlyellow}{rgb}{1.0, 1.0, 0.6}

		\node (A0) at (0,0) {};
	  \node (A1) at (4,0) {};
	  \node (A2) at (4,3) {};
	  \node (A3) at (0,3) {};
		\node (Am0) at (0.5,0.5) {};
		\node (Am1) at (3.5,0.5) {};
		\node (Am2) at (3.5,2.5) {};
		\node (Am3) at (0.5,2.5) {};
		\node (Ai0) at (1,1) {};
		\node (Ai1) at (3,1) {};
		\node (Ai2) at (3,2) {};
		\node (Ai3) at (1,2) {};
		
		\draw[line width=0.75pt,color=dgreen,fill=lgreen] (A0.center)--(A1.center)--(A2.center)--(A3.center)--(A0.center)--(A1.center);
		\draw[line width=0.75pt,color=dgreen,dashed] (Am0.center)--(Am1.center)--(Am2.center)--(Am3.center)--(Am0.center);
		\draw[line width=0.75pt,color=dgreen,dashed] (Ai0.center)--(Ai1.center)--(Ai2.center)--(Ai3.center)--(Ai0.center);
		
		\node at (-0.8, 2.7) {(a)};
		
		
		\node (B0) at (7,0) {};
	  \node (B1) at (11,0) {};
	  \node (B2) at (11,3) {};
	  \node (B3) at (7,3) {};
		\node (Bl0) at (8.3333,0) {};
		\node (Bl3) at (8.3333,3) {};
		\node (Br0) at (9.6667,0) {};
		\node (Br3) at (9.6667,3) {};
		
		\draw[line width=0.75pt,color=dyellow,fill=lyellow] (B0.center)--(B1.center)--(B2.center)--(B3.center)--(B0.center)--(B1.center);
		\draw[line width=0.75pt,color=dyellow,dashed] (Bl0.center)--(Bl3.center);
		\draw[line width=0.75pt,color=dyellow,dashed] (Br0.center)--(Br3.center);
		
		\draw[line width=0.74pt,color=black,fill=black] (Bl0.center) circle (0.08);
		\draw[line width=0.74pt,color=black,fill=black] (Bl3.center) circle (0.08);
		\draw[line width=0.74pt,color=black,fill=black] (Br0.center) circle (0.08);
		\draw[line width=0.74pt,color=black,fill=black] (Br3.center) circle (0.08);
		
		\node at (6.2, 2.7) {(b)};
		
		
		\node (C0) at (14,0) {};
	  \node (C1) at (18,0) {};
	  \node (C2) at (18,3) {};
	  \node (C3) at (14,3) {};
		\node (Cl01) at (14,1) {};
		\node (Cr01) at (18,1) {};
		\node (Cm23) at (16,3) {};
		\node (Cm) at (16,1) {};
		
		\draw[line width=0.75pt,color=dred,fill=lred] (C0.center)--(C1.center)--(C2.center)--(C3.center)--(C0.center)--(C1.center);
		\draw[line width=0.75pt,color=dred,dashed] (Cl01.center)--(Cm.center)--(Cr01.center);
		\draw[line width=0.75pt,color=dred,dashed] (Cm23.center)--(Cm.center);
		
		\draw[line width=0.74pt,color=black,fill=black] (Cl01.center) circle (0.08);
		\draw[line width=0.74pt,color=black,fill=black] (Cr01.center) circle (0.08);
		\draw[line width=0.74pt,color=black,fill=black] (Cm23.center) circle (0.08);

		\draw[line width=0.74pt,color=black,fill=black] (15.8,0.8)--(16.2,1.2);
		\draw[line width=0.74pt,color=black,fill=black] (15.8,1.2)--(16.2,0.8);
	
		\node at (13.2, 2.7) {(c)};
		
	\end{tikzpicture}	  
\end{center}
  \caption{\label{fig:junctionCrossPoint}%
	  Simple illustration of junctions, cross points, and dof multiplicity.
		(a)~No junctions, no cross points, $\mu_{\max} = 2$.
		(b)~Junctions ($\bullet$), but no cross points, $\mu_{\max} = 2$.
		(c)~Cross point ($\times$); for nodal discretization $\mu_{\max} > 2$, for Raviart-Thomas discretization $\mu_{\max} = 2$ still (see Example~\ref{ex:swappingRaviartThomas}).
  }
\end{figure}

\begin{remark}
\label{rem:TFETI}
  There also cases where $R_i$ is a zero-one matrix with at most one entry of $1$ per column and per row,
	but possibly with zero rows for \emph{phantom dofs}. Then
	\begin{align}
	  R_i R_i^T = \diag(\rho_\ell^{(i)})_{\ell=1}^{n_i}\,, \qquad R_i^T R_i = \diag(\mu_k^{(i)})_{k=1}^n\,,
	\end{align}
	where $\rho_\ell^{(i)} \in \{ 0, 1 \}$ indicates whether dof $\ell$ corresponds to a global dof or is a phantom
	\cite[Remark~2.3]{PechsteinDohrmann:2017a}.
	Such phantom dofs are used in the TFETI method \cite{DostalHorakKucera:2006a} and the all-floating BETI method
	\cite{Of:PhD,OfSteinbach:2009a}
	to detach Dirichlet boundary conditions from the local systems, see also \cite{Pechstein:FETIBook}.
	If $\rho_\ell^{(i)} = 0$ for some $\ell$, then $\range(R_i) \subsetneq U_i$, see also Remark~\ref{rem:localDirichlet}
	and Fig.~\ref{fig:detachDirichlet}.
\end{remark}

\subsection{Subdomain flux formulation}

Using the assembling property~\eqref{eq:assemblingProp}, problem~\eqref{eq:global} can be rewritten as
\[
  R^\top(A R \widehat u - f) = 0.
\]
Introducing the new variables $u = R \widehat u$ and $t = A u - f$ yields the following formulation.

\medskip

\noindent\fbox{\parbox{\textwidth}{
\emph{Subdomain flux formulation:}
\begin{align}
\label{eq:subdflux}
  \text{find } (u, t) \in \range(R) \times \ker(R^\top) \colon \qquad A u - t & = f.
\end{align}}}

\medskip

Note that the equation on the right is equivalent to $A_i u_i - t_i = f_i$ for all $i=1,\ldots,N$,
so we have one individual equation for each subdomain,
while the coupling between the subdomains is expressed through the (closed) spaces $\range(R)$ and $\ker(R^\top)$.
The following lemma clarifies the relation between the original problem~\eqref{eq:global} and formulation~\eqref{eq:subdflux}.

\begin{lemma}
\label{lem:subdflux}
  Let Assumption~\ref{ass:A1} hold. Then
	\begin{enumerate}
	\item[(i)] If $\widehat u$ is a solution of \eqref{eq:global} then $(R \widehat u, A R \widehat u - f)$ is a solution of \eqref{eq:subdflux}.
	\item[(ii)] If $(u, t)$ is a solution of \eqref{eq:subdflux} then there exists $\widehat u \in \widehat U$ such that $\widehat u$ solves \eqref{eq:global}
	  and $u = R \widehat u$ and $t = A R \widehat u - f$.
	\item[(iii)] The solution of \eqref{eq:subdflux} is unique.
	\item[(iv)] There exists a bounded linear solution operator $\mathcal{S} \colon f \mapsto (u, t)$ for \eqref{eq:subdflux}.
	\end{enumerate}
\end{lemma}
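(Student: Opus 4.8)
The plan is to use the assembling property $\widehat A = R^\top A R$, $\widehat f = R^\top f$ from Definition~\ref{def:assemblingProp} as the algebraic backbone, together with the two structural facts supplied by the hypotheses: the global problem is well-posed (basic property~(ii)), so $\widehat A$ is a bounded bijection and hence $\widehat A^{-1}$ is bounded by the bounded inverse theorem; and by Assumption~\ref{ass:A1} the operator $R$ is injective with closed range, so $R \colon \widehat U \to \range(R)$ is an isomorphism onto its range. I would settle parts~(i) and~(ii) by direct verification, part~(iii) by reducing flux-uniqueness to uniqueness of the global solution, and part~(iv) by writing down an explicit closed-form solution operator.

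For~(i), given a solution $\widehat u$ of~\eqref{eq:global}, I set $u := R\widehat u$ and $t := A R\widehat u - f = Au - f$. Then $u \in \range(R)$ by construction and $Au - t = f$ holds identically, so only $t \in \ker(R^\top)$ remains to be checked; this follows from $R^\top t = R^\top A R \widehat u - R^\top f = \widehat A \widehat u - \widehat f = 0$, using the assembling property. For~(ii), given a solution $(u,t)$ of~\eqref{eq:subdflux}, injectivity of $R$ yields a unique $\widehat u \in \widehat U$ with $R\widehat u = u$. Applying $R^\top$ to the equation $Au - t = f$ and using $t \in \ker(R^\top)$ gives $R^\top A R\widehat u = R^\top f$, that is $\widehat A\widehat u = \widehat f$, so $\widehat u$ solves~\eqref{eq:global}; and $t = Au - f = A R\widehat u - f$ by definition.

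Part~(iii) then follows quickly: two solutions of~\eqref{eq:subdflux} produce, via~(ii), two solutions of the global problem, which must coincide because $\ker(\widehat A) = \{0\}$; pushing this equality forward through $R$ and $A$ shows the two flux solutions agree. For~(iv) I would simply exhibit
\[
  \mathcal{S} f := \bigl( R\widehat A^{-1} R^\top f,\ A R\widehat A^{-1} R^\top f - f \bigr),
\]
which is manifestly linear in $f$ and bounded, since $R$, $R^\top$, $A$ are bounded and $\widehat A^{-1}$ is bounded by the bounded inverse theorem; a one-line computation with the assembling property confirms that $\mathcal{S} f$ lies in $\range(R) \times \ker(R^\top)$ and solves~\eqref{eq:subdflux} for every $f \in U^*$.

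I do not anticipate a genuine obstacle: once the two background facts are in place, each step is bookkeeping. The only points requiring care are invoking the bounded inverse theorem for $\widehat A^{-1}$ (which rests on well-posedness) and keeping track of which dual space each quantity inhabits when shuttling the equations through $R^\top$. The injectivity half of Assumption~\ref{ass:A1} is exactly what makes the reconstruction in~(ii) unambiguous, while the closed-range half upgrades $R$ to an isomorphism onto $\range(R)$; note that the explicit formula in~(iv) uses only forward operators together with $\widehat A^{-1}$, so its boundedness relies on well-posedness rather than on closed range directly.
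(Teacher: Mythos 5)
Your proof is correct and follows essentially the same route as the paper: direct verification via the assembling property for (i), applying $R^\top$ after reconstructing $\widehat u$ for (ii), reduction to $\ker(\widehat A)=\{0\}$ for (iii), and the identical explicit formula $\mathcal{S}f = (R\widehat A^{-1}R^\top f,\, A R\widehat A^{-1}R^\top f - f)$ for (iv). The only cosmetic remark is that in (ii) the \emph{existence} of $\widehat u$ comes from $u\in\range(R)$ being built into the formulation, while injectivity of $R$ supplies its \emph{uniqueness}; your wording conflates the two but the argument is unaffected.
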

\begin{proof}
  Beforehand, note that due to \ref{ass:A1}, $\range(R)$ is a closed subspace of $U$.\\
  Part~(i) is proved already.\\
	Part~(ii): Since $u \in \range(R)$,
	  \ref{ass:A1} guarantees the existence of a unique function $\widehat u \in \widehat U$ with $R \widehat u = u$.
	Application of $R^\top$ and using that $t \in \ker(R^\top)$ yields
	$R^\top(A R \widehat u - f) = 0$ which is \eqref{eq:global}.\\
	Part~(iii): Recall from Sect.~\ref{sect:globalProblem} that $\ker(\widehat A) = \{0\}$.
	So $\widehat u$ is unique and by~(ii) also $u$ and $t$.\\
	Part~(iv): Recall from Sect.~\ref{sect:globalProblem} that $\widehat A$ has a bounded inverse.
	We define
	\[
	  \mathcal{S} \colon U^* \to \range(R) \times \ker(R^\top)
		\colon f \mapsto (R \widehat A^{-1} R^\top f, A R \widehat A^{-1} R^\top f - f).
	\]
	The following is easily verified:
	\begin{enumerate}
	\item[1)] $\mathcal{S}$ is well-defined, linear, and bounded,
	\item[2)] if $(u, t) = \mathcal{S} f$ then $A u - t = f$,
	\item[3)] if $(u, t) \in \range(R) \times \ker(R^\top)$	then $\mathcal{S}(A u - t) = (u, t)$.
	\end{enumerate}
	So $\mathcal{S}$ is a bounded solution operator for \eqref{eq:subdflux}.
\end{proof}

In block-operator notation, one can write
\begin{align}
  \mathcal{S} = \begin{bmatrix} I \\ A \end{bmatrix} R \widehat A^{-1} R^\top - \begin{bmatrix} 0 \\ I \end{bmatrix}.
\end{align}

Before moving on to characterizing the spaces $\range(R)$ (Sect.~\ref{sect:traces}) and $\ker(R^\top)$ (Sect.~\ref{sect:interfaceFluxes}),
we show that the variable $t$, introduced as a distribution on the whole space $U$
(i.e., acting on all subdomains), vanishes for \emph{bubble} functions and can thus be interpreted as a distribution acting on the interface.

\begin{definition}[bubble functions]
\label{def:bubble}
  On a subdomain $i$, the bubble space $U_{i,B}$ is given by
	\[
	  U_{i,B} := \{ v_i \in U_i \colon \exists \widehat v \in \widehat U \colon R_i \widehat v = v_i,\ R_j \widehat v = 0 \ \forall j \neq i \},
	\]
	i.e., it is the space of functions on subdomain $i$ that can be \emph{extended by zero} to a function in the global space $\widehat U$.
	The product space of bubble functions is given by
	\[
	  U_B := \productspace_{i=1}^N U_{i,B} \subseteq U.
	\]
\end{definition}

\begin{proposition}
\label{prop:tFlux}
  Any $t \in \ker(R^\top)$ fulfills
	\[
	  \langle t_i, v_{i,B} \rangle = 0 \qquad \forall v_{i,B} \in U_{i,B} \quad \forall i=1,\ldots,N.
	\]
\end{proposition}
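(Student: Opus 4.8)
The plan is to unwind the definition of the transpose $R^\top$ together with the block structure of the duality pairing on the product space $U$, and then to test against the extension-by-zero function that the bubble space furnishes by definition.

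First I would record what membership $t \in \ker(R^\top)$ actually says. By the definition of the transpose and of the pairing $\langle \cdot, \cdot\rangle_{U^* \times U}$ as the sum of the component pairings, for every $\widehat v \in \widehat U$ we have
\[
  0 = \langle R^\top t, \widehat v \rangle = \langle t, R \widehat v \rangle = \sum_{j=1}^N \langle t_j, R_j \widehat v \rangle,
\]
where the last step uses $R\widehat v = (R_j \widehat v)_{j=1}^N$. This identity, valid for all test functions $\widehat v$, is the only ingredient I need from the hypothesis.

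Next I would fix an index $i$ and an arbitrary bubble function $v_{i,B} \in U_{i,B}$. By Definition~\ref{def:bubble} there exists some $\widehat v \in \widehat U$ with $R_i \widehat v = v_{i,B}$ and $R_j \widehat v = 0$ for all $j \neq i$. Substituting this particular $\widehat v$ into the displayed identity makes every summand with $j \neq i$ vanish, so that $\langle t_i, v_{i,B} \rangle = 0$. Since $i$ and $v_{i,B}$ were arbitrary, this is exactly the assertion.

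The main obstacle is really just a point of care rather than a genuine difficulty: the whole argument turns on choosing the right test function, namely the extension-by-zero $\widehat v$ guaranteed by the bubble-space definition, which is precisely what annihilates the off-diagonal contributions and isolates the $i$-th term. I expect no need for the closedness or well-posedness provided by Assumption~\ref{ass:A1}; the statement follows directly from the assembling structure and holds verbatim in both the continuous and the discrete settings.
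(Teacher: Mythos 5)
Your proof is correct and follows essentially the same route as the paper's: unwind $\langle R^\top t,\widehat v\rangle=\langle t,R\widehat v\rangle=\sum_j\langle t_j,R_j\widehat v\rangle$ and test against the extension-by-zero function supplied by Definition~\ref{def:bubble}. Your closing remark that Assumption~\ref{ass:A1} is not needed here is also accurate.
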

\begin{proof}
  $R^\top t = 0$ implies $\langle t, R \widehat v \rangle = 0$ for all $\widehat v \in \widehat U$.
	For fixed $i$ and $v_{i,B} \in U_{i,B}$ there exists, by Definition~\ref{def:bubble},
	a function $\widehat v_B \in \widehat U$ with $R_i \widehat v_B = v_{i,B}$ and $R_j \widehat v_B = 0$ for all $j \neq 0$.
	Hence
	\[
	  0 = \langle t, R \widehat v_B \rangle = \langle t_i, R_i \widehat v_B \rangle = \langle t_i, v_{i,B} \rangle. \qedhere
	\]
\end{proof}

Due to the property in Proposition~\ref{prop:tFlux}, it is justified to call $t$ the \emph{subdomain flux}, cf.\ \eqref{eq:subdflux}.

\begin{example}[flux on a closed subdomain boundary]
  Consider Example~\ref{ex:modelProblemHelmholtz} for an \emph{interior} subdomain $\Omega_i$ that has no intersection with the outer boundary $\partial\Omega$.
	Then $t_i \in H^1(\Omega_i)^*$ vanishes on all functions from $H^1_0(\Omega_i)$, which is why we can represent it by a distribution in $H^{-1/2}(\partial\Omega_i)$.
	Integration by parts in the principal term shows that the very same distribution generalizes the normal derivative $\widehat u/\partial \normal_i$ on $\partial\Omega_i$,
	see in particular \cite[Lemma~4.3]{McLean:Book} and \cite[Lemma~1.2.1]{QuarteroniValli:Book}.
\end{example}

\begin{example}[flux with Neumann boundary]
  Consider Example~\ref{ex:modelProblemHelmholtz} for a subdomain $\Omega_i$
	where $\Gamma_{N,i} := \partial\Omega_i \cap \Gamma_N$ is connected and has positive surface measure
	and where $\partial\Omega_i \cap \Gamma_D = \emptyset$, $\partial\Omega_i \cap \Gamma_R = \emptyset$.
	Then, since the bubble functions do have support on $\Gamma_{N_i}$,
	the flux $t_i$	vanishes on $\Gamma_{i,N}$.
	To be precise, the trace of a bubble function on $\Gamma_{N,i}$
	is in the Lions-Magenes space $H^{1/2}_{00}(\Gamma_{N,i})$, see e.g.\ \cite[Appendix~A]{ToselliWidlund:Book}
	(which is often denoted by $\widetilde H^{1/2}(\Gamma_{N,i})$, cf.\ \cite[Ch.~3]{McLean:Book}).
	This is why $t_i$ can be represented by an element in the dual of $H^{1/2}(\partial\Omega_i \setminus \Gamma_{N_i})$,
	i.e., $t_i$ is a distribution supported on $\partial\Omega_i \setminus \Gamma_{N,i}$ that can be \emph{extended by zero} to $H^{-1/2}(\partial\Omega_i)$.
	A discrete analogon is illustrated in Fig.~\ref{fig:fluxIllustr}(a).
\end{example}

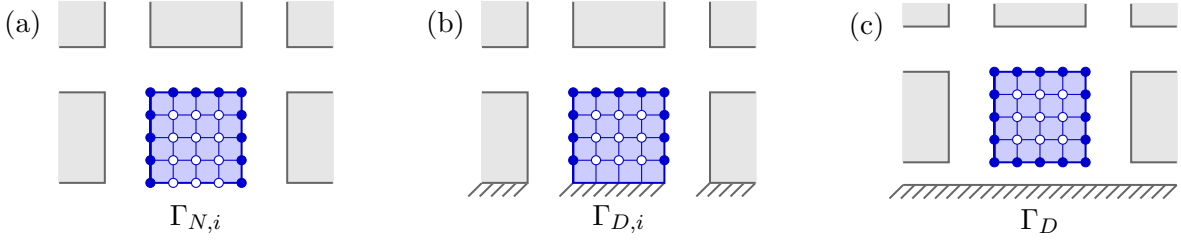
\begin{figure}
\begin{center}
  \begin{tikzpicture}
		\pgftransformscale{0.6}
				
		\definecolor{dblue}{rgb}{0.0, 0.0, 0.8}
		\definecolor{mdblue}{rgb}{0.4, 0.4, 1.0}
		\definecolor{lblue}{rgb}{0.8, 0.8, 1.0}
		\definecolor{mlblue}{rgb}{0.6, 0.6, 1.0}
		
		\definecolor{dgrey}{rgb}{0.4, 0.4, 0.4}
		\definecolor{mdgrey}{rgb}{0.55, 0.55, 0.55}
		\definecolor{lgrey}{rgb}{0.9, 0.9, 0.9}
		\definecolor{mlgrey}{rgb}{0.75, 0.75, 0.75}
	  
		
		\node at (-0.8,3.5) {(a)};
		
		\draw[line width=0.75pt,color=dblue,fill=lblue] (2,0)--(2,2)--(4,2)--(4,0)--(2,0)--(2,2);
		\foreach \x in {2.5, 3, 3.5} { \draw[line width=0.4pt,color=dblue] (\x,0)--(\x,2); }
		\foreach \x in {0.5, 1, 1.5} { \draw[line width=0.4pt,color=dblue] (2,\x)--(4,\x); }
		
		\foreach \x in {0, 0.5, ..., 2} { \draw[line width=0.4pt,color=dblue,fill=dblue] (2,\x) circle (0.1); }
		\foreach \x in {0, 0.5, ..., 1.5} { \draw[line width=0.4pt,color=dblue,fill=white] (2.5,\x) circle (0.1); }
		\foreach \x in {0, 0.5, ..., 1.5} { \draw[line width=0.4pt,color=dblue,fill=white] (3,\x) circle (0.1); }
		\foreach \x in {0, 0.5, ..., 1.5} { \draw[line width=0.4pt,color=dblue,fill=white] (3.5,\x) circle (0.1); }
		\foreach \x in {0, 0.5, ..., 2} { \draw[line width=0.4pt,color=dblue,fill=dblue] (4,\x) circle (0.1); }
		\foreach \x in {2.5, 3, 3.5} { \draw[line width=0.4pt,color=dblue,fill=dblue] (\x,2) circle (0.1); }
		
		\draw[line width=0.75pt,color=lgrey,fill=lgrey] (0,0)--(1,0)--(1,2)--(0,2)--(0,0);
		\draw[line width=0.75pt,color=dgrey] (0,0)--(1,0)--(1,2)--(0,2);
		
		\draw[line width=0.75pt,color=lgrey,fill=lgrey] (2,3)--(4,3)--(4,4)--(2,4)--(2,3);
		\draw[line width=0.75pt,color=dgrey] (2,4)--(2,3)--(4,3)--(4,4);

		\draw[line width=0.75pt,color=lgrey,fill=lgrey] (0,3)--(1,3)--(1,4)--(0,4)--(0,3);
		\draw[line width=0.75pt,color=dgrey] (0,3)--(1,3)--(1,4);
		
		\draw[line width=0.75pt,color=lgrey,fill=lgrey] (5,3)--(6,3)--(6,4)--(5,4)--(5,3);
		\draw[line width=0.75pt,color=dgrey] (5,4)--(5,3)--(6,3);
		
		\draw[line width=0.75pt,color=lgrey,fill=lgrey] (5,0)--(6,0)--(6,2)--(5,2)--(5,0);
		\draw[line width=0.75pt,color=dgrey] (6,0)--(5,0)--(5,2)--(6,2);
		
		
		\node at (3,-0.8) {$\Gamma_{N,i}$};

	\end{tikzpicture}
	\hspace{5ex}
  \begin{tikzpicture}
		\pgftransformscale{0.6}
		
		\definecolor{dblue}{rgb}{0.0, 0.0, 0.8}
		\definecolor{mdblue}{rgb}{0.4, 0.4, 1.0}
		\definecolor{lblue}{rgb}{0.8, 0.8, 1.0}
		\definecolor{mlblue}{rgb}{0.6, 0.6, 1.0}
		
		\definecolor{dgrey}{rgb}{0.4, 0.4, 0.4}
		\definecolor{mdgrey}{rgb}{0.55, 0.55, 0.55}
		\definecolor{lgrey}{rgb}{0.9, 0.9, 0.9}
		\definecolor{mlgrey}{rgb}{0.75, 0.75, 0.75}
	  
		
		\node at (-0.8,3.5) {(b)};
		
		\foreach \x in {0,0.25,...,1} { \draw[line width=0.75pt,color=dgrey] (0 + \x, 0)--(0 + \x - 0.3, 0 - 0.3); }
		\foreach \x in {0,0.25,...,2} { \draw[line width=0.75pt,color=dgrey] (2 + \x, 0)--(2 + \x - 0.3, 0 - 0.3); }
		\foreach \x in {0,0.25,...,1} { \draw[line width=0.75pt,color=dgrey] (5 + \x, 0)--(5 + \x - 0.3, 0 - 0.3); }
		
		\draw[line width=0.75pt,color=dblue,fill=lblue] (2,0)--(2,2)--(4,2)--(4,0)--(2,0)--(2,2);
				
		\foreach \x in {2.5, 3, 3.5} { \draw[line width=0.4pt,color=dblue] (\x,0)--(\x,2); }
		\foreach \x in {0.5, 1, 1.5} { \draw[line width=0.4pt,color=dblue] (2,\x)--(4,\x); }
		
		\foreach \x in {0.5, 1, ..., 2} { \draw[line width=0.4pt,color=dblue,fill=dblue] (2,\x) circle (0.1); }
		\foreach \x in {0.5, 1, ..., 1.5} { \draw[line width=0.4pt,color=dblue,fill=white] (2.5,\x) circle (0.1); }
		\foreach \x in {0.5, 1, ..., 1.5} { \draw[line width=0.4pt,color=dblue,fill=white] (3,\x) circle (0.1); }
		\foreach \x in {0.5, 1, ..., 1.5} { \draw[line width=0.4pt,color=dblue,fill=white] (3.5,\x) circle (0.1); }
		\foreach \x in {0.5, 1, ..., 2} { \draw[line width=0.4pt,color=dblue,fill=dblue] (4,\x) circle (0.1); }
		\foreach \x in {2.5, 3, 3.5} { \draw[line width=0.4pt,color=dblue,fill=dblue] (\x,2) circle (0.1); }
		
		\draw[line width=0.75pt,color=lgrey,fill=lgrey] (0,0)--(1,0)--(1,2)--(0,2)--(0,0);
		\draw[line width=0.75pt,color=dgrey] (0,0)--(1,0)--(1,2)--(0,2);
		
		\draw[line width=0.75pt,color=lgrey,fill=lgrey] (2,3)--(4,3)--(4,4)--(2,4)--(2,3);
		\draw[line width=0.75pt,color=dgrey] (2,4)--(2,3)--(4,3)--(4,4);

		\draw[line width=0.75pt,color=lgrey,fill=lgrey] (0,3)--(1,3)--(1,4)--(0,4)--(0,3);
		\draw[line width=0.75pt,color=dgrey] (0,3)--(1,3)--(1,4);
		
		\draw[line width=0.75pt,color=lgrey,fill=lgrey] (5,3)--(6,3)--(6,4)--(5,4)--(5,3);
		\draw[line width=0.75pt,color=dgrey] (5,4)--(5,3)--(6,3);
		
		\draw[line width=0.75pt,color=lgrey,fill=lgrey] (5,0)--(6,0)--(6,2)--(5,2)--(5,0);
		\draw[line width=0.75pt,color=dgrey] (6,0)--(5,0)--(5,2)--(6,2);
		

		\node at (3,-0.8) {$\Gamma_{D,i}$};

	\end{tikzpicture}
	\hspace{5ex}
  \begin{tikzpicture}
		\pgftransformscale{0.6}
		
		\definecolor{dblue}{rgb}{0.0, 0.0, 0.8}
		\definecolor{mdblue}{rgb}{0.4, 0.4, 1.0}
		\definecolor{lblue}{rgb}{0.8, 0.8, 1.0}
		\definecolor{mlblue}{rgb}{0.6, 0.6, 1.0}
		
		\definecolor{dgrey}{rgb}{0.4, 0.4, 0.4}
		\definecolor{mdgrey}{rgb}{0.55, 0.55, 0.55}
		\definecolor{lgrey}{rgb}{0.9, 0.9, 0.9}
		\definecolor{mlgrey}{rgb}{0.75, 0.75, 0.75}
	  
		
		\node at (-0.8,3) {(c)};
		
		\foreach \x in {0,0.25,...,6} { \draw[line width=0.75pt,color=dgrey] (0 + \x, -0.5)--(0 + \x - 0.3, -0.5 - 0.3); }
		\draw[line width=0.75pt,color=dgrey] (0, -0.5)--(6, -0.5);
		
		\draw[line width=0.75pt,color=dblue,fill=lblue] (2,0)--(2,2)--(4,2)--(4,0)--(2,0)--(2,2);
				
		\foreach \x in {2.5, 3, 3.5} { \draw[line width=0.4pt,color=dblue] (\x,0)--(\x,2); }
		\foreach \x in {0.5, 1, 1.5} { \draw[line width=0.4pt,color=dblue] (2,\x)--(4,\x); }
		
		\foreach \x in {0, 0.5, ..., 2} { \draw[line width=0.4pt,color=dblue,fill=dblue] (2,\x) circle (0.1); }
		\foreach \x in {0.5, 1, ..., 1.5} { \draw[line width=0.4pt,color=dblue,fill=white] (2.5,\x) circle (0.1); }
		\foreach \x in {0.5, 1, ..., 1.5} { \draw[line width=0.4pt,color=dblue,fill=white] (3,\x) circle (0.1); }
		\foreach \x in {0.5, 1, ..., 1.5} { \draw[line width=0.4pt,color=dblue,fill=white] (3.5,\x) circle (0.1); }
		\foreach \x in {0, 0.5, ..., 2} { \draw[line width=0.4pt,color=dblue,fill=dblue] (4,\x) circle (0.1); }
		\foreach \x in {2.5, 3, 3.5} { \draw[line width=0.4pt,color=dblue,fill=dblue] (\x,2) circle (0.1); }
		\foreach \x in {2.5, 3, 3.5} { \draw[line width=0.4pt,color=dblue,fill=dblue] (\x,0) circle (0.1); }
		
		\draw[line width=0.75pt,color=lgrey,fill=lgrey] (0,0)--(1,0)--(1,2)--(0,2)--(0,0);
		\draw[line width=0.75pt,color=dgrey] (0,0)--(1,0)--(1,2)--(0,2);
		
		\draw[line width=0.75pt,color=lgrey,fill=lgrey] (2,3)--(4,3)--(4,3.5)--(2,3.5)--(2,3);
		\draw[line width=0.75pt,color=dgrey] (2,3.5)--(2,3)--(4,3)--(4,3.5);

		\draw[line width=0.75pt,color=lgrey,fill=lgrey] (0,3)--(1,3)--(1,3.5)--(0,3.5)--(0,3);
		\draw[line width=0.75pt,color=dgrey] (0,3)--(1,3)--(1,3.5);
		
		\draw[line width=0.75pt,color=lgrey,fill=lgrey] (5,3)--(6,3)--(6,3.5)--(5,3.5)--(5,3);
		\draw[line width=0.75pt,color=dgrey] (5,3.5)--(5,3)--(6,3);
		
		\draw[line width=0.75pt,color=lgrey,fill=lgrey] (5,0)--(6,0)--(6,2)--(5,2)--(5,0);
		\draw[line width=0.75pt,color=dgrey] (6,0)--(5,0)--(5,2)--(6,2);
		

		\node at (3,-1.3) {$\Gamma_D$};

	\end{tikzpicture}
\end{center}
\caption{\label{fig:fluxIllustr}%
   Support of subdomain flux for a simple nodal discretization for three cases
	 ($\circ$~dofs where $t_i$ vanishes, $\bullet$~interface dofs, where $t_i$ is supported).
}
\end{figure}

\begin{example}[flux with Dirichlet boundary]
  Consider Example~\ref{ex:modelProblemHelmholtz} for a subdomain $\Omega_i$
	where $\Gamma_{D,i} := \partial\Omega_i \cap \Gamma_D$ is connected and has positive surface measure
	and where $\partial\Omega_i \cap \Gamma_N = \emptyset$, $\partial\Omega_i \cap \Gamma_R = \emptyset$.
	We treat two choices for the local space $U_i$, see also Remark~\ref{rem:localDirichlet}.\\
	(i) $U_i = H^1_D(\Omega_i)$.
	Since the flux $t_i \in H^1_D(\Omega_i)^*$ vanishes on functions from $U_{i,B} = H^1_0(\Omega_i)$,
	we conclude that $t_i$ can be represented by an element in the dual of $H^{1/2}_{00}(\partial\Omega_i \setminus \Gamma_D)$,
	i.e., it is a distribution supported on $\partial\Omega_i \setminus \Gamma_D$ that \emph{cannot necessarily be extended by zero} to $H^{-1/2}(\partial\Omega_i)$.
	For a discrete analogon see Fig.~\ref{fig:fluxIllustr}(b).\\
	(ii) $U_i = H^1(\Omega_i)$. Still, due to Definition~\ref{def:bubble}, $U_{i,B} = H^1_0(\Omega_i)$.
	Since locally, we work on the full space $U_i = H^1(\Omega_i)$, the flux $t_i$ is defined in $H^1(\Omega_i)^*$
	and it vanishes on functions from $H^1_0(\Omega_i)$.
	We conclude that $t_i$ can be represented by a distribution in $H^{-1/2}(\partial\Omega_i)$
	corresponding to the normal derivative $\widehat u/\partial \normal_i$ on the \emph{entire} boundary $\partial\Omega_i$.
	For a discrete analogon see Fig.~\ref{fig:fluxIllustr}(c).
\end{example}

\begin{example}[flux for Maxwell's equations]
  For the $E$-field formulation of Maxwell's equations (see Table~\ref{tab:variationalExamples}),
	the variable $t_i$ represents the tangential trace of $\mu^{-1} \opcurl\,\uv_i$,
	which is (up to a factor of $\pm \complexi\omega$ and a possible rotation by $90^\circ$) the electric surface current on the interface.
\end{example}

\begin{example}[discrete flux]
  In the discrete case (Sect.~\ref{sect:discreteMatrixNotation}),
	for a global dof $k$ shared by subdomains $\mathcal{N}_k$, the $k$-th row of the condition $R^\top t = 0$ reads
	\[
	  \sum_{i \in \mathcal{N}_k} t_{i,\mathsf{g}_i^{-1}(k)} = 0,
	\]
	where $\mathsf{g}_i^{-1}(k)$ denotes the local dof on subdomain $i$ corresponding to the global dof $k$.
	If $k$ is shared by two subdomains, the two fluxes must have opposite sign.
	In general, the fluxes must add up to zero, see Figure~\ref{fig:transmCrosspoint}.
\end{example}

\begin{figure}
\begin{center}
  \def\svgwidth{0.8\textwidth}
  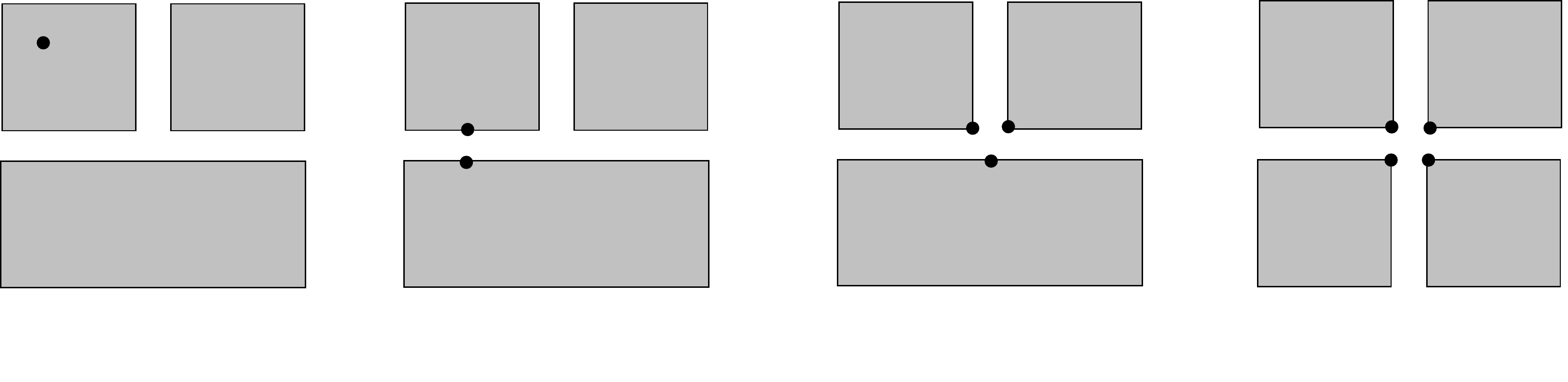
  \caption{\label{fig:transmCrosspoint}%
	  Sketch of the conditions $u \in \range(R)$, $t \in \ker(R^\top)$ in the discrete case
		for an individual global dof shared by one up to four subdomains.
		Black bullets ($\bullet$) indicate the location of the corresponding local dofs.
	}
\end{center}  
\end{figure}

\subsection{Traces}
\label{sect:traces}

Formulation~\ref{eq:subdflux} can be read as: find $(u, t) \in U \times U^*$ with
\[
  A u - t = f, \qquad u \in \range(R), \qquad R^\top t = 0.
\]
Opposed to the two equations,
the condition $u \in \range(R)$ is inconvenient for any algorithmic development,
and we will characterize it using \emph{trace operators}.
For the abstract framework of this paper, we assume the following for each subdomain $i=1,\ldots,N$:
\begin{enumerate}
\item[(i)] \emph{Local trace space} $\Lambda_i$, a Hilbert space
  with inner product $(\cdot,\cdot)_{\Lambda_i}$ and norm $\| \cdot \|_{\Lambda_i}$.
	If $\widehat U$ is finite-/infinite-dimensional, real/complexified then $\Lambda_i$ shares the same property.	
\item[(ii)] \emph{Local trace operator} $T_i \colon U_i \to \Lambda_i$, linear, bounded, and real-valued.
\end{enumerate}
We define the \emph{compound trace space} $\Lambda := \productspace_{i=1}^N \Lambda_i$, equipped with the natural inner product
$(\lambda, \mu)_\Lambda := \sum_{i=1}^N (\lambda_i, \mu_i)_{\Lambda_i}$ and corresponding norm $\| \cdot \|_{\Lambda}$,
as well as the \emph{compound trace operator}
\[
  T = \text{diag}(T_i)_{i=1}^N \colon U \to \Lambda.
\]
Before moving on to the next ingredient, the interface exchange operator, we study two important examples of trace spaces for $H^1$-formulations.

\begin{example}[natural trace operator]
\label{ex:tracesH1Surj}
  For the case $\widehat U = H^1(\Omega)$, let $\overline \Omega = \bigcup_{i=1}^N \overline\Omega_i$
	be a non-overlapping subdomain decomposition with sufficiently smooth boundaries and interfaces.
  For an interior subdomain (with positive distance from the global boundary $\partial\Omega$),
	the natural trace space is $H^{1/2}(\partial\Omega_i)$,
	and the associated trace operator $T_i \colon H^1(\Omega_i) \to H^{1/2}(\partial\Omega_i)$ is surjective,
	see e.g., \cite{ToselliWidlund:Book,Pechstein:FETIBook},
	for an illustration see Fig.~\ref{fig:traceDiagrams}(b).
  We will return to this type of choice in Sect.~\ref{sect:interfaceExchange}.
	Suppose now that we have Dirichlet conditions and that and $\partial\Omega_i \cap \Gamma_D$ is connected and has positive surface measure.
	If we wish to use $U_i = H^1_D(\Omega_i)$ then the natural trace space is
  the Lions-Magenes space $H^{1/2}_{00}(\partial\Omega_i \setminus \Gamma_D)$.
	However, we are also free to choose $U_i = H^1(\Omega_i)$ and use $H^{1/2}(\partial\Omega_i)$ as trace space.
	In the discretized case, we can use $T_i$ as the zero-one restriction matrix that selects the dofs of subdomain $i$ with multiplicity $\ge 2$
	(and possibly phantom dofs, see Remark~\ref{rem:TFETI}).
\end{example}

\begin{example}[collective trace operator]
\label{ex:tracesH1Split}
  In the classical works \cite{Lions:DD03,Despres:PhD,CollinoGhanemiJoly:2000a},
	the interface is split into \emph{facets}\footnote{In the literature, one often reads of \emph{faces},
	         in the two-dimensional case of \emph{edges}.}
	$F_{ij}$, which are open manifolds of one dimension lower than $\Omega$,
  form the interface between two subdomains, i.e., $\overline F_{ij} = \partial\Omega_i \cap \partial\Omega_j$,
  and have a non-trivial surface measure, cf.\ Fig.~\ref{fig:facetTraces}, left.
	Let $\mathcal{F}_i$ denote the facets of $\Omega_i$.
  For $\widehat U$ and $\{ \Omega_i \}$ as in Example~\ref{ex:tracesH1Surj},
  we can define a trace operator $T_{iF} \colon H^1(\Omega_i) \to L^2(F)$ for each facet $F \in \mathcal{F}_i$,
  and then define the subdomain trace operator
	\[
	  T_i \colon H^1(\Omega_i) \to \Lambda_i := \productspace\nolimits_{F \in \mathcal{F}_i} L^2(F), \quad
		T_i u_i := (T_{iF} u_i)_{F \in \mathcal{F}_i}
	\]
	of \emph{collective} type.
	In these definitions, we can replace $L^2(F)$ by $H^s(F)$ for $s \in [0, \tfrac{1}{2}]$, cf.\ \cite{CollinoGhanemiJoly:2000a}.
	Note that the compound trace space $\Lambda = \productspace_{i=1} \Lambda_i$ has \emph{two} instances of spaces on each facet (see Fig.~\ref{fig:traceDiagrams}(a)),
	which is a feature to be used a lot later on.
	Note also that so far, we have only dealt with \emph{interior} facets shared by two subdomains
	  (marked in blue in Fig.~\ref{fig:facetTraces}).
  However, there may be \emph{exterior} facets (marked in grey in Fig.~\ref{fig:facetTraces}) that only belong to one subdomain only:
	either Dirichlet facets	(see Remark~\ref{rem:localDirichlet} and Fig.~\ref{fig:fluxIllustr}(c))
	or \emph{auxiliary} facets, where we wish to evaluate traces for some other reason. 
	In the discretized case, $T_{iF} \colon U_i \to \Lambda_F$ is the zero-one matrix that selects the dofs of subdomain $i$ that are associated with the facet $F$,
	and we can set $\Lambda_i := \productspace_{F \in \mathcal{F}_i} \Lambda_F$ and define $T_i$ as above.
	This can be done for any conforming finite element discretization of $H^1(\Omega)$, $\Hv(\opcurl,\Omega)$, and $\Hv(\opdiv,\Omega)$,
	see also Examples~\ref{ex:swappingRaviartThomas}--\eqref{ex:swappingNedelec} below.
	Note that if a dof is associated with more than one facet (as it happens for cross point dofs), then the collective trace operator $T_i$
	creates multiple copies of that dof (see also Sect.~\ref{sect:bilateralDiscrFacetSys} below) and is not surjective.
\end{example}

\begin{figure}
\begin{center}
  \begin{tikzpicture}
		\definecolor{dcyan}{rgb}{0.0,0.4,0.5}
		\definecolor{dgrey}{rgb}{0.4,0.4,0.4}
	
	  \node (A0) at (0,0) {};
	  \node (A1) at (2,0) {};
	  \node (A2) at (4,0) {};
	  \node (A3) at (6,0) {};
		\node (A4) at (0,2) {};
	  \node (A5) at (2,2) {};
	  \node (A6) at (4,2) {};
	  \node (A7) at (6,2) {};
		\node (A8) at (0,4) {};
	  \node (A9) at (2,4) {};
	  \node (A10) at (4,4) {};
	  \node (A11) at (6,4) {};
	
	  \draw[line width=0.75pt] (A2.center)--(A3.center)--(A11.center)--(A8.center)--(A0.center)--(A1.center);
		\draw[line width=0.75pt, draw=dgrey] (A1.center)--(A2.center);
		\draw[dashed, line width=0.75pt, draw=dcyan] (A1.center)--(A9.center);
		\draw[dashed, line width=0.75pt, draw=dcyan] (A2.center)--(A10.center);
		\draw[dashed, line width=0.75pt, draw=dcyan] (A4.center)--(A7.center);
		
		\node at (0.7,0.7) {$\Omega_1$};
		\node at (2.7,0.7) {$\Omega_2$};
		\node at (4.7,0.7) {$\Omega_3$};
		\node at (0.7,2.7) {$\Omega_4$};
		\node at (2.7,2.7) {$\Omega_5$};
		\node at (4.7,2.7) {$\Omega_6$};
		
		\node at (1,1.7) {$\textcolor{dcyan}{F_{14}}$};
		\node at (3,1.7) {$\textcolor{dcyan}{F_{25}}$};
		\node at (5,1.7) {$\textcolor{dcyan}{F_{36}}$};
		\node at (1.6,1) {$\textcolor{dcyan}{F_{12}}$};
		\node at (1.6,3) {$\textcolor{dcyan}{F_{45}}$};
		\node at (3.6,1) {$\textcolor{dcyan}{F_{23}}$};
		\node at (3.6,3) {$\textcolor{dcyan}{F_{56}}$};
		
		\node at (3,-0.3) {$\textcolor{dgrey}{F_{20}}$};
		
	\end{tikzpicture}
  \begin{tikzpicture}
	  \definecolor{dcyan}{rgb}{0.0,0.4,0.5}
		\definecolor{dgrey}{rgb}{0.4,0.4,0.4}
		
		\node at (0,0) {};
	  \node (A0) at (0.6,1) {};
	  \node (A1) at (2.6,1) {};
	  \node (A2) at (0.6,3) {};
	  \node (A3) at (2.6,3) {};

		\node at (1.7,1.7) {$\Omega_2$};
		
		\node at (1.5,3.5) {$U_2 = H^1(\Omega_2)$};

	  \draw[line width=0.75pt] (A0.center)--(A1.center)--(A3.center)--(A2.center)--(A0.center);
		
		\node at (3.6,2.5) {$T_2$};
		\node at (3.6,2) {$\longrightarrow$};
		
		\node at (4.5,3.5) {$\Lambda_2$};
		
		\draw[line width=0.75pt, draw=dcyan] (5,0.6)--(5,2.6);
		\node at (4.5,0.3) {$\textcolor{dcyan}{L^2(F_{12})}$};
		
		\draw[line width=0.75pt, draw=dcyan] (5.4,3.0)--(7.4,3.0);
		\node at (6.4,3.4) {$\textcolor{dcyan}{L^2(F_{25})}$};
		
		\draw[line width=0.75pt, draw=dcyan] (7.8,0.6)--(7.8,2.6);
		\node at (8.3,0.3) {$\textcolor{dcyan}{L^2(F_{12})}$};
		
		\draw[line width=0.75pt, draw=dgrey] (5.4,0.0)--(7.4,0.0);
		\node at (6.4,-0.3) {$\textcolor{dgrey}{L^2(F_{20})}$};
		
	\end{tikzpicture}

  \caption{\label{fig:facetTraces}%
	  \emph{Left:} Geometric sketch of facets for a two-dimensional subdomain decomposition.
		\emph{Right:} Sketch of the subdomain trace operator for Example~\ref{ex:tracesH1Split}.
	}
\end{center}
\end{figure}
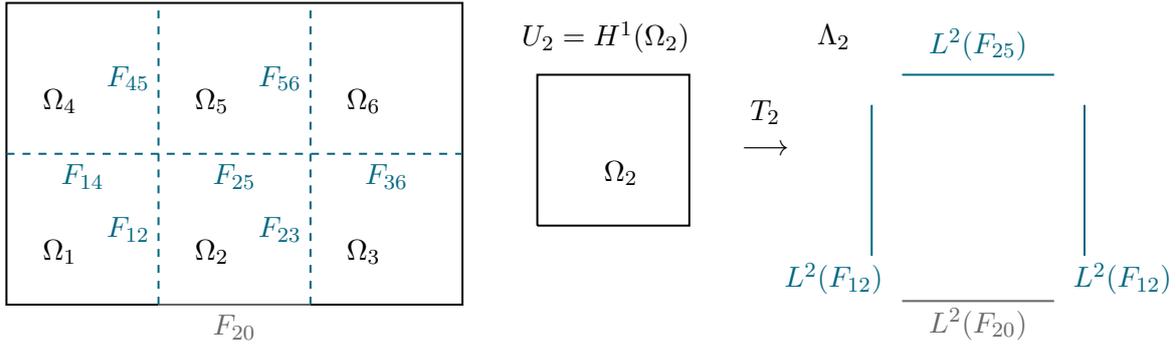

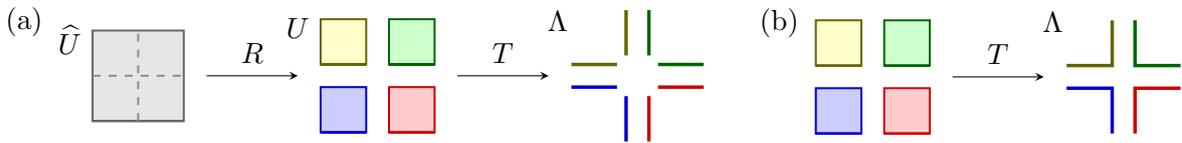
\begin{figure}
\begin{center}
  \begin{tikzpicture}
	  \pgftransformscale{0.6}
		
	  \definecolor{dred}{rgb}{0.8, 0.0, 0.0}
		\definecolor{mdred}{rgb}{1.0, 0.4, 0.4}
		\definecolor{lred}{rgb}{1.0, 0.8, 0.8}
		\definecolor{mlred}{rgb}{1.0, 0.6, 0.6}
		
		\definecolor{dgreen}{rgb}{0.0, 0.4, 0.0}
		\definecolor{mdgreen}{rgb}{0.0, 0.8, 0.0}
		\definecolor{lgreen}{rgb}{0.8, 1.0, 0.8}
		\definecolor{mlgreen}{rgb}{0.6, 1.0, 0.6}
		
		\definecolor{dblue}{rgb}{0.0, 0.0, 0.8}
		\definecolor{mdblue}{rgb}{0.4, 0.4, 1.0}
		\definecolor{lblue}{rgb}{0.8, 0.8, 1.0}
		\definecolor{mlblue}{rgb}{0.6, 0.6, 1.0}
		
		\definecolor{dyellow}{rgb}{0.4, 0.4, 0.0}
		\definecolor{mdyellow}{rgb}{0.8, 0.8, 0.4}
		\definecolor{lyellow}{rgb}{1.0, 1.0, 0.8}
		\definecolor{mlyellow}{rgb}{1.0, 1.0, 0.6}
		
		\definecolor{dgrey}{rgb}{0.4, 0.4, 0.4}
		\definecolor{mdgrey}{rgb}{0.55, 0.55, 0.55}
		\definecolor{lgrey}{rgb}{0.9, 0.9, 0.9}
		\definecolor{mlgrey}{rgb}{0.75, 0.75, 0.75}
		
		\draw[line width=0.75pt,color=dgrey,fill=lgrey] (1,0)--(3,0)--(3,2)--(1,2)--(1,0)--(3,0);
		\draw[line width=0.75pt,color=mdgrey,dashed] (1,1)--(3,1);
		\draw[line width=0.75pt,color=mdgrey,dashed] (2,0)--(2,2);
		
		\node at (0.5,1.8) {$\widehat U$};
		
		
	  \node (G0) at (6,-0.25) {};
	  \node (G1) at (7,-0.25) {};
	  \node (G2) at (7,0.75) {};
	  \node (G3) at (6,0.75) {};
		
		\draw[line width=0.75pt,color=dblue,fill=lblue] (G0.center)--(G1.center)--(G2.center)--(G3.center)--(G0.center)--(G1.center);
		
		\node (H0) at (7.5,-0.25) {};
	  \node (H1) at (8.5,-0.25) {};
	  \node (H2) at (8.5,0.75) {};
	  \node (H3) at (7.5,0.75) {};
		
		\draw[line width=0.75pt,color=dred,fill=lred] (H0.center)--(H1.center)--(H2.center)--(H3.center)--(H0.center)--(H1.center);
		
		\node (I0) at (6,1.25) {};
	  \node (I1) at (7,1.25) {};
	  \node (I2) at (7,2.25) {};
	  \node (I3) at (6,2.25) {};
		
		\draw[line width=0.75pt,color=dyellow,fill=lyellow] (I0.center)--(I1.center)--(I2.center)--(I3.center)--(I0.center)--(I1.center);
		
		\node (J0) at (7.5,1.25) {};
	  \node (J1) at (8.5,1.25) {};
	  \node (J2) at (8.5,2.25) {};
	  \node (J3) at (7.5,2.25) {};
		
		\draw[line width=0.75pt,color=dgreen,fill=lgreen] (J0.center)--(J1.center)--(J2.center)--(J3.center)--(J0.center)--(J1.center);
		
		
		\node at (5.5,2.05) {$U$};
		
		\draw[->, >=stealth] (3.5,1)--(5.5,1);
		\node at (4.5,1.5) {$R$};
		
		
		\draw[line width=1.3pt,color=dblue] (11.5,0.75)--(12.5,0.75);
		\draw[line width=1.3pt,color=dblue] (12.7,-0.45)--(12.7,0.55);
		
		\draw[line width=1.3pt,color=dyellow] (11.5,1.25)--(12.5,1.25);
		\draw[line width=1.3pt,color=dyellow] (12.7,1.45)--(12.7,2.45);
		
		\draw[line width=1.3pt,color=dred] (13.4,0.75)--(14.4,0.75);
		\draw[line width=1.3pt,color=dred] (13.2,-0.45)--(13.2,0.55);
		
		\draw[line width=1.3pt,color=dgreen] (13.4,1.25)--(14.4,1.25);
		\draw[line width=1.3pt,color=dgreen] (13.2,1.45)--(13.2,2.45);
		
		
		\node at (11.2,2.2) {$\Lambda$};
		
		\draw[->, >=stealth] (9,1)--(11,1);
		\node at (10,1.5) {$T$};


		\node at (-0.5,2.2) {(a)};
		
  \end{tikzpicture}
	\hspace{2ex}
	\begin{tikzpicture}
	  \pgftransformscale{0.6}
		
	  \definecolor{dred}{rgb}{0.8, 0.0, 0.0}
		\definecolor{mdred}{rgb}{1.0, 0.4, 0.4}
		\definecolor{lred}{rgb}{1.0, 0.8, 0.8}
		\definecolor{mlred}{rgb}{1.0, 0.6, 0.6}
		
		\definecolor{dgreen}{rgb}{0.0, 0.4, 0.0}
		\definecolor{mdgreen}{rgb}{0.0, 0.8, 0.0}
		\definecolor{lgreen}{rgb}{0.8, 1.0, 0.8}
		\definecolor{mlgreen}{rgb}{0.6, 1.0, 0.6}
		
		\definecolor{dblue}{rgb}{0.0, 0.0, 0.8}
		\definecolor{mdblue}{rgb}{0.4, 0.4, 1.0}
		\definecolor{lblue}{rgb}{0.8, 0.8, 1.0}
		\definecolor{mlblue}{rgb}{0.6, 0.6, 1.0}
		
		\definecolor{dyellow}{rgb}{0.4, 0.4, 0.0}
		\definecolor{mdyellow}{rgb}{0.8, 0.8, 0.4}
		\definecolor{lyellow}{rgb}{1.0, 1.0, 0.8}
		\definecolor{mlyellow}{rgb}{1.0, 1.0, 0.6}
		
		\definecolor{dgrey}{rgb}{0.4, 0.4, 0.4}
		\definecolor{mdgrey}{rgb}{0.55, 0.55, 0.55}
		\definecolor{lgrey}{rgb}{0.9, 0.9, 0.9}
		\definecolor{mlgrey}{rgb}{0.75, 0.75, 0.75}
				
		
	  \node (G0) at (6,-0.25) {};
	  \node (G1) at (7,-0.25) {};
	  \node (G2) at (7,0.75) {};
	  \node (G3) at (6,0.75) {};
		
		\draw[line width=0.75pt,color=dblue,fill=lblue] (G0.center)--(G1.center)--(G2.center)--(G3.center)--(G0.center)--(G1.center);
		
		\node (H0) at (7.5,-0.25) {};
	  \node (H1) at (8.5,-0.25) {};
	  \node (H2) at (8.5,0.75) {};
	  \node (H3) at (7.5,0.75) {};
		
		\draw[line width=0.75pt,color=dred,fill=lred] (H0.center)--(H1.center)--(H2.center)--(H3.center)--(H0.center)--(H1.center);
		
		\node (I0) at (6,1.25) {};
	  \node (I1) at (7,1.25) {};
	  \node (I2) at (7,2.25) {};
	  \node (I3) at (6,2.25) {};
		
		\draw[line width=0.75pt,color=dyellow,fill=lyellow] (I0.center)--(I1.center)--(I2.center)--(I3.center)--(I0.center)--(I1.center);
		
		\node (J0) at (7.5,1.25) {};
	  \node (J1) at (8.5,1.25) {};
	  \node (J2) at (8.5,2.25) {};
	  \node (J3) at (7.5,2.25) {};
		
		\draw[line width=0.75pt,color=dgreen,fill=lgreen] (J0.center)--(J1.center)--(J2.center)--(J3.center)--(J0.center)--(J1.center);
		
				
		\draw[line width=1.3pt,color=dblue] (11.5,0.75)--(12.5,0.75)--(12.5,-0.25);
		
		\draw[line width=1.3pt,color=dyellow] (11.5,1.25)--(12.5,1.25)--(12.5,2.25);
		
		\draw[line width=1.3pt,color=dred] (13.0,-0.25)--(13.0,0.75)--(14.0,0.75);
		
		\draw[line width=1.3pt,color=dgreen] (13.0,2.25)--(13.0,1.25)--(14.0,1.25);
		
		
		\node at (11.2,2.2) {$\Lambda$};
		
		\draw[->, >=stealth] (9,1)--(11,1);
		\node at (10,1.5) {$T$};


		\node at (5.2,2.2) {(b)};
		
  \end{tikzpicture}
\end{center}
\caption{\label{fig:traceDiagrams}%
  Sketch of the compound trace operator $T$ for two different choices: (a) ``torn'' traces, (b) natural traces.
}
\end{figure}

The continuity of traces will be enforced using an \emph{interface exchange operator}
acting on the local trace spaces:

\medskip

\noindent%
\fbox{%
\begin{minipage}[c]{0.985\textwidth}
\begin{assumption}
\label{ass:A2}
  The \emph{interface exchange operator} $\mathcal{X} \colon \Lambda \to \Lambda$ is linear and bounded, and
  \begin{enumerate}
  \item[(i)] $\mathcal{X}^2 = I$,
  \item[(ii)] $\range(R) = \ker((I - \mathcal{X}) T)$.
  \end{enumerate}
\end{assumption}
\end{minipage}}

\medskip

The first property states that the interface exchange operator is an \emph{involution} ($\mathcal{X}^{-1} = \mathcal{X}$),
the second property can be read as $u \in \range(R)$ $\Longleftrightarrow$ $Tu = \mathcal{X} Tu$.

Under Assumptions~\ref{ass:A1}--\ref{ass:A2}, the subdomain flux formulation~\eqref{eq:subdflux} is equivalent to
\begin{align}
\label{eq:subdfluxT}
\begin{aligned}
  \text{find } (u, t) \in U \times U^* \colon \qquad A u - t & = f,\\
	(I - \mathcal{X}) T u & = 0,\\
	R^\top t & = 0.
\end{aligned}
\end{align}
The operator $\mathcal{S}$ from Lemma~\ref{lem:subdflux} is also a solution operator for \eqref{eq:subdfluxT}.

\begin{example}[swapping operator]
\label{ex:swapping}
  For the setup from Example~\ref{ex:tracesH1Split}, let $\mathcal{X}$ be the operator that \emph{swaps} traces in the sense that
	for each interior facet $F = F_{ij}$, we have $(\mathcal{X} \lambda)_{iF} = \lambda_{jF}$
	and $(\mathcal{X} \lambda)_{jF} = \lambda_{iF}$	for $\lambda \in \Lambda$,
	where	$\lambda_{jF}$ denotes the component of $\lambda_j$ corresponding to $F$;
	see also \cite[Formula~(42)]{CollinoGhanemiJoly:2000a}.
	For exterior Dirichlet facets $F \in \mathcal{F}_i$, we can set $(\mathcal{X} \lambda)_{iF} = -\lambda_{iF}$
	such that the condition $(I-\mathcal{X})T u = 0$ enforces the homogeneous Dirichlet condition on $F$,
	see also Sect.~\ref{sect:facetSystems} below.
	Property~(i) of Assumption~\ref{ass:A2} obviously holds.
	Assume for simplicity that we have no exterior facets at all, such that $U_i = H^1_D(\Omega_i)$.
	To verify property~(ii), we have to show that for all $u \in U$,
	\begin{align}
	\label{eq:facetWiseTraceContinuity}
	  u \in \range(R)
	  \quad    \Longleftrightarrow    \quad
	  \big[ \forall F \in \mathcal{F} \colon T_{iF} u_i = T_{jF} u_j \big].
	\end{align}
	Recall from Example~\ref{ex:HelmholtzReconstruction} that the broken function $u \in U$ is in $\range(R)$
	if and only if \eqref{eq:nablaGrad} holds, i.e.,
	\[
	  \sum_{i=1}^N \int_{\Omega_i} \nabla u_i \cdot \varphiv \, dx = - \sum_{i=1}^N \int_{\Omega_i} u_i (\opdiv \varphiv) \, dx
		\qquad \forall \varphiv \in C^\infty_0(\Omega)^d.
	\]
	Integration by parts on each subdomain shows that
	\eqref{eq:nablaGrad} is equivalent to
	\[
	  \sum_{i=1}^N \int_{\partial\Omega_i} u_i (\varphiv\cdot \nu_i) \, ds = 0 \qquad \forall \varphiv \in C^\infty_0(\Omega)^d.
	\]
	Since $\varphiv$ vanishes on $\partial\Omega$ and since $u_{i|F}$ is in $L^2(F)$ for each interior facet $F \in \mathcal{F}_i$,
	the above can be rewritten as
	\[
	  \sum_{F_{ij} \in \mathcal{F}} \int_{F_{ij}} (u_i - u_j) (\varphiv\cdot \underbrace{\nu_i}_{= - \nu_j}) \, ds = 0
		\qquad \forall \varphiv \in C^\infty_0(\Omega)^d.
	\]
	Since $C^\infty_0(F)$ is dense in $L^2(F)$, the above identity holds if and only if
	$T_{iF} u_i = T_{jF} u_j$ for all $F_{ij} \in \mathcal{F}$.
	The same argument works if $L^2(F)$ is replaced by $H^s(F)$ for $s \in [0, \tfrac{1}{2}]$.
	In the discretized case, assume that we use the collective trace operators as well and that we have again no exterior facets.
	If we fix an ordering within the selected dofs on each facet $F = F_{ij}$ such that $T_{iF}$ and $T_{jF}$ map into the same space $\Lambda_F$,
	then the condition \eqref{eq:facetWiseTraceContinuity} holds and we can define $\mathcal{X}$ as above.
\end{example}

\begin{remark}
  Note that the exchange operator $\Pi$ defined in \cite[Sect.~2]{CollinoGhanemiJoly:2000a}
	is slightly different from $\mathcal{X}$ in Example~\ref{ex:swapping}: $\Pi$ swaps traces on the interior facets
	but it evaluates to zero on exterior Robin facets. Therefore, the condition $\Pi^2 = I$ only holds on the interior facets,
	cf.\ \cite[Lemma~2]{CollinoGhanemiJoly:2000a}.
\end{remark}

\begin{remark}
  As the attentive reader will have noticed, the operator $(I - \mathcal{X})T$ in Example~\ref{ex:swapping}
	evaluates the jump on each interior facet \emph{twice}:
	\[
	  ((I - \mathcal{X})T u)_{iF} = T_{iF} u_i - T_{jF} u_j\,, \qquad ((I - \mathcal{X})T u)_{jF} = T_{jF} u_j - T_{iF} u_i\,.
	\]
	Certainly, one can construct \emph{one-sided} jump operators, and these are excessively used in the classical FETI
	and FETI-DP methods as well as the FETI-H method
	\cite{FarhatMacedoTezaur:DD11,FarhatMacedoLesoinne:2000a,FarhatMacedoLesoinneRouxMagoulesDeLaBourdonnaie:2000a};
	see also Sect.~\ref{sect:HDG} and~\ref{sect:FETIH}.
	In the article at hand, the \emph{two-sided} nature of $\mathcal{X}$ will play a principal role.
\end{remark}

\begin{example}[trace and swapping for $\Hv(\opdiv)$]
\label{ex:swappingHDiv}
  For $\widehat U = \Hv(\opdiv,\Omega)$ and $U_i = \Hv(\opdiv,\Omega_i)$,
	the natural trace space for the normal trace operator (see e.g.\ \cite[Sect.~3.5]{Monk:2003a})
	on an interior facet $F \in \mathcal{F}_i$ is $H^{-1/2}_{00}(F)$,
	i.e., the \emph{dual} of $H^{1/2}_{00}(F)$ containing distributions on $F$ that are not necessarily extendible by zero to
	$H^{-1/2}(\partial\Omega_i)$.
	Let us define $T_{iF} \colon \Hv(\opdiv,\Omega_i) \to H^{-1/2}_{00}(F)$
	by $u_i \mapsto \sigma_{iF} (u_i \cdot \nu_i)_{|F}$,
	where $\sigma_{iF} = - \sigma_{jF} \in \{-1, 1\}$ is a fixed sign pattern (for each interior facet $F = F_{ij}$).
	Then Condition~\eqref{eq:facetWiseTraceContinuity} can be shown to hold as well.
	To see this, we can use integration by parts to show that a broken function $u \in U$
	is in $\range(R)$ if and only if
	\begin{align}
	\label{eq:interface1}
	  \sum_{i=1}^N \langle (u_i \cdot \nu_i), \varphi \rangle_{H^{-1/2}(\partial\Omega_i) \times H^{1/2}(\partial\Omega_i)} = 0
		\qquad \forall \varphi \in C^\infty_0(\Omega).
	\end{align}
	We restrict the test functions $\varphi$ to those that vanish in the neighborhood of $\partial F$ for all interior faces $F$,
	such that $\varphi_{|F} \in H^{1/2}_{00}(F)$.
	Then \eqref{eq:interface1} implies
	\begin{align}
	\label{eq:interface2}
		\sum_{F_{ij} \in \mathcal{F}} \sigma_{iF} \langle T_{iF} u_i - T_{jF} u_j, \varphi \rangle_{H^{-1/2}_{00}(F) \times H^{1/2}_{00}(F)}
		= 0,
	\end{align}
  which holds if and only if $T_{iF} u_i = T_{jF} u_j$ in $H^{-1/2}_{00}(F)$ for all $F_{ij} \in \mathcal{F}$
	(by a density argument).
	Conversely, one can show that \eqref{eq:interface2} implies \eqref{eq:interface1}
	by using the fact that the space of $C^\infty_0(\Omega)$-functions that vanish in the vicinity of the \emph{wirebasket}
	$\big( \bigcup_i \partial\Omega_i \big) \setminus \big( \partial\Omega \cup \bigcup_{F \in \mathcal{F}} \text{int}(F) \big)$
	is dense in $H^1_0(\Omega)$, cf.\ \cite[Lemma~3.1]{BendaliBoubendir:2005a} and \cite{ChandlerWildeHewettMoiola:2017a}.
	The same kind of argument can be used to prove \eqref{eq:facetWiseTraceContinuity} for $\Hv(\opcurl,\Omega)$,
	at least for sufficiently smooth interfaces.
\end{example}

\begin{example}[trace and swapping for Raviart-Thomas elements]
\label{ex:swappingRaviartThomas}
	Consider a lowest-order Raviart-Thomas discretization of $\Hv(\opdiv,\Omega)$ (N\'ed\'elec face elements)
	such that each dof is associated with a face $f$ of the mesh, which has a fixed orientation.
	It is then reasonable to let the subdomain restriction operators $R_i$ simply select dofs according to their ownership and not change any orientation, so $R_i$ is a zero-one matrix.
	For a facet $F$, which is the union of mesh faces, we can simply define $T_{iF}$, $T_{jF}$ as the zero-one matrices selecting the dofs on the faces $f \subset F$.
	In that case, it is easy to see that again the condition \eqref{eq:facetWiseTraceContinuity} holds and $\mathcal{X}$ can be defined as the swapping operator.
	Note that each dof has at most multiplicity two, and so the trace operator $T_i$ is surjective,
	and it coincides with the natural trace operator (up to possible reordering) that selects all dofs of subdomain $i$ with multiplicity $\ge 2$.
\end{example}

\begin{example}[trace and swapping for N\'ed\'elec elements]
\label{ex:swappingNedelec}
	Consider an $\Hv(\opcurl)$-conforming discretization by lowest-order N\'ed\'elec edge elements such that each dof is associated with an edge $e$ of the mesh,
	which has a fixed orientation.
	It is then reasonable to let the subdomain restriction operators $R_i$ simply select dofs according to their ownership and not change any orientation,
	so $R_i$ is a zero-one matrix.
	For a facet $F$, we can simply define $T_{iF}$, $T_{jF}$ as the zero-one matrices selecting the dofs on the edges $e \subset \overline F$.
	In that case, it is easy to see that again the condition \eqref{eq:facetWiseTraceContinuity} holds and $\mathcal{X}$ can be defined as the swapping operator.
	Note that if an edge $e$ with an associated dof is on the interface between two facets, $e \subset \partial F_1 \cap \partial F_2$,
	then the collective trace operator is not surjective.
\end{example}

The following lemma connects the involution $\mathcal{X}$ with its associated projections and will be helpful in many ways later on.

\begin{lemma}
\label{lem:X}
  Let $\mathcal{X} \colon \Lambda \to \Lambda$ be a linear and bounded involution ($\mathcal{X}^2 = I$). Then
	\begin{enumerate}
	\item[(i)] $\tfrac{1}{2}(I \pm \mathcal{X})$ and $\tfrac{1}{2}(I \pm \mathcal{X}^\top)$ are projections,
	\item[(ii)] $\range(I \pm \mathcal{X}) = \ker(I \mp \mathcal{X})$ and $\range(I \pm \mathcal{X}^\top) = \ker(I \mp \mathcal{X}^\top)$.
	\end{enumerate}
\end{lemma}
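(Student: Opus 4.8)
The plan is to treat both statements as elementary consequences of the single algebraic identity $\mathcal{X}^2 = I$, handling the two sign choices simultaneously through the $\pm/\mp$ bookkeeping.

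For part~(i) I would simply expand the square. Writing $P_\pm := \tfrac{1}{2}(I \pm \mathcal{X})$, the computation
\[
  P_\pm^2 = \tfrac{1}{4}(I \pm \mathcal{X})^2 = \tfrac{1}{4}(I \pm 2\mathcal{X} + \mathcal{X}^2) = \tfrac{1}{4}(2I \pm 2\mathcal{X}) = P_\pm
\]
shows directly that $P_\pm$ is idempotent, hence a projection; it is bounded since $\mathcal{X}$ is. For the transposed operators I would first observe that $\mathcal{X}^\top$ is itself a bounded involution: by the order-reversing composition rule for transposes, $(\mathcal{X}^\top)^2 = \mathcal{X}^\top \mathcal{X}^\top = (\mathcal{X}\mathcal{X})^\top = (\mathcal{X}^2)^\top = I$. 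The identical expansion then applies to $\tfrac{1}{2}(I \pm \mathcal{X}^\top)$.

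For part~(ii) I would prove the two inclusions of $\range(I \pm \mathcal{X}) = \ker(I \mp \mathcal{X})$ separately. The forward inclusion follows from the factorization $(I \mp \mathcal{X})(I \pm \mathcal{X}) = I - \mathcal{X}^2 = 0$: any $y = (I \pm \mathcal{X})x$ then satisfies $(I \mp \mathcal{X})y = 0$. For the reverse inclusion, if $y \in \ker(I \mp \mathcal{X})$ then $\mathcal{X} y = \pm y$, whence $(I \pm \mathcal{X})y = 2y$ and therefore $y = (I \pm \mathcal{X})(\tfrac{1}{2}y) \in \range(I \pm \mathcal{X})$. Since $\mathcal{X}^\top$ was already shown to be an involution, the very same two-line argument yields $\range(I \pm \mathcal{X}^\top) = \ker(I \mp \mathcal{X}^\top)$.

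I do not anticipate any genuine obstacle: the entire lemma is pure operator algebra driven by $\mathcal{X}^2 = I$. The only points requiring a moment's care are (a)~confirming that passing to the transpose preserves the involution property, which rests precisely on the order-reversing rule giving $(\mathcal{X}^\top)^2 = (\mathcal{X}^2)^\top$, and (b)~keeping the $\pm$ and $\mp$ signs correctly paired so that the product $(I \mp \mathcal{X})(I \pm \mathcal{X})$ collapses to $I - \mathcal{X}^2$ with no surviving cross term. I note that part~(ii) could alternatively be deduced from part~(i) via the standard identity $\range(P) = \ker(I - P)$ for a projection $P = P_\pm$, but the direct factorization argument above is shorter and self-contained.
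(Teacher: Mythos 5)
Your proof is correct and follows essentially the same route as the paper: part~(i) by expanding the square using $\mathcal{X}^2 = I$, and part~(ii) by elementary algebra equivalent to the standard identity $\range(P) = \ker(I-P)$ for the complementary projections, which you yourself note as the alternative the paper actually uses. The explicit check that $(\mathcal{X}^\top)^2 = (\mathcal{X}^2)^\top = I$ is a welcome bit of care that the paper leaves implicit.
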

\begin{proof}
  $\tfrac{1}{4}(I \pm \mathcal{X})(I \pm \mathcal{X}) = \tfrac{1}{4}(I \pm 2 \mathcal{X} + \mathcal{X}^2) = \tfrac{1}{2}(I \pm \mathcal{X})$,
	same for the transposed version.
	The two projections sum up to unity, i.e., $\tfrac{1}{2}(I \mp \mathcal{X}) = I - \tfrac{1}{2}(I \pm \mathcal{X}$).
	From the projection property, it can also be shown that
	$\range(\tfrac{1}{2}(I \pm \mathcal{X})) = \ker(\tfrac{1}{2}(I \mp \mathcal{X})$, same for the transposed version.
	In particular all the ranges are closed subspaces.
\end{proof}

\begin{remark}
\label{rem:XP}
  One can also reverse the statement of Lemma~\ref{lem:X}: for any linear and bounded projection $P$,
	we can construct an involution $\mathcal{X}_P := 2 P - I$, which is the \emph{reflection operator}
	that coincides with the identity on $\range(P)$ and flips the sign on $\ker(P)$.
\end{remark}

\begin{remark}
\label{rem:XTconj}
  In the complexified case (see Sect.~\ref{sect:globalProblem}), under Assumption~\ref{ass:A2},
	\[
	  v \in \range(R)
		\Longleftrightarrow \overline v \in \range(R)
		\Longleftrightarrow (I - \mathcal{X}) T \overline v = 0
		\Longleftrightarrow (I - \overline{\mathcal{X}}) T v = 0
		\Longleftrightarrow T v \in \ker(I - \overline{\mathcal{X}}),
	\]
	where $\overline v$ is the conjugate of $v$ and $\overline{\mathcal{X}} := \mathcal{X}_\re - \complexi \mathcal{X}_\im$
	denotes the conjugate operator of $\mathcal{X} = \mathcal{X}_\re + \complexi \mathcal{X}_\im$ where $\mathcal{X}_\re$, $\mathcal{X}_\im$ are operators on the real Hilbert space
	(see also Section~\ref{sect:abstractDD}).
	Therefore,
	\[
	  \range(R) = \ker((I - \overline{\mathcal{X}})T).
	\]
	Moreover, $\overline{\mathcal{X}}^2 = I$ and $\tfrac{1}{2}(I \pm \overline{\mathcal{X}})$ and
	$\tfrac{1}{2}(I \pm \mathcal{X}^\mathsf{H})$ are projections, where $\mathcal{X}^\mathsf{H} := \overline{\mathcal{X}}^\top$ denotes the Hermitian transpose.
	This fact is used in \cite{Claeys:2021Preprint}, see also Remark~\ref{rem:HermitianImp} below.
\end{remark}

\section{Facet systems$^*$}
\label{sect:facetSystems}

This section formalizes the facets from Example~\ref{ex:tracesH1Split} in the general and in the discrete case.
Since this is a detailed and technical matter,
readers who are mainly interested in the Schwarz method itself are encouraged to (at least initially)
bypass this section and continue with Sect.~\ref{sect:interfaceFluxes} (p.~\pageref{sect:interfaceFluxes}).

\subsection{General facets systems$^*$}

Let $(U, R)$ be an abstract domain decomposition of $\widehat U$ (Def.~\ref{def:abstractDD}) with Assumption~\ref{ass:A1} fulfilled.

\begin{definition}
\label{def:facet}
  A \emph{facet} $F$ of the abstract domain decomposition is characterized by
	\begin{itemize}
	\item the \emph{adjacency set} $\mathcal{N}_F$, a non-empty set of subdomain indices \emph{linked} by the facet,
	\item the \emph{facet space} $U_F$, a Hilbert space (real or complexified in accordance with $\widehat U$, $U$), and
	\item linear bounded \emph{trace operators}
	  $T_{jF} \colon U_j \to U_F$, $j \in \mathcal{N}_F$ and $\widehat T_F \colon \widehat U \to U_F$
	  that fulfill the \emph{consistency relation}
		\begin{align}
		\label{eq:traceConsistency}
		  T_{jF} R_j = \widehat T_F \qquad \forall j \in \mathcal{N}_F\,.
		\end{align}
	\end{itemize}
	A facet $F$ is \emph{interior} if $|\mathcal{N}_F| \ge 2$ and \emph{exterior} if $|\mathcal{N}_F| = 1$.
	There are two kinds of exterior facets,	\emph{Dirichlet facets} (where Dirichlet conditions are imposed)
	and \emph{auxiliary facets} (which are included for some other reason, however, only in rare cases).
	If $|\mathcal{N}_F| = 2$, we call $F$ \emph{bilateral}.
	
	A \emph{facet system} is a collection $\mathcal{F}$ of facets,
	and it is called bilateral if all its facets are bilateral.
	We denote by $\mathcal{F}_i$ the set of facets shared by subdomain $i$.
\end{definition}

\begin{definition}
\label{def:admissibleFacetSystem}
  A facet system $\mathcal{F}$ is \emph{admissible} with the abstract domain decomposition $(U, R)$ if
	\[
	  u \in \range(R)  \ \Longleftrightarrow \
		\begin{cases}
		  T_{iF} u_i = T_{jF} u_j \quad \forall i, j \in \mathcal{N}_F         & \text{for all interior facets } F \in \mathcal{F},\\
			T_{iF} u_i = 0          \quad \text{with } \mathcal{N}_F = \{ i \}   & \text{for all exterior Dirichlet facets } F \in \mathcal{F}.
		\end{cases}
	\]
\end{definition}

For admissible facet systems, the local trace space and trace operator are defined by
\[
   \Lambda_i = \productspace_{F \in \mathcal{F}_i} U_F\,, \qquad
	 T_i u_i := (T_{iF} u_i)_{F \in \mathcal{F}_i}\,,
\]
i.e., of \emph{collective type}.
If all interior facets are bilateral,
we can define $\mathcal{X} \colon \Lambda \to \Lambda$ by
\begin{align}
\label{eq:XBilateral}
  (\mathcal{X} \lambda)_{iF} =
	\begin{cases}
	    \lambda_{jF} & \text{for bilateral facets } F \text{ with } \mathcal{N}_F = \{ i, j \},\\
	  - \lambda_{iF} & \text{for exterior Dirichlet facets } F \text{ with } \mathcal{N}_F = \{ i \},\\
		  \lambda_{iF} & \text{for exterior auxiliary facets } F \text{ with } \mathcal{N}_F = \{ i \}.
	\end{cases}
\end{align}
Then Assumption~\ref{ass:A2} holds true.

\subsection{Discrete facet systems$^*$}
\label{sect:discreteFacetSystems}

In the continuous case, the proper choice of geometric facets and the associated trace operators
in order to achieve admissibility (Def.~\ref{def:admissibleFacetSystem})
hinges on the structure of the underlying geometry and Sobolev spaces, see Examples~\ref{ex:swapping} and~\ref{ex:swappingHDiv}.
The discrete case allows to construct facets solely from the sets of dofs and the sharing subdomains
 --- albeit not uniquely.
Two particular ways of construction are discussed below,
using the assumptions and notations of Sect.~\ref{sect:discreteMatrixNotation}.

\begin{definition}[discrete facet]
\label{def:discreteFacet}
	A \emph{discrete facet} $F$ is characterized by
	\begin{enumerate}
	\item[(i)] the \emph{adjacency} set $\mathcal{N}_F$, and
	\item[(ii)] a \emph{global dof set} $\mathcal{D}_F$,
	\end{enumerate}
	such that the compatibility relation $\mathcal{N}_F \subseteq \mathcal{N}_k$ holds for all $k \in \mathcal{D}_F$
	(with $\mathcal{N}_k$ defined as in Sect.~\ref{sect:discreteMatrixNotation}).
\end{definition}

The dof set $\mathcal{D}_F$ induces a trace space $U_F = \mathbb{R}^{n_F}$ or $\mathbb{C}^{n_F}$
with $n_F$ being the number of dofs in $\mathcal{D}_F$, where we agree on a unique numbering of the dofs.
Along with that, we obtain
\begin{itemize}
\item a global trace operator $\widehat T_F \colon \widehat U \to U_F$,
  the zero-one matrix selecting the dofs $\mathcal{D}_F$ from all the global dofs, and
\item for each subdomain $j \in \mathcal{N}_F$ a local trace operator $T_{jF} \colon U_j \to U_F$,
  the zero-one matrix selecting the dofs $\mathcal{D}_F$ from the local subdomain dofs.
\end{itemize}
These operators obviously fulfill the consistency relation~\eqref{eq:traceConsistency},
and altogether the conditions in Definition~\ref{def:facet} are met.
Note that two distinct discrete facets may have the same dof set but different adjacency sets
(see e.g., Fig.~\ref{fig:facetVariants_a}, facets $F_{23}$ and $F_{14}$).

\begin{definition}
\label{def:discreteFacetClosed}
  A discrete facet $F$ with adjacency set $\mathcal{N}_F$ and dof set $\mathcal{D}_F$ is called \emph{closed}
	if $\mathcal{D}_F$ contains \emph{all} the dofs shared by the subdomains listed in $\mathcal{N}_F$, i.e., if
	\[
	  \mathcal{D}_F = \{ k = 1,\ldots,n \colon \mathcal{N}_F \subseteq \mathcal{N}_k \}.
	\]
\end{definition}

\begin{remark}
\label{rem:discreteDirichletFacets}
  If one wishes to include Dirichlet facets, one has to extend Definition~\ref{def:discreteFacet}
	and allow exterior facets $F$ with an empty global dof set but with a non-trivial trace space $U_F$
	and an associated trace operator fulfilling the property $\widehat T_F = 0$.
	The theory of Sect.~\ref{sect:bilateralDiscrFacetSys} below does not include any exterior facets,
	but can be extended without major effort.
\end{remark}

In the following, two constructions of discrete facet systems are discussed that are \emph{admissible}
in the sense of Definition~\ref{def:admissibleFacetSystem}.

\subsubsection{Bilateral discrete facet systems$^*$}
\label{sect:bilateralDiscrFacetSys}

For each pair $i \neq j$ of subdomain indices we collect the global dofs shared by $i$ and $j$,
\begin{align}
\label{eq:defGammaijClosed}
  \overline{\mathcal{D}}_{ij} := \{ k = 1,\ldots,n \colon \{i, j\} \subseteq \mathcal{N}_k \}.
\end{align}
The simplest discrete bilateral facet system is the \emph{maximal} set
\begin{align}
  \mathcal{F}^{\text{bil}}_{\max}
	  := \big\{ (\{i, j\}, \overline{\mathcal{D}}_{ij}) \colon i \neq j = 1,\ldots,n,\ \overline{\mathcal{D}}_{ij} \neq \emptyset \big\},
\end{align}
which simply contains all possible closed facets (with non-empty dof sets).
Other discrete facet systems can be obtained by removing dofs from the dof sets of individual facets,
or even removing entire facets from $\mathcal{F}^{\text{bil}}_{\max}$.
Proposition~\ref{prop:admissibleDiscreteFacetSet} below states how much one can actually remove
such that the discrete facet system is still admissible in the sense of Definition~\ref{def:admissibleFacetSystem}.

\begin{definition}[Connectivity graph]
\label{def:connectivityGraph}
  Let $\mathcal{F}$ be a discrete bilateral facet system (Def.~\ref{def:facet}, Def.~\ref{def:discreteFacet}).
	For each \emph{global interface dof} $k \in \mathcal{D}_\Gamma := \{ k = 1,\ldots,n \colon \mu_k \ge 2 \}$,
	the associated \emph{connectivity graph} $\mathcal{C}_k =(\mathcal{N}_k, \mathcal{E}_k)$
  is the undirected graph with nodes $\mathcal{N}_k$
  and edges
  \[
    \mathcal{E}_k := \big\{ \{i, j\} \colon i \neq j \in \mathcal{N}_k,\
      \exists F \in \mathcal{F},\ \mathcal{N}_F = \{i,j\} \big\},
  \]
	i.e., the subdomains are the nodes of the graph and the facets its edges.
  The graph $\mathcal{C}_k$ is \emph{connected}
	if each pair of nodes $i\neq j \in \mathcal{N}_k$ can be joined by a path of $m$ edges
  \[
    \{i_\ell, i_{\ell+1}\} \in \mathcal{E}_k \qquad \text{for } \ell=1,\ldots,m
  \]
  with $i = i_1$, $i_{m+1} = j$, and $i_\ell \in \mathcal{N}_k$ for $\ell=2,\ldots,m$,
	i.e., we can link two subdomains by passing through facets.
  The graph $\mathcal{C}_k$ is called
  \begin{itemize}
  \item \emph{maximal} if $\mathcal{E}_k = \{ \{i, j\} \colon i \neq j \in \mathcal{N}_k \}$,
  \item \emph{minimal} if $\mathcal{E}_k$ is a spanning tree for $\mathcal{N}_k$,
	  i.e., the graph is connected, has no cycles, and each node is visited by at least one edge.
		In that case $\cardinality{\mathcal{E}_k} = \cardinality{\mathcal{N}_k} - 1$.
\end{itemize}
\end{definition}

\begin{figure}
\begin{center}
  \def\svgwidth{0.9\textwidth}
  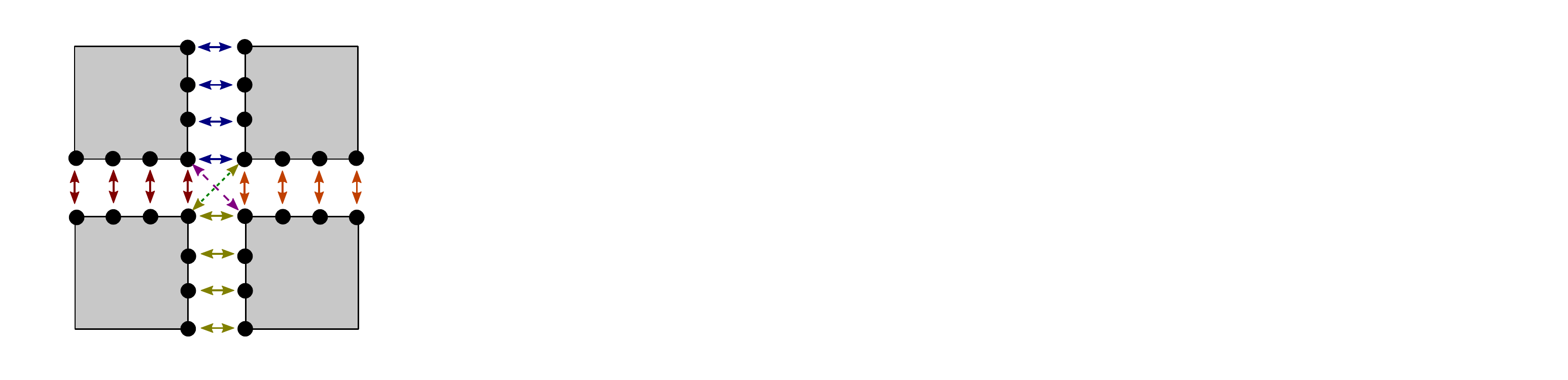
\caption{\label{fig:facetVariants_a}
  Discrete bilateral facet systems for a setting with four subdomains.
	Local dofs are visualized by $\bullet$,
	individual facets are visualized by groups of colored double arrows, each of them connecting local dofs of two subdomains.
	\emph{Left:} fully-redundant. \emph{Middle:} properly closed. \emph{Right:} non-redundant.}
\end{center}
\end{figure}

\begin{figure}
\begin{center}
  \def\svgwidth{0.83\textwidth}
  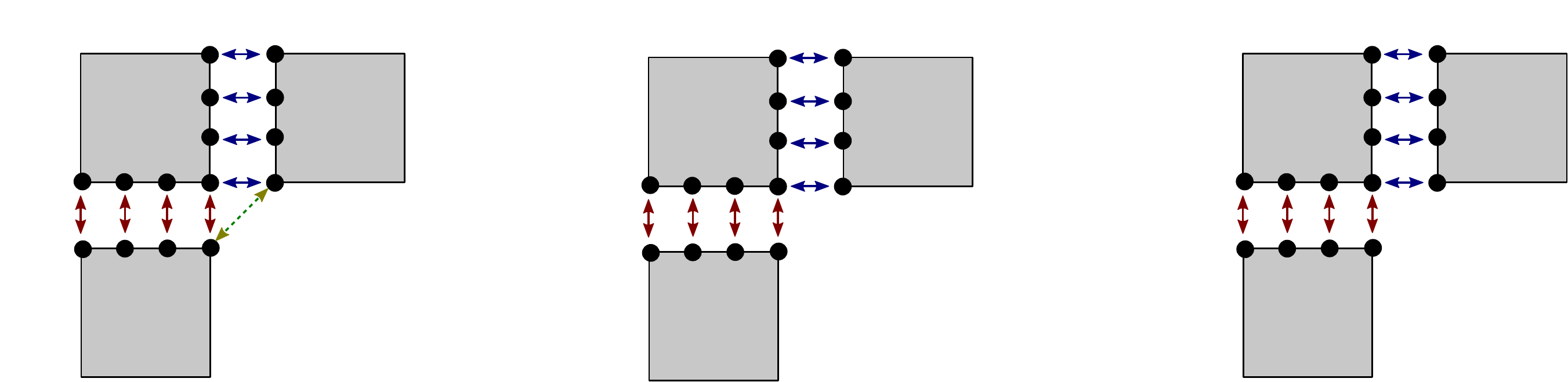
\caption{\label{fig:facetVariants_b}
  Discrete bilateral facet systems for a setting with three subdomains.
	Local dofs are visualized by $\bullet$,
	individual facets are visualized by groups of colored double arrows, each of them connecting local dofs of two subdomains.
	\emph{Left:} fully-redundant. \emph{Middle:} properly closed. \emph{Right:} non-redundant.}
\end{center}
\end{figure}

\begin{figure}
\begin{center}
  \def\svgwidth{0.83\textwidth}
  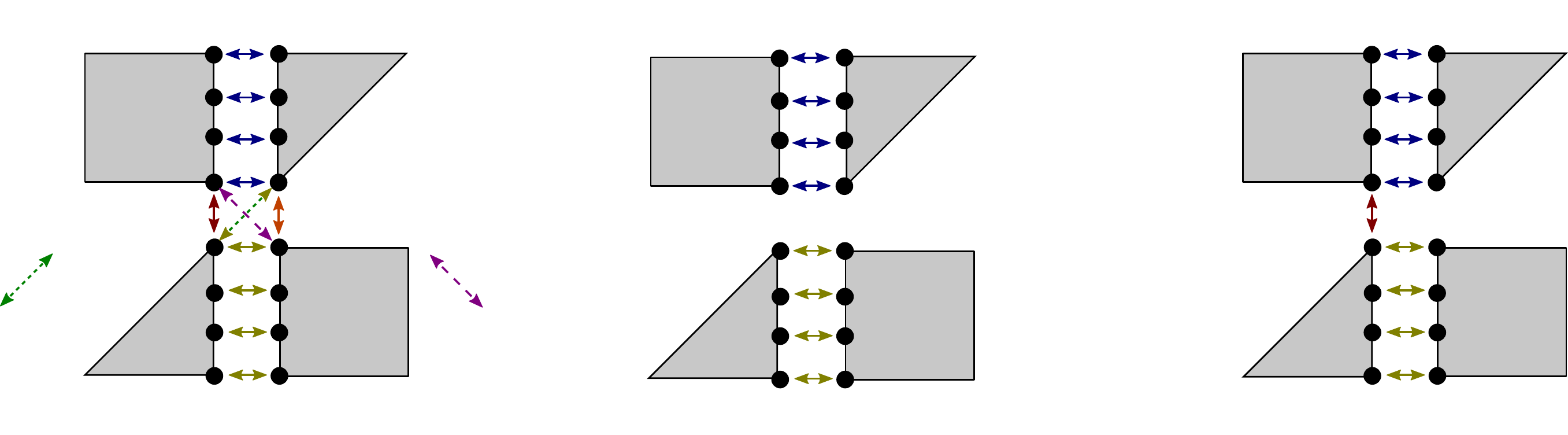
\caption{\label{fig:facetVariants_c}
  Discrete bilateral facet systems for a setting with a degenerate (non-Lipschitz) domain, divided into four subdomains.
	Local dofs are visualized by $\bullet$,
	individual facets are visualized by groups of colored double arrows, each of them connecting local dofs of two subdomains.
	\emph{Left:} fully-redundant. \emph{Middle:} properly closed. \emph{Right:} non-redundant.}
\end{center}
\end{figure}

The proof of the following proposition is left to the reader.

\begin{proposition}
\label{prop:admissibleDiscreteFacetSet}
  Let $\mathcal{F}$ be a bilateral discrete facet system (Def.~\ref{def:facet}, Def.~\ref{def:discreteFacet}).
	Then $\mathcal{F}$ is admissible (Def.~\ref{def:admissibleFacetSystem})
	if and only if the connectivity graph $\mathcal{C}_k$ of each interface dof $k \in \mathcal{D}_\Gamma$ is connected
	(Def.~\ref{def:connectivityGraph}).
	In the admissible case, $\mathcal{D}_\Gamma = \bigcup_{F \in \mathcal{F}} \mathcal{D}_F$.
\end{proposition}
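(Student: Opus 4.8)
The plan is to reduce the biconditional in Definition~\ref{def:admissibleFacetSystem} to a family of purely combinatorial statements, one per interface dof, and then to recognize each of these as the connectivity of the associated graph $\mathcal{C}_k$. First I would make the two sides of the admissibility equivalence concrete using the incidence structure of Sect.~\ref{sect:discreteMatrixNotation}. A broken vector $u \in U$ lies in $\range(R)$ iff $u = R\widehat u$ for some $\widehat u$; since $(R_i\widehat u)_\ell = \widehat u_{\mathsf{g}_i(\ell)}$, this happens exactly when the local entries glue consistently, i.e.
\[
  u \in \range(R) \iff \Big[\, \forall k \in \mathcal{D}_\Gamma\ \forall i,j \in \mathcal{N}_k :\ (u_i)_{\mathsf{g}_i^{-1}(k)} = (u_j)_{\mathsf{g}_j^{-1}(k)} \,\Big],
\]
the constraint being vacuous when $\mu_k = 1$. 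Writing $u_i[k] := (u_i)_{\mathsf{g}_i^{-1}(k)}$ for $i \in \mathcal{N}_k$, the right-hand side says that for each interface dof $k$ the values $\{u_i[k] : i \in \mathcal{N}_k\}$ all coincide. On the other side, because each $T_{iF}$ is the zero-one matrix selecting the dofs $\mathcal{D}_F$, a bilateral facet condition $T_{iF}u_i = T_{jF}u_j$ (with $\mathcal{N}_F = \{i,j\}$) is equivalent to $u_i[k] = u_j[k]$ for all $k \in \mathcal{D}_F$. Reorganizing these constraints by the dof $k$ they touch, the full facet-continuity side of admissibility becomes: for every $k \in \mathcal{D}_\Gamma$ and every edge $\{i,j\} \in \mathcal{E}_k$ --- that is, every facet $F$ with $\mathcal{N}_F = \{i,j\}$ whose dof set $\mathcal{D}_F$ contains $k$ --- one has $u_i[k] = u_j[k]$.

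Next I would observe that the implication ``$u \in \range(R) \Rightarrow$ facet continuity'' is automatic, since agreement of all local values at every dof certainly implies agreement across the dofs of each facet. Hence admissibility is equivalent to the reverse implication holding for all $u \in U$, and by the two reformulations above this reverse implication decouples completely over $k \in \mathcal{D}_\Gamma$: it holds for all $u$ iff, for each interface dof $k$ separately, equality of the node labels $u_i[k]$ along every edge of $\mathcal{C}_k$ forces them to be equal over all of $\mathcal{N}_k$. This is exactly the elementary fact that a function on the vertices of a graph that is constant along each edge is globally constant precisely when the graph is connected (Def.~\ref{def:connectivityGraph}).

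The heart is this last equivalence, which I would argue in both directions. If $\mathcal{C}_k$ is connected, any two nodes of $\mathcal{N}_k$ are joined by a path, and equality along each edge propagates along the path to give $u_i[k] = u_j[k]$; doing this for every $k$ and invoking the first display yields $u \in \range(R)$, so admissibility holds. Conversely, if some $\mathcal{C}_k$ is disconnected, I would exhibit an explicit counterexample: partition $\mathcal{N}_k$ into its connected components, set $u_i[k]$ equal to a value that is constant on each component but differs between components, and set every other local entry of $u$ to zero. Then all facet conditions are satisfied (constraints at dofs $k' \neq k$ read $0 = 0$, while edges contributing to $k$ stay inside a single component), yet the values at $k$ do not all agree, so $u \notin \range(R)$, contradicting admissibility. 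The main obstacle here is precisely this bookkeeping: one must track which facet constraint touches which dof and verify the decoupling across dofs, so that zeroing all entries away from $k$ really disturbs no other constraint and a single disconnected $\mathcal{C}_k$ suffices.

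Finally, for the identity $\mathcal{D}_\Gamma = \bigcup_{F \in \mathcal{F}} \mathcal{D}_F$ in the admissible case, the inclusion $\supseteq$ is immediate from the compatibility relation of Definition~\ref{def:discreteFacet}: any $k \in \mathcal{D}_F$ for a bilateral $F$ with $\mathcal{N}_F = \{i,j\}$ satisfies $\{i,j\} \subseteq \mathcal{N}_k$, whence $\mu_k \ge 2$ and $k \in \mathcal{D}_\Gamma$. For the reverse inclusion, given $k \in \mathcal{D}_\Gamma$ the graph $\mathcal{C}_k$ is connected with $|\mathcal{N}_k| \ge 2$, so it has at least one edge $\{i,j\} \in \mathcal{E}_k$, which by construction is furnished by a facet $F$ with $k \in \mathcal{D}_F$; thus $k \in \bigcup_{F \in \mathcal{F}} \mathcal{D}_F$, completing the proof.
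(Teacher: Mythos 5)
The paper explicitly leaves this proof to the reader, so there is no in-paper argument to compare against; your proof supplies the expected one and it is correct. The reduction of admissibility to a per-dof statement, the observation that the forward implication is automatic, the path-propagation argument for connected $\mathcal{C}_k$, and the component-wise counterexample for disconnected $\mathcal{C}_k$ are all sound (and since a bilateral facet system contains no exterior facets, your counterexample does not collide with any Dirichlet conditions). One point worth making explicit: Definition~\ref{def:connectivityGraph} as literally written puts $\{i,j\}$ into $\mathcal{E}_k$ whenever \emph{some} facet has $\mathcal{N}_F=\{i,j\}$, without requiring $k\in\mathcal{D}_F$; under that literal reading both the proposition and your argument (in particular the step where an edge of $\mathcal{C}_k$ enforces $u_i[k]=u_j[k]$, and the final $\subseteq$ inclusion) would fail. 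You silently use the strengthened reading ``there exists $F$ with $\mathcal{N}_F=\{i,j\}$ \emph{and} $k\in\mathcal{D}_F$,'' which is clearly what the paper intends (otherwise removing $k$ from $\mathcal{D}_F$ in the construction of $\mathcal{F}^{\text{bil}}_{\text{nr}}$ could not alter $\mathcal{C}_k$); it would be worth stating that you are using this reading rather than the letter of the definition.
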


\begin{definition}
  Let $\mathcal{F}$ be a discrete bilateral facet system (Def.~\ref{def:facet}, Def.~\ref{def:discreteFacet})
	and assume that for each interface dof $k \in \mathcal{D}_\Gamma$
	the connectivity graph $\mathcal{C}_k$ is connected.
	We call $\mathcal{F}$
	\begin{itemize}
	\item \emph{fully redundant} if each connectivity graph is maximal,
	\item \emph{non-redundant} if each connectivity graph is minimal.
	\end{itemize}
\end{definition}

Obviously, the \emph{maximal} discrete facet system $\mathcal{F}^{\text{bil}}_{\max}$ is fully redundant.

A non-redundant discrete facet system $\mathcal{F}^{\text{bil}}_{\text{nr}}$ can be computed in the following way:
\begin{enumerate}
\item Starting from $\mathcal{F}^{\text{bil}}_{\max}$, we run over each dof $k \in \mathcal{D}_\Gamma$
    and compute a minimal spanning tree for the connectivity graph $\mathcal{C}_k$.
\item	For each edge $\{ i, j \} \in \mathcal{E}_k$ that is not contained in the spanning tree,
	  we remove the dof $k$ from the dof set $\mathcal{D}_F$ of every facet $F$ with $\mathcal{N}_F = \{i, j\}$.
		Facets with empty dof sets are removed.
\end{enumerate}
Doing so, all the updated connectivity graphs are minimal.

Proposition~\ref{prop:admissibleDiscreteFacetSet}
states that the fully redundant and the non-redundant versions are good choices,
because admissibility (Def.~\ref{def:admissibleFacetSystem}) is guaranteed by construction.
Both variants have been used in the classical FETI method and in the FETI-DP method, cf.\ \cite[Ch.~6]{ToselliWidlund:Book}.

\medskip

A third variant, here called \emph{properly closed}, is constructed as follows:
\[
  \mathcal{F}^{\text{bil}}_{\text{pc}}
	  := \{ F \in \mathcal{F}^{\text{bil}}_{\max} \colon \exists k \in \mathcal{D}_F \colon \cardinality{\mathcal{N}_k} = 2 \},
\]
i.e., we drop those facets from $\mathcal{F}_{\max}$ where all dofs have multiplicity $>2$.
Note that indeed, all the facets in $\mathcal{F}^{\text{bil}}_{\text{pc}}$
are closed in the sense of Definition~\ref{def:discreteFacetClosed}.
In general, $\mathcal{F}^{\text{bil}}_{\text{pc}}$ is neither fully redundant nor non-redundant,
and for most examples the discretes facets of $\mathcal{F}^{\text{bil}}_{\text{pc}}$ are in accordance with the \emph{geometric}
facets of dimension $(d-1)$, see Fig.~\ref{fig:facetVariants_a}--\ref{fig:facetVariants_b}.
Note, however, that in case of \emph{degenerate} domains,
the properly closed variant may lead to non-connected connectivity graphs, see Fig.~\ref{fig:facetVariants_c}.
Nevertheless, for non-degenerate geometries,
the properly closed version usually leads to connected graphs and so Proposition~\ref{prop:admissibleDiscreteFacetSet}
guarantees admissibility as well.
The properly closed variant is (implicitly) used in the FETI-2LM formulation
\cite{Bourdonnaye:DD10,FarhatMacedoMagoulesRoux:USNCCM,FarhatMacedoLesoinneRouxMagoulesDeLaBourdonnaie:2000a}.

\medskip

The following statement marks a special case where all the above variants coincide.

\begin{proposition}
  Let $(U, R)$ be an abstract subspace decomposition of $\widehat U$ (Def.~\ref{def:abstractDD})
	and let the assumptions of Sect.~\ref{sect:discreteMatrixNotation} hold.
	In addition, assume that $\mu_{\max} \le 2$, i.e., no dof is shared by more than two subdomains.
	Then there is only one unique discrete facet system that is admissible (unless one allows exterior facets).
	In particular, any facet system following the construction from above of $\mathcal{F}^{\textnormal{bil}}_{\textnormal{nr}}$
	and $\mathcal{F}^{\textnormal{bil}}_{\textnormal{pc}}$ is equal to $\mathcal{F}^{\textnormal{bil}}_{\max}$.
\end{proposition}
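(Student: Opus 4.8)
The plan is to exploit the fact that $\mu_{\max}\le 2$ collapses every connectivity graph to a two-node graph, for which the notions \emph{connected}, \emph{maximal}, and \emph{minimal} all coincide. First I would record the basic bookkeeping. If $k\in\mathcal{D}_\Gamma$ then $\mu_k\ge 2$, and together with $\mu_{\max}\le 2$ this forces $\mu_k=2$, so $\mathcal{N}_k=\{i,j\}$ for a single unordered pair. Consequently the closed dof sets $\overline{\mathcal{D}}_{ij}$ from \eqref{eq:defGammaijClosed} partition $\mathcal{D}_\Gamma$, being pairwise disjoint for distinct pairs (any $k$ with $\{i',j'\}\subseteq\mathcal{N}_k=\{i,j\}$ forces $\{i',j'\}=\{i,j\}$). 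Moreover, for any bilateral facet $F$ with $\mathcal{N}_F=\{i,j\}$ and any $k\in\mathcal{D}_F$, the compatibility relation $\mathcal{N}_F\subseteq\mathcal{N}_k$ combined with $|\mathcal{N}_F|=2=|\mathcal{N}_k|$ yields $\mathcal{N}_k=\{i,j\}$; hence $\mathcal{D}_F\subseteq\overline{\mathcal{D}}_{ij}$. So a facet's adjacency set already constrains which dofs it may carry.

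Next I would verify the equalities $\mathcal{F}^{\text{bil}}_{\text{nr}}=\mathcal{F}^{\text{bil}}_{\text{pc}}=\mathcal{F}^{\text{bil}}_{\max}$. For the non-redundant construction, each $\mathcal{C}_k$ has exactly the two nodes of $\mathcal{N}_k$, and since $k\in\overline{\mathcal{D}}_{ij}\neq\emptyset$ the edge $\{i,j\}$ is present in $\mathcal{F}^{\text{bil}}_{\max}$, so $\mathcal{C}_k$ is already connected with a single edge; that edge is simultaneously the maximal graph and a spanning tree, so step~2 of the construction of $\mathcal{F}^{\text{bil}}_{\text{nr}}$ removes no edge and hence no dof, leaving $\mathcal{F}^{\text{bil}}_{\max}$ unchanged. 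For the properly closed construction, every facet $(\{i,j\},\overline{\mathcal{D}}_{ij})\in\mathcal{F}^{\text{bil}}_{\max}$ has a non-empty dof set, and by the first paragraph each of its dofs $k$ satisfies $|\mathcal{N}_k|=2$; therefore the selection criterion defining $\mathcal{F}^{\text{bil}}_{\text{pc}}$ is met by every facet and none is dropped.

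Finally I would establish uniqueness. Let $\mathcal{F}$ be any admissible bilateral discrete facet system without exterior facets. By Proposition~\ref{prop:admissibleDiscreteFacetSet} every $\mathcal{C}_k$ is connected and $\mathcal{D}_\Gamma=\bigcup_{F\in\mathcal{F}}\mathcal{D}_F$. For $k$ with $\mathcal{N}_k=\{i,j\}$, the coverage identity gives a facet $F\ni k$, and compatibility forces $\mathcal{N}_F=\{i,j\}$; conversely every facet with adjacency $\{i,j\}$ has its dof set inside $\overline{\mathcal{D}}_{ij}$. Hence the dofs carried by facets with adjacency $\{i,j\}$ fill out exactly $\overline{\mathcal{D}}_{ij}$, and no facet with any other adjacency can touch those dofs, so $\mathcal{F}=\mathcal{F}^{\text{bil}}_{\max}$. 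I expect the only genuine subtlety to be the reading of \emph{uniqueness}: without the normalization (shared by all three constructions) that a facet system carries at most one facet per adjacency pair, one could split a single $\overline{\mathcal{D}}_{ij}$ across several distinct facets with the same adjacency $\{i,j\}$ and remain admissible, so the statement should be understood modulo merging facets with equal adjacency set. Everything else reduces to the multiplicity bookkeeping above, and it is precisely the two-node collapse of each $\mathcal{C}_k$ that makes all three variants agree.
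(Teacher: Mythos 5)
The paper states this proposition without proof (it is left to the reader, like Proposition~\ref{prop:admissibleDiscreteFacetSet} before it), so there is no authorial argument to compare against; your proof is correct and is the natural argument the statement invites. The key observations are all in place: $\mu_k=2$ for every interface dof collapses each connectivity graph $\mathcal{C}_k$ to two nodes, for which a single edge is simultaneously the maximal graph and a spanning tree, so the non-redundant pruning removes nothing; the compatibility relation $\mathcal{N}_F\subseteq\mathcal{N}_k$ together with $|\mathcal{N}_F|\ge 2$ (no exterior facets) forces every facet to be bilateral with $\mathcal{N}_F=\mathcal{N}_k$ for each of its dofs, which gives both $\mathcal{F}^{\text{bil}}_{\text{pc}}=\mathcal{F}^{\text{bil}}_{\max}$ and the uniqueness claim via Proposition~\ref{prop:admissibleDiscreteFacetSet}. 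Your caveat about uniqueness is well taken and is a looseness in the \emph{statement} rather than a gap in your argument: Definition~\ref{def:discreteFacet} does not forbid two distinct facets with the same adjacency set $\{i,j\}$ whose dof sets together cover $\overline{\mathcal{D}}_{ij}$, and such a splitting remains admissible, so "unique" can only mean unique up to merging facets with equal adjacency sets (equivalently, uniqueness of the induced set of pairwise constraints). With that reading made explicit, the proof is complete.
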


The following proposition shows that once $\mu_{\max} > 2$ (i.e., once a cross point dof appears),
the trace operator necessarily fails to be surjective.

\begin{proposition}
\label{prop:bilateralCrosspoints}
  Let $(U, R)$ be an abstract subspace decomposition of $\widehat U$ (Def.~\ref{def:abstractDD}),
	let the assumptions of Sect.~\ref{sect:discreteMatrixNotation} hold,
	and let $\mathcal{F}$ be an admissible bilateral facet system.
	Then
	\[
	  \mu_{\max} \le 2 \quad \Longleftrightarrow \quad \range(T) = \Lambda.
	\]
\end{proposition}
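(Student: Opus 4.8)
The plan is to reduce the identity $\range(T)=\Lambda$ to a purely combinatorial statement about how often each interface dof is selected by the facets, and then to read off the two implications from the connectivity of the graphs $\mathcal{C}_k$ guaranteed by admissibility.

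First I would exploit the block-diagonal structure $T=\diag(T_i)_{i=1}^N\colon \productspace_{i=1}^N U_i\to\productspace_{i=1}^N\Lambda_i$. Since $\Lambda=\productspace_{i=1}^N\Lambda_i$, one has $\range(T)=\Lambda$ if and only if each $T_i\colon U_i\to\Lambda_i$ is surjective. I would then analyse $T_i$ as a zero-one matrix: its rows are indexed by the pairs $(F,k)$ with $F\in\mathcal{F}_i$ and $k\in\mathcal{D}_F$, and the row $(F,k)$ selects the single local dof $\mathsf{g}_i^{-1}(k)$ (well defined because $i\in\mathcal{N}_F\subseteq\mathcal{N}_k$), hence equals the covector $e_{\mathsf{g}_i^{-1}(k)}^\top$. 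Each row thus has exactly one nonzero entry, and two rows $(F_1,k_1)$, $(F_2,k_2)$ coincide exactly when $k_1=k_2$ (using injectivity of $\mathsf{g}_i$). Therefore the rows of $T_i$ are linearly independent --- equivalently $T_i$ has full row rank and is surjective --- if and only if no global dof $k$ belongs to $\mathcal{D}_F$ for two distinct facets $F\in\mathcal{F}_i$. Summarising, $\range(T)=\Lambda$ holds iff for every subdomain $i$ and every interface dof $k$ with $i\in\mathcal{N}_k$ at most one facet of $\mathcal{F}_i$ contains $k$; in the language of Definition~\ref{def:connectivityGraph} this says that every vertex of every connectivity graph $\mathcal{C}_k$ has degree at most one.

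For the implication $\mu_{\max}\le 2\Rightarrow\range(T)=\Lambda$ I would invoke the preceding uniqueness result: under $\mu_{\max}\le 2$ every admissible bilateral facet system coincides with $\mathcal{F}^{\text{bil}}_{\max}$. In $\mathcal{F}^{\text{bil}}_{\max}$ each interface dof $k$ (which necessarily has $\mathcal{N}_k=\{i,j\}$) lies in exactly one facet, namely the one with adjacency $\{i,j\}$ and dof set $\overline{\mathcal{D}}_{ij}$; indeed a facet on a pair $\{i,\ell\}$ contains $k$ only if $\{i,\ell\}\subseteq\mathcal{N}_k$, forcing $\ell=j$. Hence no dof is selected twice by any $T_i$, and by the criterion above $T$ is surjective. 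For the converse I argue by contraposition: assume $\mu_{\max}>2$ and pick $k$ with $\cardinality{\mathcal{N}_k}\ge 3$. By Proposition~\ref{prop:admissibleDiscreteFacetSet} admissibility makes $\mathcal{C}_k$ connected, and a connected graph on $\ge 3$ vertices must have a vertex $i$ of degree $\ge 2$ (a graph with all degrees $\le 1$ is a matching, which is connected only on at most two vertices). The two distinct edges $\{i,j_1\}$, $\{i,j_2\}$ at $i$ are realised by two distinct facets $F_1,F_2\in\mathcal{F}_i$ with $k\in\mathcal{D}_{F_1}\cap\mathcal{D}_{F_2}$, so $T_i$ selects the dof $\mathsf{g}_i^{-1}(k)$ twice and fails to be surjective; thus $\range(T)\neq\Lambda$.

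The step I expect to carry the most weight is the forward direction, because admissibility by itself does \emph{not} forbid redundant facets: for a dof with $\mu_k=2$ one could a priori place two facets on the same pair $\{i,j\}$ both containing $k$, and this would already destroy surjectivity of $T_i$ while keeping $\mathcal{C}_k$ (and hence admissibility) intact. This is exactly the gap that the preceding uniqueness proposition closes, so the cleanest route is to lean on it rather than to re-derive non-redundancy. The remaining ingredients --- the rank count for the zero-one matrix $T_i$ and the elementary graph fact that connectivity on three or more vertices forces a vertex of degree at least two --- are routine, and I would also note the convention that an edge $\{i,j\}$ of $\mathcal{C}_k$ is to be read as the existence of a facet $F$ with $\mathcal{N}_F=\{i,j\}$ \emph{and} $k\in\mathcal{D}_F$, which is what makes the edges of $\mathcal{C}_k$ correspond to facets actually selecting the dof $k$.
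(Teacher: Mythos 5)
Your proof is correct and follows essentially the same route as the paper: a cross-point dof forces, via connectivity of $\mathcal{C}_k$, some subdomain to carry two distinct facets containing that dof, so $T_i$ selects the corresponding local dof twice and cannot be surjective, while $\mu_{\max}\le 2$ forces each interface dof into a unique facet per subdomain. Your forward direction is in fact slightly more careful than the paper's, which simply asserts ``there exists only one facet between them''; you correctly note that this step needs the preceding uniqueness proposition (or the tacit convention that distinct facets have distinct adjacency sets) to exclude duplicated facets on the same subdomain pair, which admissibility alone would not forbid.
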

\begin{proof}
  Assume that $\mu_{\max} > 2$ such that $\mu_k > 2$ for some global dof $k$.
	Due to Proposition~\ref{prop:admissibleDiscreteFacetSet}, the connectivity graph $\mathcal{C}_k$ must be connected.
	Since $k$ is shared by at least three subdomains, there have to be three subdomain indices,
	say 1, 2, and 3, such that the edges $(1, 2)$ and $(2, 3)$ are in the connectivity graph.
	Therefore, two facets $F_{12}$, $F_{23}$ of subdomain~$2$ must exist that contain dof $k$.
	However, in $\range(T_2)$, the copies of the local dof corresponding to $k$ on $F_{12}$, $F_{23}$
	are equal, whereas in $\Lambda_2$ the two corresponding entries may in general differ from each other,
	see Fig.~\ref{fig:bilateralCrosspointProofSketch}.
	So we have shown $\mu_{\max} > 2 \implies \range(T) \neq \Lambda$.
	To see the other implication $\mu_{\max} \le 2 \implies \range(T) = \Lambda$,
	observe that for $\mu_{\max} \le 2$, the connectivity graph $\mathcal{C}_k$ of any interface dof $k$ consists of only two subdomains,
	so there exists only \emph{one} facet between them. In other words, each local interface dof of any subdomain
	is contained in a \emph{unique} facet; this implies $\range(T) = \Lambda$.
\end{proof}

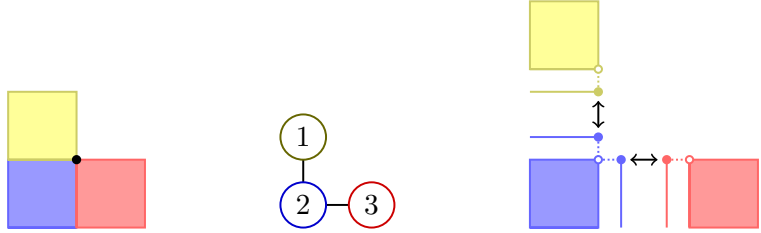
\begin{figure}
\begin{center}
  \begin{tikzpicture}
		\pgftransformscale{0.6}
		
	  \definecolor{dred}{rgb}{0.8, 0.0, 0.0}
		\definecolor{mdred}{rgb}{1.0, 0.4, 0.4}
		\definecolor{lred}{rgb}{1.0, 0.8, 0.8}
		\definecolor{mlred}{rgb}{1.0, 0.6, 0.6}
		
		\definecolor{dblue}{rgb}{0.0, 0.0, 0.8}
		\definecolor{mdblue}{rgb}{0.4, 0.4, 1.0}
		\definecolor{lblue}{rgb}{0.8, 0.8, 1.0}
		\definecolor{mlblue}{rgb}{0.6, 0.6, 1.0}
		
		\definecolor{dyellow}{rgb}{0.4, 0.4, 0.0}
		\definecolor{mdyellow}{rgb}{0.8, 0.8, 0.4}
		\definecolor{lyellow}{rgb}{1.0, 1.0, 0.8}
		\definecolor{mlyellow}{rgb}{1.0, 1.0, 0.6}
		
		\draw[line width=0.75pt,color=mdblue,fill=mlblue] (0,0)--(1.5,0)--(1.5,1.5)--(0,1.5)--(0,0)--(1.5,0);
		\draw[line width=0.75pt,color=mdyellow,fill=mlyellow] (0,1.5)--(1.5,1.5)--(1.5,3)--(0,3)--(0,1.5)--(1.5,1.5);
		\draw[line width=0.75pt,color=mdred,fill=mlred] (1.5,0)--(1.5,1.5)--(3,1.5)--(3,0)--(1.5,0)--(1.5,1.5);
		
		\draw[line width=0.74pt,color=black,fill=black] (1.5,1.5) circle (0.08);
		
  \end{tikzpicture}
	\hspace{1.5cm}
  \begin{tikzpicture}
		\pgftransformscale{0.6}
		
	  \definecolor{dred}{rgb}{0.8, 0.0, 0.0}
		\definecolor{mdred}{rgb}{1.0, 0.4, 0.4}
		\definecolor{lred}{rgb}{1.0, 0.8, 0.8}
		\definecolor{mlred}{rgb}{1.0, 0.6, 0.6}
		
		\definecolor{dblue}{rgb}{0.0, 0.0, 0.8}
		\definecolor{mdblue}{rgb}{0.4, 0.4, 1.0}
		\definecolor{lblue}{rgb}{0.8, 0.8, 1.0}
		\definecolor{mlblue}{rgb}{0.6, 0.6, 1.0}
		
		\definecolor{dyellow}{rgb}{0.4, 0.4, 0.0}
		\definecolor{mdyellow}{rgb}{0.8, 0.8, 0.4}
		\definecolor{lyellow}{rgb}{1.0, 1.0, 0.8}
		\definecolor{mlyellow}{rgb}{1.0, 1.0, 0.6}
		
		\draw[line width=0.74,color=black] (0,1.5)--(0,0)--(1.5,0);
		
		\draw[line width=0.74pt,color=dyellow,fill=white] (0,1.5) circle (0.5);
		\node at (0,1.5) {1};
		
		\draw[line width=0.74pt,color=dblue,fill=white] (0,0) circle (0.5);
		\node at (0,0) {2};
		
		\draw[line width=0.74pt,color=dred,fill=white] (1.5,0) circle (0.5);
		\node at (1.5,0) {3};
		
	\end{tikzpicture}
	\hspace{1.5cm}
  \begin{tikzpicture}
		\pgftransformscale{0.6}
		
	  \definecolor{dred}{rgb}{0.8, 0.0, 0.0}
		\definecolor{mdred}{rgb}{1.0, 0.4, 0.4}
		\definecolor{lred}{rgb}{1.0, 0.8, 0.8}
		\definecolor{mlred}{rgb}{1.0, 0.6, 0.6}
		
		\definecolor{dblue}{rgb}{0.0, 0.0, 0.8}
		\definecolor{mdblue}{rgb}{0.4, 0.4, 1.0}
		\definecolor{lblue}{rgb}{0.8, 0.8, 1.0}
		\definecolor{mlblue}{rgb}{0.6, 0.6, 1.0}
		
		\definecolor{dyellow}{rgb}{0.4, 0.4, 0.0}
		\definecolor{mdyellow}{rgb}{0.8, 0.8, 0.4}
		\definecolor{lyellow}{rgb}{1.0, 1.0, 0.8}
		\definecolor{mlyellow}{rgb}{1.0, 1.0, 0.6}
		
		\draw[line width=0.75pt,color=mdblue,fill=mlblue] (0,0)--(1.5,0)--(1.5,1.5)--(0,1.5)--(0,0)--(1.5,0);
		\draw[line width=0.75pt,color=mdyellow,fill=mlyellow] (0,3.5)--(1.5,3.5)--(1.5,5)--(0,5)--(0,3.5)--(1.5,3.5);
		\draw[line width=0.75pt,color=mdred,fill=mlred] (3.5,0)--(3.5,1.5)--(5,1.5)--(5,0)--(3.5,0)--(3.5,1.5);

		\draw[line width=0.75pt,color=mdblue] (0,2)--(1.5,2);
		\draw[line width=0.75pt,color=mdblue] (2,0)--(2,1.5);

		\draw[line width=0.75pt,color=mdyellow] (0,3)--(1.5,3);
		
		\draw[line width=0.75pt,color=mdred] (3,0)--(3,1.5);
		
		\foreach \x in {1.5}
		{
		  \draw[line width=0.74pt,color=mdblue,fill=mdblue] (\x,2) circle (0.08);
		  \draw[line width=0.74pt,color=mdblue,fill=mdblue] (2,\x) circle (0.08);

		  \draw[line width=0.74pt,color=mdred,fill=mdred] (3,\x) circle (0.08);

		  \draw[line width=0.74pt,color=mdyellow,fill=mdyellow] (\x,3) circle (0.08);

			\draw[line width=0.74pt,color=mdblue,fill=mdblue,densely dotted] (1.5,\x)--(2,\x);
			\draw[line width=0.74pt,color=mdblue,fill=mdblue,densely dotted] (\x,2)--(\x,1.5);

			\draw[line width=0.74pt,color=mdred,fill=mdred,densely dotted] (3,\x)--(3.5,\x);

			\draw[line width=0.74pt,color=mdyellow,fill=mdyellow,densely dotted] (\x,3)--(\x,3.5);
			
			\draw[line width=0.74pt,color=black,<->] (2.2,\x)--(2.8,\x);

			\draw[line width=0.74pt,color=black,<->] (\x,2.2)--(\x,2.8);

    }
		
    \draw[line width=0.74pt,color=mdblue,fill=white] (1.5,1.5) circle (0.08);
    \draw[line width=0.74pt,color=mdred,fill=white] (3.5+0,1.5) circle (0.08);
    \draw[line width=0.74pt,color=mdyellow,fill=white] (1.5,3.5+0) circle (0.08);
		
  \end{tikzpicture}
  \caption{\label{fig:bilateralCrosspointProofSketch}%
    \emph{Left:} global dof $k$ ($\bullet$) shared by three subdomains 1, 2, and 3.
		\emph{Middle:} associated connectivity graph $\mathcal{C}_k$.
		\emph{Right:} sketch relevant dofs~($\circ$) of local subdomain spaces and dofs ($\bullet$) of trace space $\Lambda$.
  }
\end{center}
\end{figure}

\subsubsection{Non-bilateral Discrete Facet Systems -- Globs$^*$}
\label{sect:globs}

While for bilateral discrete facets, continuity is imposed between the dofs of \emph{two} subdomains at a time,
we can also use conditions between the dofs of \emph{several} subdomains \emph{simultaneously}.

\begin{definition}[globs]
	The set $\mathcal{D}_\Gamma = \{ k = 1,\ldots, n \colon \mu_k \ge 2 \}$
	of interface dofs is partitioned into equivalence classes with respect to the equivalence relation
	$k \sim j \Longleftrightarrow \mathcal{N}_k = \mathcal{N}_j$,
	such that the dofs within a class are shared by the same set of subdomains.
	A discrete facet system is formed (in the sense of Definition~\ref{def:discreteFacet})
	by looping over each equivalence class:
	\begin{itemize}
	\item the equivalence class becomes the dof set $\mathcal{D}_G$,
	\item the set of (commonly) sharing subdomains becomes the adjacency set $\mathcal{N}_G$.
	\end{itemize}
	In that special case, we speak of a \emph{glob} $G$ (instead of a facet).
	For each glob $G$ there is the associated trace space $U_G$ induced by the global dofs of $G$
	and trace operators $T_{jG}$, $j \in \mathcal{N}_G$ and $\widehat T_G$ fulfilling the consistency relation~\eqref{eq:traceConsistency}.
	Finally, one may add Dirichlet globs which have an adjacency set with just one subdomain
	and a trace operator fulfilling $\widehat T_G = 0$, cf.~Remark~\ref{rem:discreteDirichletFacets}.
	The set of all the globs is denoted by $\mathcal{G}$ and the globs of subdomain $i$ by $\mathcal{G}_i$.
\end{definition}

For the standard $H^1$-conforming discretization of piece-wise linear finite elements,
the globs correspond to geometric entities that may be called subdomain faces, edges, and vertices,
cf.\ \cite{ToselliWidlund:Book,Pechstein:FETIBook}.

It turns out naturally that the glob set is admissible in the sense of Definition~\ref{def:admissibleFacetSystem},
the proof of which is left to the reader.
Figure~\ref{fig:Globs_abc} shows some examples. The glob $G = V$ in the left-most example is shared by four subdomains
$\mathcal{N}_V = \{1, 2, 3, 4\}$ and we have the trace operators $T_{1V},\ldots,T_{4V}$
which all select the vertex dof out of the respective subdomain dofs.
The conditions enforced at the vertex, as expressed in Definition~\ref{def:admissibleFacetSystem}, read
\[
  \forall i, j \in \{1,2,3,4\} \colon T_{iV} u_i = T_{jV} u_j\,,
\]
so \emph{all} the dofs associated with $V$ are imposed to be equal.
This is in contrast to the bilateral case, where only \emph{two} dofs are constrained at a time.
We note that globs are frequently used in BDDC methods \cite{Dohrmann:2003a,PechsteinDohrmann:2017a}
as well as in the analysis of FETI and FETI-DP methods \cite{ToselliWidlund:Book,Pechstein:FETIBook}.

\begin{figure}
\begin{center}
  \hspace{0.04\textwidth}
  {
    \def\svgwidth{0.35\textwidth}
    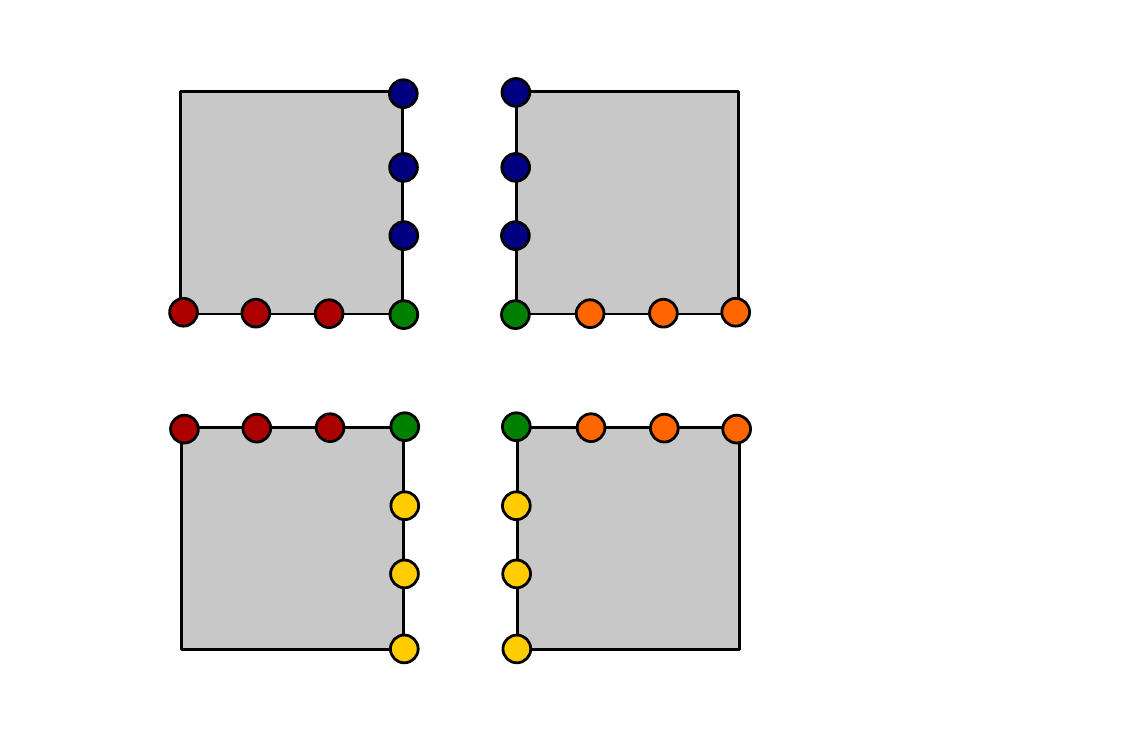
	}
  \hspace{-0.06\textwidth}
	{
	  \def\svgwidth{0.225\textwidth}
	  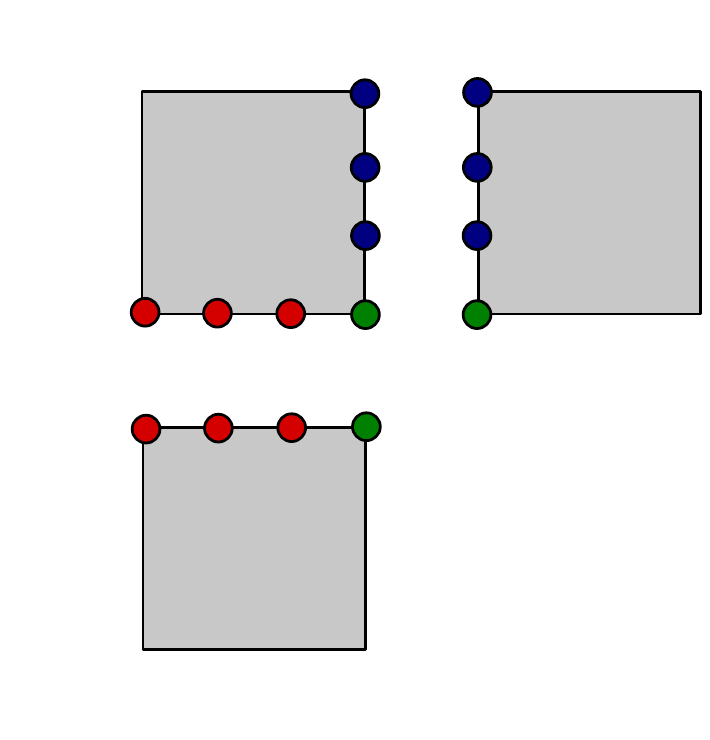
	}
  \hspace{0.08\textwidth}
	{
	  \def\svgwidth{0.18\textwidth}
	  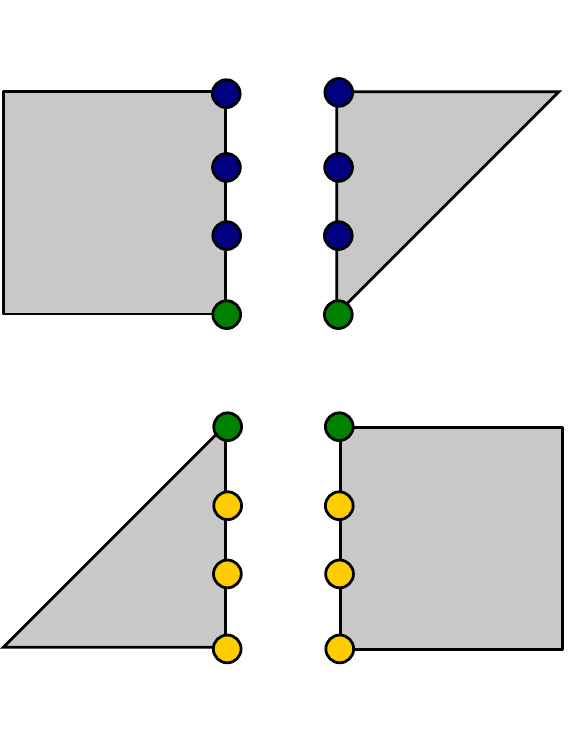
	}
  \hspace{0.04\textwidth}
	
\caption{\label{fig:Globs_abc}
  Discrete globs for three different situations;
	local interface dofs visualized by $\bullet$,
	individual globs indicated by different color.}
\end{center}

\end{figure}

The construction of the trace space and collective trace operator follows that for general discrete facets:
the subdomain trace space is given by $\Lambda_i := \productspace_{G \in \mathcal{G}_i} U_G$
and the subdomain trace operator by $T_i u_i := (T_{iG} u_i)_{G \in \mathcal{G}_i}$.
Finally, $\Lambda := \productspace_{i=1}^N \Lambda_i$ and $T u := (T_i u_i)_{i=1}^N$.
Since every local interface dof is contained in a unique glob,
there is a one-to-one correspondance between traces and local interface dofs,
which is summarized in the following proposition.

\begin{proposition}
\label{prop:globsRangeGLambda}
  Let the glob system $\mathcal{G}$ and the trace operator $T$ be constructed as above.
	Then $\range(T) = \Lambda$. In particular, there exists a right-inverse
	(an extension operator) $T^\dag \colon \Lambda \to U$ such that $T T^\dag = I$.
\end{proposition}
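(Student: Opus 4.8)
The plan is to exploit the defining feature of the glob construction: by the equivalence relation $k \sim j \iff \mathcal{N}_k = \mathcal{N}_j$, the globs partition the interface dof set $\mathcal{D}_\Gamma$, so that every interface dof lies in \emph{exactly one} glob. From this I would derive that $T_i$ sets up a bijection between the local interface dofs of subdomain $i$ and the entries of $\Lambda_i = \productspace_{G \in \mathcal{G}_i} U_G$, which yields surjectivity of each $T_i$ and hence of $T$.

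First I would make the correspondence precise. Fix a subdomain $i$ and call a local dof $\ell \in \{1,\ldots,n_i\}$ an \emph{interface dof} if its global counterpart $k = \mathsf{g}_i(\ell)$ satisfies $\mu_k \ge 2$, i.e.\ $k \in \mathcal{D}_\Gamma$. Such a $k$ lies in precisely one equivalence class, hence in the dof set $\mathcal{D}_G$ of precisely one glob $G$; moreover $i \in \mathcal{N}_k = \mathcal{N}_G$, so $G \in \mathcal{G}_i$. Thus $\ell$ is associated with the single pair $(G,k)$ with $G \in \mathcal{G}_i$ and $k \in \mathcal{D}_G$. Conversely, every such pair satisfies $i \in \mathcal{N}_G \subseteq \mathcal{N}_k$, so subdomain $i$ owns $k$ and, by injectivity of $\mathsf{g}_i$, there is a unique local dof $\ell = \mathsf{g}_i^{-1}(k)$. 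Since the components of $\Lambda_i$ are exactly indexed by these pairs $(G,k)$, this gives a bijection between the local interface dofs of $i$ and the entries of $\Lambda_i$; by the definition of the selecting zero-one matrices $T_{iG}$, the $(G,k)$-entry of $T_i u_i$ equals the value of $u_i$ at the local dof $\mathsf{g}_i^{-1}(k)$.

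With this correspondence, surjectivity is immediate. Given $\lambda_i \in \Lambda_i$, I would define $u_i \in U_i$ by setting, for each interface dof $\ell$, its value equal to the component of $\lambda_i$ indexed by the associated pair $(G,k)$, and setting all remaining (multiplicity-one) local dofs to zero. By construction $T_i u_i = \lambda_i$, so $T_i$ is onto $\Lambda_i$; assembling over $i$ yields $\range(T) = \Lambda$. This assignment is linear in $\lambda_i$, so it defines a linear map $T_i^\dag \colon \Lambda_i \to U_i$ with $T_i T_i^\dag = I$ (boundedness being automatic in the finite-dimensional discrete setting), and $T^\dag := \diag(T_i^\dag)_{i=1}^N$ then satisfies $T T^\dag = I$.

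The only point requiring care — and the step I would treat as the main (if modest) obstacle — is verifying that the pair $(G,k)$ attached to a local interface dof is genuinely unique, i.e.\ that the equivalence classes really do split $\mathcal{D}_\Gamma$ into disjoint globs so that no local dof is shared between two globs of the same subdomain. This is exactly where the glob construction departs from the bilateral facet construction: in Proposition~\ref{prop:bilateralCrosspoints} a cross-point dof belongs to several facets of one subdomain, so $\range(T_2)$ is constrained and surjectivity fails, whereas here each interface dof lies in a single equivalence class and hence in a single glob, making $T$ surjective.
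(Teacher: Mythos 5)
Your proof is correct and follows exactly the route the paper takes: the paper dispenses with a formal proof and justifies the proposition in one sentence preceding it ("since every local interface dof is contained in a unique glob, there is a one-to-one correspondence between traces and local interface dofs"), which is precisely the bijection you make explicit before extending by zero to obtain $T^\dag$. Your elaboration of why the pair $(G,k)$ is unique (the equivalence classes partition $\mathcal{D}_\Gamma$) is the right key point and matches the paper's intent.
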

	
Observe that the property $\range(T) = \Lambda$
\emph{never} holds for \emph{bilateral} discrete facet systems with $\mu_{\max} > 2$
(Proposition~\ref{prop:bilateralCrosspoints}), whereas it is \emph{always} fulfilled for glob systems.

\medskip

Recall the definition~\eqref{eq:XBilateral} of the interface exchange operator $\mathcal{X}$ in the bilateral case.
In the following, we construct such an operator for the case of globs, using the
\emph{averaging projection} operator $E_m \colon \Lambda \to \Lambda$ ($m$ stands for multiplicity), given by
\begin{align}
\label{eq:EDdef}
  (E_m \lambda)_{iG} := \begin{cases}
	  \frac{1}{|\mathcal{N}_G|} \sum_{j \in \mathcal{N}_G} \lambda_{jG}
	  \quad \forall i \in \mathcal{N}_G 
		  & \text{for all } G \in \mathcal{G} \text{ that are no Dirichlet globs,}\\
		0 & \text{for all Dirichlet globs } G\text{, where } \mathcal{N}_G = \{ i \}.
	\end{cases}
\end{align}
This operator, averaging traces and redistributing them, plays a principal role in the 
\emph{2-Lagrange multiplier method} proposed by Loisel \cite{Loisel:2013a}.\footnote{The projection operator defined in~\eqref{eq:EDdef}
  is denoted by $K$ in \cite{Loisel:2013a}.}
We use it here to define the exchange operator
\begin{align}
\label{eq:XdefGeneral}
  \mathcal{X}_m := 2 E_m - I,
\end{align}
which is actually the \emph{reflection} corresponding to the projection $E_m$ (cf.\ Remark~\ref{rem:XP}).
Note that for a bilateral facet $F$ with $\mathcal{N}_F = \{i, j\}$,
\[
  (\mathcal{X}_m \lambda)_{iF} = 2 (\tfrac{1}{2} \lambda_{iF} + \tfrac{1}{2} \lambda_{jF}) - \lambda_{iF} = \lambda_{jF}\,,
\]
so \eqref{eq:XdefGeneral} is a genuine generalization of the \emph{bilateral} exchange operator from \eqref{eq:XBilateral}.

\begin{proposition}
  Let the glob system $\mathcal{G}$ and the trace operator $T$ be constructed as above.
  With $E_m$, $\mathcal{X} = \mathcal{X}_m$ defined as in \eqref{eq:EDdef}--\eqref{eq:XdefGeneral},	
	Assumption~\ref{ass:A2} holds true.
\end{proposition}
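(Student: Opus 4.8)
The plan is to reduce both parts of Assumption~\ref{ass:A2} to elementary properties of the averaging operator $E_m$, leaning on the already-granted admissibility of the glob system and on the reflection/projection correspondence of Remark~\ref{rem:XP}.

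First I would dispatch property~(i). By Remark~\ref{rem:XP}, $\mathcal{X}_m = 2E_m - I$ is automatically a bounded involution as soon as $E_m$ is a bounded linear projection, so the task reduces to verifying $E_m^2 = E_m$ together with $\|E_m\| < \infty$. Since every local interface dof lies in a \emph{unique} glob (this is the content behind Proposition~\ref{prop:globsRangeGLambda}), the compound trace space splits as a product over globs and $E_m$ acts block-diagonally, independently on each glob. On a non-Dirichlet glob $G$ it replaces every entry $\lambda_{iG}$, $i \in \mathcal{N}_G$, by their common average; applying it a second time averages an already-constant family and reproduces it, so $E_m^2 = E_m$ glob-wise, while on Dirichlet globs $E_m$ is the zero map and idempotency is trivial. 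Boundedness (in fact $\|E_m\| \le 1$) follows from a Cauchy--Schwarz estimate on each glob block, summed over $i \in \mathcal{N}_G$.

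Next, for property~(ii) I would write $I - \mathcal{X}_m = 2(I - E_m)$, so that $\ker((I-\mathcal{X}_m)T) = \ker((I-E_m)T)$ and it suffices to prove $\range(R) = \ker((I-E_m)T)$. Because $E_m$ is a projection, $(I-E_m)Tu = 0$ is equivalent to $Tu \in \range(E_m)$. The key step is then to identify this range: $Tu \in \range(E_m)$ holds precisely when, on every non-Dirichlet glob $G$, all entries $T_{iG}u_i$ ($i \in \mathcal{N}_G$) coincide, and on every Dirichlet glob the corresponding entry vanishes. Since the non-Dirichlet globs are exactly the interior facets and the Dirichlet globs the exterior Dirichlet facets of the (admissible) glob system, these are verbatim the conditions of Definition~\ref{def:admissibleFacetSystem}, which by admissibility are equivalent to $u \in \range(R)$. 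This closes the equality.

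The routine parts---idempotency, boundedness, and the range characterization---are short; the main obstacle is conceptual rather than computational. One must be certain that the fixed-point set of the averaging projection matches the admissibility constraints exactly, with no hidden coupling introduced between distinct globs. This is secured precisely by the one-to-one correspondence between traces and local interface dofs, which lets $\range(E_m)$ decompose glob-by-glob and align, constraint for constraint, with Definition~\ref{def:admissibleFacetSystem}.
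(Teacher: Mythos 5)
Your proof is correct and follows essentially the same route as the paper: establish that $E_m$ is a (bounded) projection so that $\mathcal{X}_m = 2E_m - I$ is an involution, then reduce $(I-\mathcal{X}_m)Tu = 0$ to the glob-wise equality/vanishing conditions and invoke admissibility of the glob system. Your detour through $\range(E_m)$ instead of writing out $\ker(I-E_m)$ componentwise is only a cosmetic difference.
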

\begin{proof}
	Property~(i):
	It is easy to show that $E_m$ is a projection, i.e., $E_m^2 = E_m$.
	From this we see that $\mathcal{X}_m^2 = (2 E_m - I)^2 = 4 E_m^2 - 4 E_m + I = I$.\\
  Property~(ii):
  Apparently, $I - \mathcal{X}_m = 2(I - E_m)$. So $(I - \mathcal{X}_m) T u = 0$ if and only if
	\[
	  \begin{cases}
		  \forall i \in \mathcal{N}_G \colon
		  T_{iG} u_i - \frac{1}{|\mathcal{N}_G|} \sum_{j \in \mathcal{N}_G} T_{jG} u_j = 0
			& \text{for all } G \in \mathcal{G} \text{ that are not Dirichlet globs,}\\
			T_{iG} u_i - 0 = 0
			& \text{for all Dirichlet globs } G\text{, where } \mathcal{N}_G = \{ i \},
		\end{cases}
	\]
	which means that for non-Dirichlet globs $G$, the values $\{ T_{jG} u_j \}_{j \in \mathcal{N}_G}$ must be equal,
	and for Dirichlet globs, $T_{iG} u_i = 0$.
	Since the glob system is admissible, this concludes the proof.
\end{proof}

\begin{remark}
\label{rem:ED}
  The averaging operator $E_m$ defined in \eqref{eq:EDdef} is only one out of a whole family.
	For each glob $G$, let $\{ D_{jG} \}_{j \in \mathcal{N}_G}$ be linear operators $D_{iG} \colon U_G \to U_G$
  that form a partition of unity, i.e.,
  \begin{align}
    \sum \nolimits_{j \in \mathcal{N}_G} D_{jG} = I.
  \end{align}
  Using these, we define
  \begin{align}
	  (E_D \lambda)_{iG} := \sum \nolimits_{j \in \mathcal{N}_G} D_{jG} \lambda_{jG}
		\qquad \forall G \in \mathcal{G}\ \forall i \in \mathcal{N}_G\,.
	\end{align}
	Apparently, $E_D$ is a projection, i.e., $E_D^2 = E_D$, and we can define an interface exchange operator by
	$\mathcal{X}_D := 2 E_D - I$.
  The averaging operators $E_D$ play a principal role in FETI and balancing methods
  \cite{ToselliWidlund:Book,Dohrmann:2003a,PechsteinDohrmann:2017a}.
  The simplest choice of weights is the \emph{multiplicity scaling}
  $D_{jG} = \frac{1}{|\mathcal{N}_G|} I$,
	in which case, $E_D = E_m$. We will revisit this type of weighted projection in Sect.~\ref{sect:globLocImp}.
\end{remark}


\medskip

To summarize Section~\ref{sect:facetSystems}:
The concept of admissible facet systems leads to a natural definition of the interface exchange operator $\mathcal{X}$
such that Assumption~\ref{ass:A2} holds.
In the discrete case, one can systematically construct \emph{bilateral facet systems} or \emph{glob systems} (each of them admissible).
In the continuous case, bilateral admissible facet systems are available at least for the de Rham complex
(see Examples~\ref{ex:swapping}, \ref{ex:swappingHDiv}).

\section{Interface flux formulation}
\label{sect:interfaceFluxes}

With the help of the trace operator $T$ and the exchange operator $\mathcal{X}$, the subdomain flux formulation \eqref{eq:subdflux}
is equivalent to \eqref{eq:subdfluxT}, i.e., $A u - t = f$, $(I - \mathcal{X})T u = 0$, and $R^\top t = 0$.
Although all three conditions are proper equations, still the variable $t$ is a \emph{volumetric} distribution.
The following lemma provides a characterization of $\ker(R^\top)$ in terms of \emph{dual traces}.

\begin{lemma}
\label{lem:kerRT}
  Let Assumption~\ref{ass:A2} hold. Then
	\[
    \ker(R^\top) = \overline{ \{ T^\top \tau \colon (I + \mathcal{X}^\top) \tau = 0, \ \tau \in \Lambda^* \}}.
	\]
	The space $\{ T^\top \tau \colon (I + \mathcal{X}^\top) \tau = 0, \ \tau \in \Lambda^* \}$
	is closed if and only if $\range((I-\mathcal{X})T)$ is closed.
\end{lemma}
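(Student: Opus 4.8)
The claim has two parts: first, the identity $\ker(R^\top) = \overline{\{T^\top\tau : (I+\mathcal{X}^\top)\tau=0\}}$, and second, the closedness equivalence. I would prove the set identity by establishing $\ker(R^\top) = \overline{\range(T^\top P^\top)}$ where $P = \tfrac{1}{2}(I-\mathcal{X})$, and then identify $\range(P^\top)$ with $\ker(I+\mathcal{X}^\top)$ via Lemma~\ref{lem:X}. The guiding idea is the functional-analytic duality $\ker(B^\top) = \range(B)^\perp$ and its companion $\overline{\range(B^\top)} = \ker(B)^\perp$, applied to the operator $B := (I-\mathcal{X})T$ whose kernel is exactly $\range(R)$ by Assumption~\ref{ass:A2}(ii).

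\textbf{Key steps for the set identity.} First I would recall that Assumption~\ref{ass:A2}(ii) gives $\range(R) = \ker((I-\mathcal{X})T)$, so $\ker(R^\top) = \range(R)^{\perp\top}$ in the appropriate dual sense. More precisely, since $\range(R)$ is closed (Assumption~\ref{ass:A1}), the annihilator of $\ker(B)$ for a bounded operator $B$ with closed range coincides with $\overline{\range(B^\top)}$; and $\ker(R^\top)$ is precisely the annihilator of $\range(R)$. The cleanest route: observe that $\widehat v \in \widehat U$ has $R^\top t = 0$ iff $\langle t, Rv\rangle = 0$ for all $v$, i.e. $t$ annihilates $\range(R) = \ker(B)$. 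A standard closed-range duality result then gives that the annihilator of $\ker(B)$ equals $\overline{\range(B^\top)}$. Now $B^\top = T^\top(I-\mathcal{X}^\top)$, so $\range(B^\top) = \{T^\top(I-\mathcal{X}^\top)\mu : \mu \in \Lambda^*\}$. Writing $\tau = \tfrac{1}{2}(I-\mathcal{X}^\top)\mu$, Lemma~\ref{lem:X}(i),(ii) tells me $\tfrac{1}{2}(I-\mathcal{X}^\top)$ is the projection onto $\ker(I+\mathcal{X}^\top)$, so the image $\{(I-\mathcal{X}^\top)\mu\}$ is exactly $\ker(I+\mathcal{X}^\top) = \{\tau : (I+\mathcal{X}^\top)\tau = 0\}$. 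Substituting yields $\overline{\range(B^\top)} = \overline{\{T^\top\tau : (I+\mathcal{X}^\top)\tau=0\}}$, which is the claim.

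\textbf{The closedness equivalence.} For the second statement I would invoke the closed-range theorem in the form: for a bounded operator $B$ between Hilbert spaces, $\range(B)$ is closed iff $\range(B^\top)$ is closed. Take $B = (I-\mathcal{X})T$ again; then $\range(B^\top) = \range(T^\top(I-\mathcal{X}^\top))$, and by the projection rewriting above this is (up to the scalar $2$) the set $\{T^\top\tau : (I+\mathcal{X}^\top)\tau=0\}$. Hence this set is closed iff $\range(B^\top) = \range((I-\mathcal{X})T)^\top$ is closed, which by the closed-range theorem is iff $\range((I-\mathcal{X})T)$ is closed.

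\textbf{Anticipated obstacle.} The main subtlety is handling the duality carefully in the reflexive (but not self-dual, since the paper does not identify $V^*$ with $V$) Hilbert-space setting with transposes rather than adjoints. I must ensure the annihilator/range-closure identities are applied in the correct pairing and that I use closedness of $\range(R)$ to legitimately identify $\ker(R^\top)$ as the annihilator of $\range(R)$ and equate it with $\overline{\range(B^\top)}$. The rewriting $\{T^\top(I-\mathcal{X}^\top)\mu\} = \{T^\top\tau : \tau \in \ker(I+\mathcal{X}^\top)\}$ hinges on Lemma~\ref{lem:X} applied to $\mathcal{X}^\top$, which is itself an involution since $(\mathcal{X}^\top)^2 = (\mathcal{X}^2)^\top = I$; I would state this explicitly before using it. The closedness part is then a direct appeal to the closed-range theorem and requires no further computation.
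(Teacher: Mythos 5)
Your proposal is correct and follows essentially the same route as the paper: applying the annihilator identities $\ker(B^\top)=\range(B)^0$ and $\ker(B)^0=\overline{\range(B^\top)}$ to $B=(I-\mathcal{X})T$, identifying $\range(I-\mathcal{X}^\top)=\ker(I+\mathcal{X}^\top)$ via Lemma~\ref{lem:X}, and invoking the closed range theorem for the equivalence. The only cosmetic difference is that you phrase the projection step through $\tfrac{1}{2}(I-\mathcal{X}^\top)$ and mention closedness of $\range(R)$, which is not actually needed for these duality identities.
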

\begin{proof}
  The proof makes use of Banach's closed range theorem (see, e.g., \cite[p.~23ff]{McLean:Book} or \cite[Sect.~VII.5]{Yosida:Book}).
	Let $X$, $Y$ be Banach spaces. Given a subset $W \subseteq X$,
	the associated \emph{annihilator}\footnote{called \emph{polar set} in \cite[p.~58]{GiraultRaviart:Book1986}}
	is defined as $W^0 := \{ \psi \in X^* \colon \langle \psi, w \rangle = 0 \ \forall w \in W \}$.
  For any bounded linear operator $B \colon X \to Y$,
	\begin{align}
	\label{eq:polarKerRange}
	  \ker(B^\top) = \range(B)^0\,, \qquad \ker(B)^0 = \overline{\range(B^\top)},
	\end{align}
	cf.\ \cite[Lem.~2.10, Lem.~2.11]{McLean:Book}.
	In our context, since $\range(R) = \ker((I - \mathcal{X}) T)$, it follows that
	\[
	  \ker(R^\top) = \range(R)^0 = \ker((I - \mathcal{X}) T)^0 = \overline{\range(T^\top (I - \mathcal{X}^\top))}.
	\]
	Apparently,
	\[
	  \range(T^\top (I - \mathcal{X}^\top)) = \{ T^\top \tau \colon \tau \in \range(I - \mathcal{X}^\top) \},
	\]
	and this space is closed if and only if $\range((I - \mathcal{X})T)$ is closed \cite[Thm.~2.13]{McLean:Book}.
	The proof is concluded by noting that $\range(I - \mathcal{X}^\top) = \ker(I + \mathcal{X}^\top)$, see Lemma~\ref{lem:X}.
\end{proof}

The result of the previous lemma gives rise to the following reformulation.

\medskip

\noindent%
\fbox{\parbox{\textwidth}{
\emph{Interface flux formulation}: $\text{find } (u, \tau) \in U \times \Lambda^* \colon$
\begin{align}
\label{eq:traceflux}
\begin{aligned}
  A u - T^\top \tau & = f,\\
	(I - \mathcal{X}) T u & = 0,\\
	(I + \mathcal{X}^\top) \tau & = 0.
\end{aligned}
\end{align}}}

\medskip

\begin{remark}
  Using Remark~\ref{rem:XTconj}, we obtain
  $\ker(R^\top) = \overline{ \{ T^\top \tau \colon (I + \mathcal{X}^\herm) \tau = 0,\ \tau \in \Lambda^* \} }$.
  Therefore, we are allowed to replace $\mathcal{X}$ by $\overline{\mathcal{X}}$ in the second line and/or
  $\mathcal{X}^\top$ by $\mathcal{X}^\herm$ in the third line of \eqref{eq:traceflux}.
  For the simple exchange operators $\mathcal{X}$ from~\eqref{eq:XBilateral} and from~\eqref{eq:XdefGeneral},
  $\mathcal{X}^\herm = \mathcal{X}^\top$ anyway.
\end{remark}

Before discussing the connection between \eqref{eq:traceflux} and \eqref{eq:global},
let us investigate the uniqueness of solutions to \eqref{eq:traceflux}.

\begin{definition}
\label{def:spaceZ}
  The \emph{interface flux redundancy space} is given by
	\[
	  \mathcal{Z} := \ker(T^\top) \cap \ker(I + \mathcal{X}^\top).
	\]
\end{definition}

\begin{proposition}
\label{prop:uniqueTau}
  Let \ref{ass:A1}--\ref{ass:A2} hold and let $f = 0$. Then $(u, \tau)$ is a solution of \eqref{eq:traceflux} if and only if
	\[
	  u = 0 \quad \text{and} \quad \tau \in \mathcal{Z}.
	\]
\end{proposition}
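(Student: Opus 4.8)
The plan is to prove the two implications separately, with the characterization (``only if'') direction carrying all the work.

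For the easy direction, I would suppose $u = 0$ and $\tau \in \mathcal{Z}$. Then $\tau \in \ker(T^\top)$ gives $T^\top \tau = 0$, so the first equation of \eqref{eq:traceflux} reads $A\cdot 0 - 0 = 0 = f$; the second equation holds trivially since $Tu = 0$; and the third holds because $\mathcal{Z} \subseteq \ker(I + \mathcal{X}^\top)$. Hence $(0,\tau)$ solves \eqref{eq:traceflux} with $f=0$.

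For the converse, let $(u,\tau)$ be any solution with $f=0$. First I would read off from the second equation $(I-\mathcal{X})Tu = 0$ together with Assumption~\ref{ass:A2}(ii) that $u \in \range(R)$, so by the reconstruction property in Assumption~\ref{ass:A1} there is a unique $\widehat u \in \widehat U$ with $u = R\widehat u$. The crucial step is then to show $R^\top T^\top \tau = 0$. This follows directly from the third equation: since $\mathcal{X}^\top \tau = -\tau$ and $TR\widehat v = \mathcal{X} TR\widehat v$ for every $\widehat v \in \widehat U$ (the latter because $R\widehat v \in \range(R) = \ker((I-\mathcal{X})T)$ by Assumption~\ref{ass:A2}(ii)), one computes
\[
  \langle \tau, TR\widehat v\rangle = \langle \tau, \mathcal{X} TR\widehat v\rangle = \langle \mathcal{X}^\top \tau, TR\widehat v\rangle = -\langle \tau, TR\widehat v\rangle,
\]
so that $\langle R^\top T^\top \tau, \widehat v\rangle = \langle \tau, TR\widehat v\rangle = 0$ for all $\widehat v$, i.e.\ $R^\top T^\top \tau = 0$. (Alternatively, this is immediate from Lemma~\ref{lem:kerRT}, which places $T^\top\tau \in \ker(R^\top)$ whenever $(I+\mathcal{X}^\top)\tau = 0$.)

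With this in hand, I would apply $R^\top$ to the first equation $Au = T^\top \tau$ and invoke the assembling property $\widehat A = R^\top A R$: this gives $\widehat A \widehat u = R^\top A R \widehat u = R^\top T^\top \tau = 0$, and since $\ker(\widehat A) = \{0\}$ (the well-posedness assumed in Sect.~\ref{sect:globalProblem}) we conclude $\widehat u = 0$, hence $u = R\widehat u = 0$. Substituting $u = 0$ back into the first equation yields $T^\top \tau = 0$, i.e.\ $\tau \in \ker(T^\top)$; combined with the third equation $\tau \in \ker(I + \mathcal{X}^\top)$ this gives $\tau \in \mathcal{Z}$, as required. The only real obstacle is the identity $R^\top T^\top \tau = 0$, which hinges on the involution $\mathcal{X}^2 = I$ and the range characterization $\range(R) = \ker((I-\mathcal{X})T)$ of Assumption~\ref{ass:A2}; everything else is back-substitution and the injectivity of $\widehat A$.
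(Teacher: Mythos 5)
Your proof is correct and follows essentially the same route as the paper: use Assumption~\ref{ass:A2} on the second equation to write $u = R\widehat u$, use the third equation (via Lemma~\ref{lem:kerRT}, or your equivalent direct computation with the involution) to get $T^\top\tau \in \ker(R^\top)$, apply $R^\top$ to the first equation and invoke $\ker(\widehat A)=\{0\}$ to conclude $u=0$, then back-substitute to place $\tau$ in $\mathcal{Z}$. The only differences are cosmetic: you also spell out the trivial ``if'' direction, and you give a self-contained derivation of $R^\top T^\top\tau=0$ where the paper simply cites Lemma~\ref{lem:kerRT}.
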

\begin{proof}
  From the second line of \eqref{eq:traceflux} we find by \ref{ass:A2}
	that there exists $\widehat u \in \widehat U$ with $u = R \widehat u$.
	The third line and Lemma~\ref{lem:kerRT} imply that $T^\top \tau \in \ker(R^\top)$.
	Applying $R^\top$ to the first line of \eqref{eq:traceflux} proves $R^\top A R \widehat u = 0$,
	which shows that $\widehat u = 0$ by our assumptions on $\widehat A$.
	The remaining equations yield $T^\top \tau = 0$ and $(I + \mathcal{X}^\top) \tau = 0$.
\end{proof}

Note that $\overline{\range(T)} = \Lambda$ implies $\ker(T^\top) = \{ 0 \}$,
which means that $\mathcal{Z}$ is trivial.
Therefore, the only interesting case where $\mathcal{Z}$ can be non-trivial is that of finite dimensions.
As it turns out, each cycle of the connectivity graph corresponds to a non-trivial element of the redundancy space.

\begin{theorem}
\label{thm:Zcharact}
  In the finite-dimensional case,
  let Assumption~\ref{ass:A1}--\ref{ass:A2} hold and let $\mathcal{F}$ be a bilateral discrete facet system that is admissible.
	For each global interface dof $k \in \mathcal{D}_\Gamma$,
	let $\ell_k = \cardinality{\mathcal{E}_k} + 1 - \cardinality{\mathcal{N}_k}$
	denote the number of independent cycles of the connectivity graph $\mathcal{C}_k$ (Def.~\ref{def:connectivityGraph}).
	Then the dimension of the redundancy space $\mathcal{Z}$
	is given by $\dim(\mathcal{Z}) = \sum_{k \in \mathcal{D}_\Gamma} \ell_k$.
\end{theorem}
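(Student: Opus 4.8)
The plan is to exploit the fact that, in the bilateral discrete setting, every constraint defining $\mathcal{Z}$ decouples over the individual global interface dofs $k \in \mathcal{D}_\Gamma$, so that computing $\dim(\mathcal{Z})$ reduces to a circulation (flow) count on each connectivity graph $\mathcal{C}_k$. First I would make the two kernels concrete. Since all facets are interior and bilateral and $\mathcal{X}$ is the swap from~\eqref{eq:XBilateral}, $\mathcal{X}$ is a permutation of coordinates, whence $\mathcal{X}^\top = \mathcal{X}$ and (consistently with Lemma~\ref{lem:X}) $\ker(I + \mathcal{X}^\top) = \ker(I + \mathcal{X})$ is precisely the space of \emph{antisymmetric} dual traces, i.e.\ those $\tau \in \Lambda^*$ satisfying $\tau_{iF} = -\tau_{jF}$ on every facet $F$ with $\mathcal{N}_F = \{i,j\}$. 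Next, because $T = \diag(T_i)$ with $T_i^\top \tau_i = \sum_{F \in \mathcal{F}_i} T_{iF}^\top \tau_{iF}$ and each $T_{iF}$ is a zero-one selection matrix, the condition $T^\top \tau = 0$ reads, separately for each subdomain $i$ and each of its local interface dofs carrying the global dof $k$,
\[
  \sum_{F \in \mathcal{F}_i \,:\, k \in \mathcal{D}_F} (\tau_{iF})_k = 0,
\]
where $(\tau_{iF})_k$ denotes the scalar component of $\tau_{iF}$ at dof $k$.

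The key observation is that both defining relations act coordinate-wise in the dof index $k$: antisymmetry couples only the two copies $(\tau_{iF})_k, (\tau_{jF})_k$ of one and the same dof, and the summation constraint above involves only $k$-components. Hence I would establish the direct-sum decomposition $\mathcal{Z} = \bigoplus_{k \in \mathcal{D}_\Gamma} \mathcal{Z}_k$, where $\mathcal{Z}_k$ collects the admissible configurations of the $k$-components alone (over those facets $F$ with $k \in \mathcal{D}_F$). I would then identify $\mathcal{Z}_k$ with the cycle space of the connectivity graph $\mathcal{C}_k = (\mathcal{N}_k, \mathcal{E}_k)$, whose edges are exactly the facets containing $k$: orienting each such facet $F$ and using antisymmetry to store a single scalar $x_F$ per edge (the value on the tail side), the subdomain-wise nodal constraints become precisely Kirchhoff's current law $B_k x = 0$, with $B_k$ the oriented node--edge incidence matrix of $\mathcal{C}_k$. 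This yields a linear isomorphism $\mathcal{Z}_k \cong \ker(B_k)$.

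Finally I would invoke connectivity. Since $\mathcal{F}$ is admissible, Proposition~\ref{prop:admissibleDiscreteFacetSet} guarantees that each $\mathcal{C}_k$ is connected, so $\mathrm{rank}(B_k) = |\mathcal{N}_k| - 1$ and therefore $\dim \ker(B_k) = |\mathcal{E}_k| - |\mathcal{N}_k| + 1 = \ell_k$; this holds over the real or the complex scalar field alike because $B_k$ is integer-valued. Summing over $k$ through the direct-sum decomposition gives $\dim(\mathcal{Z}) = \sum_{k \in \mathcal{D}_\Gamma} \ell_k$, as claimed.

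The main obstacle I anticipate is the bookkeeping in the identification step rather than any deep difficulty: one must match each facet $F$ with $k \in \mathcal{D}_F$ to exactly one edge of $\mathcal{C}_k$ (permitting parallel edges when two distinct facets share the same pair and the same dof), fix a consistent orientation so that antisymmetry turns the local kernel conditions into the single relation $B_k x = 0$, and check that the resulting map $\mathcal{Z}_k \to \ker(B_k)$ is onto and injective. The remaining ingredient, namely that the cycle space of a connected (multi)graph has dimension $|\mathcal{E}_k| - |\mathcal{N}_k| + 1$, is standard graph theory and needs no separate argument.
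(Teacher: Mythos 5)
Your proof is correct, and it shares the paper's first and decisive reduction: neither $T^\top$ nor $I+\mathcal{X}^\top$ couples components belonging to different global dofs, so $\mathcal{Z}$ splits as a direct sum over $k \in \mathcal{D}_\Gamma$ and everything collapses onto the connectivity graph $\mathcal{C}_k$. Where the two arguments differ is in how the per-dof dimension is then counted. The paper's proof (Appendix~\ref{apx:ProofZCharact}) fixes a minimal spanning tree of $\mathcal{C}_k$, attaches to each non-tree edge its fundamental cycle, and writes down an explicit element $\widehat\mu_i \in \mathcal{Z}$ with alternating $\pm 1$ values along that cycle; linear independence gives the lower bound, and an inductive elimination (subtract multiples of the $\widehat\mu_i$ to clear the non-tree edges, then propagate zeros inward from the leaves of the tree) gives the upper bound. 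You instead use antisymmetry to store one scalar per edge and recognize the nodal constraints as Kirchhoff's current law $B_k x = 0$ for the oriented incidence matrix, so that $\dim \mathcal{Z}_k = \dim\ker(B_k) = \cardinality{\mathcal{E}_k} - \cardinality{\mathcal{N}_k} + 1 = \ell_k$ follows from the standard rank formula for a connected graph. Your route is shorter and delegates the counting entirely to linear algebra; the paper's route is more laborious but produces an explicit basis of $\mathcal{Z}$, which is what the proof advertises and is potentially useful beyond the dimension count. A minor point in your favor: you explicitly flag the degenerate possibility of two distinct facets with the same adjacency pair both carrying the dof $k$ (parallel edges), a situation the set-of-pairs definition of $\mathcal{E}_k$ in Definition~\ref{def:connectivityGraph} does not capture and which the paper's proof also silently excludes; for the facet systems actually constructed in Section~\ref{sect:bilateralDiscrFacetSys} this never occurs, so both proofs are sound in the intended setting.
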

\begin{proof}
  See Appendix~\ref{apx:ProofZCharact}, where even a basis for $\mathcal{Z}$ is constructed.
\end{proof}

After having characterized the redundancy space $\mathcal{Z}$,
our next goal is finding conditions under which formulation~\eqref{eq:traceflux}
is equivalent to the original formulation~\eqref{eq:global}.
In the finite-dimensional case, the space in Lemma~\ref{lem:kerRT} is always closed
and so we can parametrize any $t \in \ker(R^\top)$ as $t = T^\top \tau$ with $(I + \mathcal{X}^\top) \tau = 0$.
In the infinite-dimensional case, there are two possibilities: $\range((I-\mathcal{X})T)$ can be closed or not.
While the following lemma provides a sufficient condition for this space to be closed,
Lemma~\ref{lem:H1} below helps in the non-closed case.

\begin{lemma}
\label{lem:Tsurjective}
  Let \ref{ass:A1}--\ref{ass:A2} hold and assume in addition that $\range(T) = \Lambda$.
	Then the space $\range((I - \mathcal{X})T)$ is closed.
	In particular, together with Lemma~\ref{lem:kerRT}, this implies
	\[
	  \ker(R^\top) = \{ T^\top \tau \colon (I + \mathcal{X}^\top) \tau = 0, \ \tau \in \Lambda^* \}.
	\]
\end{lemma}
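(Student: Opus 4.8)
The plan is to exploit the added surjectivity hypothesis $\range(T) = \Lambda$ to collapse the range of the composite operator onto the range of $(I - \mathcal{X})$ alone, and then to invoke the involution structure of $\mathcal{X}$ via Lemma~\ref{lem:X}. First I would compute
\[
  \range\big((I - \mathcal{X})T\big)
  = \{ (I - \mathcal{X}) T u \colon u \in U \}
  = \{ (I - \mathcal{X}) \lambda \colon \lambda \in \range(T) \}
  = \{ (I - \mathcal{X}) \lambda \colon \lambda \in \Lambda \}
  = \range(I - \mathcal{X}),
\]
where the crucial third equality uses exactly the hypothesis $\range(T) = \Lambda$. This is the one place the extra assumption enters, and it is what reduces the problem from a composite operator (whose range need not be closed in general) to a single operator with favorable structure.

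Next I would appeal to Lemma~\ref{lem:X}: since $\mathcal{X}$ is a bounded involution by Assumption~\ref{ass:A2}(i), part~(ii) of that lemma gives $\range(I - \mathcal{X}) = \ker(I + \mathcal{X})$. The latter is the kernel of the bounded linear operator $I + \mathcal{X}$ and is therefore automatically closed. Combining the two displays yields $\range\big((I - \mathcal{X})T\big) = \ker(I + \mathcal{X})$, which proves the first assertion.

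For the ``in particular'' statement I would simply feed this back into Lemma~\ref{lem:kerRT}. That lemma already identifies $\ker(R^\top)$ as the closure of $\{ T^\top \tau \colon (I + \mathcal{X}^\top)\tau = 0,\ \tau \in \Lambda^* \}$, and states that this space is closed precisely when $\range\big((I - \mathcal{X})T\big)$ is closed. Having just established the latter, the closure is redundant and the set coincides with $\ker(R^\top)$ on the nose, giving the claimed equality.

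I do not anticipate a genuine obstacle here: the entire difficulty of the general (possibly non-closed) situation lives in the composition $(I-\mathcal{X})T$, and the hypothesis $\range(T) = \Lambda$ dissolves it by letting the surjectivity of $T$ propagate the well-behaved range of the projection-type operator $I - \mathcal{X}$. The only point requiring care is to state the set-image computation cleanly and to cite Lemma~\ref{lem:X}(ii) for the identity $\range(I - \mathcal{X}) = \ker(I + \mathcal{X})$ rather than re-deriving it; everything else is bookkeeping against Lemma~\ref{lem:kerRT}.
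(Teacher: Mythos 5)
Your proof is correct and follows essentially the same route as the paper: both arguments use $\range(T)=\Lambda$ to identify $\range((I-\mathcal{X})T)$ with $\range(I-\mathcal{X})$ and then invoke Lemma~\ref{lem:X} to conclude that this range equals the (automatically closed) kernel $\ker(I+\mathcal{X})$, after which Lemma~\ref{lem:kerRT} delivers the characterization of $\ker(R^\top)$. Your write-up merely makes the set-image computation more explicit than the paper does.
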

\begin{proof}
	Due to Lemma~\ref{lem:X}, $\range(I - \mathcal{X})$ is closed.
	By assumption $\range(T) = \Lambda$, so altogether we can conclude that $\range((I - \mathcal{X})T)$ is closed.
\end{proof}

\begin{lemma}
\label{lem:H1}
  Let \ref{ass:A1}--\ref{ass:A2} hold and assume that $\overline{\range(T)} = \Lambda$. Then
	\[
	  \range(T^\top) \cap \ker(R^\top) = \big\{ T^\top \tau \colon (I + \mathcal{X}^\top) \tau = 0, \ \tau \in \Lambda^* \big\}.
	\]
\end{lemma}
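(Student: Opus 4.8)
The plan is to establish the two inclusions separately, leaning on Lemma~\ref{lem:kerRT} for the trivial direction and on the projections attached to the involution $\mathcal{X}$ for the substantial one. Write $S := \{ T^\top \tau \colon (I + \mathcal{X}^\top)\tau = 0,\ \tau \in \Lambda^* \}$ for the right-hand side. The inclusion $S \subseteq \range(T^\top) \cap \ker(R^\top)$ is immediate: every element of $S$ lies in $\range(T^\top)$ by definition, and it lies in $\ker(R^\top)$ because Lemma~\ref{lem:kerRT} identifies $\ker(R^\top)$ with $\overline{S}$, which certainly contains $S$. This needs no new hypothesis.

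For the reverse inclusion I would first observe that $\overline{\range(T)} = \Lambda$ forces $\ker(T^\top) = \range(T)^0 = \{0\}$, so $T^\top$ is \emph{injective}. Given $t \in \range(T^\top) \cap \ker(R^\top)$, write $t = T^\top \sigma$ with the (now unique) $\sigma \in \Lambda^*$. By Lemma~\ref{lem:X}, $\tfrac12(I - \mathcal{X}^\top)$ is the projection onto $\ker(I + \mathcal{X}^\top)$; set $\tau := \tfrac12(I - \mathcal{X}^\top)\sigma$, so that $(I + \mathcal{X}^\top)\tau = 0$ and $\rho := \sigma - \tau = \tfrac12(I + \mathcal{X}^\top)\sigma \in \ker(I - \mathcal{X}^\top)$. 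Then $T^\top \tau \in S \subseteq \ker(R^\top)$, and since $t \in \ker(R^\top)$ as well, the residual $T^\top \rho = t - T^\top \tau$ again lies in $\ker(R^\top)$. Hence it suffices to prove $T^\top \rho = 0$, for then $t = T^\top\tau \in S$ and we are done.

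To pin down the residual I would use the elementary identity $T\big(\ker((I-\mathcal{X})T)\big) = \range(T) \cap \ker(I - \mathcal{X})$ (both inclusions are one-liners) together with $\range(R) = \ker((I-\mathcal{X})T)$ from Assumption~\ref{ass:A2}, which gives $T(\range(R)) = \range(T) \cap \ker(I - \mathcal{X})$. Now $T^\top \rho \in \ker(R^\top) = \range(R)^0$ unfolds to $\langle \rho, Tv\rangle = 0$ for all $v \in \range(R)$, i.e.\ $\rho$ annihilates $\range(T) \cap \ker(I - \mathcal{X})$. At the same time $\rho \in \ker(I - \mathcal{X}^\top)$, i.e.\ $\mathcal{X}^\top\rho = \rho$, which by duality says precisely that $\rho$ annihilates $\range(I - \mathcal{X}) = \ker(I + \mathcal{X})$ (Lemma~\ref{lem:X}). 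Since the involution splits $\Lambda = \ker(I - \mathcal{X}) \oplus \ker(I + \mathcal{X})$, it would then remain \emph{only} to show that $\rho$ annihilates all of $\ker(I - \mathcal{X})$, not just its intersection with $\range(T)$; that would give $\rho \perp \Lambda$, hence $\rho = 0$.

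This last upgrade is exactly the main obstacle, and the sole place where $\overline{\range(T)} = \Lambda$ must be made to work. Concretely, the crux is the density statement $\overline{\range(T) \cap \ker(I - \mathcal{X})} = \ker(I - \mathcal{X})$, equivalently $\ker(R^\top T^\top) = \ker(I + \mathcal{X}^\top)$. The tempting route is to push the dense set $\range(T)$ through the bounded projection $\tfrac12(I + \mathcal{X})$ onto $\ker(I - \mathcal{X})$: continuity immediately yields that $\tfrac12(I + \mathcal{X})\range(T)$ is dense in $\ker(I - \mathcal{X})$. The genuinely delicate point, which I expect to be the heart of the matter, is that $\tfrac12(I + \mathcal{X})\range(T)$ is \emph{not} the same as $\range(T) \cap \ker(I - \mathcal{X})$, and density of $\range(T)$ in $\Lambda$ does not, by pure soft analysis, transfer to density of the intersection inside the closed subspace $\ker(I - \mathcal{X})$. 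I therefore anticipate that closing this gap requires exploiting the concrete product/trace structure behind $T$ and $\mathcal{X}$ (so that traces compatible across the exchange can be approximated \emph{within} $\range(T)$), rather than the abstract density hypothesis alone; isolating the precise structural ingredient that makes this intersection dense is the step I would budget the most effort for.
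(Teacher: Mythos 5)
Your argument is correct as far as it goes but is not a proof: you reduce the inclusion ``$\subseteq$'' to the density statement $\overline{\range(T)\cap\ker(I-\mathcal{X})}=\ker(I-\mathcal{X})$ and then leave that statement open. Everything up to that point checks out --- the easy inclusion via Lemma~\ref{lem:kerRT}, the injectivity of $T^\top$ from $\overline{\range(T)}=\Lambda$, the splitting $\sigma=\tau+\rho$ along the projections $\tfrac12(I\mp\mathcal{X}^\top)$, the identity $T(\range(R))=\range(T)\cap\ker(I-\mathcal{X})$, and the observation that $\rho$ annihilates both this set and $\ker(I+\mathcal{X})$. But without the density of the intersection you cannot conclude $\rho=0$, so the lemma is not established. (The detour through $\rho$ and the injectivity of $T^\top$ is also unnecessary: once one knows that $\sigma$ annihilates a dense subset of $\ker(I-\mathcal{X})$, one gets $\sigma\in\ker(I-\mathcal{X})^0=\range(I-\mathcal{X}^\top)=\ker(I+\mathcal{X}^\top)$ directly from Lemma~\ref{lem:X}, with no decomposition needed.)

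You should know, however, that you have put your finger on exactly the step the paper's own proof takes without further justification: there, from ``$\tau$ annihilates all $\lambda\in\range(T)$ with $(I-\mathcal{X})\lambda=0$'' it is inferred, ``since $\range(T)$ is dense in $\Lambda$,'' that $\tau$ annihilates all of $\ker(I-\mathcal{X})$. Your skepticism about this inference is mathematically warranted: density of a subspace in $\Lambda$ does not pass to its intersection with a closed subspace (one can build $T$ with dense range and an involution $\mathcal{X}$ for which $\range(T)\cap\ker(I-\mathcal{X})=\{0\}$, e.g.\ using two dense operator ranges with trivial intersection and the swap on $\Lambda=H\times H$), so the implication is not a consequence of \ref{ass:A1}, \ref{ass:A2} and $\overline{\range(T)}=\Lambda$ alone. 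What makes it true in the settings the paper actually uses is the concrete facet/trace structure: a matched pair of traces in $\ker(I-\mathcal{X})$ can be approximated by traces of globally conforming functions, i.e.\ $T(\range(R))$ is dense in $\ker(I-\mathcal{X})$. That is precisely the ``structural ingredient'' you anticipated; if you add it as an explicit hypothesis (or verify it in the concrete example, as in Example~\ref{ex:swapping}), your argument closes and in fact collapses to the paper's shorter one.
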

\begin{proof}
  ``$\supseteq$'':
	The space on the right is, by Lemma~\ref{lem:kerRT}, contained in $\ker(R^\top)$ and it is obviously also contained in $\range(T^\top)$.\\
	``$\subseteq$'':
  Suppose $\tau \in \Lambda^*$ with $T^\top \tau \in \ker(R^\top) = \range(R)^0$ by \eqref{eq:polarKerRange}. Then
	\[
	  \langle T^\top \tau, v \rangle = 0 \qquad \forall v \in \range(R),
	\]
	which implies
	\[
	  \langle \tau, T v \rangle = 0 \qquad \forall v \in \range(R).
	\]
	Since by \ref{ass:A2}, $v \in \range(R)$ if and only if $(I - \mathcal{X})T v = 0$, we can conclude that
	\[
		\langle \tau, \lambda \rangle = 0 \qquad \forall \lambda \in \range(T) \text{ with } (I - \mathcal{X})\lambda = 0.
	\]
  By assumption $\range(T)$ is dense in $\Lambda$, so it follows that
	\[
	  \langle \tau, \lambda \rangle = 0 \qquad \forall \lambda \in \ker(I - \mathcal{X}).
	\]
  This implies that $\tau \in \ker(I - \mathcal{X})^0 = \range(I - \mathcal{X}^\top) = \ker(I + \mathcal{X}^\top)$ by Lemma~\ref{lem:X}.
\end{proof}

With these tools available, we can state the main theorem of this section.

\begin{theorem}
\label{thm:traceflux}
  Under Assumptions~\ref{ass:A1}--\ref{ass:A2}, the following statements hold.
	\begin{enumerate}
	\item[(i)] If $(u, \tau)$ solves \eqref{eq:traceflux} then $u = R \widehat u$ where $\widehat u$ is the unique solution of \eqref{eq:global}.
	\item[(ii)] If $\widehat u$ solves \eqref{eq:global} and, in addition, either
	  \begin{enumerate}
		\item[(a)] all spaces are finite-dimensional, or
		\item[(b)] $\range(T) = \Lambda$, or
		\item[(c)] $\overline{\range(T)} = \Lambda$ and $A R \widehat u - f \in \range(T^\top)$,
		\end{enumerate}
    then there exists $\tau \in \Lambda^*$such that $(R \widehat u, \tau)$ solves \eqref{eq:traceflux}.
		In cases~(b) and (c), $\tau$ is guaranteed to be unique,
		whereas in the finite-dimensional case (a), $\tau$ is only unique up to an element from the space $\mathcal{Z}$,
		see Proposition~\ref{prop:uniqueTau}.
	\item[(iii)]
			In cases~(a) and (b), there exists a bounded linear solution operator
			$\mathcal{S}^{(\tau)} \colon f \mapsto (u, \tau)$ for \eqref{eq:traceflux}.
	\end{enumerate}
\end{theorem}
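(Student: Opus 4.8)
The plan is to route everything through the subdomain flux formulation~\eqref{eq:subdflux} and Lemma~\ref{lem:subdflux}, treating the third equation of~\eqref{eq:traceflux} as a \emph{parametrization} of a flux $t \in \ker(R^\top)$ by a dual trace $\tau$. For Part~(i), given a solution $(u, \tau)$ of~\eqref{eq:traceflux}, the second equation $(I-\mathcal{X})Tu = 0$ together with Assumption~\ref{ass:A2}(ii) gives $u \in \range(R)$. Setting $t := T^\top \tau$, the third equation places $t$ in the set $\{ T^\top \tau \colon (I+\mathcal{X}^\top)\tau = 0 \}$, which by Lemma~\ref{lem:kerRT} is contained in $\ker(R^\top)$. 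The first equation then reads $Au - t = f$ with $(u, t) \in \range(R) \times \ker(R^\top)$, i.e.\ exactly~\eqref{eq:subdflux}, and Lemma~\ref{lem:subdflux}(ii)--(iii) delivers the unique $\widehat u$ solving~\eqref{eq:global} with $u = R\widehat u$.

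For Part~(ii), I would run this backwards: given $\widehat u$ solving~\eqref{eq:global}, put $u = R\widehat u$ and $t = AR\widehat u - f$, so that $(u, t)$ solves~\eqref{eq:subdflux} by Lemma~\ref{lem:subdflux}(i); in particular $t \in \ker(R^\top)$ and $u \in \range(R) = \ker((I-\mathcal{X})T)$. It then suffices to realize $t = T^\top \tau$ with $(I+\mathcal{X}^\top)\tau = 0$, for then $(u,\tau)$ solves~\eqref{eq:traceflux} (first line by $Au - T^\top\tau = Au - t = f$, second by $u \in \range(R)$, third by construction). The three cases differ only in how this realization is secured: in case~(a), finite dimensionality makes $\range((I-\mathcal{X})T)$ closed, so Lemma~\ref{lem:kerRT} upgrades to the equality $\ker(R^\top) = \{ T^\top\tau \colon (I+\mathcal{X}^\top)\tau = 0 \} \ni t$; in case~(b), Lemma~\ref{lem:Tsurjective} gives the same equality directly; in case~(c), $t \in \range(T^\top) \cap \ker(R^\top)$, which Lemma~\ref{lem:H1} identifies with the desired set.

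The uniqueness claims follow by linearity: the difference $\tau_1 - \tau_2$ of two admissible multipliers satisfies $T^\top(\tau_1 - \tau_2) = 0$ and $(I+\mathcal{X}^\top)(\tau_1 - \tau_2) = 0$, hence lies in $\mathcal{Z}$. In cases~(b) and~(c) the hypothesis $\overline{\range(T)} = \Lambda$ forces $\ker(T^\top) = \{0\}$, so $\mathcal{Z} = \{0\}$ and $\tau$ is unique; in case~(a), $\tau$ is unique only modulo $\mathcal{Z}$, which is precisely Proposition~\ref{prop:uniqueTau}. For Part~(iii), the map $f \mapsto (u, t)$ is the bounded solution operator $\mathcal{S}$ of Lemma~\ref{lem:subdflux}, so only the bounded recovery of $\tau$ from $t$ remains. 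In case~(b), surjectivity of $T$ makes $T^\top$ injective and bounded below (open mapping theorem / closed range), so it admits a bounded inverse on $\range(T^\top) = \ker(R^\top) \ni t$; since the unique admissible $\tau$ furnished by Lemma~\ref{lem:Tsurjective} must coincide with $(T^\top)^{-1}t$ by injectivity, composing with $\mathcal{S}$ gives the bounded linear operator $\mathcal{S}^{(\tau)}$. In case~(a), boundedness is automatic, and a genuine linear selection is obtained by taking the minimum-norm solution $\tau \perp \mathcal{Z}$ (equivalently, the pseudoinverse of the stacked operator), which depends linearly on $t$.

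I expect the main obstacle to be Part~(iii) in case~(b): one must argue carefully that surjectivity of $T$ yields a \emph{bounded} inverse of $T^\top$ on its range, and that this bounded preimage \emph{automatically} satisfies the constraint $(I+\mathcal{X}^\top)\tau = 0$ --- which it does, because Lemma~\ref{lem:Tsurjective} already supplies a constrained preimage and injectivity of $T^\top$ forces the two to agree. Everything else is bookkeeping around the equivalence with~\eqref{eq:subdflux}.
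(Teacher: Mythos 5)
Your proposal is correct and follows essentially the same route as the paper: reduce to the subdomain flux formulation via Lemma~\ref{lem:subdflux}, characterize the admissible fluxes $T^\top\tau$ with $(I+\mathcal{X}^\top)\tau=0$ through Lemmas~\ref{lem:kerRT}, \ref{lem:Tsurjective}, and~\ref{lem:H1} in the three cases, and build the bounded solution operator by composing $\mathcal{S}$ with a bounded linear selection $t\mapsto\tau$ (the paper's operator $Q$, your pseudoinverse/minimum-norm choice). The only blemish is the asserted equality $\range(T^\top)=\ker(R^\top)$ in case~(b) of Part~(iii), which is false in general (only $\ker(R^\top)\subseteq\range(T^\top)$ holds), but your argument uses only this inclusion, so nothing breaks.
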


\begin{remark}
  The assumption $\overline{\range(T)} = \Lambda$ in Case~(c) is merely of technical type.
	If this assumption is not fulfilled for an infinite-dimensional setting, it means that the trace space $\Lambda$
	is chosen unnecessarily large.
\end{remark}

\begin{proof}[Proof of Theorem~\ref{thm:traceflux}]
  (i) If $(u, \tau)$ solves \eqref{eq:traceflux} then $T^\top \tau \in \ker(R^\top)$ due to Lemma~\ref{lem:kerRT}. Hence,
	$(u, T^\top \tau)$ solves \eqref{eq:subdfluxT} and so $u = R \widehat u$, where $\widehat u$ is the unique solution of \eqref{eq:global}.\\
	(ii) Suppose $\widehat u$ solves \eqref{eq:global}.
	Then there exists $(u, t)$ solving \eqref{eq:subdfluxT}, in particular with $t \in \ker(R^\top)$.\\
	In cases~(a) and~(b), the space $\{ T^\top \tau \colon (I + \mathcal{X}^\top) \tau = 0, \ \tau \in \Lambda^* \}$ is closed (Lemma~\ref{lem:Tsurjective})
	and coincides with $\ker(R^\top)$, see Lemma~\ref{lem:kerRT}.
	Hence there exists $\tau \in \Lambda^*$ with $(I + \mathcal{X}^\top)\tau = 0$ such that $t = T^\top \tau$.
	Apparently, $(u, \tau)$ solves \eqref{eq:traceflux}.\\
	In case~(c), it follows (by assumption) that $t = A R \widehat u - f \in \range(T^\top) \cap \ker(R^\top)$, and so by Lemma~\ref{lem:H1}
	there exists $\tau \in \Lambda^*$ with $t = T^\top \tau$ and $(I + \mathcal{X}^\top) \tau = 0$.
	Again, $(u, \tau)$ solves \eqref{eq:traceflux}.\\
	Due to Proposition~\ref{prop:uniqueTau}, $\tau$ in \eqref{eq:traceflux} is only unique up to an element from $\mathcal{Z}$.
	In case~(b), however, $\ker(T^\top) = \range(T)^0 = \Lambda^0 = \{ 0 \}$.
	In case~(c), $\ker(T^\top) = \range(T)^0 = \overline{\range(T)}^0 = \{ 0 \}$.\\
	(iii)	In cases~(a) and~(b), $\ker(R^\top) = \{ T^\top \tau \colon (I + \mathcal{X}^\top) \tau = 0, \ \tau \in \Lambda^* \}$
	is closed and so there exists a bounded linear operator $Q \colon \ker(R^\top) \to \ker(I + \mathcal{X}^\top)$
	with the property that $T^\top Q t = t$ for $t \in \ker(R^\top)$.
	Recall that $\mathcal{S} \colon U^* \to \range(R) \times \ker(R^\top)$
	from Lemma~\ref{lem:subdflux} is a bounded solution operator for \eqref{eq:subdflux}.
	We define
	\begin{align*}
	  \mathcal{S}^{(\tau)} \colon U^* \to \ker((I-\mathcal{X})T) \times \ker(I + \mathcal{X}^\top)
		\colon f \mapsto (\mathcal{S}_u f, Q \mathcal{S}_t f),
	\end{align*}
	where $\mathcal{S} f = (\mathcal{S}_u f, \mathcal{S}_t f)$.
	We verify three properties of $\mathcal{S}^{(\tau)}$.
	\begin{enumerate}
	\item[1)] The operator $\mathcal{S}^{(\tau)}$ is well-defined, linear, and bounded.
	\item[2)] If $(u, \tau) = \mathcal{S}^{(\tau)} f$ then $A u - T^\top \tau = f$.
	  This follows from the properties of $\mathcal{S}$ and the fact that $T^\top Q \mathcal{S}_t f =  \mathcal{S}_t f$
		since $\mathcal{S}_t f \in \ker(R^\top)$.
	\item[3)] Any $(u, \tau) \in \ker((I - \mathcal{X})T) \times \ker(I + \mathcal{X}^\top)$ fulfills $\mathcal{S}^{(\tau)}(A u - T^\top \tau) = (u, \tau + z)$
	  for some element $z \in \mathcal{Z}$.
		To see this, we define $(v, t) := \mathcal{S}(A u - T^\top \tau) \in \range(R) \times \ker(R^\top)$.
		By construction, $A v - t = A u - T^\top \tau$,
		and Assumption~\ref{ass:A2} and Lemma~\ref{lem:kerRT} imply that $(u, T^\top \tau) \in \range(R) \times \ker(R^\top)$.
		Therefore, $(u - v, t - T^\top)$ solve the homogeneous problem,
		and so Lemma~\ref{lem:subdflux}(iii) implies $v = u$ and $t = T^\top \tau$.
		The second component of $\mathcal{S}^{(\tau)}(A u - T^\top \tau)$ is therefore given by $\sigma = Q t = Q T^\top \tau$.
		Applying $T^\top$ and using that $t \in \ker(R^\top)$ shows that
		\[
		  T^\top \sigma = T^\top \tau.
		\]
		Therefore, $\sigma - \tau \in \ker(T^\top)$. By construction, $\sigma = Q t$ is also in $\ker(I + \mathcal{X}^\top)$,
		which is a property that it shares with $\tau$. Hence, $\sigma - \tau \in \mathcal{Z}$.
	\end{enumerate}
	Altogether, $\mathcal{S}^{(\tau)}$ is a bounded solution operator for \eqref{eq:traceflux}.
\end{proof}

A short summary of Theorem~\ref{thm:traceflux}:
In case~(b), i.e., if the trace operator is surjective,
  Formulation~\eqref{eq:traceflux} is well-posed and equivalent to \eqref{eq:global}.
In the finite-dimensional case~(a), the same holds, up to possible non-uniqueness of $\tau$.
For the infinite-dimensional case with non-surjective trace operator, case~(c),
equivalence of~\eqref{eq:traceflux} and~\eqref{eq:global}
can be guaranteed under a mild density assumption and the
\emph{regularity assumption} $A R \widehat u - f \in \range(T^\top)$.
The solution operator, however, is possibly unbounded.
In the following, some examples are given that apply to the primal formulation of the Helmholtz or Laplace equation in $H^1$.

\begin{example}[$L^2$ traces]
\label{ex:regularityL2}
  Consider the setup of Example~\ref{ex:tracesH1Split} with $\Lambda_i = \bigcup_{F \in \mathcal{F}_i} L^2(F)$.
	Then already the single-facet trace operator $T_{iF} \colon H^1(\Omega_i) \to L^2(F)$ fails to have a closed range,
	and $\range(T)$ is not closed as well. Therefore, we are in case~(c). The condition $\overline{\range(T)} = \Lambda$ is fulfilled
	and the regularity assumption is equivalent to $\partial \widehat u/\partial \normal_i \in L^2(\Gamma_i)$ for the interface
	$\Gamma_i := \partial\Omega_i \cap \bigcup_{j \neq i}\partial\Omega_j$
	of each subdomain	(cf.\ \cite[p.~314]{Despres:1990a} and \cite[p.~10]{CollinoGhanemiJoly:2000a}).
\end{example}

\begin{example}[a problem with cross points]
\label{ex:crosspointProblem}
  Consider the setup from Example~\ref{ex:tracesH1Split} with $\Lambda_i = \bigcup_{F \in \mathcal{F}_i} H^{1/2}(F)$
	and with a cross point. More precisely, assume that for a subdomain $\Omega_i$
	there are two faces that share a common edge in three dimensions or a common vertex in two dimensions.
	In contrast to Example~\ref{ex:regularityL2}, the single-facet trace operator $T_{iF}$ \emph{does} have closed range,
	but the \emph{collective} trace operator $T_i \colon H^1(\Omega_i) \to \bigcup_{F \in \mathcal{F}_i} H^{1/2}(F)$
	does not; in particular it is not surjective. This kind of obstruction is analyzed in detail in Grisvard's monograph \cite{Grisvard:Book1985}.
	To get the idea, let $F$ and $G$ be two edges of a rectangular subdomain $\Omega_i \subset \mathbb{R}^2$ that share a common vertex.
	Due to the peculiar property that $C^\infty_0(F)$ is dense in $H^{1/2}(F)$ \cite[Thm.~3.40]{McLean:Book},
	we can find a sequence of $C^\infty$ functions in $\Omega_i$ that vanish entirely on $G$ and whose trace on $F$ converges to the constant function $1$
	in the $\|\cdot\|_{H^{1/2}(F)}$-norm.
	So the collective trace of this sequence has a limit in the product space $H^{1/2}(F) \times H^{1/2}(G)$, but this limit is not the collective trace of any $H^1$ function.
	To summarize, $\range(T) \subsetneq \Lambda$ but $\overline{\range(T)} = \Lambda$.
	The regularity condition from case~(c)
	is equivalent to saying (for an interior subdomain $\Omega_i)$
	that the restriction of $\partial \widehat u/\partial \normal_i \in H^{-1/2}(\partial\Omega_i)$
	to each facet $F \in \mathcal{F}_i$ is in $H^{-1/2}(F)$,
	which is the dual of $H^{1/2}(F)$ and contains distributions that are \emph{extendible by zero} to $H^{-1/2}(\partial\Omega_i)$.
\end{example}

\begin{example}[a non-collective, surjective trace operator]
  Consider the case of $\widehat U = H^1(\Omega)$ and $U_i = H^1(\Omega_i)$
	for a general subdomain partition with cross points,
	where we do \emph{not} split the interface into faces, i.e.,
	we use $T_i \colon H^1(\Omega_i) \to H^{1/2}(\Gamma_i)$ with
	$\Gamma_i := \partial\Omega_i \cap \bigcup_{j \neq i}\partial\Omega_j$,
	which is perfectly surjective,
	so the strong assumption $\range(T) = \Lambda$ holds.
	However, in general, one cannot use the simple swapping operator $\mathcal{X}$,
	see also Remark~\ref{rem:strongAssRangeTLambda} and Sect.~\ref{sect:interfaceExchange}.
	Note also that if the interface touches the Dirichlet boundary,
	some traces spaces may have to be adapted in order to maintain surjectivity.
\end{example}

\begin{figure}
\begin{center}
  \def\svgwidth{0.55\textwidth}
  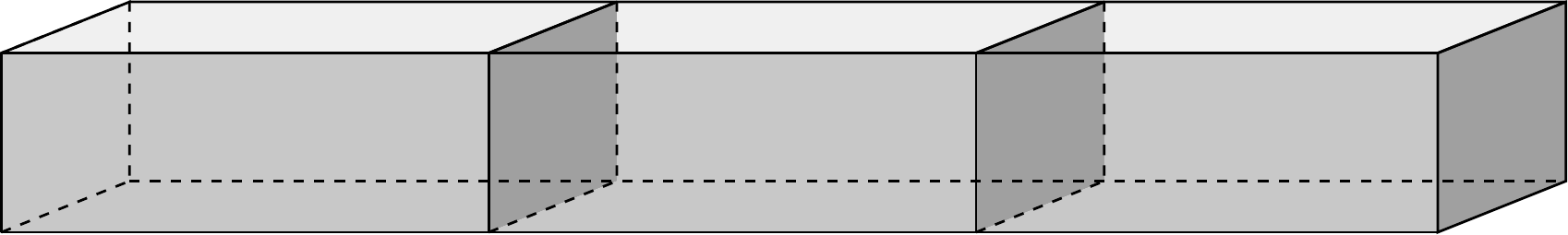
	\caption{\label{fig:1DDecomp}%
	  Illustration of a typical ``1D decomposition'': facets are separated.}
\end{center}
\end{figure}

\begin{figure}
\begin{center}
  \def\svgwidth{0.37\textwidth}
  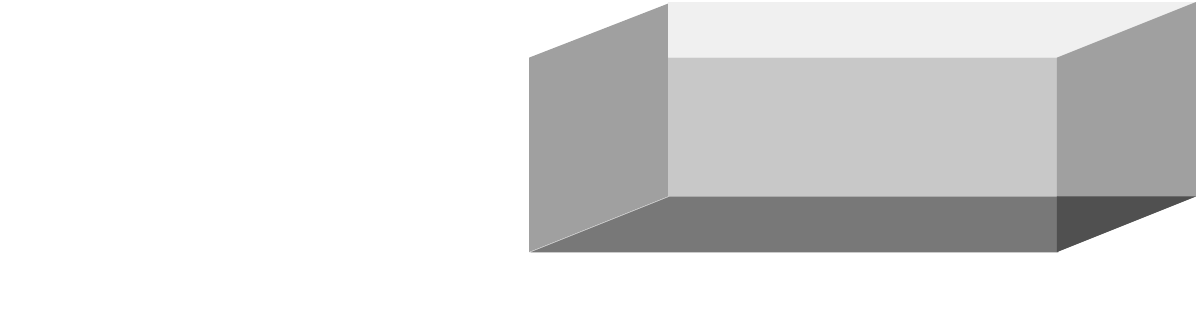
	\caption{\label{fig:1DDecompDirichlet}%
	  Example of non-matching trace spaces: the local space on the left is $H^1(\Omega_1)$ with its trace space $H^{1/2}(F_{12})$.
		Due to the Dirichlet boundary $\Gamma_D$, the space on the right, however is constrained and the trace space is a genuine subspace of $H^{1/2}(F_{12})$
		with its functions properly decaying to zero on the bottom edge of $F_{12}$.}
\end{center}
\end{figure}

\begin{example}[no junctions]
\label{ex:HelmholtzNoJunctions}
  Consider the case of $\widehat U = H^1(\Omega)$ and $U_i = H^1(\Omega_i)$
	for a general subdomain partition with \emph{no junctions} in the sense of \eqref{eq:noJunctions},
	such that the interface naturally splits into facets that are closed manifolds of co-dimension one,
	each of them with two subdomains on each side.
	We can use $T_i \colon H^1(\Omega_i) \to H^{1/2}(\Gamma_i)$ with
	$\Gamma_i := \partial\Omega_i \cap \bigcup_{j \neq i}\partial\Omega_j$,
	which is perfectly surjective,
	so the strong assumption $\range(T) = \Lambda$ holds.
	Apparently, the simple type of swapping operator $\mathcal{X}$ can be used without any complications.
\end{example}

\begin{example}[no cross points]
\label{ex:HelmholtzNoCrosspointButJunctions}
  Consider the case of $\widehat U = H^1(\Omega)$, and $U_i = H^1(\Omega_i)$
	and suppose that we have \emph{no cross points},
	in the sense that the interface $\partial\Omega_i \cap \partial\Omega_j$
	between two subdomains is either empty or a (possibly open) manifold of co-dimension one,
	and each such interface has positive distance from each other, for an example see Figure~\ref{fig:1DDecomp}.
	Note that this assumption allows two subdomains to meet at the outer boundary, cf.\ Fig.~\ref{fig:junctionCrossPoint},
	so~\eqref{eq:noJunctions} does not necessarily hold.
	In such a case, if we use $T_i \colon H^1(\Omega_i) \to \Lambda := H^{1/2}(\Gamma_i)$,
	where $\Gamma_i := \partial\Omega_i \cap \bigcup_{j \neq i}\partial\Omega_j$,
	then $T_i$ is surjective.
	At the same time, since the local interface $\Gamma_i$
	is the union of disconnected facets, each of which is shared by a unique neighboring subdomain,
	we can use the simple type of swapping operator $\mathcal{X}$ without any complications.
	Note, however, that if $U_i$ includes a Dirichlet condition and $\Gamma_i$ intersects the Dirichlet boundary in a manifold of co-dimension two,
	then the trace space $\Lambda_i$ must be adapted to include the same Dirichlet boundary condition (leading to a $H^{1/2}_{00}$-like space).
	Otherwise, there is no chance for surjectivity.
	If the Dirichlet boundary only lies on one side of a face, this can lead to non-matching trace spaces for the same facet,
	such that the simple swapping operator cannot be used anymore, cf.\ Fig.~\ref{fig:1DDecompDirichlet}.
\end{example}

The following two examples concern the discrete case.

\begin{example}
  Let $\mathcal{F}$ be a discrete bilateral facet system with $\mu_{\max} > 2$.
	Then the conditions of Cases~(b) and~(c) are \emph{not} fulfilled as
	$\range(T) \subsetneq \Lambda$, cf.\ Proposition~\ref{prop:bilateralCrosspoints}.
	Therefore, Formulation~\ref{eq:traceflux} has a solution operator, but the component $\tau$ of the solution is not unique.
\end{example}

\begin{example}
  Assume that we have either a bilateral discrete facet system with $\mu_{\max} \le 2$
	or a glob system. Then $\range(T) = \Lambda$, cf.\ Proposition~\ref{prop:bilateralCrosspoints}
	and Proposition~\ref{prop:globsRangeGLambda}.
	Therefore, Formulation~\ref{eq:traceflux} has a unique solution operator.
\end{example}

\begin{remark}
\label{rem:strongAssRangeTLambda}
  In the infinite-dimensional case, the most appealing version is case~(b) with the strong assumption $\range(T) = \Lambda$.
	While the latter can be fulfilled quite easily by a proper choice of $\Lambda$ and $T$,
  the attentive reader may ask: \emph{do there even exist operators $\mathcal{X}$ fulfilling Assumption~\ref{ass:A2}
	under these circumstances?}
	This question is the subject of Sect.~\ref{sect:interfaceExchange} below
	and a short answer is: in fact always, but in general $\mathcal{X}$ will be non-local.
\end{remark}

The following proposition will be helpful later on in Sect.~\ref{sect:convStrong}.

\begin{proposition}
\label{prop:solOpTau}
  Let $E \colon \Lambda \to U$ let be an arbitrary linear and bounded extension operator such that $T E = I$.
	(In general, such an extension is not unique, but it can only exist if $\range(T) = \Lambda$.)
	Then the solution operator from the proof of Theorem~\ref{thm:traceflux} has the form
  \begin{align*}
    \mathcal{S}^{(\tau)} = \begin{bmatrix} I & 0 \\ 0 & E^\top \end{bmatrix} \mathcal{S}
  	= \begin{bmatrix} I \\ E^\top A \end{bmatrix} R \widehat A^{-1} R^\top - \begin{bmatrix} 0 \\ E^\top \end{bmatrix}.
  \end{align*}
\end{proposition}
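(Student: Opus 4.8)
The plan is to show that the particular bounded right-inverse $Q$ appearing in the proof of Theorem~\ref{thm:traceflux}(iii) may be taken to be $E^\top$ restricted to $\ker(R^\top)$, and then to read off the block form by substituting the explicit expression for $\mathcal{S}$ from Lemma~\ref{lem:subdflux}. First note that an extension $E$ with $TE = I$ can only exist when $\range(T) = \Lambda$, so we are automatically in case~(b) of Theorem~\ref{thm:traceflux}; in that case $\ker(T^\top) = \{0\}$, the redundancy space $\mathcal{Z}$ is trivial (Proposition~\ref{prop:uniqueTau}), and the solution component $\tau$ is unique. Consequently $\mathcal{S}^{(\tau)}$ is uniquely determined and independent of the choice of $Q$, so it suffices to exhibit one admissible $Q$ of the desired form.

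Next I would verify that $Q := E^\top|_{\ker(R^\top)}$ satisfies the two properties required of $Q$ in the proof of Theorem~\ref{thm:traceflux}(iii). Transposing $TE = I$ gives $E^\top T^\top = (TE)^\top = I$ on $\Lambda^*$. Since $\range(T) = \Lambda$, Lemma~\ref{lem:Tsurjective} lets me write any $t \in \ker(R^\top)$ as $t = T^\top \tau$ with $(I + \mathcal{X}^\top)\tau = 0$. Then
\[
  E^\top t = E^\top T^\top \tau = \tau \in \ker(I + \mathcal{X}^\top),
\]
so $E^\top$ maps $\ker(R^\top)$ into $\ker(I + \mathcal{X}^\top)$, and
\[
  T^\top E^\top t = T^\top \tau = t,
\]
so $E^\top$ is a right-inverse of $T^\top$ on $\ker(R^\top)$. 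Boundedness is inherited from $E$. Hence $Q = E^\top|_{\ker(R^\top)}$ is admissible, and the second component of $\mathcal{S}^{(\tau)}$ equals $E^\top \mathcal{S}_t f$.

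Finally, recording $\mathcal{S}^{(\tau)} f = (\mathcal{S}_u f, E^\top \mathcal{S}_t f)$ in operator form yields
\[
  \mathcal{S}^{(\tau)} = \begin{bmatrix} I & 0 \\ 0 & E^\top \end{bmatrix} \mathcal{S},
\]
and substituting the block expression $\mathcal{S} = \begin{bmatrix} I \\ A \end{bmatrix} R \widehat A^{-1} R^\top - \begin{bmatrix} 0 \\ I \end{bmatrix}$ from Lemma~\ref{lem:subdflux} gives the claimed formula after left multiplication. The only genuinely non-routine step is the middle one: identifying the right-inverse as $E^\top$ and checking that it lands in $\ker(I + \mathcal{X}^\top)$, which hinges on the parametrization of $\ker(R^\top)$ from Lemma~\ref{lem:Tsurjective} together with the uniqueness guaranteed in case~(b); the rest is transposition and block-matrix algebra.
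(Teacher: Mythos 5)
Your proposal is correct and follows essentially the same route as the paper: both reduce the claim to checking that $E^\top$ is an admissible choice of the right-inverse $Q$ from the proof of Theorem~\ref{thm:traceflux}(iii), verify $T^\top E^\top t = t$ via $E^\top T^\top = I$ and $E^\top t \in \ker(I + \mathcal{X}^\top)$ via the parametrization of $\ker(R^\top)$ from Lemma~\ref{lem:Tsurjective}, and then substitute the block form of $\mathcal{S}$. Your added remark that uniqueness of $\tau$ in case~(b) makes $\mathcal{S}^{(\tau)}$ independent of the choice of $Q$ is a small extra justification the paper leaves implicit, but it does not change the argument.
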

\begin{proof}
  If suffices to show that $E^\top \colon U^* \to \Lambda^*$ can be used instead of the operator $Q$ in the proof of Theorem~\ref{thm:traceflux}.
	Therefore we have to show that (i) $T^\top E^\top t = t$ for all $t \in \ker(R^\top)$ and (ii) $E^\top t \in \ker(I + \mathcal{X}^T)$ for all $t \in \ker(R^\top)$.
	Property~(i) follows simply from the fact that $\ker(R^\top) \subseteq \range(T^\top)$.
	To see Property~(ii), recall from Lemma~\ref{lem:Tsurjective} that for any $t \in \ker(R^\top)$ there exists $\tau \in \ker(I + \mathcal{X}^\top)$ such that $t = T^\top \tau$.
	Hence, $E^\top t = E^\top T^\top \tau = \tau \in \ker(I + \mathcal{X}^\top)$.
\end{proof}

\section{Formulations with Robin Transmission Conditions}
\label{sect:RobinTC}

In this section, based on the interface flux formulation, another formulation is derived using generalized Robin transmission conditions.
This leads to the classical method by Despr\'es (in the continuous case) and to the FETI-2LM formulation and variants thereof
(in the discrete case).

\subsection{Robin Transmission Conditions}

For the $H^1$-setting, recall the classical Robin transmission conditions \eqref{eq:RobinClassical}
and the generalized transmission conditions \eqref{eq:RobinMij}
with the impedance operator on each facet.
If, like in Example~\ref{ex:tracesH1Split}, the trace space has the form
$\Lambda_i = \productspace_{F \in \mathcal{F}_i} U_F$, then we can use an impedance operator $M_F \colon U_F \to U_F^*$ on each facet,
with a bounded inverse.
The generalized incoming and outgoing impedance traces are given by
$\alpha M_F T_{iF} u_i \pm \tau_{iF}$, where $\alpha = \complexi$ or $1$ (depending whether we deal with the Helmholtz or the Laplace equation)
and where $\tau_{iF}$ is the component of $\tau_i \in \Lambda_i^*$ corresponding to $F$ and stands for the normal derivative $\partial u_i / \partial \normal_i$.
For the choice $U_F = L^2(F)$ and $\langle M_F \lambda, \mu \rangle = \kappa \int_F \lambda\, \mu \, ds$,
we reproduce the classical impedance traces.
Forming the operators $M_i = \diag(M_F)_{F \in \mathcal{F}_i}$
and $M = \diag(M_i)_{i=1}^N \colon \Lambda \to \Lambda^*$,
we can evaluate all these traces simultaneously, $\alpha M T u \pm \tau$.
Note that if all operators $M_F$ have a bounded inverse, so has $M$.
Although at a certain point later on, we will return to impedance operators $M$ of such particular block-diagonal structure,
the following theory covers more general situations.

\medskip

\noindent%
\fbox{%
\begin{minipage}{0.985\textwidth}
\begin{assumption}
\label{ass:A3}
  The \emph{impedance operator} $M \colon \Lambda \to \Lambda^*$ is linear and bounded,
  and the operator $(M + \mathcal{X}^\top M \mathcal{X})$ has a bounded inverse.
\end{assumption}
\end{minipage}}

\begin{example}
  Consider Example~\ref{ex:tracesH1Split}	with the swapping operator $\mathcal{X}$ from Example~\ref{ex:swapping}
	and assume a block-diagonal structure $M = \diag(M_i)_{i=1}^N$ and $M_i = \diag(M_{iF})_{F \in \mathcal{F}_i}$.
	Then Assumption~\ref{ass:A3} states that the \emph{sum} $(M_{iF} + M_{jF})$ on each facet $F \in \mathcal{F}_i \cap \mathcal{F}_j$ is invertible.
	However, $M_{iF}$ may differ from $M_{jF}$.
\end{example}

\begin{lemma}
\label{lem:Robin}
  Let \ref{ass:A2}--\ref{ass:A3} hold and let $\alpha \in \mathbb{C} \setminus \{ 0 \}$.
  Then for any $\gamma \in \Lambda$ and $\tau \in \Lambda^*$, the following statements are equivalent:
	\begin{enumerate}
	\item[(i)] $(I - \mathcal{X}) \gamma = 0$ and $(I + \mathcal{X}^\top) \tau = 0$,
	\item[(ii)] $\alpha M (I - \mathcal{X}) \gamma + (I + \mathcal{X}^\top) \tau = 0$.
	\end{enumerate}
\end{lemma}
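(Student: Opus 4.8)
The plan is to dispose of the easy implication first and then concentrate on the reverse one, where Assumption~\ref{ass:A3} must carry the weight. The direction (i)~$\Rightarrow$~(ii) is immediate: if $(I - \mathcal{X})\gamma = 0$ and $(I + \mathcal{X}^\top)\tau = 0$, then both summands in (ii) vanish separately, hence so does their linear combination, and the scalar $\alpha$ plays no role whatsoever.

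For (ii)~$\Rightarrow$~(i) I would introduce the abbreviations $p := (I - \mathcal{X})\gamma \in \Lambda$ and $q := (I + \mathcal{X}^\top)\tau \in \Lambda^*$, so that the hypothesis becomes $\alpha M p + q = 0$. The first move is to record the two spectral constraints furnished by Lemma~\ref{lem:X}(ii): since $p \in \range(I - \mathcal{X}) = \ker(I + \mathcal{X})$ we have $\mathcal{X} p = -p$, and since $q \in \range(I + \mathcal{X}^\top) = \ker(I - \mathcal{X}^\top)$ we have $\mathcal{X}^\top q = q$. Note that only the involution part $\mathcal{X}^2 = I$ of Assumption~\ref{ass:A2} enters here, through Lemma~\ref{lem:X}.

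The key step is to convert these two constraints into a statement about $M p$ alone. From $q = -\alpha M p$ I would apply $\mathcal{X}^\top$ and invoke $\mathcal{X}^\top q = q$ to get $-\alpha M p = q = \mathcal{X}^\top q = -\alpha\, \mathcal{X}^\top M p$; dividing by $\alpha \neq 0$ yields the relation $M p = \mathcal{X}^\top M p$. Substituting this together with $\mathcal{X} p = -p$ into the operator of Assumption~\ref{ass:A3} gives
\[
  (M + \mathcal{X}^\top M \mathcal{X})\, p = M p + \mathcal{X}^\top M(-p) = M p - \mathcal{X}^\top M p = 0,
\]
where the final equality uses $M p = \mathcal{X}^\top M p$. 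Because $(M + \mathcal{X}^\top M \mathcal{X})$ has a bounded inverse, this forces $p = 0$, and then $q = -\alpha M p = 0$, which is precisely statement~(i).

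The main obstacle---really the single conceptual point---is recognizing \emph{why} the particular combination $M + \mathcal{X}^\top M \mathcal{X}$ in Assumption~\ref{ass:A3} is the correct invertibility hypothesis: it is exactly the operator that annihilates any $p$ sitting in the $(-1)$-eigenspace of $\mathcal{X}$ for which $M p$ simultaneously lands in the $(+1)$-eigenspace of $\mathcal{X}^\top$, and these are precisely the two conditions that a failure of (i) would produce. Once this matching is seen, the remainder is a three-line computation; the only routine care required is keeping the transpose on the correct factors and tracking that $p$ and $q$ live in the \emph{opposite} eigenspaces ($\mathcal{X} p = -p$ versus $\mathcal{X}^\top q = +q$).
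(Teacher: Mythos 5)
Your proof is correct and is essentially the paper's argument recast in eigenspace language: applying $\mathcal{X}^\top$ to $q = -\alpha M p$ and using $\mathcal{X}^\top q = q$ accomplishes the same thing as the paper's step of applying $(I - \mathcal{X}^\top)$ to (ii) to annihilate the $\tau$-term, and your computation $(M + \mathcal{X}^\top M \mathcal{X})p = Mp - \mathcal{X}^\top Mp$ (using $\mathcal{X}p = -p$) is exactly the paper's operator identity $(I - \mathcal{X}^\top) M (I - \mathcal{X}) = (M + \mathcal{X}^\top M \mathcal{X})(I - \mathcal{X})$ evaluated at $\gamma$. Both arguments use only the involution property $\mathcal{X}^2 = I$ from Assumption (A2) together with the invertibility hypothesis of (A3), so there is no gap.
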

\begin{proof}
  Obviously (i) $\implies$ (ii). To show the reverse implication, we apply $(I - \mathcal{X}^\top)$ to (ii):
	\[
	  \alpha (I - \mathcal{X}^\top) M (I - \mathcal{X}) \gamma + \underline{(I - \mathcal{X}^\top) (I + \mathcal{X}^\top)} \tau = 0.
	\]
	Due to Lemma~\ref{lem:X} the underlined expression vanishes. A side computation reveals that
	\[
	  (I - \mathcal{X}^\top) M (I - \mathcal{X})
		= M + \mathcal{X}^\top M \mathcal{X} - M \mathcal{X}
		  - \underbrace{\mathcal{X}^\top M}_{= \mathcal{X}^\top M \mathcal{X} \mathcal{X}}
		= (M + \mathcal{X}^\top M \mathcal{X}) (I - \mathcal{X}).
	\]
	Therefore, $\alpha (M + \mathcal{X}^\top M \mathcal{X}) (I - \mathcal{X}) \gamma = 0$. Since $\alpha \neq 0$ and $(M + \mathcal{X}^\top M \mathcal{X})$ is invertible,
	this shows that $(I - \mathcal{X}) \gamma = 0$.
	Insertion into (ii) proves that $(I + \mathcal{X}^\top) \tau = 0$.
\end{proof}

\begin{remark}
  In the proof of Lemma~\ref{lem:Robin}, only the property $\mathcal{X}^2 = I$ is used from \ref{ass:A2}.
\end{remark}

\begin{remark}
  In Sect.~\ref{sect:convergence}, we will use the \emph{stronger} assumptions that $M$ itself has a bounded inverse
	(Assumption~\ref{ass:A6})
	and that $\mathcal{X}^\top M \mathcal{X} = M$ (Assumption~\ref{ass:A4}).
\end{remark}

\begin{remark}
\label{rem:RobinH}
  As a viable alternative to Assumption~\ref{ass:A3}, one can assume that $M + \mathcal{X}^\herm M \mathcal{X}$ has a bounded inverse and obtain the equivalence between
	(i) $(I - \mathcal{X})\gamma = 0$ and $(I + \mathcal{X}^\herm) \tau = 0$ and
	(ii) $\alpha M(I - \mathcal{X}) \gamma + (I + \mathcal{X}^\herm) \tau = 0$.
	See also Remark~\ref{rem:HermitianImp} below.
\end{remark}

For the following, we fix the \emph{Robin parameter} $\alpha$
(later on, we will set $\alpha = 1$ for the coercive case and $\alpha = \complexi$ for the wave propagation case).
Using Lemma~\ref{lem:Robin}, Formulation~\eqref{eq:traceflux} is equivalent to
\begin{align}
\label{eq:tracefluxRobin}
  \text{find } (u, \tau) \in U \times \Lambda^* \colon \quad
	\begin{bmatrix} A & - T^\top \\ \alpha M(I - \mathcal{X}) T & (I + \mathcal{X}^\top) \end{bmatrix}
	\begin{bmatrix} u \\ \tau \end{bmatrix}
	= \begin{bmatrix} f \\ 0 \end{bmatrix},
\end{align}
where the second line enforces the generalized Robin transmission conditions.

\subsection{A formulation based on generalized Robin traces}

Using the impedance operator $M$ and the scalar Robin parameter $\alpha$ from the previous section,
we define the generalized \emph{impedance trace} (or generalized Robin trace)
$\lambda := \alpha M T u + \tau$.
In view of \eqref{eq:impedanceTraceForPlaneWave}, we can call $\lambda$ the \emph{incoming impedance trace}.
Applying the bijective transformation of variables $(u, \tau) \leftrightarrow (u, \lambda)$
to \eqref{eq:tracefluxRobin} we arrive at the

\medskip

\noindent%
\fbox{\parbox{\textwidth}{
 \emph{Interface impedance trace formulation:}
\begin{align}
\label{eq:waveRobinGeneral}
  \text{find } (u, \lambda) \in U \times \Lambda^* \colon \quad
	\begin{bmatrix} (A + \alpha T^\top M T) & - T^\top \\ -\alpha \mathcal{X}^\top (M + \mathcal{X}^\top M \mathcal{X}) T & (I + \mathcal{X}^\top) \end{bmatrix}
	\begin{bmatrix} u \\ \lambda \end{bmatrix}
	= \begin{bmatrix} f \\ 0 \end{bmatrix}.
\end{align}
}}


\begin{corollary}
\label{cor:waveRobin}
  Let \ref{ass:A1}--\ref{ass:A3} hold and let $f \in U^*$ be given. Then:
	\begin{enumerate}
	\item[(i)] If $(u, \lambda)$ solves \eqref{eq:waveRobinGeneral} then $u = R \widehat u$ where $\widehat u$ is the unique solution of \eqref{eq:global}.
  \item[(ii)] If, in addition, either
    \begin{enumerate}
		\item[(a)] all spaces are finite-dimensional, or
    \item[(b)] $\range(T) = \Lambda$, or
    \item[(c)] $\overline{\range(T)} = \Lambda$ and $A R \widehat u - f \in \range(T^\top)$,
		\end{enumerate}
		then there exists $\lambda \in \Lambda^*$ such that $(R \widehat u, \lambda)$ solves \eqref{eq:waveRobinGeneral}.
		In cases (b) and (c), $\lambda$ is guaranteed to be unique,
		whereas in the finite-dimensional case (a),
		$\lambda$ is only unique up to an element from $\mathcal{Z}$ (see Def.~\ref{def:spaceZ}).
	\item[(iii)]
	  In cases (a) and (b), there exists a bounded linear solution operator $\mathcal{S}^{(\lambda)} \colon f \mapsto (u,\lambda)$
		for \eqref{eq:waveRobin}. In case~(b), $\mathcal{S}^{(\lambda)}$ is unique.
	\end{enumerate}
\end{corollary}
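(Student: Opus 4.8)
The plan is to obtain Corollary~\ref{cor:waveRobin} as a direct transcription of Theorem~\ref{thm:traceflux} through the change of variables $(u,\tau) \leftrightarrow (u,\lambda)$, $\lambda := \alpha M T u + \tau$, that was already used to pass from \eqref{eq:tracefluxRobin} to \eqref{eq:waveRobinGeneral}. First I would record that this substitution is a bounded linear isomorphism of $U \times \Lambda^*$ onto itself: the forward map $(u,\tau)\mapsto(u,\alpha M T u + \tau)$ and its inverse $(u,\lambda)\mapsto(u,\lambda-\alpha M T u)$ are both bounded since $M$ and $T$ are bounded (Assumptions~\ref{ass:A2}--\ref{ass:A3}), and it fixes the first component $u$. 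Because it is defined on the whole ambient space, it maps the solution set of one system bijectively onto that of the other irrespective of any well-posedness.

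Next I would verify that this isomorphism carries solutions of \eqref{eq:tracefluxRobin} exactly to solutions of \eqref{eq:waveRobinGeneral}. Substituting $\tau = \lambda - \alpha M T u$ into the first row $A u - T^\top\tau = f$ yields $(A + \alpha T^\top M T)u - T^\top\lambda = f$, and into the second row $\alpha M(I-\mathcal{X})Tu + (I+\mathcal{X}^\top)\tau = 0$ yields $-\alpha(\mathcal{X}^\top M + M\mathcal{X})Tu + (I+\mathcal{X}^\top)\lambda = 0$. The only nonroutine identity is $\mathcal{X}^\top(M+\mathcal{X}^\top M\mathcal{X}) = \mathcal{X}^\top M + M\mathcal{X}$, which follows from $(\mathcal{X}^\top)^2 = (\mathcal{X}^2)^\top = I$; this reproduces precisely the operator matrix in \eqref{eq:waveRobinGeneral}. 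Combined with Lemma~\ref{lem:Robin}, which already gives the equivalence of \eqref{eq:tracefluxRobin} and the interface flux formulation \eqref{eq:traceflux} under \ref{ass:A2}--\ref{ass:A3}, I obtain the dictionary: $(u,\lambda)$ solves \eqref{eq:waveRobinGeneral} if and only if $(u,\lambda-\alpha M T u)$ solves \eqref{eq:traceflux}.

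With this dictionary, each claim is read off from Theorem~\ref{thm:traceflux}. Part~(i) follows from Theorem~\ref{thm:traceflux}(i). For part~(ii) I would take the $\tau$ supplied by Theorem~\ref{thm:traceflux}(ii) under the respective hypothesis~(a), (b), or~(c) and set $\lambda := \alpha M T(R\widehat u) + \tau$; since $u=R\widehat u$ is fixed, any two admissible $\lambda$ differ exactly by the corresponding difference of $\tau$'s, so the uniqueness statement transfers verbatim — unique in cases~(b) and~(c), and unique up to $\mathcal{Z}$ in case~(a) (cf.\ Proposition~\ref{prop:uniqueTau}). For part~(iii) I would define $\mathcal{S}^{(\lambda)}$ as the composition of the bounded solution operator $\mathcal{S}^{(\tau)}$ from Theorem~\ref{thm:traceflux}(iii) with the forward change of variables, that is $\mathcal{S}^{(\lambda)}f := (u,\alpha M T u + \tau)$ where $(u,\tau)=\mathcal{S}^{(\tau)}f$; boundedness is automatic as a composition of bounded maps, and uniqueness in case~(b) is inherited from that of $\mathcal{S}^{(\tau)}$.

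The argument is therefore almost entirely bookkeeping, leaning on the fact that the substitution is a genuine bounded isomorphism in both directions. The only places demanding care will be keeping the uniqueness-up-to-$\mathcal{Z}$ accounting consistent across the change of variables (so that the redundancy space is not altered), and confirming the second-row algebra via $(\mathcal{X}^\top)^2=I$; neither presents a real obstacle, so I do not anticipate any step requiring new ideas beyond Theorem~\ref{thm:traceflux} and Lemma~\ref{lem:Robin}.
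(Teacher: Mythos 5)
Your proposal is correct and follows exactly the paper's argument: the paper's proof is a one-line appeal to Theorem~\ref{thm:traceflux}, Lemma~\ref{lem:Robin}, and the isomorphic change of variables $(u,\tau)\leftrightarrow(u,\lambda)$, with $\mathcal{S}^{(\lambda)}f=(u,\tau+\alpha M T u)$ for $(u,\tau)=\mathcal{S}^{(\tau)}f$, which is precisely what you spell out. Your explicit verification of the second-row algebra via $(\mathcal{X}^\top)^2=I$ is a correct filling-in of detail the paper leaves implicit.
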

\begin{proof}
  The proof follows from Theorem~\ref{thm:traceflux}, Lemma~\ref{lem:Robin},
	and the fact that the transformation~$(u, \tau) \leftrightarrow (u, \lambda)$ is isomorphic.
	In cases~(a) and (b), the solution operator is given by $\mathcal{S}^{(\lambda)} f = (u, \tau + \alpha M T u)$ where $(u, \tau) = \mathcal{S}^{(\tau)} f$.
\end{proof}

With an extension operator $E$ as in Proposition~\ref{prop:solOpTau}, the solution operator can be written as
\begin{align}
\label{eq:SlambdaDef}
  \mathcal{S}^{(\lambda)} = \begin{bmatrix} I & 0 \\ \alpha M T & I \end{bmatrix} \mathcal{S}^{(\tau)}
	= 
	\begin{bmatrix} I \\ \alpha M T + E^\top A \end{bmatrix} R \widehat A^{-1} R^\top
	- \begin{bmatrix} 0 \\ E^\top \end{bmatrix}.
\end{align}

The following assumption will mainly be used in Sect.~\ref{sect:convergence} and allows for a simplified formulation.

\medskip

\noindent%
\fbox{%
\begin{minipage}{0.985\textwidth}
\begin{assumption}
\label{ass:A4}
  $\mathcal{X}^\top M \mathcal{X} = M$.
\end{assumption}
\end{minipage}}

\medskip

This assumption can be interpreted in two ways. (i) If the exchange operator $\mathcal{X}$ is fixed,
\ref{ass:A4} restricts the choice of the impedance operator $M$ to ones that are ``the same from either side'', see Example~\ref{ex:A4} below.
(ii) If $M$ is fixed, \ref{ass:A4} restricts the choice of the exchange operator~$\mathcal{X}$;
this point of view will be adopted in Sect.~\ref{sect:interfaceExchange}.
Note also that under~\ref{ass:A4}, Assumption~\ref{ass:A3} actually requires that $M$ has a bounded inverse.

\begin{example}[facet-local impedance operator]
\label{ex:A4}
  For the setup from Example~\ref{ex:tracesH1Split} with the swapping operator $\mathcal{X}$ from Example~\ref{ex:swapping},
	assume that $M$ is block-diagonal with respect to the facets, i.e., $M = \diag(M_i)_{i=1}^N$
	and $M_i = \diag(M_{iF})_{F \in \mathcal{F}_i}$.
	Then Assumption~\ref{ass:A4} states that for a facet $F$ shared by subdomain $i$ and $j$,
	the condition $M_{iF} = M_{jF}$	must hold,
	i.e., we use \emph{the same impedance operator} on both sides of the facet.
\end{example}

\begin{remark}
  Under the additional assumption of \ref{ass:A4}, formulation~\eqref{eq:waveRobinGeneral} simplifies to
  \begin{align}
  \label{eq:waveRobin}
    \text{find } (u, \lambda) \in U \times \Lambda^* \colon \quad
  	\begin{bmatrix} (A + \alpha T^\top M T) & - T^\top \\ -2\alpha \mathcal{X}^\top M T & (I + \mathcal{X}^\top) \end{bmatrix}
  	\begin{bmatrix} u \\ \lambda \end{bmatrix}
  	= \begin{bmatrix} f \\ 0 \end{bmatrix}.
  \end{align}
  Multiplication of the second line by $\frac{1}{2\alpha} M^{-1} \mathcal{X}^\top$ leads to the formally symmetric system
	\begin{align}
	  \begin{bmatrix} A & - T^\top \\ - T & \frac{1}{2\alpha} M^{-1} (I + \mathcal{X}^\top) \end{bmatrix}
	\begin{bmatrix} u \\ \lambda \end{bmatrix}
	= \begin{bmatrix} f \\ 0 \end{bmatrix}.
	\end{align}
	This is because under Assumption~\ref{ass:A4}, $M^{-1} \mathcal{X}^\top = \mathcal{X} M^{-1}$.
\end{remark}

\begin{remark}
  Under the additional assumption of \ref{ass:A4}, the substitution $\lambda = M \underline{\lambda}$ in~\eqref{eq:waveRobinGeneral}
	leads to the system
  \begin{align}
  \label{eq:waveRobinAlt}
    \text{find } (u, \underline{\lambda}) \in U \times \Lambda \colon \quad
  	\begin{bmatrix} (A + \alpha T^\top M T) & - T^\top M \\ -2\alpha \mathcal{X} T & (I + \mathcal{X}) \end{bmatrix}
  	\begin{bmatrix} u \\ \underline{\lambda} \end{bmatrix}
  	= \begin{bmatrix} f \\ 0 \end{bmatrix},
  \end{align}
	which is used, e.g., in \cite{ClaeysParolin:Preprint2020}.
\end{remark}

The elimination of the primal variable $u$ from~\eqref{eq:waveRobinGeneral} requires the following assumption.

\medskip

\noindent%
\fbox{%
\begin{minipage}{0.985\textwidth}
\begin{assumption}
\label{ass:A5}
 The operator $(A + \alpha T^\top M T)$ has a bounded inverse.
\end{assumption}
\end{minipage}}

\medskip

If $M$ is block-diagonal w.r.t.\ the subdomains, i.e., $M = \diag(M_i)_{i=1}^N$ with $M_i \colon \Lambda_i \to \Lambda_i^*$,
then Assumption~\ref{ass:A5} means that $(A_i + \alpha T_i^\top M_i T_i)$ has a bounded inverse for each individual subdomain $i=1,\ldots,N$;
see also Assumption~\ref{ass:A6}, Sect.~\ref{sect:addAssConv}.

\begin{remark}
	For the Helmholtz equation posed in $H^1$ (Example~\ref{ex:modelProblemHelmholtz})
	with a block-diagonal impedance operator,
	the property of Assumption~\ref{ass:A5} can be shown using standard techniques.
	The case of Maxwell's equations is much more intricate, and invertibility is in some situations even an open problem \cite{Parolin:PhD}.
	Some basic techniques, however, are compiled in Appendix~\ref{apx:invertibility}.
\end{remark}

Assumption~\ref{ass:A5} allows us to form the Schur complement system of \eqref{eq:waveRobinGeneral}:
\begin{align}
\label{eq:RobinSchwarz}
  \text{find } \lambda \in \Lambda \colon \qquad
  (I - \mathcal{X}^\top S) \lambda = d,
\end{align}
where
\begin{align}
\label{eq:SdDef}
\begin{aligned}
  S & := -I + \alpha (M + \mathcal{X}^\top M \mathcal{X}) T (A + \alpha T^\top M T)^{-1} T^\top,\\
	d & := \alpha \mathcal{X}^\top (M + \mathcal{X}^\top M \mathcal{X}) T (A + \alpha T^\top M T)^{-1} f.
\end{aligned}
\end{align}


\begin{proposition}
\label{prop:scattering}
  Let Assumptions~\ref{ass:A1}--\ref{ass:A5} hold.
	Then the definitions in~\eqref{eq:SdDef} simplify to
	\[
	  S = -I + 2 \alpha M T (A + \alpha T^\top M T)^{-1} T^\top, \qquad
		d = 2 \alpha \mathcal{X}^\top M T (A + \alpha T^\top M T)^{-1} f.
	\]
	For any $(v, \sigma)$ fulfilling the homogeneous equation $A v - T^\top \sigma = 0$,
	the \emph{scattering operator} $S$ maps the incoming impedance trace $\lambda = \alpha M T v + \sigma$
	to the outgoing impedance trace $S \lambda = \alpha M T v - \sigma$.\footnote{For the Helmholtz equation and for the choice
	  $\alpha = \complexi$ and $M = \kappa I$, we have $\lambda_i = \complexi\kappa u_i + \partial u_i/\partial \nu_i$
		and $S_i \lambda_i = \complexi\kappa u_i - \partial u_i/\partial \nu_i$, cf.\ \eqref{eq:impedanceTraceForPlaneWave}.}
\end{proposition}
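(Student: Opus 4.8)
The plan is to dispose of the claimed simplification first and then verify the scattering interpretation; both reduce to short algebraic manipulations once Assumptions~\ref{ass:A4} and~\ref{ass:A5} are invoked, so I do not expect a substantial obstacle.

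For the simplification, the key observation is that Assumption~\ref{ass:A4} reads $\mathcal{X}^\top M \mathcal{X} = M$, whence $M + \mathcal{X}^\top M \mathcal{X} = 2M$. Substituting this identity directly into the definitions of $S$ and $d$ in~\eqref{eq:SdDef} immediately produces the asserted forms $S = -I + 2\alpha M T (A + \alpha T^\top M T)^{-1} T^\top$ and $d = 2\alpha \mathcal{X}^\top M T (A + \alpha T^\top M T)^{-1} f$. This step is purely a substitution and requires no further argument; note that under~\ref{ass:A4}, Assumption~\ref{ass:A3} forces $M$ itself to be boundedly invertible, but that is not even needed here.

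For the scattering statement, I would start from a pair $(v, \sigma)$ with $A v - T^\top \sigma = 0$ and set $\lambda = \alpha M T v + \sigma$. The essential manipulation is to add $\alpha T^\top M T v$ to both sides of $A v = T^\top \sigma$, yielding $(A + \alpha T^\top M T) v = T^\top(\sigma + \alpha M T v) = T^\top \lambda$. By Assumption~\ref{ass:A5} the operator $(A + \alpha T^\top M T)$ is boundedly invertible, so $v = (A + \alpha T^\top M T)^{-1} T^\top \lambda$. Inserting this into the simplified expression for $S$ gives $S \lambda = -\lambda + 2\alpha M T v$, and substituting $\lambda = \alpha M T v + \sigma$ on the right leaves $S \lambda = 2\alpha M T v - (\alpha M T v + \sigma) = \alpha M T v - \sigma$, which is precisely the outgoing impedance trace.

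The result is thus essentially a bookkeeping exercise in which Assumption~\ref{ass:A4} supplies the factor of two and Assumption~\ref{ass:A5} guarantees that the inverse is available. The only point requiring mild care is recognizing that the ``closed-loop'' equation $(A + \alpha T^\top M T) v = T^\top \lambda$ simultaneously encodes both the homogeneous subdomain equation $A v = T^\top \sigma$ and the definition $\lambda = \alpha M T v + \sigma$ of the incoming trace; once this bundling is made explicit, the scattering identity $S\lambda = \alpha M T v - \sigma$ drops out after a single substitution, consistent with the plane-wave picture of~\eqref{eq:impedanceTraceForPlaneWave} recorded in the footnote.
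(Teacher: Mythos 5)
Your proof is correct and is exactly the computation the paper has in mind: the paper states Proposition~\ref{prop:scattering} without proof, treating the substitution $M + \mathcal{X}^\top M \mathcal{X} = 2M$ from Assumption~\ref{ass:A4} and the identity $(A + \alpha T^\top M T)v = T^\top\lambda$ as immediate. Both of your steps check out, and the scattering identity $S\lambda = \alpha M T v - \sigma$ is precisely what the paper later relies on in the proof of Lemma~\ref{lem:SNonExpansive}.
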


The connection between \eqref{eq:waveRobinGeneral} and \eqref{eq:RobinSchwarz} is established in a standard fashion:

\begin{proposition}
\label{prop:waveRobinSchur}
  Let \ref{ass:A1}--\ref{ass:A3}, and~\ref{ass:A5} hold.
  \begin{enumerate}
  \item[(i)] Let $f \in U^*$ be given. If $(u, \lambda)$ solves \eqref{eq:waveRobinGeneral} then $\lambda$ solves \eqref{eq:RobinSchwarz}
    with $d$ as in \eqref{eq:SdDef}. 
  \item[(ii)] Let $f \in U^*$ be given, let $d$ be as in \eqref{eq:SdDef}, and suppose that $\lambda$ solves \eqref{eq:RobinSchwarz}.
	  Then, with $u = (A + \alpha T^\top M T)^{-1} (f + T^\top \lambda)$,
		one obtains that $(u, \lambda)$ solves \eqref{eq:waveRobinGeneral}.
  \end{enumerate}
\end{proposition}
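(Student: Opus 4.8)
The plan is to perform a standard Schur-complement elimination of the primal variable $u$ from the block system \eqref{eq:waveRobinGeneral}, exactly as one eliminates a pivot block in block Gaussian elimination. First I would spell out the two operator equations encoded in \eqref{eq:waveRobinGeneral}: the first row reads $(A + \alpha T^\top M T)\, u - T^\top \lambda = f$, and the second row reads $-\alpha \mathcal{X}^\top (M + \mathcal{X}^\top M \mathcal{X}) T u + (I + \mathcal{X}^\top)\lambda = 0$.

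For part~(ii) the crucial enabling fact is Assumption~\ref{ass:A5}, which asserts that the $(1,1)$-block $(A + \alpha T^\top M T)$ is boundedly invertible. Hence the first row can be solved uniquely for $u$, giving $u = (A + \alpha T^\top M T)^{-1}(f + T^\top\lambda)$, which is precisely the formula stated in the proposition. With this $u$ the first row of \eqref{eq:waveRobinGeneral} holds by construction, and it only remains to verify the second row.

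The core of both directions is then a single algebraic substitution. I would insert $u = (A + \alpha T^\top M T)^{-1}(f + T^\top\lambda)$ into the left-hand side of the second row and split the term $f + T^\top\lambda$. The part proportional to $f$ reproduces $-d$, with $d$ as in \eqref{eq:SdDef}, while the part proportional to $\lambda$ combines with $(I + \mathcal{X}^\top)\lambda$ to give $(I - \mathcal{X}^\top S)\lambda$, once $S$ from \eqref{eq:SdDef} is recognized. Thus the second row is equivalent to $(I - \mathcal{X}^\top S)\lambda = d$, i.e.\ to \eqref{eq:RobinSchwarz}. Reading this equivalence forward establishes~(i): given a solution $(u,\lambda)$ of \eqref{eq:waveRobinGeneral}, its component $\lambda$ solves \eqref{eq:RobinSchwarz}. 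Reading it backward, together with the construction of $u$ above, establishes~(ii).

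There is no genuine obstacle here; the proposition is a bookkeeping exercise in operator algebra, and the solution operator $\mathcal{S}^{(\lambda)}$ is not even needed for this equivalence. The only point demanding a little care is the grouping $(I + \mathcal{X}^\top)\lambda - \alpha \mathcal{X}^\top (M + \mathcal{X}^\top M \mathcal{X}) T (A + \alpha T^\top M T)^{-1} T^\top \lambda = (I - \mathcal{X}^\top S)\lambda$, which hinges on matching the definition $S = -I + \alpha (M + \mathcal{X}^\top M \mathcal{X}) T (A + \alpha T^\top M T)^{-1} T^\top$ exactly as written in \eqref{eq:SdDef}; in particular, the leading $-I$ in $S$, after left-multiplication by $\mathcal{X}^\top$ and the overall sign in $I - \mathcal{X}^\top S$, is precisely what produces the $+\mathcal{X}^\top$ matching the $(I + \mathcal{X}^\top)\lambda$ appearing in the second row. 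I would double-check this sign before committing to the final calculation.
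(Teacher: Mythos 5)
Your proposal is correct, and it is exactly the argument the paper intends: the paper omits an explicit proof, saying only that the connection is "established in a standard fashion," meaning precisely the Schur-complement elimination of $u$ via Assumption~(A5) that you carry out. Your sign bookkeeping for $(I - \mathcal{X}^\top S)\lambda$ against the leading $-I$ in $S$ checks out.
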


\begin{remark}
  Applying~$\mathcal{X}^\top$ to the Schur system~\eqref{eq:RobinSchwarz} yields (under Assumption~\ref{ass:A4}) the formulation
	\begin{align}
	\label{eq:FETI2LM:a}
	  \big[ I + \mathcal{X}^\top - 2 \alpha M T (A + \alpha T^\top M T)^{-1} T^\top \big] \lambda
		  = 2 \alpha M T (A + \alpha T^\top M T)^{-1} f,
	\end{align}
	which is essentially the one used in the method introduced by de La Bourdonnaye, Farhat, Macedo, Magoul\`es, and Roux
	\cite{Bourdonnaye:DD10,FarhatMacedoMagoulesRoux:USNCCM}.
	Note that therein,
	\eqref{eq:FETI2LM:a} is solved iteratively and with a Krylov subspace method and a preconditioner
  based on a projection to subdomain plane wave functions.
  In a related journal paper \cite{FarhatMacedoLesoinneRouxMagoulesDeLaBourdonnaie:2000a},
  the method was called \emph{regularized FETI method with two Lagrange multiplier fields (FETI-2LM)}.
  The exact ordering of the unknowns and equations is not described in detail,
  but for two subdomains, \cite[(51)--(52)]{Bourdonnaye:DD10} coincides with \eqref{eq:FETI2LM:a}.
	On the contrary, formulation~\eqref{eq:RobinSchwarz} is used in the FETI-2LM method described in \cite{Roux:DD14,Roux:DD18}.
	See also \cite{VouvakisCendesLee:2006a} for a similar formulation for Maxwell's equations.
\end{remark}

\begin{remark}
\label{rem:lambdaPrimal}
  Under the additional Assumption~\ref{ass:A4}, $M$ is invertible due to \ref{ass:A3},
	and so we can use the bijective transformation
	$\underline\lambda := M^{-1} \lambda = \alpha T u + M^{-1} \tau$.
	The transformed equation reads
	\begin{align}
	\label{eq:FETI2LM:c}
	  (I - \mathcal{X} \underline S) \underline\lambda = \underline d,
	\end{align}
	where $\underline S = - I + 2 \alpha T (A + \alpha T^\top M T)^{-1} T^\top M$
	and $\underline d = 2 \alpha \mathcal{X} T (A + \alpha T^\top M T)^{-1} f$,
	which is the Schur complement formulation of \eqref{eq:waveRobinAlt}.
  Formulation~\eqref{eq:FETI2LM:c} is, e.g., used in \cite{ClaeysParolin:Preprint2020}
	(with $\alpha = -\complexi$ and with the minus sign in front of $\mathcal{X}$ moved into $\underline S$).
\end{remark}

\subsection{The Robin-Schwarz iteration}

Given a damping parameter $\beta \in (0, 1]$,
the non-overlapping Schwarz iteration with Robin transmission conditions
is nothing else than a damped Richardson method for the Schur complement system~\eqref{eq:RobinSchwarz}:
\begin{align}
\label{eq:RobinSchwarzRichardon}
\begin{aligned}
  & \text{Given: } \lambda^{(0)} \in \Lambda^*,\\
	& \lambda^{(n+1)} := \lambda^{(n)} + \beta \big[ d - (I - \mathcal{X}^\top S) \lambda^{(n)} \big] \qquad \forall n \ge 0.
\end{aligned}
\end{align}

\begin{remark}
  Equation~\eqref{eq:RobinSchwarz} can also be written in fixed-point form, $\lambda = \mathcal{X}^\top S \lambda + d$.
  Correspondingly, \eqref{eq:RobinSchwarzRichardon} can be written as
	$\lambda^{(n+1)} = (1 - \beta) \lambda^{(n)} + \beta \big[ \mathcal{X}^\top S \lambda^{(n)} + d \big]$.
\end{remark}

\begin{remark}
	With $\underline\lambda^{(0)} \in \Lambda$ given, the iteration corresponding to \eqref{eq:FETI2LM:c} reads
	(see also \cite{ClaeysParolin:Preprint2020})
	\begin{align}
	\label{eq:RobinSchwarzRichardonMTrafo}
	  \underline\lambda^{(n+1)} = \underline\lambda^{(n)}
		  + \beta \big[ \underline d - (I - \mathcal{X} \underline S) \underline\lambda^{(n)} \big] \qquad \forall n \ge 0.
	\end{align}
\end{remark}

\medskip

Along with the \emph{dual} iterates $\lambda^{(n)}$ of~\eqref{eq:RobinSchwarzRichardon},
we define the corresponding \emph{primal} sequence
\begin{align}
\label{eq:unDef}
  u^{(n)} := (A + \alpha T^\top M T)^{-1} (f + T^\top \lambda^{(n)}) \qquad \forall n \ge 0,
\end{align}
such that
\begin{align}
\label{eq:lambdauIt}
  \lambda^{(n+1)} = \lambda^{(n)} + \beta \big[ \alpha \mathcal{X}^\top (M + \mathcal{X}^\top M \mathcal{X}) T u^{(n)} - (I + \mathcal{X}^\top) \lambda^{(n)} \big] \qquad \forall n \ge 0.
\end{align}
In that form, the scheme can be interpreted as a damped Uzawa iteration for \eqref{eq:waveRobinGeneral}.

\medskip

The iterates $\lambda^{(n)}$ of \eqref{eq:RobinSchwarzRichardon} are Lagrange multipliers on the interface.
In the discrete case, this is more preferrable compared to an iteration involving functions on the whole subdomains.
However, the classical Schwarz method \cite{Despres:PhD,Lions:DD03} was proposed in terms of iterates on the subdomains,
and the following result provides a link to such a form.

\begin{proposition}
  Let~\ref{ass:A1}--\ref{ass:A3}, and~\ref{ass:A5} hold and assume that $\overline{\range(T)} = \Lambda$.
	Then the sequence $(u^{(n)})$ defined in \eqref{eq:unDef} fulfills the recurrence relation
  \begin{align}
	\label{eq:uIt}
    u^{(n+1)} = (1 - \beta) u^{(n)} + \beta (A + \alpha T^\top M T)^{-1} \Big( f + T^\top \big[
	                \alpha M \mathcal{X} T u^{(n)} - \mathcal{X}^\top (T^\top)^\dag (A u^{(n)} - f)
	     \big] \Big),
  \end{align}
	for all $n \ge 0$,
	where $(T^\top)^\dag \colon \range(T^\top) \to \Lambda^*$ is the unique left-inverse of $T^\top$, such that $(T^\top)^\dag T^\top = I$.
	(Note that $(T^\top)^\dag$ is linear but not necessarily bounded!)
	Also,
	\[
	  A u^{(n)} - f \in \range(T^\top) \qquad \forall n \ge 0,
	\]
	which is why \eqref{eq:uIt} is well-defined.
	Recall that $u^{(0)}$ is defined by $\lambda^{(0)}$,
  but actually, we can choose $u^{(0)}$ freely as long as $A u^{(0)} - f \in \range(T^\top)$ is fulfilled.
\end{proposition}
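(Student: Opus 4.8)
The plan is to verify the recurrence by direct substitution, the only genuine subtlety being the correct interpretation of the (possibly unbounded) left-inverse $(T^\top)^\dag$. First I would record that the density assumption $\overline{\range(T)} = \Lambda$ forces $\ker(T^\top) = \range(T)^0 = \overline{\range(T)}^0 = \{0\}$ (cf.\ \eqref{eq:polarKerRange}), so $T^\top$ is injective and $(T^\top)^\dag$ is a well-defined linear left-inverse on $\range(T^\top)$. Next, directly from \eqref{eq:unDef}, multiplying by $(A + \alpha T^\top M T)$ gives $(A + \alpha T^\top M T) u^{(n)} = f + T^\top \lambda^{(n)}$, whence
\begin{align*}
  A u^{(n)} - f = T^\top \lambda^{(n)} - \alpha T^\top M T u^{(n)} = T^\top\big(\lambda^{(n)} - \alpha M T u^{(n)}\big).
\end{align*}
This simultaneously proves the claimed membership $A u^{(n)} - f \in \range(T^\top)$ for every $n \ge 0$ and, upon applying $(T^\top)^\dag$ and using $(T^\top)^\dag T^\top = I$, yields the explicit identity $(T^\top)^\dag(A u^{(n)} - f) = \lambda^{(n)} - \alpha M T u^{(n)}$. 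It is essential to note that $(T^\top)^\dag$ is only ever evaluated on an element already known to lie in $\range(T^\top)$, so its potential unboundedness is harmless.

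For the recurrence itself, I would abbreviate $G := (A + \alpha T^\top M T)^{-1}$ and start from $u^{(n+1)} = G(f + T^\top \lambda^{(n+1)})$, inserting the multiplier update \eqref{eq:lambdauIt}. Since $G(f + T^\top \lambda^{(n)}) = u^{(n)}$ by \eqref{eq:unDef}, this gives
\begin{align*}
  u^{(n+1)} = u^{(n)} + \beta\, G T^\top\big[\alpha \mathcal{X}^\top(M + \mathcal{X}^\top M \mathcal{X}) T u^{(n)} - (I + \mathcal{X}^\top)\lambda^{(n)}\big].
\end{align*}
The single algebraic simplification needed is $\mathcal{X}^\top \mathcal{X}^\top = (\mathcal{X}^2)^\top = I$, furnished by the involution property $\mathcal{X}^2 = I$ of Assumption~\ref{ass:A2}, which collapses $\mathcal{X}^\top \mathcal{X}^\top M \mathcal{X} = M \mathcal{X}$. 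The bracket thereby becomes $\alpha \mathcal{X}^\top M T u^{(n)} + \alpha M \mathcal{X} T u^{(n)} - (I + \mathcal{X}^\top)\lambda^{(n)}$.

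It then remains to match this against the target right-hand side. I would expand the target, substitute the first-step identity for $(T^\top)^\dag(A u^{(n)} - f)$, and compare term by term: every contribution involving $M$, $\mathcal{X}$, and $T u^{(n)}$ cancels immediately, and the only terms left to reconcile are $u^{(n)} - \beta\, G T^\top \lambda^{(n)}$ on the derived side against $(1-\beta) u^{(n)} + \beta\, G f$ on the target side. These agree precisely because $\beta\, G(f + T^\top \lambda^{(n)}) = \beta\, u^{(n)}$ by \eqref{eq:unDef}, which closes the computation. The main obstacle is purely one of bookkeeping --- tracking the transpose identities and, above all, ensuring that the stray term $-\beta\, G T^\top \lambda^{(n)}$ is reabsorbed through the definition of $u^{(n)}$ rather than being mistaken for an independent contribution; beyond the injectivity argument of the first step there is no analytic difficulty.
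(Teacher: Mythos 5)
Your proposal is correct and follows essentially the same route as the paper: establish $\ker(T^\top)=\{0\}$ from the density assumption, read off $A u^{(n)} - f = T^\top(\lambda^{(n)} - \alpha M T u^{(n)})$ from \eqref{eq:unDef} to get both the range membership and the identity $(T^\top)^\dag(A u^{(n)}-f)=\lambda^{(n)}-\alpha M T u^{(n)}$, then substitute into the multiplier update \eqref{eq:lambdauIt}, use $(\mathcal{X}^\top)^2=I$, and apply $(A+\alpha T^\top M T)^{-1}$. The only difference is organizational — you verify the stated recurrence by term matching while the paper derives it directly — which is immaterial.
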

\begin{proof}
  Since $\ker(T^\top) = \range(T)^0 = \overline{\range(T)}^0 = \Lambda^0 = \{ 0 \}$, cf.\ \eqref{eq:polarKerRange},
  there exists a unique linear right inverse $(T^\top)^\dag \colon \range(T^\top) \to \Lambda^*$ such that $(T^\top)^\dag T^\top = I$.
	From~\eqref{eq:unDef}, one can see easily that
	\[
	  A u^{(n)} - f = T^\top (\lambda^{(n)} - \alpha M T u^{(n)}),
	\]
	so $A u^{(n)} - f \in \range(T^\top)$.
	Therefore, we can multiply the previous identity by $(T^\top)^\dag$ to obtain
	\begin{align}
	\label{eq:lambdanHelp}
	   \lambda^{(n)} = (T^\top)^\dag (A u^{(n)} - f) + \alpha M T u^{(n)}.
	\end{align}
	Substitution of \eqref{eq:lambdanHelp} into the right-most occurrence of $\lambda^{(n)}$ in \eqref{eq:lambdauIt}, applying $T^\top$, adding $f$ on both sides, 
	and finally applying $(A + \alpha T^\top M T)^{-1}$ yields \eqref{eq:uIt}.
\end{proof}

\begin{example}[classical impedance operator]
  Consider the Helmholtz equation from Example~\ref{ex:modelProblemHelmholtz}
	with globally constant wave number $\kappa$,
	with the choice of $L^2$-traces (Example~\ref{ex:tracesH1Split}),
	the swapping operator $\mathcal{X}$ from Example~\ref{ex:swapping},
	and with $\alpha = \complexi$ and $M_i = \kappa I$.
	Then the expression $(T^\top)^\dag (A u^{(n)} - f)$ evaluates for each subdomain $\Omega_i$
	the normal derivative $\tfrac{\partial}{\partial \normal_i} u^{(n)}_i$
	on the interface $\partial\Omega_i \cap \Gamma$ as a quantity in $L^2$.
  For damping parameter $\beta = 1$, the equations for $u^{(n+1)}$ in strong form read
	\begin{align}
	\label{eq:DespresHelmholtz}
	\begin{alignedat}{2}
	  -\Delta u_i^{(n+1)} - \kappa^2 u_i^{(n+1)} & = f_i \qquad && \text{in } \Omega_i\,,\\
		\complexi \kappa u_i^{(n+1)} + \frac{\partial}{\partial \normal_i} u_i^{(n+1)} & = \complexi \kappa u_j^{(n)} - \frac{\partial}{\partial \normal_j} u_j^{(n)}
		  \qquad && \text{on } \partial\Omega_i \cap \partial\Omega_j\,,
	\end{alignedat}
	\end{align}
	plus the given exterior boundary condition on $\partial\Omega_i \cap \Gamma_D$, $\partial\Omega_i \cap \Gamma_N$, and $\partial\Omega_i \cap \Gamma_R$.
	In this form, the method was proposed by B.~Despr\'es in \cite[p.~29]{Despres:PhD}.
\end{example}

\begin{remark}
  The \emph{fixed point formulation} behind the primal iteration \eqref{eq:uIt} reads
	\begin{align}
	  u = (A + \alpha T^\top M T)^{-1} \Big(f + T^\top \big[ \alpha M \mathcal{X} T u - \mathcal{X}^\top (T^\top)^\dag ( A u - f ) \big] \Big),
	\end{align}
	where $(T^\top)^\dag \colon U^* \to \Lambda^*$ is a generalized inverse of $T^\top$ (not necessarily bounded!) such that $(T^\top)^\dag T^\top = I$
	(which is only possible if $\ker(T^\top) = \{ 0 \}$ or, equivalently, $\overline{\range(T)} = \Lambda$).
  Reordering the terms yields
	\begin{align}
	\label{eq:FixedPointSchwarz}
		\Big( I - (A + \alpha T^\top M T)^{-1} T^\top \big[ \alpha M \mathcal{X} T - \mathcal{X}^\top (T^\top)^\dag A \big] \Big) u
		= (A + \alpha T^\top M T)^{-1} \big[ f + \mathcal{X}^\top (T^\top)^\dag f \big],
	\end{align}
	which is of similar structure as \eqref{eq:RobinSchwarz}.
	Note that if $E \colon \Lambda \to U$ is a bounded extension operator with $T E = I$ (which requires $\range(T) = \Lambda$),
	then $(T^\top)^\dag$ can be replaced by $E^\top$.
\end{remark}

\subsection{A classification of discrete methods}

  If we are in the discrete case and $\range(T) \subsetneq \Lambda$, then $\ker(T^\top)$ is non-trivial.
	This means that once a dual variable $\lambda$ is assembled to $T^\top \lambda$, it cannot be recovered in general,
	see Figure~\ref{fig:rangeTNotComplete} and see \cite[Sect.~3]{GanderSantugini:2016a}.
	Right inverses $(T^\top)^\dag$ of $T^\top$ do exist, but they only fulfill
	$T^\top (T^\top)^\dag T^\top = T^\top$; however, $(T^\top)^\dag T^\top \neq I$.
	From the perspective of the dual formulation~\eqref{eq:RobinSchwarz} this is not a problem at all.
	The FETI-2LM method \cite{Bourdonnaye:DD10,FarhatMacedoMagoulesRoux:USNCCM,FarhatMacedoLesoinneRouxMagoulesDeLaBourdonnaie:2000a}
	was introduced exactly along these lines,
	and also the classical FETI method \cite{FarhatRoux:1991a} lives with comparable redundancies,
	cf.\ \cite[Sect.~6]{ToselliWidlund:Book}, \cite[Sect.~2.2]{Pechstein:FETIBook}.
	From the perspective of trying to leverage formulation~\eqref{eq:DespresHelmholtz} from the continuous to the discrete case
	by using the bilateral properly closed facets (see Sect.~\ref{sect:bilateralDiscrFacetSys}),
	the lack of surjectivity of $T$ turns out to be a real obstacle.
	The formulation~\eqref{eq:unDef}--\eqref{eq:lambdauIt},
	involving both the dual and primal iterate $\lambda^{(n)}$ and $u^{(n)}$,
	has been proposed by Gander and Santugini \cite[Sect.~3]{GanderSantugini:2016a}
	and is therein called \emph{auxiliary variable method} or \emph{optimized Schwarz with auxiliary variables}.

\begin{figure}
\begin{center}
  \begin{tikzpicture}
		\pgftransformscale{0.6}
		
		\definecolor{dblue}{rgb}{0.0, 0.0, 0.8}
		\definecolor{mdblue}{rgb}{0.4, 0.4, 1.0}
		\definecolor{lblue}{rgb}{0.8, 0.8, 1.0}
		\definecolor{mlblue}{rgb}{0.6, 0.6, 1.0}
		
		\draw[line width=0.75pt,color=mdblue,fill=mlblue] (0,0)--(1.5,0)--(1.5,1.5)--(0,1.5)--(0,0)--(1.5,0);

		\draw[line width=0.75pt,color=mdblue] (0,2)--(1.5,2);
		\draw[line width=0.75pt,color=mdblue] (2,0)--(2,1.5);
		
		\foreach \x in {1.5}
		{
		  \draw[line width=0.74pt,color=mdblue,fill=mdblue] (\x,2) circle (0.08);
		  \draw[line width=0.74pt,color=mdblue,fill=mdblue] (2,\x) circle (0.08);

			\draw[line width=0.74pt,color=mdblue,fill=mdblue,densely dotted] (1.5,\x)--(2,\x);
			\draw[line width=0.74pt,color=mdblue,fill=mdblue,densely dotted] (\x,2)--(\x,1.5);
    }
		
    \draw[line width=0.74pt,color=mdblue,fill=white] (1.5,1.5) circle (0.08);
		
  \end{tikzpicture}
	\caption{\label{fig:rangeTNotComplete}%
	  Local subdomain dof ($\circ$) corresponds to dofs ($\bullet$) on two bilateral facets. For an element $\tau_i \in \Lambda_i$,
		the operation $t_i = T_i^\top \tau_i$ adds up the values of the two trace dofs $\bullet$ to the local subdomain dof $\circ$.
		From $t_i$ the original values of the two trace dofs cannot be recovered.
		Therefore $\ker(T_i^\top) \neq \{ 0 \}$ and $\range(T_i) \subsetneq \Lambda_i$.
	}
\end{center}
\end{figure}
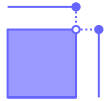

\begin{table}
\begin{center}
  \begin{tabular}{l|l|l|l}
	  method                 & proposed in & facet system               & formulation \\
	\hline
	  FETI-2LM               & \cite{Bourdonnaye:DD10,FarhatMacedoMagoulesRoux:USNCCM,FarhatMacedoLesoinneRouxMagoulesDeLaBourdonnaie:2000a}
			                                                             & bilateral, properly closed & dual   \eqref{eq:RobinSchwarz} \\
		auxiliary variables    & \cite[Sect.~3]{GanderSantugini:2016a} & bilateral, properly closed & mixed
			                                                                                    \eqref{eq:unDef}--\eqref{eq:lambdauIt} \\
		Loisel's method        & \cite{Loisel:2013a}                   & globs                      & dual   \eqref{eq:RobinSchwarz} \\
		complete communication & \cite[Sect.~4]{GanderSantugini:2016a} & globs                      & primal \eqref{eq:uIt}
	\end{tabular}
	\caption{\label{tab:methodClassification}%
    Classification of discrete methods.
	}
\end{center}
\end{table}

	Under the condition $\range(T) = \Lambda$ one can obviously use the primal form~\eqref{eq:uIt} (also in the discrete case).
	Recall that for bilateral facet systems,
	the condition $\range(T) = \Lambda$ fails to hold once a global dof is shared by three or more subdomains
	(Proposition~\ref{prop:bilateralCrosspoints}).
	Recall, however, that for \emph{glob systems}, the condition $\range(T) = \Lambda$ holds always.
	Indeed, the construction in the paper by S.~Loisel \cite{Loisel:2013a}
	(therein called \emph{2-Lagrange multiplier method}) 
	is reproduced if one uses a glob system and the \emph{dual} formulation~\eqref{eq:RobinSchwarz}.
	Independently, Gander and Santugini \cite[Sect.~4]{GanderSantugini:2016a}
  proposed a scheme called \emph{complete communication method},
	which is reproduced if one uses a glob system and the primal formulation~\eqref{eq:uIt}.
  These observations are summarized in Table~\ref{tab:methodClassification}.

\section{Convergence analysis}
\label{sect:convergence}

This section contains three types of convergence theorems that all extend available results.
Section~\ref{sect:convGeneral} generalizes the result with minimal assumptions and simple convergence
by Collino, Ghanemi, and Joly \cite{CollinoGhanemiJoly:2000a} building on Despr\'es' original proof \cite{Despres:PhD}.
Section~\ref{sect:convStrong} generlizes the result in \cite{CollinoGhanemiJoly:2000a} with stronger assumptions on the trace operator
achieving linear convergence (in \cite{CollinoGhanemiJoly:2000a} called exponential convergence).
For completeness, Section~\ref{sect:convAbs} covers the case where the wave propagation problems have a lot of absorbtions
or the coercive problems have a sufficient zero order term, leading to linear convergence as well, as e.g., demonstrated in \cite{GanderSantugini:2016a}.
Whereas the assumptions made so far have been tailored to guarantee that the reformulations in \eqref{sect:RobinTC}
are equivalent to the original equation,
one needs additional assumptions to make convergence accessible (Sect.~\ref{sect:addAssConv}).

\subsection{Additional assumptions for the convergence analysis}
\label{sect:addAssConv}

Opposed to many analyses of coercive problems where the system operator can serve as an energy norm,
here errors are measured using the impedance operator,
which requires the following stronger assumption and goes back to the concept of
\emph{pseudo-energy}, see \cite{Despres:PhD}.

\medskip

\noindent%
\fbox{%
\begin{minipage}{0.985\textwidth}
\begin{assumption}
\label{ass:A6}
  The operator $M$ from \ref{ass:A3} has the block-diagonal form $M = \diag(M_i)_{i=1}^N$,
  where each operator $M_i \colon \Lambda_i \to \Lambda_i^*$ is real-valued,
	symmetric\footnote{$M_i \colon \Lambda_i \to \Lambda_i^*$ is symmetric iff $M_i^\top = M_i$.},
	and positively bounded from below,
  i.e., there exists a constant $c_i > 0$ such that
  $\langle M_i \lambda_i, \overline \lambda_i \rangle \ge c_i \| \lambda_i \|_{\Lambda_i}^2$ for all $\lambda_i \in \Lambda_i$.
\end{assumption}
\end{minipage}}

\medskip

With Assumption~\ref{ass:A6} fulfilled, we can define the following inner products and norms:
\begin{alignat*}{3}
  (\lambda, \mu)_M & := \langle M \lambda, \overline \mu \rangle, & \qquad
	\| \lambda \|_{M} & := \langle M \lambda, \overline \lambda \rangle^{1/2}
	  \qquad && \lambda,\, \mu \in \Lambda,\\
	(\lambda, \mu)_{M^{-1}} & := \langle M^{-1} \lambda, \overline \mu \rangle, & \qquad
	     \| \mu \|_{M^{-1}} & := \langle M^{-1} \mu, \overline \mu \rangle^{1/2}
	  \qquad && \lambda,\, \mu \in \Lambda^*,
\end{alignat*}
where $\overline \mu$ denotes the complex conjugate of $\mu$.

\medskip

\noindent%
\fbox{%
\begin{minipage}{0.985\textwidth}
\begin{assumption}
\label{ass:AXR}
  The exchange operator $\mathcal{X}$ is real-valued.
\end{assumption}
\end{minipage}}

\medskip

Assumptions~\ref{ass:A4}, \ref{ass:A6}, and~\ref{ass:AXR} together imply that the exchange operator is an isometry:
\begin{align}
\label{eq:XIso}
\begin{alignedat}{2}
  \| \mathcal{X} \lambda \|_{M}       & = \| \lambda \|_{M}      \qquad && \forall \lambda \in \Lambda,\\
  \| \mathcal{X}^\top \mu \|_{M^{-1}} & = \| \mu \|_{M^{-1}}     \qquad && \forall \mu \in \Lambda^*.
\end{alignedat}
\end{align}

Our next assumption states that we are dealing either with a coercive (positive definite) problem, or with a (time-harmonic) wave propagation problem.

\medskip

\noindent%
\fbox{\begin{minipage}{0.985\textwidth}
\begin{assumption}
\label{ass:A7}
  One of the following cases holds:
	\begin{enumerate}
	\item[(i)]  \emph{Coercive case:} $\alpha = 1$ and each operator $A_i$ is real-valued, symmetric,
	  and non-negative\footnote{$A_i \colon U_i \to U_i^*$ is non-negative iff $\langle A_i v, \overline v \rangle \ge 0$
		for all $v \in U_i$.}.
	\item[(ii)] \emph{Wave propagation case:}  $\alpha = \complexi$ and each operator $A_i$ can be written as
	  \[
		  A_i = A_{i,0} + \complexi A_{i,1} - A_{i,2}
		\]
    with real-valued, symmetric, and non-negative operators $A_{i,k}$.
	\end{enumerate}
\end{assumption}
\end{minipage}}

\medskip

The following lemma states that the \emph{pseudo-energy} (cf.\ \cite[Lemme~4.3]{Despres:PhD})
of the incoming impedance trace $\lambda$ is the same as that of the outgoing impedance trace $S \lambda$ plus the interior losses.

\begin{lemma}
\label{lem:SNonExpansive}
  Let Assumptions~\ref{ass:A1}--\ref{ass:A7} hold.
	Then
	\[
	  \| S \lambda \|_{M^{-1}}^2 + 4 p = \| \lambda \|_{M^{-1}}^2 \qquad \forall \lambda \in \Lambda^*,
	\]
	where $p \ge 0$ is defined as
	\[
	  p = \left. \begin{cases}
		  \langle A v, \overline v \rangle & \text{in case~(i) of \ref{ass:A7}} \\
			\Im \langle A v, \overline v \rangle & \text{in case~(ii) of \ref{ass:A7}}
		\end{cases}
		\right\}
		\qquad \text{with } v = (A + \alpha T^\top M T)^{-1} T^\top \lambda.
	\]
	In particular, the scattering operator $S$ is non-expansive with respect to the norm $\|\cdot\|_{M^{-1}}$:
	\[
	  \| S \lambda \|_{M^{-1}} \le \| \lambda \|_{M^{-1}} \qquad \forall \lambda \in \Lambda^*.
	\]
\end{lemma}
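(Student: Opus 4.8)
The plan is to prove the energy identity directly and read off non-expansivity as an immediate corollary. The core object is $v = (A + \alpha T^\top M T)^{-1} T^\top \lambda$, the solution of the homogeneous Robin problem $A v - T^\top \sigma = 0$ with incoming trace $\lambda = \alpha M T v + \sigma$. By Proposition~\ref{prop:scattering}, the scattering operator acts by $S\lambda = \alpha M T v - \sigma$, so the outgoing trace differs from the incoming one only in the sign of the flux part $\sigma$. First I would record the two representations $\lambda = \alpha M T v + \sigma$ and $S\lambda = \alpha M T v - \sigma$ and compute the $M^{-1}$-norms of both using the inner product $(\cdot,\cdot)_{M^{-1}} = \langle M^{-1}\cdot, \overline{\cdot}\rangle$ from Assumption~\ref{ass:A6}.

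The key computation is to expand both squared norms. Writing $w := \alpha M T v$ (so that $M^{-1} w = \alpha T v$), I would compute
\begin{align*}
  \| \lambda \|_{M^{-1}}^2 - \| S\lambda \|_{M^{-1}}^2
    &= \langle M^{-1}(w + \sigma), \overline{w + \sigma} \rangle
      - \langle M^{-1}(w - \sigma), \overline{w - \sigma} \rangle\\
    &= 2 \langle M^{-1} w, \overline\sigma \rangle + 2 \langle M^{-1}\sigma, \overline w \rangle.
\end{align*}
Here I use that $M^{-1}$ is symmetric and real-valued (from~\ref{ass:A6}), so the cross terms combine cleanly. Substituting $M^{-1} w = \alpha T v$ and $w = \alpha M T v$ gives $\langle M^{-1} w, \overline\sigma\rangle = \alpha \langle \sigma, \overline{T v}\rangle = \alpha \langle T^\top\sigma, \overline v\rangle$ (using the definition of $T^\top$), and similarly the second cross term yields $\overline\alpha\langle T^\top\sigma,\overline v\rangle$ after conjugation in the second slot. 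The homogeneous equation $T^\top\sigma = A v$ then turns the right-hand side into $2(\alpha + \overline\alpha)\langle A v, \overline v\rangle$ — wait, the conjugation bookkeeping must be done carefully, and this is where I would be most attentive: in the sesquilinear $M^{-1}$-product the second argument carries the conjugate, so the two cross terms are complex conjugates of each other, and their sum is $4\Re\big(\alpha\langle A v, \overline v\rangle\big)$.

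The final step is to evaluate $4\Re(\alpha\langle A v, \overline v\rangle)$ in the two cases of Assumption~\ref{ass:A7}. In the coercive case $\alpha = 1$ and $A$ is real-valued symmetric non-negative, so $\langle A v, \overline v\rangle$ is real and non-negative, giving $4\langle A v, \overline v\rangle = 4p \ge 0$. In the wave propagation case $\alpha = \complexi$, so $\Re(\complexi\langle A v, \overline v\rangle) = -\Im\langle A v,\overline v\rangle$; since $A = A_0 + \complexi A_1 - A_2$ with each $A_k$ real symmetric, $\Im\langle A v, \overline v\rangle = \langle A_1 v, \overline v\rangle \ge 0$, and I would need to verify the sign convention so that $p = \Im\langle A v, \overline v\rangle \ge 0$ matches the stated $4p$ with the correct orientation. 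Assembling these yields $\|\lambda\|_{M^{-1}}^2 - \|S\lambda\|_{M^{-1}}^2 = 4p$, which is the claimed identity, and since $p \ge 0$ non-expansivity follows at once. The main obstacle is purely the conjugation accounting in the complexified sesquilinear setting — getting the factor $\alpha + \overline\alpha$ versus $\Re(\alpha\,\cdot)$ right and confirming the sign of $p$ in case~(ii); the algebra itself is short once that is pinned down.
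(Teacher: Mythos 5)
Your proof follows exactly the paper's argument: write $\lambda = \alpha M T v + \sigma$ and $S\lambda = \alpha M T v - \sigma$ with $A v = T^\top \sigma$, expand the two $M^{-1}$-norms, and identify the cross term with $\langle A v, \overline v\rangle$. The one point you flagged resolves as follows: because the conjugation sits in the second slot of $(\cdot,\cdot)_{M^{-1}}$, the two cross terms are $\alpha\,\overline{\langle T^\top\sigma,\overline v\rangle}$ and $\overline\alpha\,\langle T^\top\sigma,\overline v\rangle$, so their sum is $2\Re\big(\overline\alpha\langle A v,\overline v\rangle\big)$ --- the \emph{conjugate} of $\alpha$ appears, not $\alpha$ itself. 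For $\alpha=\complexi$ this gives $\Re\big(-\complexi\langle A v,\overline v\rangle\big) = +\Im\langle A v,\overline v\rangle = \sum_i\langle A_{i,1} v_i,\overline{v_i}\rangle \ge 0$, so $p$ has the stated (non-negative) sign and the identity and non-expansivity follow exactly as you describe.
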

\begin{proof}
  In addition to $v$ given above, we define $\sigma := \lambda - \alpha M T v$.
	Proposition~\ref{prop:scattering} implies
	\[
	  A v - T^\top \sigma = 0, \qquad
		\lambda = \alpha M T v + \sigma, \qquad
		S \lambda = \alpha M T v - \sigma.
	\]
	We form
	\begin{align*}
	  \| S \lambda \|_{M^{-1}}^2 - \| \lambda \|_{M^{-1}}^2
		& = \| \alpha M T v - \sigma \|_{M^{-1}}^2 - \| \alpha M T v + \sigma \|_{M^{-1}}^2\\
		& = -4 \Re \underbrace{(\sigma, \alpha M T v )_{M^{-1}}}_{\overline\alpha \langle T^\top \sigma, \overline v \rangle}
		  = -4 \underbrace{ \Re\big( \overline\alpha \langle A v, \overline v \rangle \big) }_{=: p}.
	\end{align*}
	According to \ref{ass:A7}, we have two cases:
	In case~(i), $\alpha = 1$ and $A$ is real-valued, symmetric, and non-negative, so
	  $p = \langle A v, \overline v \rangle \ge 0$.
	In case~(ii), $\alpha = \complexi$, so
	  $p = \Im \langle A v, \overline v \rangle = \sum_{i=1}^N \langle A_{i,1} v_i, \overline{v_i} \rangle \ge 0$,
	  since each operator $A_{i,1}$ is real-valued and non-negative.
\end{proof}

\begin{remark}
  For the formulation from Remark~\ref{rem:lambdaPrimal},
	$\| \underline{S}\, \underline\lambda \|_M^2 + 4 p = \| \underline \lambda \|_M^2$
	for all $\underline\lambda \in \Lambda$,
	with $p$ as in Lemma~\ref{lem:SNonExpansive} but with $v = (A + \alpha T^\top M T)^{-1} T^\top M \underline\lambda$.
\end{remark}

\subsection{Convergence in the general case}
\label{sect:convGeneral}

This section generalizes the convergence result by Collino, Ghanemi, and Joly \cite[Lemma~5]{CollinoGhanemiJoly:2000a}
which is based on the early findings by Despr\'es.

\begin{lemma}
\label{lem:kerIXS}
  Let \ref{ass:A1}--\ref{ass:A3}, and~\ref{ass:A5} hold. Then (with $\mathcal{Z}$ as in Def.~\ref{def:spaceZ})
  \[
	  \ker(I - \mathcal{X}^\top S) = \mathcal{Z}.
	\]
	If, in addition, Assumption~\ref{ass:A6} holds then $\ker(I - \mathcal{X} \underline S) = M^{-1}(\mathcal{Z})$ (see Remark~\ref{rem:lambdaPrimal}).
\end{lemma}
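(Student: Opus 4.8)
The plan is to prove the first identity, $\ker(I - \mathcal{X}^\top S) = \mathcal{Z}$, by reducing the homogeneous Schur equation to the homogeneous interface flux formulation \eqref{eq:traceflux}, whose solution set has already been pinned down in Proposition~\ref{prop:uniqueTau}. Setting $f = 0$ (so that $d = 0$ in \eqref{eq:SdDef}), I would first invoke Proposition~\ref{prop:waveRobinSchur}: a trace $\lambda$ lies in $\ker(I - \mathcal{X}^\top S)$ exactly when the pair $(u_\lambda, \lambda)$ with $u_\lambda := (A + \alpha T^\top M T)^{-1} T^\top \lambda$ solves the homogeneous version of \eqref{eq:waveRobinGeneral}, and every solution of that system arises this way. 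Since \eqref{eq:waveRobinGeneral} is just \eqref{eq:tracefluxRobin} after the bijective substitution $\lambda = \alpha M T u + \tau$, and \eqref{eq:tracefluxRobin} is equivalent to \eqref{eq:traceflux} by Lemma~\ref{lem:Robin}, the homogeneous \eqref{eq:waveRobinGeneral} is equivalent to the homogeneous \eqref{eq:traceflux}. Proposition~\ref{prop:uniqueTau} then forces $u = 0$ and $\tau \in \mathcal{Z}$; translating back through $u_\lambda = 0$ and $\lambda = \tau$ yields $\ker(I - \mathcal{X}^\top S) = \mathcal{Z}$.

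I would also keep a fully self-contained realization of this argument ready, in case routing through the formulation equivalences is deemed too indirect. Put $u := (A + \alpha T^\top M T)^{-1} T^\top \lambda$ and $\tau := \lambda - \alpha M T u$; the definition of $u$ gives $A u - T^\top \tau = 0$ directly, and expanding $(I - \mathcal{X}^\top S)\lambda$ with $S$ from \eqref{eq:SdDef}, using only $\mathcal{X}^\top \mathcal{X}^\top = I$, collapses the equation $(I - \mathcal{X}^\top S)\lambda = 0$ to the single Robin-type identity $\alpha M (I - \mathcal{X}) T u + (I + \mathcal{X}^\top)\tau = 0$. Here Lemma~\ref{lem:Robin} does the decisive work, splitting this into $(I - \mathcal{X}) T u = 0$ and $(I + \mathcal{X}^\top)\tau = 0$; together with $A u - T^\top \tau = 0$ these are precisely the homogeneous equations of \eqref{eq:traceflux}, and Proposition~\ref{prop:uniqueTau} concludes. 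The main obstacle is bookkeeping rather than conceptual: one must verify that the substitution $\lambda \leftrightarrow (u,\tau)$ is an exact bijection so that no kernel elements are created or lost, and one must remember that Lemma~\ref{lem:Robin} relies on the invertibility of $M + \mathcal{X}^\top M \mathcal{X}$ from \ref{ass:A3}, which is exactly what makes the forward and backward passes reversible.

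For the second identity I would pass to the transformed operator $\underline S$ of Remark~\ref{rem:lambdaPrimal}, which is available once \ref{ass:A6} (hence invertibility of $M$) and the underlying \ref{ass:A4} are in force. Comparing $S$ from Proposition~\ref{prop:scattering} with $\underline S = -I + 2\alpha T(A + \alpha T^\top M T)^{-1} T^\top M$ gives $\underline S = M^{-1} S M$ by inspection, and the commutation $M^{-1} \mathcal{X}^\top M = \mathcal{X}$ (a consequence of $\mathcal{X}^\top M \mathcal{X} = M$, already recorded in Remark~\ref{rem:lambdaPrimal}) upgrades this to
\[
  I - \mathcal{X} \underline S = M^{-1}(I - \mathcal{X}^\top S) M .
\]
Since $M$ is a bounded bijection, kernels transform contravariantly, so $\ker(I - \mathcal{X} \underline S) = M^{-1}\big(\ker(I - \mathcal{X}^\top S)\big) = M^{-1}(\mathcal{Z})$ by the first part, which is the claim.
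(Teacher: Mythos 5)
Your proposal is correct and takes essentially the same route as the paper's own proof: for $f=0$ one passes from $(I-\mathcal{X}^\top S)\lambda=0$ through Proposition~\ref{prop:waveRobinSchur} and Lemma~\ref{lem:Robin} to the homogeneous interface flux formulation~\eqref{eq:traceflux}, applies Proposition~\ref{prop:uniqueTau} to get $(u,\tau)=(0,z)$ with $z\in\mathcal{Z}$, and obtains the second identity from $M\underline{S}=SM$ together with $M\mathcal{X}=\mathcal{X}^\top M$. Your explicit similarity relation $I-\mathcal{X}\underline{S}=M^{-1}(I-\mathcal{X}^\top S)M$ and your remark that the substitution $\lambda\leftrightarrow(u,\tau)$ must be bijective (so both inclusions are covered) merely spell out details the paper leaves implicit.
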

\begin{proof}
  Assume that $\lambda \in \Lambda^*$ with $(I - \mathcal{X}^\top S) \lambda = 0$.
	We define $f = 0$ and $u = (A + \alpha T^\top M T)^{-1} T^\top \lambda$
	and find that $(u, \lambda)$ solves \eqref{eq:waveRobin}, cf.\ Proposition~\ref{prop:waveRobinSchur}.
	With $\tau = \lambda - \alpha M T u$, this implies that	$(u, \tau)$ solves \eqref{eq:traceflux}.
	Since the solution operator $\mathcal{S}^{(\tau)}$ for \eqref{eq:traceflux} depends only on $f = 0$,
	it follows from Theorem~\ref{thm:traceflux} that $(u, \tau) = (0, z)$ for some element $z \in \mathcal{Z}$.
	This implies, in turn, that $\lambda \in \mathcal{Z}$.
	The second relation follows from $M \underline S = S M$, cf.\ Remark~\ref{rem:lambdaPrimal}.
\end{proof}

\begin{theorem}
\label{thm:convGeneral}
  Let Assumptions~\ref{ass:A1}--\ref{ass:A7} hold. In addition, either
	\begin{enumerate}
	\item[1.] all spaces are finite-dimensional, or
	\item[2.] all the following assumptions hold:
	  \begin{enumerate}
		\item[(a)] $\overline{\range(T)} = \Lambda$,
		\item[(b)] $A R \widehat u - f \in \range(T^\top)$,
		\item[(c)] $T$ is compact.
		\end{enumerate}
	\end{enumerate}
	Then the Robin-Schwarz iteration \eqref{eq:RobinSchwarzRichardon}
	with damping parameter $\beta \in (0, 1)$ converges in the sense that
	\begin{align*}
	  T^\top \lambda^{(n)} \stackrel{n \to \infty}{\longrightarrow} w^{(\infty)} \text{ in } U^*, \qquad
		u^{(n)} \stackrel{n \to \infty}{\longrightarrow} R \widehat u \text{ in } U,
	\end{align*}
	where $w^{(\infty)} := (A + \alpha T^\top M T) R \widehat u - f$ and $u^{(n)} := (A + \alpha T^\top M T)^{-1} (f + T^\top \lambda^{(n)})$.
	In general, $(\lambda^{(n)})$ only contains weakly convergent subsequences.
	Likewise, the iterates $(\underline \lambda^{(n)})$ from~\eqref{eq:RobinSchwarzRichardonMTrafo} fulfill
	$T^\top M \underline\lambda^{(n)} \stackrel{n \to \infty}{\longrightarrow} w^{(\infty)}$.
\end{theorem}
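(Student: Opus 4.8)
The plan is to run the Krasnoselskii--Mann/Despr\'es argument in the pseudo-energy norm and then upgrade the resulting weak information to strong convergence of the \emph{traces} by invoking uniqueness of the homogeneous decomposition problem. First I would reduce the assertion to a single strong limit. By Corollary~\ref{cor:waveRobin} (case~(a) in the finite-dimensional situation, case~(c) under hypotheses~2(a),(b)) there is a solution $(R\widehat u,\lambda^*)$ of \eqref{eq:waveRobinGeneral}; by Proposition~\ref{prop:waveRobinSchur} its multiplier satisfies $(I-\mathcal{X}^\top S)\lambda^*=d$, and $(A+\alpha T^\top M T)R\widehat u=f+T^\top\lambda^*$, so $w^{(\infty)}=T^\top\lambda^*$. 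Setting $e^{(n)}:=\lambda^{(n)}-\lambda^*$ and $v_n:=u^{(n)}-R\widehat u=(A+\alpha T^\top M T)^{-1}T^\top e^{(n)}$, we have $T^\top\lambda^{(n)}-w^{(\infty)}=(A+\alpha T^\top M T)v_n$, so by \ref{ass:A5} it suffices to prove $v_n\to0$ in $U$. Since $S$ is linear and $\lambda^*$ is a fixed point, \eqref{eq:RobinSchwarzRichardon} becomes the linear recursion $e^{(n+1)}=(1-\beta)e^{(n)}+\beta\,\mathcal{X}^\top S e^{(n)}$.

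Next I would establish the monotone pseudo-energy estimate. Using that $\mathcal{X}^\top$ is an $M^{-1}$-isometry \eqref{eq:XIso}, the scattering identity $\|S e^{(n)}\|_{M^{-1}}^2=\|e^{(n)}\|_{M^{-1}}^2-4p_n$ with $p_n\ge0$ from Lemma~\ref{lem:SNonExpansive}, and the identity for convex combinations, a short computation yields
\[
  \|e^{(n+1)}\|_{M^{-1}}^2=\|e^{(n)}\|_{M^{-1}}^2-4\beta p_n-\beta(1-\beta)\,\|r^{(n)}\|_{M^{-1}}^2,
  \qquad r^{(n)}:=(I-\mathcal{X}^\top S)e^{(n)}.
\]
Because $\beta\in(0,1)$ and $p_n\ge0$, the norms $\|e^{(n)}\|_{M^{-1}}$ are non-increasing (hence $(e^{(n)})$ is bounded) and the telescoping sum forces $r^{(n)}\to0$ in $\Lambda^*$ (and $p_n\to0$).

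Then I would convert the residual decay into the transmission and flux conditions. With $v_n$ as above and $\sigma_n:=e^{(n)}-\alpha M T v_n$, Proposition~\ref{prop:scattering} gives $A v_n-T^\top\sigma_n=0$, $e^{(n)}=\alpha M T v_n+\sigma_n$, and $S e^{(n)}=\alpha M T v_n-\sigma_n$. Using \ref{ass:A4} in the form $\mathcal{X}^\top M=M\mathcal{X}$ one finds $r^{(n)}=\alpha M(I-\mathcal{X})T v_n+(I+\mathcal{X}^\top)\sigma_n$; applying $I-\mathcal{X}^\top$ and $I+\mathcal{X}^\top$ and invoking Lemma~\ref{lem:X} exactly as in the proof of Lemma~\ref{lem:Robin} yields $(I-\mathcal{X}^\top)r^{(n)}=2\alpha M(I-\mathcal{X})T v_n$ and $(I+\mathcal{X}^\top)r^{(n)}=2(I+\mathcal{X}^\top)\sigma_n$. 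Hence $(I-\mathcal{X})T v_n\to0$ and $(I+\mathcal{X}^\top)\sigma_n\to0$.

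The main obstacle is the final passage to a \emph{strong} limit: since $S$ is only non-expansive (the scattering is essentially lossless), $(e^{(n)})$ and $(\lambda^{(n)})$ need only converge weakly, which is precisely why a compactness hypothesis enters. Given any subsequence of $(v_n)$, the corresponding bounded $(e^{(n)})$ has a further subsequence with $e^{(n_k)}\rightharpoonup e^*$; since $T^\top$ is compact (as the transpose of the compact $T$; automatic in the finite-dimensional case), $T^\top e^{(n_k)}\to T^\top e^*$ strongly, whence $v_{n_k}\to v^*:=(A+\alpha T^\top M T)^{-1}T^\top e^*$ strongly and, after extracting a weak limit $\sigma_{n_k}\rightharpoonup\sigma^*$ of the bounded $(\sigma_n)$, one passes to the limit in the relations of the previous paragraph. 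This gives $(I-\mathcal{X})T v^*=0$, i.e.\ $v^*\in\range(R)$ by \ref{ass:A2}, together with $A v^*=T^\top\sigma^*$ and $(I+\mathcal{X}^\top)\sigma^*=0$, i.e.\ $A v^*\in\ker(R^\top)$ by Lemma~\ref{lem:kerRT}. Thus $(v^*,A v^*)$ solves the homogeneous problem \eqref{eq:subdfluxT}, and Lemma~\ref{lem:subdflux}(iii) (equivalently $\ker(\widehat A)=\{0\}$, or Lemma~\ref{lem:kerIXS} giving $e^*\in\mathcal{Z}\subseteq\ker(T^\top)$) forces $v^*=0$. Since every subsequence of $(v_n)$ therefore admits a further subsequence converging to $0$, the whole sequence converges, $v_n\to0$, which yields $u^{(n)}\to R\widehat u$ in $U$ and $T^\top\lambda^{(n)}\to w^{(\infty)}$ in $U^*$; the statement for $(\underline\lambda^{(n)})$ follows at once from $T^\top M\underline\lambda^{(n)}=T^\top\lambda^{(n)}$.
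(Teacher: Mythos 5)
Your proposal is correct and follows essentially the same route as the paper's proof: the same pseudo-energy telescoping identity $\|e^{(n+1)}\|_{M^{-1}}^2=\|e^{(n)}\|_{M^{-1}}^2-4\beta p_n-\beta(1-\beta)\|r^{(n)}\|_{M^{-1}}^2$, the same use of compactness of $T^\top$ (Schauder) to upgrade the weakly convergent subsequence to strong convergence of $T^\top e^{(n_k)}$, the same identification of the limit via uniqueness of the homogeneous problem, and the same sub-subsequence argument to pass from subsequences to the full sequence. The only (cosmetic) deviation is that where the paper concludes $\mu^{(\infty)}\in\mathcal{Z}$ by citing Lemma~\ref{lem:kerIXS}, you re-derive that content inline by splitting $r^{(n)}$ into the parts $2\alpha M(I-\mathcal{X})Tv_n$ and $2(I+\mathcal{X}^\top)\sigma_n$ and showing the limit pair solves the homogeneous subdomain flux problem.
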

\begin{proof}
  The structure of the proof follows that of \cite[Lemma~5]{CollinoGhanemiJoly:2000a}.
  Due to the stated assumptions, Theorem~\ref{thm:traceflux}
	guarantees the existence of $\tau \in \Lambda^*$ such that $(R \widehat u, \tau)$
	solves \eqref{eq:traceflux}, or equivalently \eqref{eq:tracefluxRobin}.
	In the finite-dimensional case, $\tau$ is not necessarily unique, but we fix one possible solution.
	With $\lambda := \alpha M T R \widehat u + \tau$, we find that $(R \widehat u, \lambda)$ solves \eqref{eq:waveRobin}
	and consequently $\lambda$ solves \eqref{eq:RobinSchwarz}.
	Defining $\mu^{(n)} := \lambda^{(n)} - \lambda$, we find from \eqref{eq:RobinSchwarzRichardon} that
	\[
	  \mu^{(n+1)} = (1 - \beta) \mu^{(n)} + \beta \mathcal{X}^\top S \mu^{(n)}.
	\]
	Hence,
	\[
	  \| \mu^{(n+1)} \|_{M^{-1}}^2 = (1 - \beta)^2 \| \mu^{(n)} \|_{M^{-1}}^2
		  + 2 \beta (1 - \beta) \Re \big( \mathcal{X}^\top S \mu^{(n)}, \mu^{(n)} \big)_{M^{-1}}
			+ \beta^2 \| \mathcal{X}^\top S \mu^{(n)} \|_{M^{-1}}^2\,.
	\]
	As in the proof of \cite[Lemma~5]{CollinoGhanemiJoly:2000a}, we use the identity
	\begin{align}
	\label{eq:ReXSIdentity}
	  2 \Re \big( \mathcal{X}^\top S \mu^{(n)}, \mu^{(n)} \big)_{M^{-1}}
		  = \| \mu^{(n)} \|_{M^{-1}}^2 + \| \mathcal{X}^\top S \mu^{(n)} \|_{M^{-1}}^2
		    - \| (I - \mathcal{X}^\top S)\mu^{(n)} \|_{M^{-1}}^2
	\end{align}
	in the earlier formula to obtain
	\[
	  \| \mu^{(n+1)} \|_{M^{-1}}^2
		  = (1 - \beta) \| \mu^{(n)} \|_{M^{-1}}^2
		  - \beta (1 - \beta) \| (I - \mathcal{X}^\top S) \mu^{(n)} \|_{M^{-1}}^2
			+ \beta \| \mathcal{X}^\top S \mu^{(n)} \|_{M^{-1}}^2.
	\]
	Due to the isometry property~\eqref{eq:XIso} and Lemma~\ref{lem:SNonExpansive},
	\[
	  \| \mathcal{X}^\top S \mu^{(n)} \|_{M^{-1}}^2 = 
		\| S \mu^{(n)} \|_{M^{-1}}^2
		= \| \mu^{(n)} \|_{M^{-1}}^2 - 4 p^{(n)}\,,
	\]
	with $p^{(n)} \ge 0$ given by
	\[
	  p^{(n)} = \begin{cases}
			\Re \big\langle A e^{(n)}, \overline{e^{(n)}} \big\rangle & \text{in case (i) of \ref{ass:A7},}\\
			\Im \big\langle A e^{(n)}, \overline{e^{(n)}} \big\rangle & \text{in case (ii) of \ref{ass:A7},}
		\end{cases}
	\]
	where $e^{(n)} = (A + \alpha T^\top M T)^{-1} T^\top \mu^{(n)}$.
	By induction, we can show that
	\[
	  \| \mu^{(n+1)} \|_{M^{-1}}^2
		  + \beta (1 - \beta) \sum_{k=1}^n \| (I - \mathcal{X}^\top S) \mu^{(k)} \|_{M^{-1}}^2
		  + 4 \beta \sum_{k=1}^n p^{(k)}
		= \| \mu^{(0)} \|_{M^{-1}}^2\,.
	\]
	Since all terms on the left-hand side are non-negative and since $\beta \in (0, 1)$, this proves that
	\begin{enumerate}
	\item[(i)] the sequence $(\mu^{(n)})$ is bounded with respect to $\| \cdot \|_{M^{-1}}$,
	\item[(ii)] the series $\sum_{k=1}^\infty \| (I - \mathcal{X}^\top S) \mu^{(k)} \|_{M^{-1}}^2$ converges,
	 and so $(I - \mathcal{X}^\top S) \mu^{(k)} \stackrel{k \to \infty}{\longrightarrow} 0$ in $\Lambda^*$.
	\end{enumerate}
	Because of (i) there exists a weakly convergent subsequence $(\mu^{(n_\ell)})$ with a weak limit $\mu^{(\infty)} \in \Lambda^*$, i.e.,
	$\mu^{(n_\ell)} \rightharpoonup \mu^{(\infty)}$.
	Next, we need a case distinction:
	\begin{itemize}
	\item In case~1, all spaces are finite-dimensional and so weak convergence implies strong convergence.
	  Because of~(ii) this yields $(I - \mathcal{X}^\top S) \mu^{(\infty)} = 0$.
	\item In case~2, due to (2c), $T^\top$ is compact, and so
	  \begin{alignat*}{2}
		  T^\top \mu^{(n_\ell)} & \stackrel{\ell \to \infty}{\longrightarrow} T^\top \mu^{(\infty)} \qquad && \text{(strongly) in } U^*,\\
			T^\top (I - \mathcal{X}^\top S) \mu^{(n_\ell)}
			                      & \stackrel{\ell \to \infty}{\longrightarrow} T^\top (I - \mathcal{X}^\top S) \mu^{(\infty)} \qquad && \text{(strongly) in } U^*.
		\end{alignat*}
		Since $(I - \mathcal{X}^\top S) \mu^{(n_\ell)}$ converges to zero, it follows that $T^\top (I - \mathcal{X}^\top S) \mu^{(\infty)} = 0$.
		Due to assumption (2a) it follows that $\ker(T^\top) = \range(T)^0 = \overline{\range(T)}^0 = \{0\}$, and so
		\[
		  (I - \mathcal{X}^\top S) \mu^{(\infty)} = 0.
		\]
	\end{itemize}
	In both cases, we conclude from Lemma~\ref{lem:kerIXS} that $\mu^{(\infty)} \in \ker(T^\top) \cap \ker(I + \mathcal{X}^\top)$.
	In the finite-dimensional case, $\mu^{(\infty)}$ may be non-zero and depend on the subsequence.
	Nevertheless, it is true in general that $T^\top \mu^{(\infty)} = 0$, which shows that
	\[
	  T^\top \mu^{(n_\ell)} \stackrel{\ell \to \infty}{\longrightarrow} 0 \qquad \text{(strongly) in } U^*.
	\]
	Suppose now that the \emph{original} sequence $T^\top \mu^{(n)}$ does \emph{not} converge to zero.
	Then there must be a subsequence $(n_j)$ and some $\varepsilon > 0$ such that $\| T^\top \mu^{(n_j)} \|_{M^{-1}} \ge \varepsilon$.
	However, we can repeat the arguments from above and extract a sub-subsequence that \emph{does} converge to zero, which is a contradiction.
	Therefore,
	\[
	  T^\top \mu^{(n)} \stackrel{n \to \infty}{\longrightarrow} 0 \qquad \text{(strongly) in } U^*.
	\]	
	From the definition of $\mu^{(n)}$ we obtain that
	\[
	  T^\top \lambda^{(n)} \stackrel{n \to \infty}{\longrightarrow}
		  \alpha T^\top M T R \widehat u + \underbrace{T^\top \tau}_{A R \widehat u - f}
		  = (A + \alpha T^\top M T) R \widehat u - f.
	\]
	Using \ref{ass:A5}, the convergence property for $u^{(n)}$ follows suit.
\end{proof}

\begin{example}
  For the Helmholtz equation (Example~\ref{ex:modelProblemHelmholtz}),
	suppose that we use a bilateral facet system with facet trace space $U_F := H^s(F)$, where $0 \le s < 1/2$,
	and the usual exchange operator that swaps traces (see Example~\ref{ex:swapping}).
	Then Assumptions~\ref{ass:A1}--\ref{ass:A2} and \ref{ass:AXR} are fulfilled.
	For each facet $F$, let $M_F \colon H^s(F) \to H^s(F)^*$ be a real-valued, symmetric impedance operator which is bounded positively from below,
	and define $M_i$ as a block-diagonal operator with entries $(M_F)_{F \in \mathcal{F}_i}$.
	Then Assumptions~\ref{ass:A3}, \ref{ass:A4}, and \ref{ass:A6} hold.
	The invertibility of the local subdomain problems (Assumption~\ref{ass:A5}) is also guaranteed, see Appendix~\ref{apx:invGenRobin}.
	Finally, with $\alpha = \complexi$, Assumption~\ref{ass:A7} holds (see Table~\ref{tab:DDClass}).
	In the continuous case, $\range(T)$ is dense in $\Lambda$ because the \emph{natural} trace space $H^{1/2}(F)$ is dense in the \emph{chosen} trace space $H^s(F)$,
	and $T$ is compact because the embedding $H^{1/2}(F) \subset H^s(F)$ is compact for $s < 1/2$, so Assupmtions~2.a) and 2.c) hold.
	Assumption~2.b) states that the normal derivative of the global solution must be piecewise in $H^s(F)^*$.
	For $s=0$, this is the $L^2$ regularity used in \cite{Despres:PhD} and \cite[Sect.~2.3]{CollinoGhanemiJoly:2000a}.
\end{example}

\begin{remark}
  For the special case of the Laplace and the Helmholtz equation,
  Lions \cite{Lions:DD03} and Despr\'es \cite{Despres:PhD} proved that the \emph{undamped} Schwarz scheme (with $\beta = 1$)
	converges as well.
	A generalization of that line of proof, however, is beyond the scope of this paper
	as it would require more assumptions and appear even more technical.
\end{remark}

It would be advantageous if the scattering operator $S$ were a contraction, because this would at once imply linear convergence
\`a la Banach's fixed point theorem.

\begin{proposition}
\label{prop:SContractive}
  Let Assumptions~\ref{ass:A1}--\ref{ass:A7} hold and suppose that
	\[
	  \| S \mu \|_{M^{-1}} \le \rho \| \mu \|_{M^{-1}} \qquad \forall \mu \in \Lambda^*,
	\]
	for some contraction factor $\rho < 1$.
	Then for any damping parameter $\beta \in (0, 1]$, the Robin-Schwarz iteration~\eqref{eq:RobinSchwarzRichardon}
	converges linearly in the sense that for $n \ge 0$,
  \[
	  \| \lambda^{(n+1)} - \lambda \|_{M^{-1}} \le \rho \| \lambda^{(n)} - \lambda \|_{M^{-1}}
	\]
	and therefore $\| \lambda^{(n)} - \lambda \|_{M^{-1}} \le \rho^n \| \lambda^{(0)} - \lambda \|_{M^{-1}}$.
\end{proposition}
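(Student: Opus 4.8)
The plan is to track the error $\mu^{(n)} := \lambda^{(n)} - \lambda$ and to show that it contracts in the $\|\cdot\|_{M^{-1}}$-norm. Since the limit $\lambda$ solves \eqref{eq:RobinSchwarz}, i.e.\ $(I - \mathcal{X}^\top S)\lambda = d$, subtracting this fixed-point identity from the update rule \eqref{eq:RobinSchwarzRichardon} and cancelling $d$ yields the homogeneous recursion
\[
  \mu^{(n+1)} = (1-\beta)\mu^{(n)} + \beta\,\mathcal{X}^\top S\,\mu^{(n)} =: G_\beta\,\mu^{(n)},
\]
so the whole statement reduces to bounding the error-propagation operator $G_\beta$ in the $\|\cdot\|_{M^{-1}}$-operator norm.

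The decisive ingredient is that $\mathcal{X}^\top S$ is a genuine $\rho$-contraction for $\|\cdot\|_{M^{-1}}$. Indeed, by the isometry property \eqref{eq:XIso} (which rests on \ref{ass:A4}, \ref{ass:A6}, and \ref{ass:AXR}) one has $\|\mathcal{X}^\top S\mu\|_{M^{-1}} = \|S\mu\|_{M^{-1}}$, and the contraction hypothesis then gives $\|\mathcal{X}^\top S\mu\|_{M^{-1}} \le \rho\,\|\mu\|_{M^{-1}}$. For the undamped iteration $\beta = 1$ this already closes the argument: since $G_1 = \mathcal{X}^\top S$, one obtains
\[
  \|\mu^{(n+1)}\|_{M^{-1}} = \|S\mu^{(n)}\|_{M^{-1}} \le \rho\,\|\mu^{(n)}\|_{M^{-1}},
\]
which is exactly the displayed per-step bound; composing it $n$ times gives $\|\mu^{(n)}\|_{M^{-1}} \le \rho^n\|\mu^{(0)}\|_{M^{-1}}$, and \ref{ass:A5} transfers the statement to the primal iterates.

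For $\beta \in (0,1)$ I would re-use the exact energy identity behind \eqref{eq:ReXSIdentity}, exactly as in the proof of Theorem~\ref{thm:convGeneral}, to write
\[
  \|\mu^{(n+1)}\|_{M^{-1}}^2 = (1-\beta)\,\|\mu^{(n)}\|_{M^{-1}}^2 + \beta\,\|S\mu^{(n)}\|_{M^{-1}}^2 - \beta(1-\beta)\,\|(I - \mathcal{X}^\top S)\mu^{(n)}\|_{M^{-1}}^2,
\]
and then discard the nonpositive last term and insert $\|S\mu^{(n)}\|_{M^{-1}}^2 \le \rho^2\|\mu^{(n)}\|_{M^{-1}}^2$ to obtain $\|\mu^{(n+1)}\|_{M^{-1}}^2 \le [\,1-\beta(1-\rho^2)\,]\,\|\mu^{(n)}\|_{M^{-1}}^2$. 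This already establishes linear convergence for every $\beta \in (0,1]$, with the contraction factor collapsing to $\rho$ precisely at $\beta = 1$.

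The step I expect to be the real obstacle is the claim of the \emph{uniform} factor $\rho$ for all $\beta \in (0,1]$. The identity part of $G_\beta$ leaves the $\|\cdot\|_{M^{-1}}$-length of any near-kernel direction of $S$ essentially untouched: testing on a unit $\mu$ for which $\mathcal{X}^\top S\mu$ is small forces $\|G_\beta\mu\|_{M^{-1}} \approx 1-\beta$, so no estimate can push the per-step factor below $\max\{\rho,\,1-\beta\}$, and the sharpest uniform bound obtainable from the triangle inequality is $\|G_\beta\|_{M^{-1}} \le (1-\beta)+\beta\rho = 1-\beta(1-\rho)$, which strictly exceeds $\rho$ as soon as $\beta<1$ and $\rho<1$. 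The sharp constant $\rho$ is therefore intrinsic to the undamped case, and I would prove the displayed bound for $\beta = 1$ while recording linear convergence with the $\beta$-dependent rate $\sqrt{1-\beta(1-\rho^2)}$ (equivalently $1-\beta(1-\rho)$) for $\beta \in (0,1)$.
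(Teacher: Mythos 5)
Your argument is correct, and in fact it is more careful than the paper, which states Proposition~\ref{prop:SContractive} without any proof at all. The error recursion $\mu^{(n+1)} = (1-\beta)\mu^{(n)} + \beta\,\mathcal{X}^\top S\mu^{(n)}$, the reduction via the isometry~\eqref{eq:XIso} (available since Assumptions~\ref{ass:A4}, \ref{ass:A6}, \ref{ass:AXR} are all in force), and the immediate bound $\| \mu^{(n+1)} \|_{M^{-1}} = \| S \mu^{(n)} \|_{M^{-1}} \le \rho \| \mu^{(n)} \|_{M^{-1}}$ for $\beta = 1$ are exactly the intended mechanism; your use of the energy identity behind~\eqref{eq:ReXSIdentity} for $\beta \in (0,1)$ mirrors the technique of Theorems~\ref{thm:convGeneral} and~\ref{thm:convSchwarzSurjTrace}. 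Your closing objection is also well founded: the statement as printed asserts the per-step factor $\rho$ uniformly for all $\beta \in (0,1]$, and this cannot hold — taking $S$ with very small norm (so the hypothesis holds with tiny $\rho$) and $\beta < 1$ gives $\| \mu^{(n+1)} \|_{M^{-1}} \approx (1-\beta)\| \mu^{(n)} \|_{M^{-1}}$, which exceeds $\rho \| \mu^{(n)} \|_{M^{-1}}$. The honest conclusion is the one you record: factor $\rho$ for $\beta = 1$, and linear convergence with the degraded factor $1 - \beta(1-\rho)$ (or $\sqrt{1 - \beta(1-\rho^2)}$ from the energy identity) for $\beta \in (0,1)$; note this is consistent with Theorem~\ref{thm:convSchwarzSurjTrace}, where the author does report a $\beta$-dependent rate. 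Two trivial housekeeping points you may add for completeness: the contraction hypothesis makes $I - \mathcal{X}^\top S$ invertible by a Neumann series, so the fixed point $\lambda$ you subtract actually exists and is unique, and it also forces $\mathcal{Z} = \ker(I - \mathcal{X}^\top S) = \{0\}$.
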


However, two causes can prevent $S$ from being (strongly) contractive.
\begin{enumerate}
\item[(i)] In case of redundancies, i.e., if $\mathcal{Z} = \ker(I - \mathcal{X}^\top S)$ is non-trivial,
  there exists an element $\mu \neq 0$ such that $\mathcal{X}^\top S \mu = \mu$, and so
	$\| S \mu \|_{M^{-1}} = \| \mathcal{X}^\top S \mu \|_{M^{-1}} = \| \mu \|_{M^{-1}}$.
\item[(ii)] For wave propagation problems, the typical subdomain $i$ has mostly \emph{propagative modes},
  these are functions $\mu_i \in \Lambda_i^* \setminus \{ 0 \}$ such that
  \[
	  S_i \mu_i = \xi \mu_i \qquad \text{with } \xi \in \mathbb{C},\ |\xi| = 1,
  \]
	which is why $\| S_i \mu_i \|_{M_i^{-1}} = \| \mu_i \|_{M_i^{-1}}$.
	Indeed, Lemma~\ref{lem:SNonExpansive} shows that if $A_{i,1} = 0$ then \emph{all} functions in $\Lambda_i$
	are propagative and $S_i$ is an \emph{isometry}.
\end{enumerate}

\begin{remark}
  A recent and very promising work \cite{GongGanderGrahamLafontaineSpence:Preprint2021a}
	on an overlapping Robin-Schwarz method for the Helmholtz equation proves \emph{power-contractivity}
	(but does not fit into the framework of this paper).
\end{remark}

\subsection{Convergence for strong absorbtion}
\label{sect:convAbs}

The following theorem shows convergence also for damping parameters of one and can do so without compactness,
however, under strong assumptions on the subdomain operators (cf.\ \cite[Thm.~3.2]{GanderSantugini:2016a}).

\begin{theorem}
\label{thm:convSPD}
  Let Assumptions~\ref{ass:A1}--\ref{ass:A7} hold. In addition, 
	\begin{enumerate}
	\item[1.] all spaces are finite-dimensional, or
	\item[2.] the two following assumptions hold:
	  \begin{enumerate}
		\item[(a)] $\overline{\range(T)} = \Lambda$,
		\item[(b)] $A R \widehat u - f \in \range(T^\top)$.
		\end{enumerate}
	\end{enumerate}
	Furthermore, assume that there exist positive constants $a_i > 0$ such that for each $i=1,\ldots,N$,
	\begin{alignat*}{2}
	  \langle A_i v, \overline v \rangle & \ge a_i \| v \|_{U_i}^2 \qquad & \text{in case~(i) of \ref{ass:A7},}\\
		\langle A_{i,1} v, \overline v \rangle & \ge a_i \| v \|_{U_i}^2 \qquad & \text{in case~(ii) of \ref{ass:A7}.}
	\end{alignat*}
	Then the Robin-Schwarz iteration \eqref{eq:RobinSchwarzRichardon}
	with damping parameter $\beta \in (0, 1]$ converges in the sense that
	\begin{align*}
	  T^\top \lambda^{(n)} \stackrel{n \to \infty}{\longrightarrow} w^{(\infty)} \text{ in } U^*, \qquad
		u^{(n)} \stackrel{n \to \infty}{\longrightarrow} R \widehat u \text{ in } U.
	\end{align*}
	where $w^{(\infty)} := (A + \alpha T^\top M T) R \widehat u - f$ and $u^{(n)} := (A + \alpha T^\top M T)^{-1} (f + T^\top \lambda^{(n)})$.
\end{theorem}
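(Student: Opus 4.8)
The plan is to follow the proof of Theorem~\ref{thm:convGeneral} verbatim up through the energy balance and then replace the compactness argument by one that exploits the coercivity hypothesis, yielding strong convergence directly (and for the full range of damping, including $\beta=1$). First I would establish, exactly as in Theorem~\ref{thm:convGeneral}, the existence of a solution $\lambda$ of the Schur system~\eqref{eq:RobinSchwarz}: under the stated hypotheses (finite dimensions, or assumptions (2a)--(2b), which are precisely cases~(a) resp.~(c) of Theorem~\ref{thm:traceflux}), there exists $\tau \in \Lambda^*$ such that $(R\widehat u, \tau)$ solves \eqref{eq:traceflux}, and then $\lambda := \alpha M T R\widehat u + \tau$ solves \eqref{eq:RobinSchwarz}. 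Introducing the error $\mu^{(n)} := \lambda^{(n)} - \lambda$, the iteration \eqref{eq:RobinSchwarzRichardon} gives $\mu^{(n+1)} = (1-\beta)\mu^{(n)} + \beta \mathcal{X}^\top S \mu^{(n)}$, and combining Lemma~\ref{lem:SNonExpansive} with the isometry property~\eqref{eq:XIso} and the identity~\eqref{eq:ReXSIdentity} yields, for every $\beta \in (0,1]$, the telescoped balance
\[
  \| \mu^{(n+1)} \|_{M^{-1}}^2
  + \beta (1-\beta) \sum_{k=0}^n \| (I - \mathcal{X}^\top S) \mu^{(k)} \|_{M^{-1}}^2
  + 4 \beta \sum_{k=0}^n p^{(k)}
  = \| \mu^{(0)} \|_{M^{-1}}^2,
\]
with $p^{(k)} \ge 0$ the interior loss associated with $e^{(k)} := (A + \alpha T^\top M T)^{-1} T^\top \mu^{(k)}$.

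The decisive new step is to exploit the coercivity hypothesis. Setting $a := \min_i a_i > 0$, the additional assumptions give $p^{(k)} \ge a \| e^{(k)} \|_U^2$ in both cases of \ref{ass:A7}: in case~(i) directly from $\langle A e^{(k)}, \overline{e^{(k)}} \rangle = \sum_i \langle A_i e_i^{(k)}, \overline{e_i^{(k)}} \rangle \ge \sum_i a_i \| e_i^{(k)} \|_{U_i}^2$, and in case~(ii) from $p^{(k)} = \sum_i \langle A_{i,1} e_i^{(k)}, \overline{e_i^{(k)}} \rangle \ge \sum_i a_i \| e_i^{(k)} \|_{U_i}^2$. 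Since every term on the left of the balance is non-negative and $\beta > 0$, the series $\sum_k p^{(k)}$ converges; hence $p^{(k)} \to 0$ and therefore $e^{(k)} \to 0$ strongly in $U$. Note that this holds without any compactness or weak-subsequence argument and for the whole range $\beta \in (0,1]$, in particular at the undamped endpoint $\beta = 1$ (where the middle sum simply drops out).

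Finally I would convert this into the claimed convergence. By definition of $e^{(n)}$ one has $T^\top \mu^{(n)} = (A + \alpha T^\top M T) e^{(n)}$, and since $A + \alpha T^\top M T$ is bounded, $e^{(n)} \to 0$ forces $T^\top \mu^{(n)} \to 0$ in $U^*$. Consequently $T^\top \lambda^{(n)} \to T^\top \lambda = (A + \alpha T^\top M T) R\widehat u - f = w^{(\infty)}$, using $T^\top \tau = A R\widehat u - f$. The convergence $u^{(n)} \to R\widehat u$ then follows by applying the bounded inverse from \ref{ass:A5} to $u^{(n)} = (A + \alpha T^\top M T)^{-1}(f + T^\top \lambda^{(n)})$. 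The point worth emphasizing is that there is essentially no genuine obstacle beyond the general case: coercivity replaces the compactness of $T$ by furnishing direct $U$-control of $e^{(n)}$, which both removes the need to pass to weakly convergent subsequences and permits the undamped choice $\beta = 1$ that was excluded in Theorem~\ref{thm:convGeneral}.
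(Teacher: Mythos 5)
Your proposal is correct and follows essentially the same route as the paper's own proof: reuse the pseudo-energy balance from Theorem~\ref{thm:convGeneral}, deduce $\sum_k p^{(k)} < \infty$ hence $p^{(k)} \to 0$, use the coercivity hypothesis to get $e^{(k)} \to 0$ strongly in $U$, and identify $e^{(n)} = u^{(n)} - R\widehat u$ to conclude. Your explicit bound $p^{(k)} \ge (\min_i a_i)\|e^{(k)}\|_U^2$ just spells out what the paper compresses into ``by assumption, this implies''.
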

\begin{proof}
  With the stated assumptions, we can repeat the first few arguments from the proof of Theorem~\ref{thm:convGeneral}
	and obtain
	\[
	  \| \mu^{(n+1)} \|_{M^{-1}}^2
		  + \beta (1 - \beta) \sum_{k=1}^n \| (I - \mathcal{X}^\top S) \mu^{(k)} \|_{M^{-1}}^2
		  + 4 \beta \sum_{k=1}^n p^{(k)}
		= \| \mu^{(0)} \|_{M^{-1}}^2\,,
	\]
  with
	\[
	  p^{(k)} = \begin{cases}
			\Re \big\langle A e^{(k)}, \overline{e^{(k)}} \big\rangle = \sum_{i=1}^N \big\langle A_i e_i^{(k)}, \overline{e_i^{(k)}} \big\rangle & \text{in case (i) of \ref{ass:A7},}\\[1ex]
			\Im \big\langle A e^{(k)}, \overline{e^{(k)}} \big\rangle = \sum_{i=1}^N \big\langle A_{i,1} e_i^{(k)}, \overline{e_i^{(k)}} \big\rangle & \text{in case (ii) of \ref{ass:A7},}
		\end{cases}
	\]
	where $e^{(k)} = (A + \alpha T^\top M T)^{-1} T^\top \mu^{(k)}$.
	Since $p^{(k)} \ge 0$ and $\beta > 0$ the series $\sum_{k=1}^\infty p^{(k)}$ converges, and so $p^{(k)} \to 0$ as $k \to \infty$.
	By assumption, this implies that
	\[
	  e^{(k)} \to 0 \quad \text{(strongly) in } U.
	\]
	From the definition of $e^{(k)}$ and $\mu^{(k)} = \lambda^{(n)} - \lambda$,
	one can easily conclude that the sequence $(T^\top \lambda^{(k)})$ converges.
	Recalling that $\lambda = \alpha M T R\widehat u + \tau$ and $A R \widehat u - T^\top \tau = f$, we find that
	\begin{align*}
	  e^{(n)} & = (A + \alpha T^\top M T)^{-1} T^\top \lambda^{(n)} - (A + \alpha T^\top M T)^{-1} (\alpha T^\top M T R \widehat u + \underbrace{T^\top \tau}_{A R \widehat u - f})\\[-2ex]
		& = (A + \alpha T^\top M T)^{-1} (T^\top \lambda^{(n)} + f) - R \widehat u = u^{(n)} - R \widehat u.
	\end{align*}
	Therefore, $u^{(n)} \to R \widehat u$ in $U$.
\end{proof}

\begin{remark}
  In the typical coercive case (such as for Laplace's equation),
	the assumption in Theorem~\ref{thm:convSPD} essentially states that each subdomain operator $A_i$ has a trivial kernel.
	In the typical wave propagation case (such as for the Helmholtz equation),
	the assumption in Theorem~\ref{thm:convSPD} leads to the damping of any wave.
	Note, however, that the assumption is quite strong since $A_{i,1}$
	has to include not only a zero-order term (which would be more typical) but also a portion of the principal term.
\end{remark}

\subsection{Convergence with surjective traces}
\label{sect:convStrong}

In this section, linear convergence is shown under the additional assumption that the trace operator $T$ is surjective
($\range(T) = \Lambda$), but neither regularity nor compactness is needed anymore.
The proofs work along the lines of \cite[Sect.~4.2]{CollinoGhanemiJoly:2000a}.

\begin{lemma}
\label{lem:IXSIso}
  Let Assumptions~\ref{ass:A1}--\ref{ass:A3}, \ref{ass:A5} hold and in addition that $\range(T) = \Lambda$.
	Then the operator $(I - \mathcal{X}^\top S)$ is an isomorphism,
	and the same applies to the operator $(I - \mathcal{X} \underline{S})$ from Remark~\ref{rem:lambdaPrimal}.
\end{lemma}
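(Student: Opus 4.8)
The plan is to establish that $(I - \mathcal{X}^\top S)$ is a bounded bijection of $\Lambda^*$ onto itself, conclude boundedness of its inverse from the open mapping theorem, and then transfer the statement to $(I - \mathcal{X}\underline S)$ by a similarity transform.

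First I would settle injectivity, which is essentially free. By Lemma~\ref{lem:kerIXS} (valid under \ref{ass:A1}--\ref{ass:A3} and \ref{ass:A5}) one has $\ker(I - \mathcal{X}^\top S) = \mathcal{Z}$, and by Definition~\ref{def:spaceZ} $\mathcal{Z} = \ker(T^\top) \cap \ker(I + \mathcal{X}^\top)$. The hypothesis $\range(T) = \Lambda$ forces $\ker(T^\top) = \range(T)^0 = \Lambda^0 = \{0\}$ through the annihilator identity~\eqref{eq:polarKerRange}; hence $\mathcal{Z} = \{0\}$ and $(I - \mathcal{X}^\top S)$ is injective.

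The main obstacle, and the one non-routine step, is surjectivity. The solution operator already available only produces solutions of $(I - \mathcal{X}^\top S)\lambda = d$ for right-hand sides of the special form~\eqref{eq:SdDef}, not for an arbitrary $g \in \Lambda^*$. The key observation I would exploit is that the map $f \mapsto d = \alpha \mathcal{X}^\top (M + \mathcal{X}^\top M \mathcal{X}) T (A + \alpha T^\top M T)^{-1} f$ is \emph{itself} onto $\Lambda^*$: reading it right to left, $(A + \alpha T^\top M T)^{-1}$ is an isomorphism onto $U$ by \ref{ass:A5}, $T$ maps $U$ onto $\Lambda$ by the standing surjectivity hypothesis, $(M + \mathcal{X}^\top M \mathcal{X})$ is invertible by \ref{ass:A3}, the involution $\mathcal{X}^\top$ is invertible by \ref{ass:A2}(i), and $\alpha \neq 0$. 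Thus every $g \in \Lambda^*$ equals $d$ for some $f \in U^*$. For such an $f$, Corollary~\ref{cor:waveRobin}(iii) in case~(b) supplies a solution $(u,\lambda) = \mathcal{S}^{(\lambda)} f$ of~\eqref{eq:waveRobinGeneral}, and Proposition~\ref{prop:waveRobinSchur}(i) shows that this $\lambda$ solves $(I - \mathcal{X}^\top S)\lambda = g$. Hence $\range(I - \mathcal{X}^\top S) = \Lambda^*$. Being a bounded bijection between Hilbert spaces, $(I - \mathcal{X}^\top S)$ then has a bounded inverse by the open mapping theorem, i.e.\ it is an isomorphism.

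Finally, for $(I - \mathcal{X}\underline S)$, which is meaningful under the additional Assumption~\ref{ass:A4} of Remark~\ref{rem:lambdaPrimal} (under which $M$ is invertible), I would use the similarity relation $I - \mathcal{X}\underline S = M^{-1}(I - \mathcal{X}^\top S) M$. This follows from $\underline S = M^{-1} S M$ (rewriting the identity $M\underline S = S M$ used in the proof of Lemma~\ref{lem:kerIXS}) together with $\mathcal{X} M^{-1} = M^{-1}\mathcal{X}^\top$, which is a direct consequence of \ref{ass:A4}. Since $M \colon \Lambda \to \Lambda^*$ and its inverse are bounded isomorphisms and the middle factor is an isomorphism by the first part, the composition is an isomorphism as well. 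I expect the only delicate points to be the bookkeeping of the dualities in the similarity relation and the careful verification that each factor in $f \mapsto d$ is onto; every other ingredient is a direct appeal to the results already established.
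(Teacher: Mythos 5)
Your proof is correct. It reaches the same conclusion by a slightly different route than the paper: you establish injectivity via Lemma~\ref{lem:kerIXS} combined with $\ker(T^\top)=\range(T)^0=\{0\}$, establish surjectivity by observing that the data map $f\mapsto d$ in \eqref{eq:SdDef} is onto $\Lambda^*$ (each factor being surjective under \ref{ass:A5}, $\range(T)=\Lambda$, \ref{ass:A3}, and \ref{ass:A2}(i)) and then invoking Corollary~\ref{cor:waveRobin} and Proposition~\ref{prop:waveRobinSchur}, and finally you appeal to the open mapping theorem for boundedness of the inverse. The paper instead writes down an explicit candidate inverse $\mathcal{T}=\mathcal{S}^{(\lambda)}_\lambda\,\tfrac{1}{\alpha}(A+\alpha T^\top M T)E(M+\mathcal{X}^\top M\mathcal{X})^{-1}\mathcal{X}^\top$ (using an extension $E$ with $TE=I$, i.e.\ an explicit section of your surjective data map) and verifies directly that it is a two-sided bounded inverse. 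The two arguments share the same essential mechanism; what the paper's constructive version buys is the closed-form representation of $(I-\mathcal{X}^\top S)^{-1}$ recorded in Corollary~\ref{cor:IXSInvRepr}, which is subsequently needed for the quantitative inf-sup bound of Lemma~\ref{lem:lowerBoundInfSup}, whereas your open-mapping argument yields existence of the bounded inverse without a formula. Your treatment of the second claim via the similarity $I-\mathcal{X}\underline S=M^{-1}(I-\mathcal{X}^\top S)M$ under \ref{ass:A4} is valid and in fact more explicit than the paper, which leaves that part implicit.
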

\begin{proof}
  Recall the solution operator $\mathcal{S}^{(\lambda)} \colon U^* \to U \times \Lambda^*$ from Corollary~\ref{cor:waveRobin}.
  Since by assumption, $\range(T) = \Lambda$, there exists a bounded extension operator $E \colon \Lambda \to U$ such that $T E = I$,
	see also Proposition~\ref{prop:solOpTau}.
	We define $\mathcal{T} \colon \Lambda^* \to \Lambda^*$
	by
	\[
	  \mathcal{T} := \mathcal{S}^{(\lambda)}_\lambda \tfrac{1}{\alpha}(A + \alpha T^\top M T) E (M + \mathcal{X}^\top M \mathcal{X})^{-1} \mathcal{X}^\top,
	\]
	with the notation $\mathcal{S}^{(\lambda)} f = (\mathcal{S}^{(\lambda)}_u f, \mathcal{S}^{(\lambda)}_\lambda f)$.
  \begin{enumerate}
	\item[1)] $\mathcal{T}$ is well-defined, linear, and bounded,
	\item[2)] $(I - \mathcal{X}^\top S) \mathcal{T} e = e$ for all $e \in \Lambda^*$:
	  For arbitrary but fixed $e \in \Lambda^*$, we set
		\[
		  f := \tfrac{1}{\alpha}(A + \alpha T^\top M T) E (M + \mathcal{X}^\top M \mathcal{X})^{-1} \mathcal{X}^\top e,
		\]
		such that $\lambda = \mathcal{S}^{(\lambda)}_\lambda f = \mathcal{T} e$.
		From Corollary~\ref{cor:waveRobin} we see that $(I - \mathcal{X}^\top S) \lambda = d$
		where $d = \alpha \mathcal{X}^\top (M + \mathcal{X}^\top M \mathcal{X}) T (A + \alpha T^\top M T)^{-1} f$.
		As one can check, $d = e$.
	\item[3)] $\mathcal{T} (I - \mathcal{X}^\top S) \lambda = \lambda$ for all $\lambda \in \Lambda^*$:
	  For arbitrary but fixed $\lambda \in \Lambda^*$, we set
		\[
		  d := (I - \mathcal{X}^\top S) \lambda, \qquad 
			f := \tfrac{1}{\alpha}(A + \alpha T^\top M T) E (M + \mathcal{X}^\top M \mathcal{X})^{-1} \mathcal{X}^\top d.
		\]
		Obviously, $\mu := \mathcal{T} (I - \mathcal{X}^\top S) \lambda = \mathcal{S}^{(\lambda)}_\lambda f$.
		Proposition~\ref{prop:waveRobinSchur}(ii) implies that $(u, \lambda)$ with $u = (A + \alpha T^\top M T)^{-1} T^\top \lambda$ solves \eqref{eq:waveRobin}.
		Corollary~\ref{cor:waveRobin} shows that $\lambda = \mathcal{S}^{(\lambda)}_\lambda f$, so $\mu = \lambda$.
	\end{enumerate}
	Summarizing, $\mathcal{T}$ is the bounded inverse of $(I - \mathcal{X}^\top S)$.
\end{proof}

\begin{corollary}
\label{cor:IXSInvRepr}
  Let the prerequisites of Lemma~\ref{lem:IXSIso} be fulfilled.
	Then
	\begin{align*}
	  & (I - \mathcal{X}^\top S)^{-1}
			= \tfrac{1}{\alpha} E^\top \Big[ \widetilde A (R \widehat A^{-1} R^\top) \widetilde A - \widetilde A \Big] E
			  \big( M + \mathcal{X}^\top M \mathcal{X} \big)^{-1} \mathcal{X}^\top\\
		& = \tfrac{1}{\alpha} \Big[ (E^\top A + \alpha M T) R \widehat A^{-1} R^\top (A E + \alpha T^\top M)
			    - E^\top A E - \alpha M) \Big]
			  \big( M + \mathcal{X}^\top M \mathcal{X} \big)^{-1} \mathcal{X}^\top,
	\end{align*}
	where $\widetilde A = (A + \alpha T^\top M T)$ and $E \colon \Lambda \to U$ is an arbitrary extension operator such that $T E = I$.
\end{corollary}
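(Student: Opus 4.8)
The plan is to unwind the definition of the inverse $\mathcal{T} = (I - \mathcal{X}^\top S)^{-1}$ obtained in the proof of Lemma~\ref{lem:IXSIso} and to insert the explicit block representation of the solution operator $\mathcal{S}^{(\lambda)}$ from \eqref{eq:SlambdaDef}. Recall that there
\[
  \mathcal{T} = \mathcal{S}^{(\lambda)}_\lambda \, \tfrac{1}{\alpha} \widetilde A\, E\, (M + \mathcal{X}^\top M \mathcal{X})^{-1} \mathcal{X}^\top, \qquad \widetilde A := A + \alpha T^\top M T,
\]
so the whole task reduces to rewriting the left-most factor $\mathcal{S}^{(\lambda)}_\lambda$. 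From \eqref{eq:SlambdaDef} its lower block is
\[
  \mathcal{S}^{(\lambda)}_\lambda = (\alpha M T + E^\top A)\, R \widehat A^{-1} R^\top - E^\top.
\]

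First I would establish the single algebraic identity that drives everything: since $T E = I$ we also have $E^\top T^\top = I$, and therefore
\[
  E^\top \widetilde A = E^\top A + \alpha E^\top T^\top M T = E^\top A + \alpha M T.
\]
This is exactly the coefficient appearing in $\mathcal{S}^{(\lambda)}_\lambda$, so the latter factors cleanly as $\mathcal{S}^{(\lambda)}_\lambda = E^\top(\widetilde A R \widehat A^{-1} R^\top - I)$. Substituting this into $\mathcal{T}$ and pulling the scalar $\tfrac{1}{\alpha}$ to the front immediately yields the first displayed formula of the corollary,
\[
  (I - \mathcal{X}^\top S)^{-1} = \tfrac{1}{\alpha}\, E^\top \big[\, \widetilde A (R\widehat A^{-1}R^\top)\widetilde A - \widetilde A \,\big]\, E\, (M + \mathcal{X}^\top M \mathcal{X})^{-1}\mathcal{X}^\top.
\]

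For the second, fully expanded formula I would distribute the outer $E^\top$ and $E$ over $\widetilde A$ using $T E = I$ once more. The three relevant reductions are $E^\top \widetilde A = E^\top A + \alpha M T$ (as above), its counterpart $\widetilde A E = A E + \alpha T^\top M$, and $E^\top \widetilde A E = E^\top A E + \alpha M$. Inserting these into the bracket $E^\top[\,\widetilde A R\widehat A^{-1}R^\top \widetilde A - \widetilde A\,]E$ turns it into $(E^\top A + \alpha M T) R\widehat A^{-1}R^\top(A E + \alpha T^\top M) - E^\top A E - \alpha M$, which is precisely the second claimed expression. I do not anticipate any genuine obstacle: the argument is a direct computation whose only non-routine ingredient is the elementary but essential identity $E^\top T^\top = I$ coming from the right-inverse property $T E = I$; everything else is bookkeeping, and boundedness of the inverse is already guaranteed by Lemma~\ref{lem:IXSIso}.
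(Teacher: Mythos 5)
Your proposal is correct and follows exactly the route the paper intends: the paper's own proof is a one-line reference to the formula for $\mathcal{T}$ from the proof of Lemma~\ref{lem:IXSIso} together with the representation~\eqref{eq:SlambdaDef} of $\mathcal{S}^{(\lambda)}$, and you have simply carried out the resulting algebra, with the identities $E^\top\widetilde A = E^\top A + \alpha M T$, $\widetilde A E = A E + \alpha T^\top M$, and $E^\top \widetilde A E = E^\top A E + \alpha M$ (all consequences of $TE=I$) being precisely the bookkeeping the paper leaves implicit.
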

\begin{proof}
  The statement follows from the proof of Lemma~\ref{lem:IXSIso} and the definition~\eqref{eq:SlambdaDef} of $S^{(\lambda)}$,
	see also Proposition~\ref{prop:solOpTau}.
\end{proof}

\begin{remark}
  The representation of $(I - \mathcal{X}^\top S)^{-1}$ in Corollary~\ref{cor:IXSInvRepr} is used in \cite{ClaeysParolin:Preprint2020}
	to derive a lower bound for the associated inf-sup constant in the discrete case, where this bound is independent of the mesh parameter.
	See also Remark~\ref{rem:uniformInfSup} below.
\end{remark}

\begin{corollary}
\label{cor:IXSinfSup}
  Let Assumptions~\ref{ass:A1}--\ref{ass:A3}, \ref{ass:A5}--\ref{ass:A6} hold and assume that $\range(T) = \Lambda$.
	Then there exists a constant $\gamma > 0$ such that
	\begin{align}
	\label{eq:IXSinfSup}
	  \| (I - \mathcal{X}^\top S)\lambda \|_{M^{-1}} \ge \gamma \| \lambda \|_{M^{-1}} \qquad \forall \lambda \in \Lambda^*.
	\end{align}
	Under the additional Assumptions~\ref{ass:A4}, \ref{ass:AXR} and~\ref{ass:A7}, the following coercivity estimate holds (with the same constant $\gamma$ as above):
	\begin{align}
	\label{eq:IXScoercivity}
	  \Re \big\langle M^{-1}(I - \mathcal{X}^\top S) \lambda, \overline \lambda \big\rangle \ge \frac{\gamma^2}{2} \| \lambda \|_{M^{-1}}^2
		\qquad \forall \lambda \in \Lambda^*.
	\end{align}
	Likewise, $\| (I - \mathcal{X} \underline{S}) \mu \|_M \ge \gamma \| \mu \|_M$
	and $\Re \big\langle M (I - \mathcal{X} \underline S) \mu, \overline\mu \big\rangle
	  \ge \frac{\gamma^2}{2} \| \mu \|_M^2$
	for all $\mu \in \Lambda$, cf.\ Remark~\ref{rem:lambdaPrimal}.
\end{corollary}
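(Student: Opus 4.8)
The plan is to deduce the inf-sup bound \eqref{eq:IXSinfSup} directly from the isomorphism property established in Lemma~\ref{lem:IXSIso}, and then to obtain the coercivity estimate \eqref{eq:IXScoercivity} from it by combining a polarization identity with the non-expansiveness of $S$. The two $\underline S$-statements will follow by the similarity transform $M \underline S = S M$.

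First I would prove \eqref{eq:IXSinfSup}. Under Assumptions~\ref{ass:A1}--\ref{ass:A3}, \ref{ass:A5} and $\range(T) = \Lambda$, Lemma~\ref{lem:IXSIso} provides a bounded inverse $\mathcal{T} = (I - \mathcal{X}^\top S)^{-1}$. Because Assumption~\ref{ass:A6} makes $\|\cdot\|_{M^{-1}}$ an equivalent norm on $\Lambda^*$, the operator $\mathcal{T}$ is also bounded with respect to $\|\cdot\|_{M^{-1}}$; denote its operator norm in this norm by $C_{\mathcal{T}}$. Writing $\lambda = \mathcal{T}(I - \mathcal{X}^\top S)\lambda$ and applying $C_{\mathcal{T}}$ yields $\|\lambda\|_{M^{-1}} \le C_{\mathcal{T}}\,\|(I - \mathcal{X}^\top S)\lambda\|_{M^{-1}}$, so \eqref{eq:IXSinfSup} holds with $\gamma := 1/C_{\mathcal{T}}$. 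Nothing beyond this is needed for the first inequality.

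For the coercivity estimate I would invoke the additional Assumptions~\ref{ass:A4}, \ref{ass:AXR}, \ref{ass:A7}. Expanding $\Re\,((I - \mathcal{X}^\top S)\lambda, \lambda)_{M^{-1}} = \|\lambda\|_{M^{-1}}^2 - \Re\,(\mathcal{X}^\top S\lambda, \lambda)_{M^{-1}}$ and substituting the polarization identity \eqref{eq:ReXSIdentity} (with $\mu^{(n)}$ replaced by $\lambda$), the cross term is rewritten in terms of $\|\lambda\|_{M^{-1}}^2$, $\|\mathcal{X}^\top S \lambda\|_{M^{-1}}^2$ and $\|(I - \mathcal{X}^\top S)\lambda\|_{M^{-1}}^2$. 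The isometry property \eqref{eq:XIso} gives $\|\mathcal{X}^\top S\lambda\|_{M^{-1}} = \|S\lambda\|_{M^{-1}}$, which Lemma~\ref{lem:SNonExpansive} bounds by $\|\lambda\|_{M^{-1}}$. After these substitutions the terms carrying $\|\lambda\|_{M^{-1}}^2$ cancel, leaving $\Re\,((I - \mathcal{X}^\top S)\lambda, \lambda)_{M^{-1}} \ge \tfrac{1}{2}\|(I - \mathcal{X}^\top S)\lambda\|_{M^{-1}}^2$. Inserting the already-proven bound \eqref{eq:IXSinfSup} then produces \eqref{eq:IXScoercivity} with the \emph{same} constant $\gamma$, once one notes that $\langle M^{-1}(I - \mathcal{X}^\top S)\lambda, \overline\lambda\rangle = ((I - \mathcal{X}^\top S)\lambda, \lambda)_{M^{-1}}$ by the definition of the $M^{-1}$-inner product.

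Finally, the $\underline S$-statements would be transferred using the relation $M\underline S = SM$ from Remark~\ref{rem:lambdaPrimal} together with $M^{-1}\mathcal{X}^\top = \mathcal{X}M^{-1}$, which follows from Assumption~\ref{ass:A4} in the form $\mathcal{X}^\top M = M\mathcal{X}$. These give $(I - \mathcal{X}\underline S) = M^{-1}(I - \mathcal{X}^\top S)M$, and, combined with the isometry $\|M\mu\|_{M^{-1}} = \|\mu\|_M$ (valid since $M$ is real-valued and symmetric by Assumption~\ref{ass:A6}), the substitution $\lambda = M\mu$ converts both estimates for $I - \mathcal{X}^\top S$ into the corresponding ones for $I - \mathcal{X}\underline S$. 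The main obstacle---really the only non-bookkeeping point---is recognizing that a single $\gamma$ serves both inequalities: this hinges on the coercivity reducing exactly to $\tfrac{1}{2}\|(I-\mathcal{X}^\top S)\lambda\|_{M^{-1}}^2$ before the inf-sup bound is applied, so that no second constant is introduced.
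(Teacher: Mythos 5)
Your proposal is correct and follows essentially the same route as the paper's proof: the inf-sup bound is read off from the bounded inverse supplied by Lemma~\ref{lem:IXSIso} together with the norm equivalence from Assumption~\ref{ass:A6}, and the coercivity estimate is obtained from the polarization identity~\eqref{eq:ReXSIdentity} combined with the isometry~\eqref{eq:XIso}, the non-expansiveness of $S$ (Lemma~\ref{lem:SNonExpansive}), and the already-established inf-sup bound, yielding the same constant $\gamma^2/2$. The transfer to $\underline S$ via $M\underline S = SM$ and $\mathcal{X}^\top M = M\mathcal{X}$ is likewise the intended argument.
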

\begin{proof}
  The first estimate follows directly from Lemma~\ref{lem:IXSIso}, owing to the fact that $(I - \mathcal{X}^\top S)$
	has a bounded inverse and that $M^{-1}$ induces a norm due to \ref{ass:A6}.\\
	For the second part, let \ref{ass:A4}, \ref{ass:AXR} and~\ref{ass:A7} hold in addition.
  Using identity~\eqref{eq:ReXSIdentity} from the proof of Theorem~\ref{thm:convGeneral},
	\[
	  \Re \langle M^{-1} \mathcal{X}^\top S \lambda, \overline \lambda \rangle
		  = \tfrac{1}{2} \big[ \| \lambda \|_{M^{-1}}^2 + \| \mathcal{X}^\top S \lambda \|_{M^{-1}}^2
		    - \| (I - \mathcal{X}^\top S) \lambda \|_{M^{-1}}^2 \big] \qquad \forall \lambda \in \Lambda^*\,,
	\]
	as well as Property~\eqref{eq:XIso}, Lemma~\ref{lem:SNonExpansive}, and Lemma~\ref{lem:IXSIso}, we obtain
	\begin{align*}
	  & \Re \big\langle M^{-1} (I - \mathcal{X}^\top S) \lambda, \overline\lambda \big\rangle
		  = \| \lambda \|_{M^{-1}}^2 - \Re \big\langle M^{-1} \mathcal{X}^\top S \lambda, \overline \lambda \big\rangle \\
		& \qquad = \| \lambda \|_{M^{-1}}^2 - \frac{1}{2} \bigg[
		    \| \lambda \|_{M^{-1}}^2
				{} + {} \!\!\!\!\! \underbrace{ \| \mathcal{X}^\top S \lambda \|_{M^{-1}}^2 }_{= \| S \lambda \|_{M^{-1}}^2 \le \| \lambda \|_{M^{-1}}^2 } \!\!\!\!\!
		    {} - {} \underbrace{ \| (I - \mathcal{X}^\top S) \lambda \|_{M^{-1}}^2 }_{\ge \gamma^2 \| \lambda \|_{M^{-1}}^2 } \bigg]
		\ge \frac{\gamma^2}{2} \| \lambda \|_{M^{-1}}^2\,. \qedhere
	\end{align*}
\end{proof}

\begin{remark}
  The constant $\gamma$ in \eqref{eq:IXSinfSup} may be called an \emph{inf-sup constant} as \eqref{eq:IXSinfSup} is equivalent to
	\[
	  \inf_{\lambda \in \Lambda^*} \sup_{\mu \in \Lambda} \frac{ |\langle (I - \mathcal{X}^\top S) \lambda, \mu \rangle|
		          }{     \| \lambda \|_{M^{-1}} \| \mu \|_{M}    }
		\ge \gamma.
	\]
  A stronger property is
	\begin{align}
	\label{eq:op:fieldOfValues}
	  |\langle M^{-1} (I - \mathcal{X}^\top S) \lambda, \overline\lambda \rangle| \ge \gamma_{\text{FV}} \| \lambda \|_{M^{-1}}^2 \qquad \forall \lambda \in \Lambda^*.
	\end{align}
  This can be equally expressed by saying that the \emph{numerical range} (in the finite dimensional case also called \emph{field of values}) of $(I - \mathcal{X}^\top S)$
	with respect to the inner product $(\cdot,\cdot)_{M^{-1}}$ has a distance of at least $\gamma_{\text{FV}}$ from the origin:
	\[
	  \min_{z \in W} |z| \ge \gamma_{\text{FV}}\,, \quad \text{where }
		W = \big\{ \big( (I - \mathcal{X}^\top S) \lambda, \lambda \big)_{M^{-1}} \colon \| \lambda \|_{M^{-1}} = 1 \big\} \subset \mathbb{C}.
	\]
	Obviously, \eqref{eq:op:fieldOfValues} implies \eqref{eq:IXSinfSup} with $\gamma = \gamma_{\text{FV}}$:
	\[
	  \gamma_{\text{FV}} \| \lambda \|_{M^{-1}}^2 \le \big( (I - \mathcal{X}^\top S) \lambda, \lambda \big)_{M^{-1}}
		\le \| (I - \mathcal{X}^\top S) \lambda \|_{M^{-1}} \| \lambda \|_{M^{-1}}\,.
	\]
	An even stronger property is
	\begin{align}
	\label{eq:op:coercivity}
	  \Re \langle M^{-1} (I - \mathcal{X}^\top S) \lambda, \overline\lambda \rangle \ge \gamma_{\text{co}} \| \lambda \|_{M^{-1}}^2 \qquad \forall \lambda \in \Lambda^*,
	\end{align}
	as \eqref{eq:op:coercivity} implies \eqref{eq:op:fieldOfValues} with $\gamma_\text{FV} = \gamma_{\text{co}}$.
	Note that in the finite-dimensional case, \eqref{eq:op:coercivity} states that the Hermitian part of $(I - \mathcal{X}^\top S)$ is positive definite,
	where Hermitian and positive definite are to be understood with respect to the inner product $(\cdot,\cdot)_{M^{-1}}$ in $\Lambda^*$.

	We have seen that, in general, the coercivity property~\eqref{eq:op:coercivity} implies the inf-sup property~\eqref{eq:IXSinfSup}
	with $\gamma = \gamma_{\text{co}}$.
	The second part of Corollary~\ref{cor:IXSinfSup}, however, states that for the special operator $(I - \mathcal{X}^\top S)$,
	the inf-sup property~\eqref{eq:IXSinfSup} implies the coercivity property~\eqref{eq:op:coercivity}
	with $\gamma_\text{co} = \tfrac{1}{2}\gamma^2$.
\end{remark}

\begin{theorem}
\label{thm:convSchwarzSurjTrace}
  Let Assumptions~\ref{ass:A1}--\ref{ass:A7} hold and assume, in addition, that $\range(T) = \Lambda$.
	Then the Robin-Schwarz iteration~\eqref{eq:RobinSchwarzRichardon}
	with damping parameter $\beta \in (0, 1)$ converges linearly in the sense that for $n \ge 0$,
	\begin{itemize}
	\item $\| \lambda^{(n+1)} - \lambda \|_{M^{-1}} \le \rho \| \lambda^{(n)} - \lambda \|_{M^{-1}}$ and therefore
	      $\| \lambda^{(n)} - \lambda \|_{M^{-1}} \le \rho^n \| \lambda^{(0)} - \lambda \|_{M^{-1}}$,
	\item $\| u^{(n)} - R \widehat u \|_U \le C\,\rho^n \| \lambda^{(0)} - \lambda \|_{U}$,
	\end{itemize}
	where $C$ is constant and
	$\rho = \sqrt{1 - (1-\beta)\beta \gamma^2} < 1$, with the inf-sup constant $\gamma$ from Corollary~\ref{cor:IXSinfSup}.
	Likewise, $\| \underline\lambda^{(n+1)} - \underline\lambda \|_M \le \rho \| \underline\lambda^{(n)} - \underline\lambda \|_M$
	for the transformed iteration~\ref{eq:RobinSchwarzRichardonMTrafo}.
\end{theorem}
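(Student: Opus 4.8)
The plan is to read the damped iteration~\eqref{eq:RobinSchwarzRichardon} as a Richardson method for the fixed-point equation $\lambda = \mathcal{X}^\top S \lambda + d$ and to upgrade the non-strict energy bound from the proof of Theorem~\ref{thm:convGeneral} into a genuine contraction by feeding in the inf-sup estimate of Corollary~\ref{cor:IXSinfSup}. First I would fix the exact solution $\lambda$ of~\eqref{eq:RobinSchwarz}, whose existence and uniqueness are guaranteed by Corollary~\ref{cor:waveRobin} in case~(b) since $\range(T) = \Lambda$, and set $\mu^{(n)} := \lambda^{(n)} - \lambda$. Because $\lambda$ is a fixed point, the error obeys the same recurrence as in Theorem~\ref{thm:convGeneral}, namely $\mu^{(n+1)} = (1-\beta)\mu^{(n)} + \beta\,\mathcal{X}^\top S \mu^{(n)}$.

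Next I would expand $\|\mu^{(n+1)}\|_{M^{-1}}^2$ and substitute the polarization identity~\eqref{eq:ReXSIdentity}, exactly as in the general proof, to obtain
\[
  \|\mu^{(n+1)}\|_{M^{-1}}^2 = (1-\beta)\|\mu^{(n)}\|_{M^{-1}}^2
    - \beta(1-\beta)\|(I - \mathcal{X}^\top S)\mu^{(n)}\|_{M^{-1}}^2
    + \beta\|\mathcal{X}^\top S\mu^{(n)}\|_{M^{-1}}^2.
\]
The isometry property~\eqref{eq:XIso} together with the non-expansivity of $S$ from Lemma~\ref{lem:SNonExpansive} gives $\|\mathcal{X}^\top S\mu^{(n)}\|_{M^{-1}} = \|S\mu^{(n)}\|_{M^{-1}} \le \|\mu^{(n)}\|_{M^{-1}}$, so the last two terms collapse to
\[
  \|\mu^{(n+1)}\|_{M^{-1}}^2 \le \|\mu^{(n)}\|_{M^{-1}}^2
    - \beta(1-\beta)\|(I - \mathcal{X}^\top S)\mu^{(n)}\|_{M^{-1}}^2.
\]
This is precisely where surjectivity of $T$ does its work: inserting the inf-sup bound~\eqref{eq:IXSinfSup}, $\|(I - \mathcal{X}^\top S)\mu^{(n)}\|_{M^{-1}} \ge \gamma\|\mu^{(n)}\|_{M^{-1}}$, yields $\|\mu^{(n+1)}\|_{M^{-1}}^2 \le (1 - \beta(1-\beta)\gamma^2)\|\mu^{(n)}\|_{M^{-1}}^2$, and taking square roots gives the one-step contraction with $\rho = \sqrt{1 - (1-\beta)\beta\gamma^2}$. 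The non-negativity of the left-hand side already forces the radicand into $[0,1)$ for $\beta \in (0,1)$ and $\gamma > 0$. Iterating delivers the geometric decay $\|\lambda^{(n)} - \lambda\|_{M^{-1}} \le \rho^n\|\lambda^{(0)} - \lambda\|_{M^{-1}}$.

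For the primal estimate I would use $u^{(n)} - R\widehat u = (A + \alpha T^\top M T)^{-1} T^\top \mu^{(n)}$, which follows from~\eqref{eq:unDef} and the representation $R\widehat u = (A + \alpha T^\top M T)^{-1}(f + T^\top \lambda)$ of Proposition~\ref{prop:waveRobinSchur}(ii). Boundedness of $(A + \alpha T^\top M T)^{-1}$ (Assumption~\ref{ass:A5}) and of $T^\top$, combined with the equivalence of $\|\cdot\|_{M^{-1}}$ with the native norm on $\Lambda^*$ (Assumption~\ref{ass:A6}), then produces $\|u^{(n)} - R\widehat u\|_U \le C\,\rho^n\|\lambda^{(0)} - \lambda\|_{M^{-1}}$ for a suitable constant $C$. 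For the transformed iteration~\eqref{eq:RobinSchwarzRichardonMTrafo} I would repeat the identical argument in the $\|\cdot\|_M$-geometry, relying on the companion isometry $\|\mathcal{X}\cdot\|_M = \|\cdot\|_M$ from~\eqref{eq:XIso}, the non-expansivity of $\underline S$ noted after Lemma~\ref{lem:SNonExpansive}, and the inf-sup bound for $(I - \mathcal{X}\underline S)$ in Corollary~\ref{cor:IXSinfSup}; the intertwining $M\underline S = SM$ from Remark~\ref{rem:lambdaPrimal} guarantees the same $\gamma$ and hence the same $\rho$. The only genuinely delicate point is the interplay between non-expansivity and the inf-sup constant: non-expansivity and the involution-isometry by themselves yield merely $\|\mu^{(n+1)}\|_{M^{-1}} \le \|\mu^{(n)}\|_{M^{-1}}$, and strictness is recovered solely through $\gamma > 0$, so the entire theorem rests on Corollary~\ref{cor:IXSinfSup}, whose own proof exploited $\range(T) = \Lambda$ to construct a bounded inverse of $(I - \mathcal{X}^\top S)$.
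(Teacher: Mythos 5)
Your proposal is correct and follows essentially the same route as the paper's proof: the error recurrence from Theorem~\ref{thm:convGeneral}, the polarization identity~\eqref{eq:ReXSIdentity}, the isometry~\eqref{eq:XIso} with the non-expansivity of $S$, and the inf-sup bound of Corollary~\ref{cor:IXSinfSup} combine to the same contraction factor $\rho = \sqrt{1 - \beta(1-\beta)\gamma^2}$, and the primal estimate via $e^{(n)} = (A + \alpha T^\top M T)^{-1} T^\top \mu^{(n)}$ is exactly the paper's argument. The only cosmetic difference is that you collapse $(1-\beta)\|\mu^{(n)}\|^2 + \beta\|\mathcal{X}^\top S\mu^{(n)}\|^2$ into $\|\mu^{(n)}\|^2$ before inserting the inf-sup bound, whereas the paper applies both estimates term by term; the resulting bound is identical.
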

\begin{proof}
  As in the proof of Theorem~\ref{thm:convGeneral}, we define $\mu^{(n)} := \lambda^{(n)} - \lambda$
	and obtain
	\[
	  \| \mu^{(n+1)} \|_{M^{-1}}^2
		  = (1 - \beta) \| \mu^{(n)} \|_{M^{-1}}^2
		  - \beta (1 - \beta) \| (I - \mathcal{X}^\top S) \mu^{(n)} \|_{M^{-1}}^2
			+ \beta \| \mathcal{X}^\top S \mu^{(n)} \|_{M^{-1}}^2.
	\]
	Thanks to \ref{ass:A4}, Lemma~\ref{lem:SNonExpansive}, and Corollary~\ref{cor:IXSinfSup}, this implies
	\begin{align*}
	  \| \mu^{(n+1)} \|_{M^{-1}}^2
		& \le (1 - \beta) \| \mu^{(n)} \|_{M^{-1}}^2
		  - \beta (1 - \beta) \gamma^2 \| \mu^{(n)} \|_{M^{-1}}^2
			+ \beta \| \mu^{(n)} \|_{M^{-1}}^2\\
		& = \big(1 - \beta(1 - \beta) \gamma^2 \big) \| \mu^{(n)} \|_{M^{-1}}^2.
	\end{align*}
	As in the proof of Theorem~\ref{thm:convSPD}, one easily shows that
	$e^{(n)} = u^{(n)} - R \widehat u = (A + \alpha T^\top M T)^{-1} T^\top \mu^{(n)}$.
	Due to \ref{ass:A5}, \ref{ass:A6}, there exists a constant $C$ such that $\| e^{(n)} \|_U \le C \| \mu^{(n)} \|_{M^{-1}}$.
\end{proof}

\begin{example}
  For the Helmholtz equation (Example~\ref{ex:modelProblemHelmholtz}),
	suppose that we use the natural trace space $\Lambda_i = H^{1/2}(\Gamma_i)$
	on the subdomain interface $\Gamma_i := \partial\Omega_i \cap \bigcup_{j \neq i}\partial\Omega_j$,
	then the additional assumption $\range(T) = \Lambda$ holds.
	Let $M_i$ be a real-valued, symmetric impedance operator which is bounded positively from below.
	With $\alpha = \complexi$, Assumption~\ref{ass:A7} holds (see Table~\ref{tab:DDClass}),
	and Assumption~\ref{ass:A5} is guaranteed by Appendix~\ref{apx:invGenRobin}.
	We distinguish two cases.	
	\begin{enumerate}
	\item[(i)] In the case of no junctions, the usual swapping operator can be used (see also
	  Example~\ref{ex:crosspointProblem}, Example~\ref{ex:HelmholtzNoJunctions}, Example~\ref{ex:HelmholtzNoCrosspointButJunctions},
		and Example~\ref{ex:A4}),
	  provided that $M_i$ is block-diagonal and can be written as $(M_F)_{F \in \mathcal{F}_i}$,
	  i.e., with the \emph{same} impedance $M_F$ on either side of the facet $F$.
	  Then Assumptions~\ref{ass:A1}--\ref{ass:A3}, \ref{ass:A6}, and \ref{ass:AXR} are fulfilled,
	  and so Theorem~\ref{thm:convSchwarzSurjTrace} reproduces the result of \cite[Sect.~4.2]{CollinoGhanemiJoly:2000a}.
	\item[(ii)] In the general case, one can resort to the exchange operator constructed in Section~\ref{sect:interfaceExchange}.	
	  Then Assumptions~\ref{ass:A1}--\ref{ass:A3}, \ref{ass:A6}, and \ref{ass:AXR} are fulfilled as well,
	  and Theorem~\ref{thm:convSchwarzSurjTrace} reproduces the result of \cite{Claeys:2021a,ClaeysParolin:Preprint2020,Parolin:PhD} for the Helmholtz equation.
	\end{enumerate}
\end{example}

\begin{remark}
  In the finite-dimensional case, the convergence of the weighted GMRES iteration for $(I - \mathcal{X}^\top S)$
	using the inner product $(\cdot,\cdot)_{M^{-1}}$ can be estimated
	along the classical result by Elman \cite{Elman:PhD},
	see also \cite{LiesenTichy:2006a} and references therein.
	The lower bound~\eqref{eq:IXScoercivity}
	and the upper bound
	\[
	  \sup_{\lambda \in \Lambda^*} \frac{\| (I - \mathcal{X}^\top S) \lambda \|_{M^{-1}}}{\| \lambda \|_{M^{-1}}} \le 2,
	\]
	result in the convergence estimate of
	\[
	  \| (I - \mathcal{X}^\top S) (\lambda^{n} - \lambda) \|_{M^{-1}} \le \rho_\text{GMRES}^n \| (I - \mathcal{X}^\top S)(\lambda^{0} - \lambda) \|_{M^{-1}}\,,
	\]
	for the \emph{residuals} of weighted GMRES,
	where $\rho_\text{GMRES} = \sqrt{1 - \gamma^2/4} < 1$.
	This estimate is similar to that of Theorem~\ref{thm:convSchwarzSurjTrace}, observing that for the choice $\beta=1/2$,
	the estimated convergence rate for the \emph{iterates} is $\rho = \sqrt{1 - \gamma^2/4}$, cf.\ \cite[Remark~9]{CollinoGhanemiJoly:2000a}.
\end{remark}

\begin{lemma}
\label{lem:lowerBoundInfSup}
  Let Assumptions~\ref{ass:A1}--\ref{ass:A7} hold and assume that $\range(T) = \Lambda$.
  In addition, suppose that we have estimates of the form
	\begin{alignat*}{2}
	  \| R \widehat v \|_U & \le C_R \| \widehat v \|_{\widehat U} \qquad && \forall \widehat v \in \widehat U, \\
	  \| A v \|_{U^*} & \le C_A \| v \|_U \qquad && \forall v \in U, \\
		\| T v \|_M & \le C_T \| v \|_U \qquad && \forall v \in U, \\
	  \| E \lambda \|_U & \le C_E \| \lambda \|_M \qquad && \forall \lambda \in \Lambda, \\
		\| \widehat A \widehat v \|_{\widehat U^*} & \ge c_{\widehat A} \| \widehat v \|_{\widehat U}
		        \qquad && \forall \widehat v \in \widehat U,
	\end{alignat*}
	then the inf-sup stability~\eqref{eq:IXSinfSup} holds with
	\[
	  \gamma
		= 2 \left( \frac{ (C_A C_E + C_T)^2 C_R^2 }{c_{\widehat A}} + C_A C_E^2 + 1 \right)^{-1}
		\ge \frac{c_{\widehat A}}{ (C_A C_E + C_T)^2 C_R^2 }\,.
	\]
\end{lemma}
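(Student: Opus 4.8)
The plan is to recognize that the inf-sup estimate~\eqref{eq:IXSinfSup} with constant $\gamma$ is exactly the operator-norm bound $\|(I - \mathcal{X}^\top S)^{-1}\|_{M^{-1}\to M^{-1}} \le \gamma^{-1}$: substituting $\mu = (I - \mathcal{X}^\top S)\lambda$ turns \eqref{eq:IXSinfSup} into $\|(I - \mathcal{X}^\top S)^{-1}\mu\|_{M^{-1}} \le \gamma^{-1}\|\mu\|_{M^{-1}}$. Since Lemma~\ref{lem:IXSIso} already guarantees invertibility and Corollary~\ref{cor:IXSInvRepr} provides a closed form, I would estimate the norm of that closed form factor by factor. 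I would use the second (expanded) representation, simplify $(M + \mathcal{X}^\top M \mathcal{X})^{-1} = \tfrac12 M^{-1}$ via Assumption~\ref{ass:A4}, and use $|\alpha| = 1$ (true in both cases of Assumption~\ref{ass:A7}) to drop the prefactor $\tfrac1\alpha$.

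The core is bookkeeping with the dual-norm pairs ($M$/$M^{-1}$ on $\Lambda$/$\Lambda^*$, and $U$/$U^*$, $\widehat U$/$\widehat U^*$). The tool I would invoke repeatedly is the duality characterization $|\langle\psi,\overline\lambda\rangle| \le \|\psi\|_{M^{-1}}\|\lambda\|_M$ (with equality at $\lambda = M^{-1}\psi$), together with $\|M^{-1}\psi\|_M = \|\psi\|_{M^{-1}}$, $\|M\nu\|_{M^{-1}} = \|\nu\|_M$, and the isometry $\|\mathcal{X}^\top\mu\|_{M^{-1}} = \|\mu\|_{M^{-1}}$ from~\eqref{eq:XIso}. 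From the five hypotheses I read off $\|R\|_{\widehat U\to U}\le C_R$, $\|R^\top\|_{U^*\to\widehat U^*}\le C_R$, $\|A\|_{U\to U^*}\le C_A$, $\|\widehat A^{-1}\|_{\widehat U^*\to\widehat U}\le c_{\widehat A}^{-1}$, and then derive the two \emph{cross-term} bounds that drive the result: for $\widetilde A E = AE + \alpha T^\top M$ and $E^\top\widetilde A = E^\top A + \alpha M T$ one obtains $\|\widetilde A E\nu\|_{U^*}\le (C_AC_E + C_T)\|\nu\|_M$ and $\|E^\top\widetilde A v\|_{M^{-1}}\le (C_AC_E + C_T)\|v\|_U$. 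Here $\|T^\top M\nu\|_{U^*}\le C_T\|\nu\|_M$ follows from $\langle T^\top M\nu,\overline v\rangle = (\nu,Tv)_M$ and Cauchy--Schwarz, $\|MT v\|_{M^{-1}} = \|Tv\|_M\le C_T\|v\|_U$ symmetrically, and $\|E^\top A v\|_{M^{-1}}\le C_AC_E\|v\|_U$ comes from $\langle E^\top Av,\overline\lambda\rangle = \langle Av,\overline{E\lambda}\rangle$ and the duality characterization (using $\|\overline\lambda\|_M = \|\lambda\|_M$, as $M$ is real-valued).

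Chaining these through $\tfrac1\alpha[(E^\top A + \alpha MT)R\widehat A^{-1}R^\top(AE + \alpha T^\top M) - (E^\top AE + \alpha M)]\cdot\tfrac12 M^{-1}\mathcal{X}^\top$ applied to $\mu$, and writing $\nu = \tfrac12 M^{-1}\mathcal{X}^\top\mu$ (so $\|\nu\|_M = \tfrac12\|\mu\|_{M^{-1}}$), the first bracket contributes at most $\tfrac{(C_AC_E+C_T)^2C_R^2}{c_{\widehat A}}\|\nu\|_M$ and the second at most $(C_AC_E^2 + 1)\|\nu\|_M$ (using $\|E^\top AE\nu\|_{M^{-1}}\le C_AC_E^2\|\nu\|_M$ and $\|M\nu\|_{M^{-1}} = \|\nu\|_M$). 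Hence $\|(I-\mathcal{X}^\top S)^{-1}\mu\|_{M^{-1}}\le\tfrac12(X+Y)\|\mu\|_{M^{-1}}$ with $X := \tfrac{(C_AC_E+C_T)^2C_R^2}{c_{\widehat A}}$ and $Y := C_AC_E^2 + 1$, giving exactly $\gamma = 2(X+Y)^{-1}$.

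For the cleaner lower bound $\gamma\ge c_{\widehat A}/[(C_AC_E+C_T)^2C_R^2] = X^{-1}$ I would show $X\ge Y$, whence $\gamma = 2/(X+Y)\ge 2/(2X) = X^{-1}$. This is the one genuinely non-mechanical step and rests on two auxiliary inequalities: the assembling identity $\widehat A = R^\top AR$ gives $c_{\widehat A}\le C_AC_R^2$, and $TE = I$ gives $C_TC_E\ge 1$ (from $\|\lambda\|_M = \|TE\lambda\|_M\le C_TC_E\|\lambda\|_M$). Combining, $X\ge\tfrac{[(C_AC_E)^2 + 2C_AC_EC_T]C_R^2}{C_AC_R^2} = C_AC_E^2 + 2C_EC_T\ge C_AC_E^2 + 2 > Y$. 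I expect the main obstacles to be precisely (i) keeping the dual-norm bookkeeping of the cross terms $T^\top M$ and $E^\top A$ straight, and (ii) noticing that $X\ge Y$ is not automatic but follows from $c_{\widehat A}\le C_AC_R^2$ and $C_TC_E\ge1$; the rest is a routine triangle-inequality chase.
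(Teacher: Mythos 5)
Your proposal is correct and follows essentially the same route as the paper: reduce the inf-sup estimate to the operator-norm bound $\|(I-\mathcal{X}^\top S)^{-1}\mu\|_{M^{-1}}\le\gamma^{-1}\|\mu\|_{M^{-1}}$, insert the explicit representation from Corollary~\ref{cor:IXSInvRepr} (simplified via Assumption~\ref{ass:A4} and $|\alpha|=1$), and bound it term by term through the dual-norm pairs to arrive at $\gamma^{-1}=\tfrac12(X+Y)$ with the same $X$ and $Y$. The only cosmetic difference is the algebraic chain establishing $X\ge Y$ for the second estimate --- you expand $(C_AC_E+C_T)^2$ and drop $C_T^2$, while the paper factors through $C_E(C_AC_E+C_T)$ --- but both rest on exactly the two auxiliary facts $C_TC_E\ge1$ and $c_{\widehat A}\le C_AC_R^2$ that you identified.
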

\begin{remark}
  In applications, often $\| R \widehat v \|_U = \| \widehat v \|_{\widehat U}$, which implies $C_R = 1$.
	If, in addition, $M$ is chosen as the minimal extension, i.e., $\| \lambda \|_M = \min_{v \in U \colon T v = \lambda} \| v \|_U$,
	and if $E \lambda = \mathop{\text{argmin}}_{v \in U \colon T v = \lambda} \| v \|_U$,
	then $C_T = 1$ and $C_E = 1$, and so the bound depends on $C_A$ and $c_{\widehat A}$ only.
\end{remark}
\begin{proof}[Proof of Lemma~\ref{lem:lowerBoundInfSup}]
  Apparently, \eqref{eq:IXSinfSup} is equivalent to
	\begin{align}
	\label{eq:boundIXTSInverse}
	  \| (I - \mathcal{X}^T S)^{-1} \mu \|_{M^{-1}} \le \gamma^{-1} \| \mu \|_{M^{-1}} \qquad \forall \mu \in \Lambda^*.
	\end{align}
	Corollary~\ref{cor:IXSInvRepr} together with \ref{ass:A4} yields
	\begin{align*}
	   (I - \mathcal{X}^T S)^{-1} \mu
		 = \tfrac{1}{2\alpha} \big[ (E^\top A + \alpha M T) R \widehat A^{-1} R^\top (A E + \alpha T^\top M)
		  - E^\top A E - \alpha M \big] M^{-1} \mathcal{X}^\top \mu.
	\end{align*}
	In order to estimate the $\| \cdot \|_{M^{-1}}$-norm of the above expression, we make use of the fact that
	$\| \cdot \|_{M^{-1}}$ is the dual norm of $\| \cdot \|_M$, which implies
	\begin{alignat}{2}
	  \| E^\top \psi \|_{M^{-1}} & \le C_E \| \psi \|_{U^*} \qquad && \forall \psi \in U^* \,,\\
	\label{eq:TtopBound}
		\| T^\top \mu \|_{U^*} & \le C_T \| \mu \|_{M^{-1}} \qquad && \forall \mu \in \Lambda^*.
	\end{alignat}
	Together with the assumed bounds for the operators $\widehat A$, $A$, $R$, $T$, and $E$ this yields
	\begin{multline*}
	  \| (I - \mathcal{X}^T S)^{-1} \mu \|_{M^{-1}} \\
		\le \frac{1}{2|\alpha|} \Big[ (C_E C_A + |\alpha| C_T) \frac{C_R^2}{c_{\widehat A}} (C_A C_E + |\alpha| C_T)
		  + (C_E C_A C_E + |\alpha|) \Big] \| M^{-1} \mathcal{X}^\top \mu \|_{M}\,.
	\end{multline*}
	Recall that due to Assumptions~\ref{ass:A6} and~\ref{ass:A4}, $\| \mathcal{X}^\top \mu \|_{M^{-1}} = \| \mu \|_{M^{-1}}$,
	and that due to \ref{ass:A7}, $|\alpha| = 1$. Altogether, this implies~\eqref{eq:boundIXTSInverse} with
	\[
	  \gamma^{-1} = \frac{1}{2} \left( \frac{ (C_A C_E + C_T)^2 C_R^2 }{c_{\widehat A}} + C_A C_E^2 + 1 \right).
	\]
	Since $T E = I$, it follows that $C_T C_E \ge 1$. In addition, since $\widehat A = R^\top A R$, it can be shown that
	$C_A C_R^2 / c_{\widehat A} \ge 1$. Therefore,
	\[
	  C_A C_E^2 + 1
		\le C_E (C_A C_E + C_T)
		\le \frac{C_E C_A C_R^2}{c_{\widehat A}} (C_A C_E + C_T)
		\le \frac{C_R^2}{c_{\widehat A}} (C_A C_E + C_T)^2\,,
	\]
	which implies the second estimate.
\end{proof}

\begin{remark}
\label{rem:uniformInfSup}
  In~\cite{ClaeysParolin:Preprint2020}, it is shown that for a \emph{family} of refined meshes
	with mesh parameter $h \to 0$,
	the associated family of Schwarz methods leads to a uniform positive bound for $\gamma_h$ for the Helmholtz equation.
	This fact is reflected in Lemma~\ref{lem:lowerBoundInfSup} when considering that $C_R = 1$,
	and that all the other estimates (for the chosen set of discrete operators) can be shown to hold uniformly w.r.t.\ $h \to 0$.
\end{remark}

For completeness, a result is given for the absorbing case with surjective trace.

\begin{theorem}
  Let Assumptions~\ref{ass:A1}--\ref{ass:A7} hold.
	In addition, assume that $\range(T) = \Lambda$
	and that there exist positive constants $a_i > 0$ such that for each $i=1,\ldots,N$,
	\begin{alignat*}{2}
	  \langle A_i v, \overline v \rangle & \ge a_i \| v \|_{U_i}^2 \qquad & \text{in case~(i) of \ref{ass:A7},}\\
		\langle A_{i,1} v, \overline v \rangle & \ge a_i \| v \|_{U_i}^2 \qquad & \text{in case~(ii) of \ref{ass:A7}.}
	\end{alignat*}
  Then
	\[
	  \| S \mu \|_{M^{-1}} \le \sqrt{1 - \zeta} \| \mu \|_{M^{-1}} \quad \forall \mu \in \Lambda^*, \qquad
		\text{with } \zeta = \big( \min_{i=1,\ldots,N} a_i \big) \frac{4}{(C_A + C_T^2)^2\, C_E^2}\,,
	\]
	with $C_A$, $C_E$, and $C_T$ as in Lemma~\ref{lem:lowerBoundInfSup}.
	Therefore, by Proposition~\ref{prop:SContractive},
	the sequence $(\lambda^{(n)})$ of Robin-Schwarz iterates with damping parameter $\beta \in (0, 1]$ converges linearly.
\end{theorem}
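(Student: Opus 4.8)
The plan is to combine the pseudo-energy identity of Lemma~\ref{lem:SNonExpansive} with a lower bound on the interior field, the latter being where the surjectivity $\range(T)=\Lambda$ enters. By Lemma~\ref{lem:SNonExpansive}, for every $\mu\in\Lambda^*$ one has $\|S\mu\|_{M^{-1}}^2 = \|\mu\|_{M^{-1}}^2 - 4p$, where $p\ge 0$ is the interior loss associated with $v := (A+\alpha T^\top M T)^{-1}T^\top\mu$. Writing $p$ as $\sum_{i=1}^N\langle A_i v_i,\overline{v_i}\rangle$ in case~(i) or $\sum_{i=1}^N\langle A_{i,1}v_i,\overline{v_i}\rangle$ in case~(ii) of \ref{ass:A7} (exactly as in the proof of Lemma~\ref{lem:SNonExpansive}), the extra coercivity hypothesis of the theorem immediately yields $p \ge (\min_i a_i)\,\|v\|_U^2$. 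Thus the whole task reduces to bounding $\|v\|_U$ from below in terms of $\|\mu\|_{M^{-1}}$.

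For that lower bound I would exploit the surjectivity of $T$: since $\range(T)=\Lambda$, there is a bounded extension $E\colon\Lambda\to U$ with $TE=I$, whence $E^\top T^\top = (TE)^\top = I$ on $\Lambda^*$. Abbreviating $\widetilde A := A+\alpha T^\top M T$, the defining relation $\widetilde A v = T^\top\mu$ gives $\mu = E^\top T^\top\mu = E^\top\widetilde A v$. Now I estimate in the dual norm: using $\|E^\top\psi\|_{M^{-1}}\le C_E\|\psi\|_{U^*}$ (the transpose of $\|E\lambda\|_U\le C_E\|\lambda\|_M$), then bounding $\|\widetilde A v\|_{U^*}\le\|Av\|_{U^*} + \|T^\top M T v\|_{U^*}$ with $\|T^\top\psi\|_{U^*}\le C_T\|\psi\|_{M^{-1}}$ (the transpose of $\|Tv\|_M\le C_T\|v\|_U$) and the isometry $\|MTv\|_{M^{-1}}=\|Tv\|_M$ valid under \ref{ass:A6}, I arrive at $\|\widetilde A v\|_{U^*}\le (C_A + C_T^2)\|v\|_U$ (here $|\alpha|=1$ by \ref{ass:A7}), and hence $\|\mu\|_{M^{-1}}\le C_E(C_A+C_T^2)\|v\|_U$. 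These are precisely the operator bounds of Lemma~\ref{lem:lowerBoundInfSup}.

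Combining the two pieces gives $p\ge (\min_i a_i)\,\|\mu\|_{M^{-1}}^2\big/\big((C_A+C_T^2)^2 C_E^2\big)$, so that $\|S\mu\|_{M^{-1}}^2 \le (1-\zeta)\|\mu\|_{M^{-1}}^2$ with the stated $\zeta$. The inequality itself forces $1-\zeta\ge 0$ since its left-hand side is non-negative for every $\mu$, so $\sqrt{1-\zeta}$ is well-defined, while $\zeta>0$ because $\min_i a_i>0$. Feeding the contraction factor $\rho=\sqrt{1-\zeta}<1$ into Proposition~\ref{prop:SContractive} then yields linear convergence of $(\lambda^{(n)})$ for any $\beta\in(0,1]$, completing the argument.

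I expect the only genuinely delicate point to be the identity $\mu=E^\top\widetilde A v$ together with the careful handling of dual norms: one must verify that the transpose estimates $\|E^\top\psi\|_{M^{-1}}\le C_E\|\psi\|_{U^*}$ and $\|T^\top\psi\|_{U^*}\le C_T\|\psi\|_{M^{-1}}$ really are the duals of the assumed primal bounds under the pairing inducing $\|\cdot\|_{M^{-1}}$, and that $\|MTv\|_{M^{-1}}=\|Tv\|_M$. Once the bound $\|v\|_U\gtrsim\|\mu\|_{M^{-1}}$ is secured, the remainder is a routine substitution into Lemma~\ref{lem:SNonExpansive} and an appeal to Proposition~\ref{prop:SContractive}.
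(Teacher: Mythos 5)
Your proposal is correct and follows essentially the same route as the paper's proof: the pseudo-energy identity of Lemma~\ref{lem:SNonExpansive} combined with the coercivity hypothesis gives $p\ge(\min_i a_i)\|v\|_U^2$, and the lower bound $\|v\|_U\ge\|\mu\|_{M^{-1}}/\big((C_A+C_T^2)C_E\big)$ is obtained exactly as in the paper via $\mu=E^\top T^\top\mu=E^\top\widetilde A v$ and the dual-norm estimates from Lemma~\ref{lem:lowerBoundInfSup}. The "delicate points" you flag (the transposed bounds and $\|MTv\|_{M^{-1}}=\|Tv\|_M$) are indeed the ones the paper relies on, and your verification of them is sound.
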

\begin{proof}
  Recall from Lemma~\ref{lem:SNonExpansive} that
	\[
	  \| S \mu \|_{M^{-1}}^2 \le \| \mu \|_{M^{-1}}^2 - 4 p, \qquad
		\text{where } p = \begin{cases} \langle A_i v, \overline v \rangle & \text{in case~(i) of \ref{ass:A7},} \\
		  \langle A_{i,1} v, \overline v \rangle & \text{in case~(ii) of \ref{ass:A7},} \end{cases}
	\]
	with $v = (A + \alpha T^\top M T)^{-1} T^\top \mu$. Due to the stated assumptions,
	\[
	  \| S \mu \|_{M^{-1}}^2 \le \| \mu \|_{M^{-1}}^2 - 4 \big( \min_{i=1,\ldots,N} a_i \big) \| v \|_U^2\,.
	\]
	From $\| (A + \alpha T^\top M T) v \|_{U^*} \le (C_A + C_T^2) \| v \|_{U}$
	and $\| \mu \|_{\Lambda^*} \le C_E \| E^\top \mu \|_{U^*}$
	(which follows from~\eqref{eq:TtopBound} with $\psi = E^\top \mu$ using $T E = I$),
	we obtain
	\[
	  \| v \|_U \ge \frac{1}{C_A + C_T^2} \| T^\top \mu \|_{U^*} \ge \frac{1}{(C_A + C_T^2)C_E} \| \mu \|_{M^{-1}}\,.
	\]
	Combination of the two estimates concludes the proof.
\end{proof}

\section{Generalized interface exchange operators}
\label{sect:interfaceExchange}

In this section, we follow the key idea of \cite{ClaeysParolin:Preprint2020} and construct generalized interface exchange operator
based on surjective trace operators.
Compared to \cite{ClaeysParolin:Preprint2020} the situation is more general and based on just two assumptions~\ref{ass:B1}, \ref{ass:B2}
on the trace operator, to be discussed below.

\begin{proposition}
  Let \ref{ass:A1} hold. Then the bubble space $U_B$ from Definition~\ref{def:bubble} is a closed subspace of $\range(R)$.
	If $\widehat U$ is a complexified Hilbert space, then so is $U_B$.
\end{proposition}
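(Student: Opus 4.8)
The plan is to establish three facts in turn: that $U_B \subseteq \range(R)$, that $U_B$ is closed, and (in the complexified setting) that $U_B$ is itself complexified. That $U_B$ is a subspace is immediate, since each $U_{i,B}$ is the image under the linear map $R_i$ of the set of admissible extensions and hence linear, and $U_B = \productspace_{i=1}^N U_{i,B}$ is their product.

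For the inclusion $U_B \subseteq \range(R)$ I would argue by explicit reconstruction. Given $v = (v_i)_{i=1}^N \in U_B$, Definition~\ref{def:bubble} furnishes for each $i$ an extension $\widehat v^{(i)} \in \widehat U$ with $R_i \widehat v^{(i)} = v_i$ and $R_j \widehat v^{(i)} = 0$ for all $j \neq i$. Setting $\widehat v := \sum_{i=1}^N \widehat v^{(i)}$ and applying $R_k$, all cross terms vanish and one obtains $R_k \widehat v = R_k \widehat v^{(k)} = v_k$ for every $k$, i.e.\ $R\widehat v = v$. Hence $v \in \range(R)$.

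For closedness the key idea is to realize each $U_{i,B}$ as the image of a closed subspace under an operator that is bounded below. I would set $W_i := \bigcap_{j \neq i}\ker(R_j)$, a closed subspace of $\widehat U$ (an intersection of kernels of bounded operators), and note that $U_{i,B} = R_i(W_i)$. The crucial step, which I expect to be the main obstacle, is the bounded-below estimate: for $\widehat v \in W_i$ every component of $R\widehat v$ except the $i$-th vanishes, so $\|R\widehat v\|_U = \|R_i\widehat v\|_{U_i}$, while Assumption~\ref{ass:A1} makes $R \colon \widehat U \to \range(R)$ a continuous bijection onto a Banach space, whence the bounded inverse theorem yields $c>0$ with $\|R\widehat v\|_U \ge c\|\widehat v\|_{\widehat U}$ for all $\widehat v$ (the equivalent-norm remark already noted after Assumption~\ref{ass:A1}). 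Combining, $\|R_i\widehat v\|_{U_i} \ge c\|\widehat v\|_{\widehat U}$ on $W_i$, so $R_i|_{W_i}$ is injective and bounded below; since $W_i$ is complete, a standard Cauchy-sequence argument shows its range $U_{i,B}$ is closed. As a finite product of closed subspaces is closed in $U$, the space $U_B$ is closed, and being contained in the closed subspace $\range(R)$ it is in particular a closed subspace thereof.

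For the complexified case I would verify that each $U_{i,B}$ is invariant under complex conjugation, which for a complexified ambient space is equivalent to being complexified (one recovers the real part as $\tfrac12(v_i + \overline{v_i})$ etc.). If $v_i \in U_{i,B}$ has extension $\widehat v$, then $\overline{\widehat v}$ is an extension of $\overline{v_i}$: because the $R_k$ are real-valued (Definition~\ref{def:abstractDD}) they commute with conjugation, so $R_i\overline{\widehat v} = \overline{R_i\widehat v} = \overline{v_i}$ and $R_j\overline{\widehat v} = \overline{R_j\widehat v} = 0$ for $j \neq i$. Thus $\overline{v_i} \in U_{i,B}$, so $U_{i,B}$ is complexified, and taking the product over $i$ (with componentwise conjugation on $U$) shows $U_B$ is complexified as well.
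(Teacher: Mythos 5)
Your proposal is correct. The componentwise decomposition $U_B = \prod_i U_{i,B}$ and the inclusion $U_B \subseteq \range(R)$ match the paper; the only real difference is how closedness of each $U_{i,B}$ is obtained. The paper characterizes $U_{i,B}$ as the $i$-th component of $\{ v \in \range(R) \colon v_j = 0 \ \forall j \neq i \}$, i.e.\ as the intersection of the closed subspace $\range(R)$ with the closed subspace of elements supported on component $i$, so closedness is immediate from Assumption~\ref{ass:A1} with no further analysis. You instead push forward the closed subspace $W_i = \bigcap_{j\neq i}\ker(R_j)$ of $\widehat U$ under $R_i$ and invoke the bounded-below estimate $\|R_i \widehat v\|_{U_i} = \|R\widehat v\|_U \ge c\|\widehat v\|_{\widehat U}$ on $W_i$ together with a Cauchy-sequence argument. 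This is valid, and the norm equivalence you need is exactly the one the paper records after Assumption~\ref{ass:A1}, but it re-derives via the bounded inverse theorem what the paper gets for free by intersecting closed sets inside $U$. The treatment of the complexified case (conjugation invariance via real-valuedness of the $R_i$) is the same in both arguments.
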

\begin{proof}
  From Definition~\ref{def:bubble}, it is easily seen that
	$U_{i,B} = \{ v_i \colon v \in \range(R),\ v_j = 0 \quad \forall j \neq i \}$.
	Therefore, we can write
	\[
	  U_B = \sum_{i=1}^N \{ v \in \range(R) \colon v_j = 0 \quad \forall j \neq i \},
	\]
	which shows that $U_B$ is a closed subspace of $\range(R)$.
	If $\widehat U$ is a complexified Hilbert space then $U$ is complexified and $R$ real valued. Therefore, $U_B$ is complexified, too.
\end{proof}

\noindent%
\fbox{%
\begin{minipage}{0.985\textwidth}
\begin{assumptb}
\label{ass:B1}
  $\ker(T) \subseteq U_B$.
\end{assumptb}
\end{minipage}
}

\smallskip

\noindent%
\fbox{%
\begin{minipage}{0.985\textwidth}
\begin{assumptb}
\label{ass:B2}
  $\range(T) = \Lambda$.
\end{assumptb}
\end{minipage}
}

\medskip

Assumption~\ref{ass:B1} states that a function $u_i$ with $T_i u_i = 0$ can always be extended by zero to the other subdomains.
Assumption~\ref{ass:B2} states that the trace operator is surjective, cf.~Example~\ref{ex:tracesH1Surj} and Sect.~\ref{sect:globs}.

\begin{remark}
	In the special case of $\ker(T) = U_B$, together with~\ref{ass:B2},
	it follows that $\Lambda$ is isomorphic to any complementary space $U_B^\perp$ such that $U = U_B \oplus U_B^\perp$
	and $T$ isomorphic to the operator that projects a function $u \in U$ to $U_B^\perp$.
\end{remark}

\begin{lemma}
\label{lem:RLambda}
  Let \ref{ass:A1}, \ref{ass:B1} hold. Then there exists a Hilbert space $\widehat \Lambda$ and a bounded linear operator
	$R_\Lambda \colon \widehat \Lambda \to \Lambda$ such that
	\begin{enumerate}
	\item[(i)] $R_\Lambda$ is injective,
	\item[(ii)] $\range(T R) = \range(R_\Lambda)$,
	\item[(iii)] $\widehat \Lambda$ is isomorphic to any complementary subspace $\mathcal{V}$ fulfilling $\widehat U = R^{-1}(\ker(T)) \oplus \mathcal{V}$,
	  and to any complementary subspace $\mathcal{W}$ fulfilling $\range(R) = \ker(T) \oplus \mathcal{W}$,
	\item[(iv)] if $\widehat U$ is a complexified Hilbert space, then also $\widehat\Lambda$ is complexified and $R_\Lambda$ is real-valued.
	\end{enumerate}
\end{lemma}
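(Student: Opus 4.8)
The plan is to realize $R_\Lambda$ as a restriction of $TR$ to a concrete complement of its kernel. First I would set $K := \ker(TR)$. Since $R$ is injective by \ref{ass:A1}, one has $K = R^{-1}(\ker(T))$, and because $\ker(T)$ is closed and $R$ is bounded, $K$ is a closed subspace of $\widehat U$. I would then define $\widehat\Lambda := K^\perp$, the orthogonal complement of $K$ in $\widehat U$, which is itself a Hilbert space, together with $R_\Lambda := (TR)|_{\widehat\Lambda} \colon \widehat\Lambda \to \Lambda$, a bounded linear operator. Property~(i) is immediate: if $R_\Lambda x = 0$ then $x \in K \cap K^\perp = \{0\}$. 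Property~(ii) follows from the orthogonal decomposition $\widehat U = K \oplus \widehat\Lambda$ together with the fact that $TR$ vanishes on $K$, so that $\range(TR) = TR(\widehat\Lambda) = \range(R_\Lambda)$.

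For property~(iii) I would use the standard fact that any two closed, topologically complementary subspaces of the fixed closed subspace $K$ are isomorphic, both being isomorphic to the quotient $\widehat U / K$ via the restricted quotient map; concretely, if $\widehat U = K \oplus \mathcal{V}$, the projection onto $\widehat\Lambda$ along $K$ restricts to a bounded isomorphism $\mathcal{V} \to \widehat\Lambda$ whose inverse is the projection onto $\mathcal{V}$ along $K$ (bounded by the bounded inverse theorem). To connect this with the complements $\mathcal{W}$ of $\ker(T)$ inside $\range(R)$, I would invoke that $R \colon \widehat U \to \range(R)$ is an isomorphism by \ref{ass:A1} and that $\ker(T) \subseteq U_B \subseteq \range(R)$ by \ref{ass:B1} and the preceding proposition. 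Hence $R$ maps $K = R^{-1}(\ker(T))$ isomorphically onto $\ker(T)$ and carries every decomposition $\widehat U = K \oplus \mathcal{V}$ to a decomposition $\range(R) = \ker(T) \oplus R(\mathcal{V})$, while conversely every admissible $\mathcal{W}$ pulls back to $\mathcal{V} := R^{-1}(\mathcal{W})$. Thus the two families of complements correspond under $R$ and are all isomorphic to $\widehat\Lambda$.

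Finally, for property~(iv), I would note that $R$ and $T$ are real-valued, hence so is $TR$, which forces $K = \ker(TR)$ to be conjugation-invariant: if $u_\re + \complexi u_\im \in K$ then $TR u_\re = 0$ and $TR u_\im = 0$, so $K$ is complexified. Since complex conjugation is an anti-linear isometry on $\widehat U$ that maps $K$ onto itself, it also maps $K^\perp = \widehat\Lambda$ onto itself; indeed, for $w \in K^\perp$ and $k \in K$ one computes $(\overline w, k) = \overline{(w, \overline k)} = 0$ because $\overline k \in K$. Therefore $\widehat\Lambda$ is complexified, and $R_\Lambda$, being a restriction of the real-valued operator $TR$, is real-valued.

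The main obstacle will be property~(iii): one must keep careful track of whether a given complement lives in $\widehat U$ (the spaces $\mathcal{V}$) or inside $\range(R)$ (the spaces $\mathcal{W}$), and the bridge between the two is exactly the inclusion $\ker(T) \subseteq \range(R)$ supplied by \ref{ass:B1}. Without this inclusion the decomposition $\range(R) = \ker(T) \oplus \mathcal{W}$ would not even be meaningful, so \ref{ass:B1} is essential precisely here; all remaining care amounts to ensuring the isomorphisms in question are bounded with bounded inverse, which is guaranteed by the bounded inverse theorem since every subspace involved is closed.
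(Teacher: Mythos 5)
Your proof is correct and follows essentially the same route as the paper: both realize $R_\Lambda$ as the restriction of the trace map to a complement of its kernel, the paper choosing an abstract complement $\mathcal{W}$ of $\ker(T)$ inside $\range(R)$ and restricting $T$, while you choose the orthogonal complement of $\ker(TR)$ inside $\widehat U$ and restrict $TR$ --- the two constructions being identified by the isomorphism $R \colon \widehat U \to \range(R)$, exactly as your treatment of (iii) makes explicit. Your conjugation-invariance argument for (iv) is in fact more detailed than the paper's, which simply asserts that the real construction can be complexified.
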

\begin{proof}
  Without loss of generality, we may assume that all spaces are real
	(in the complex case, we can follow the construction of the real case and then complexify the space and operator).
  Since $\ker(T)$ is a closed subspace of $U_B \subseteq \range(R)$, there exists a complementary space $\mathcal{W}$ such that $\range(R) = \ker(T) \oplus \mathcal{W}$.
	We restrict $T$ to $\mathcal{W}$ and call it $R_{\Lambda,\mathcal{W}} \colon \mathcal{W} \to \Lambda$.
	With this construction, $\widehat \Lambda = \mathcal{W}$ and $R_\Lambda = R_{\Lambda,\mathcal{W}}$ fulfill properties~(i), (ii), and (iv):
	\begin{align*}
	  \ker(R_{\Lambda,\mathcal{W}}) & = \ker(T) \cap \mathcal{W} = \{ 0 \},\\
	  \range(R_{\Lambda,\mathcal{W}}) & = T(\mathcal{W}) = T(\underbrace{\ker(T) \oplus \mathcal{W}}_{=\range(R)}) = \range(T R).
	\end{align*}
	Finally, since $R$ is an isomorphism between $\widehat U$ and $\range(R)$ and since $\ker(T)$ is closed,
	the space $R^{-1}(\ker(T)) \subseteq \widehat U$ is closed,
	and any complementary space $\mathcal{V}$ is isomorphic to $\mathcal{W}$.
\end{proof}

\begin{remark}
\label{rem:RLambda}
  Under the assumptions of Lemma~\ref{lem:RLambda}, one can even show that there exists an operator $\widehat T \colon \widehat U \to \widehat\Lambda$
	such that $R_\Lambda \widehat T = T R$, see Figure~\ref{fig:TRcommDiag}.
	If $\widehat \Lambda = \mathcal{V} \subset \widehat U$ (see Lemma~\ref{lem:RLambda}(iii)),
	the operator $\widehat T$ is simply the projection to $\mathcal{V}$ and vanishes on $R^{-1}(\ker(T))$.
\end{remark}

\begin{figure}
\begin{center}
  \begin{tikzpicture}
	  \pgftransformscale{0.6}
		
	  \definecolor{dred}{rgb}{0.8, 0.0, 0.0}
		\definecolor{mdred}{rgb}{1.0, 0.4, 0.4}
		\definecolor{lred}{rgb}{1.0, 0.8, 0.8}
		\definecolor{mlred}{rgb}{1.0, 0.6, 0.6}
		
		\definecolor{dgreen}{rgb}{0.0, 0.4, 0.0}
		\definecolor{mdgreen}{rgb}{0.0, 0.8, 0.0}
		\definecolor{lgreen}{rgb}{0.8, 1.0, 0.8}
		\definecolor{mlgreen}{rgb}{0.6, 1.0, 0.6}
		
		\definecolor{dblue}{rgb}{0.0, 0.0, 0.8}
		\definecolor{mdblue}{rgb}{0.4, 0.4, 1.0}
		\definecolor{lblue}{rgb}{0.8, 0.8, 1.0}
		\definecolor{mlblue}{rgb}{0.6, 0.6, 1.0}
		
		\definecolor{dyellow}{rgb}{0.4, 0.4, 0.0}
		\definecolor{mdyellow}{rgb}{0.8, 0.8, 0.4}
		\definecolor{lyellow}{rgb}{1.0, 1.0, 0.8}
		\definecolor{mlyellow}{rgb}{1.0, 1.0, 0.6}
		
		\definecolor{dgrey}{rgb}{0.4, 0.4, 0.4}
		\definecolor{mdgrey}{rgb}{0.55, 0.55, 0.55}
		\definecolor{lgrey}{rgb}{0.9, 0.9, 0.9}
		\definecolor{mlgrey}{rgb}{0.75, 0.75, 0.75}
				
		
		\draw[line width=0.75pt,color=dgrey,fill=lgrey] (1,0)--(3,0)--(3,2)--(1,2)--(1,0)--(3,0);
		\draw[line width=0.75pt,color=mdgrey,dashed] (1,1)--(3,1);
		\draw[line width=0.75pt,color=mdgrey,dashed] (2,0)--(2,2);
		
		\node at (0.5,1.8) {$\widehat U$};

		\draw[->, >=stealth] (3.5,1)--(5.5,1);
		\node at (4.25,1.5) {$R$};

		\node at (5.5,2) {$U$};


		\draw[line width=1.3pt,color=mdgrey] (1,1-4.5)--(3,1-4.5);
		\draw[line width=1.3pt,color=mdgrey] (2,0-4.5)--(2,2-4.5);

		\node at (0.5,1.8-4.5) {$\widehat \Lambda$};

		\draw[->, >=stealth] (3.5,1-4.5)--(5.5,1-4.5);
		\node at (4.25,1.5-4.5) {$R_\Lambda$};
		

	  \node (G0) at (6,-0.25) {};
	  \node (G1) at (7,-0.25) {};
	  \node (G2) at (7,0.75) {};
	  \node (G3) at (6,0.75) {};
		
		\draw[line width=0.75pt,color=dblue,fill=lblue] (G0.center)--(G1.center)--(G2.center)--(G3.center)--(G0.center)--(G1.center);
		
		\node (H0) at (7.5,-0.25) {};
	  \node (H1) at (8.5,-0.25) {};
	  \node (H2) at (8.5,0.75) {};
	  \node (H3) at (7.5,0.75) {};
		
		\draw[line width=0.75pt,color=dred,fill=lred] (H0.center)--(H1.center)--(H2.center)--(H3.center)--(H0.center)--(H1.center);
		
		\node (I0) at (6,1.25) {};
	  \node (I1) at (7,1.25) {};
	  \node (I2) at (7,2.25) {};
	  \node (I3) at (6,2.25) {};
		
		\draw[line width=0.75pt,color=dyellow,fill=lyellow] (I0.center)--(I1.center)--(I2.center)--(I3.center)--(I0.center)--(I1.center);
		
		\node (J0) at (7.5,1.25) {};
	  \node (J1) at (8.5,1.25) {};
	  \node (J2) at (8.5,2.25) {};
	  \node (J3) at (7.5,2.25) {};
		
		\draw[line width=0.75pt,color=dgreen,fill=lgreen] (J0.center)--(J1.center)--(J2.center)--(J3.center)--(J0.center)--(J1.center);
		
				
		\draw[line width=1.3pt,color=dblue] (6,0.75-4.5)--(7,0.75-4.5)--(7,-0.25-4.5);
		
		\draw[line width=1.3pt,color=dyellow] (6,1.25-4.5)--(7,1.25-4.5)--(7,2.25-4.5);
		
		\draw[line width=1.3pt,color=dred] (7.5,-0.25-4.5)--(7.5,0.75-4.5)--(8.5,0.75-4.5);
		
		\draw[line width=1.3pt,color=dgreen] (7.5,2.25-4.5)--(7.5,1.25-4.5)--(8.5,1.25-4.5);
		
		
		\node at (5.5,-2.25) {$\Lambda$};
		
		\draw[->, >=stealth] (7.25,-0.75)--(7.25,-1.75);
		\node at (7.75,-1.25) {$T$};

		\draw[->, >=stealth] (2,-0.75)--(2,-1.75);
		\node at (2.5,-1.25) {$\widehat T$};
		
  \end{tikzpicture}
	\hspace{2cm}
  \begin{tikzpicture}
	  \pgftransformscale{0.6}
		
	  \definecolor{dred}{rgb}{0.8, 0.0, 0.0}
		\definecolor{mdred}{rgb}{1.0, 0.4, 0.4}
		\definecolor{lred}{rgb}{1.0, 0.8, 0.8}
		\definecolor{mlred}{rgb}{1.0, 0.6, 0.6}
		
		\definecolor{dgreen}{rgb}{0.0, 0.4, 0.0}
		\definecolor{mdgreen}{rgb}{0.0, 0.8, 0.0}
		\definecolor{lgreen}{rgb}{0.8, 1.0, 0.8}
		\definecolor{mlgreen}{rgb}{0.6, 1.0, 0.6}
		
		\definecolor{dblue}{rgb}{0.0, 0.0, 0.8}
		\definecolor{mdblue}{rgb}{0.4, 0.4, 1.0}
		\definecolor{lblue}{rgb}{0.8, 0.8, 1.0}
		\definecolor{mlblue}{rgb}{0.6, 0.6, 1.0}
		
		\definecolor{dyellow}{rgb}{0.4, 0.4, 0.0}
		\definecolor{mdyellow}{rgb}{0.8, 0.8, 0.4}
		\definecolor{lyellow}{rgb}{1.0, 1.0, 0.8}
		\definecolor{mlyellow}{rgb}{1.0, 1.0, 0.6}
		
		\definecolor{dgrey}{rgb}{0.4, 0.4, 0.4}
		\definecolor{mdgrey}{rgb}{0.55, 0.55, 0.55}
		\definecolor{lgrey}{rgb}{0.9, 0.9, 0.9}
		\definecolor{mlgrey}{rgb}{0.75, 0.75, 0.75}
				

		\draw[line width=1.3pt,color=mdgrey] (1-0.2,1-4.5)--(2-0.2,1-4.5);
		\draw[line width=1.3pt,color=mdgrey] (2,0-4.5-0.2)--(2,1-4.5-0.2);
		\draw[line width=1.3pt,color=mdgrey] (2+0.2,1-4.5)--(3+0.2,1-4.5);
		\draw[line width=1.3pt,color=mdgrey] (2,1-4.5+0.2)--(2,2-4.5+0.2);
		\draw[line width=1.3pt,color=mdgrey,fill=mdgrey] (2,1-4.5) circle (0.02);

		\node at (0.5,1.8-4.5) {$\widehat \Lambda$};

		\draw[->, >=stealth] (3.5,1-4.35)--(5.5,1-4.35);
		\node at (4.5,1.5-4.35) {$R_\Lambda$};
		
		\draw[<-, >=stealth] (3.5,1-4.65)--(5.5,1-4.65);
		\node at (4.5,1.5-5.75) {$E_\Lambda$};

				
		\draw[line width=1.3pt,color=dblue] (6-0.2,0.75-4.5)--(7-0.2,0.75-4.5);
		\draw[line width=1.3pt,color=dblue] (7,0.75-4.5-0.2)--(7,-0.25-4.5-0.2);
		\draw[line width=1.3pt,color=dblue,fill=dblue] (7.0, 0.75-4.5) circle (0.02);
		
		\draw[line width=1.3pt,color=dyellow] (6-0.2,1.25-4.5)--(7-0.2,1.25-4.5);
		\draw[line width=1.3pt,color=dyellow] (7,1.25-4.5+0.2)--(7,2.25-4.5+0.2);
		\draw[line width=1.3pt,color=dyellow,fill=dyellow] (7,1.25-4.5) circle (0.02);
		
		\draw[line width=1.3pt,color=dred] (7.5,-0.25-4.5-0.2)--(7.5,0.75-4.5-0.2);
		\draw[line width=1.3pt,color=dred] (7.5+0.2,0.75-4.5)--(8.5+0.2,0.75-4.5);
		\draw[line width=1.3pt,color=dred,fill=dred] (7.5,0.75-4.5) circle (0.02);
		
		\draw[line width=1.3pt,color=dgreen] (7.5,2.25-4.5+0.2)--(7.5,1.25-4.5+0.2);
		\draw[line width=1.3pt,color=dgreen] (7.5+0.2,1.25-4.5)--(8.5+0.2,1.25-4.5);
		\draw[line width=1.3pt,color=dgreen,fill=dgreen] (7.5,1.25-4.5) circle (0.02);
		
		
		\node at (5.75,-2.25) {$\Lambda$};
		

		\draw[line width=1.3pt,color=mdgrey] (1,1)--(3,1);
		\draw[line width=1.3pt,color=mdgrey] (2,0)--(2,2);

		\node at (0.5,1.8) {$\widehat \Lambda$};

		\draw[->, >=stealth] (3.5,1-4.35+4.5)--(5.5,1-4.35+4.5);
		\node at (4.5,1.5-4.35+4.5) {$R_\Lambda$};
		
		\draw[<-, >=stealth] (3.5,1-4.65+4.5)--(5.5,1-4.65+4.5);
		\node at (4.5,1.5-5.75+4.5) {$E_\Lambda$};

				
		\draw[line width=1.3pt,color=dblue] (6,0.75)--(7,0.75)--(7,-0.25);
		
		\draw[line width=1.3pt,color=dyellow] (6,1.25)--(7,1.25)--(7,2.25);
		
		\draw[line width=1.3pt,color=dred] (7.5,-0.25)--(7.5,0.75)--(8.5,0.75);
		
		\draw[line width=1.3pt,color=dgreen] (7.5,2.25)--(7.5,1.25)--(8.5,1.25);
		
		
		\node at (5.75,-2.25+4.5) {$\Lambda$};
		
  \end{tikzpicture}
\caption{\label{fig:TRcommDiag}%
  \emph{Left:} Illustration of Lemma~\ref{lem:RLambda} and Remark~\ref{rem:RLambda} (the diagram commutes).
	\emph{Top right:} Illustration of Lemma~\ref{lem:ELambda}.
	\emph{Bottom right:} Illustration of Sect.~\ref{sect:globLocalImp}.
}
\end{center}
\end{figure}
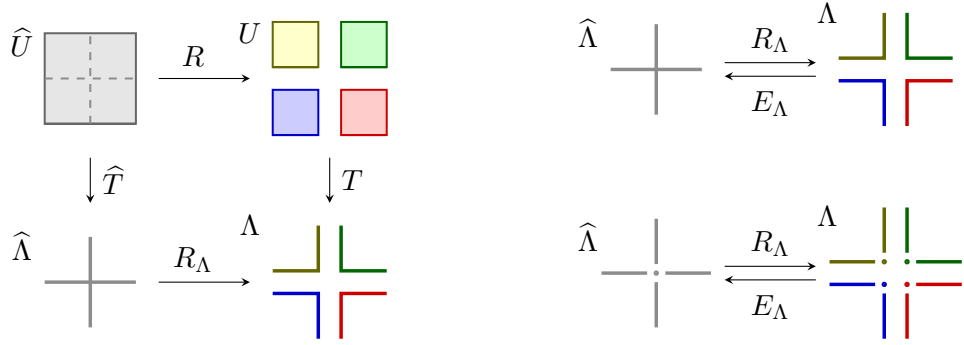

\begin{proposition}
\label{prop:rangeRbyRLambda}
  Let \ref{ass:A1}, \ref{ass:B1} hold. Then
	\[
	  u \in \range(R) \quad \Longleftrightarrow \quad T u \in \range(R_\Lambda).
	\]
\end{proposition}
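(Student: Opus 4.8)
The plan is to reduce the entire statement to property~(ii) of Lemma~\ref{lem:RLambda}, namely $\range(R_\Lambda) = \range(TR)$, and then to invoke Assumption~\ref{ass:B1} together with the fact (established in the proposition just above) that $U_B$ is a closed subspace of $\range(R)$. No regularity or closedness work beyond what these earlier results already supply is needed, so the proof should be short; the only genuine point is to see \emph{why} agreement of traces alone is not enough and where \ref{ass:B1} repairs this.

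First I would dispatch the forward implication, which is immediate. If $u \in \range(R)$, write $u = R\widehat u$ for some $\widehat u \in \widehat U$ (using \ref{ass:A1}). Then $T u = T R \widehat u \in \range(TR) = \range(R_\Lambda)$ by Lemma~\ref{lem:RLambda}(ii). This direction uses nothing more than the definition of $R_\Lambda$.

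For the reverse implication, suppose $T u \in \range(R_\Lambda) = \range(TR)$. Then there exists $\widehat v \in \widehat U$ with $T u = T R \widehat v$. Setting $w := R \widehat v \in \range(R)$, we obtain $T(u - w) = 0$, i.e.\ $u - w \in \ker(T)$. This is precisely the step where Assumption~\ref{ass:B1} is essential: it gives $\ker(T) \subseteq U_B$, and the preceding proposition shows $U_B \subseteq \range(R)$. Hence $u - w \in \range(R)$, and since $w \in \range(R)$ and $\range(R)$ is a linear subspace, $u = w + (u - w) \in \range(R)$, as claimed.

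I expect the main (and only) subtlety to be conceptual rather than computational: knowing merely that $T u$ coincides with the trace $T R \widehat v$ of some reconstructible function does \emph{not} by itself force $u$ into $\range(R)$, because the discrepancy $u - w$ could in principle be an arbitrary element of $\ker(T)$. Assumption~\ref{ass:B1} is exactly the hypothesis that collapses this obstruction, by confining $\ker(T)$ to the bubble space $U_B$, which already lies inside $\range(R)$. I would make sure to flag this inclusion $\ker(T) \subseteq U_B \subseteq \range(R)$ explicitly, since it is the load-bearing part of the argument.
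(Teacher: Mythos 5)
Your proof is correct and follows essentially the same route as the paper's: the forward direction is immediate from Lemma~\ref{lem:RLambda}(ii), and the reverse direction writes $u = R\widehat v + (u - R\widehat v)$ with $u - R\widehat v \in \ker(T) \subseteq U_B \subseteq \range(R)$. Your explicit flagging of the inclusion chain $\ker(T) \subseteq U_B \subseteq \range(R)$ as the load-bearing step is exactly the point the paper's proof relies on.
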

\begin{proof}
  ``$\Longrightarrow$'' follows from Lemma~\ref{lem:RLambda}(ii).\\
	``$\Longleftarrow$'': Let $u \in U$ with $T u \in \range(R_\Lambda) = \range(T R)$ by Lemma~\ref{lem:RLambda}(ii).
	So there exists $\widehat v \in \widehat U$ such that $T u = T R \widehat v$, or equivalently, $u - R \widehat v \in \ker(T) \subseteq U_B$ by Assumption~\ref{ass:B1}.
	Consequently, there exists $v_B \in U_B$ such that $u = R \widehat v + v_B$.
	Since both $R \widehat v$ and $v_B$ are contained in $\range(R)$, the proof is concluded.
\end{proof}

\begin{proposition}
\label{prop:rangeTR}
  Let \ref{ass:A1}, \ref{ass:B1}, and \ref{ass:B2} hold. Then $\range(T R)$ is closed.
\end{proposition}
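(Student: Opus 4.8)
The target is to show that $\range(TR)$ is closed, and the first move is to rewrite it as $\range(TR)=T(\range(R))$, since $\range(TR)=T(R(\widehat U))=T(\range(R))$. This reframes the task: I must show that the image of the \emph{closed} subspace $\range(R)\subseteq U$ (closed by \ref{ass:A1}) under $T$ is again closed. Three facts will be combined. From \ref{ass:A1}, $\range(R)$ is closed. From \ref{ass:B1} together with the preceding proposition ($U_B$ is a closed subspace of $\range(R)$) we get $\ker(T)\subseteq U_B\subseteq\range(R)$. From \ref{ass:B2}, $T$ is surjective, so that $\ker(T)$ is the kernel of a bounded \emph{surjection} between Hilbert spaces.

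The key construction is a bounded right inverse of $T$. Since $\ker(T)$ is closed, write $U=\ker(T)\oplus\ker(T)^{\perp}$; the restriction $T|_{\ker(T)^{\perp}}\colon \ker(T)^{\perp}\to\Lambda$ is a continuous bijection (injective by construction, surjective by \ref{ass:B2}), hence a topological isomorphism by the bounded inverse theorem. Its inverse, extended, yields a bounded operator $E\colon\Lambda\to U$ with $TE=I$. Then $P:=I-ET$ is bounded and satisfies $TP=T-TET=T-T=0$, so that $P$ maps all of $U$ into $\ker(T)$.

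With $E$ in hand I would argue by sequences. Suppose $\lambda_n=TR\widehat u_n\to\lambda$ in $\Lambda$. I decompose
\[
  E\lambda_n=ETR\widehat u_n=R\widehat u_n-(I-ET)R\widehat u_n=R\widehat u_n-PR\widehat u_n.
\]
Here $R\widehat u_n\in\range(R)$, while $PR\widehat u_n\in\ker(T)\subseteq\range(R)$ by \ref{ass:B1}; therefore $E\lambda_n\in\range(R)$ for every $n$. Since $E$ is bounded, $E\lambda_n\to E\lambda$, and because $\range(R)$ is closed we conclude $E\lambda\in\range(R)$. Applying $T$ and using $TE=I$ gives $\lambda=T(E\lambda)\in T(\range(R))=\range(TR)$. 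As the limit of an arbitrary convergent sequence in $\range(TR)$ lies in $\range(TR)$, the range is closed.

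The step I expect to carry the real weight is the production of the bounded section $E$ (equivalently, recognizing that surjectivity upgrades $T|_{\ker(T)^{\perp}}$ to a \emph{bounded-below} isomorphism via the open mapping / bounded inverse theorem); without \ref{ass:B2} the injective operator $T|_{\ker(T)^{\perp}}$ need not have closed range, and the argument collapses. The other delicate point, which is precisely where \ref{ass:B1} enters, is that the correction term $PR\widehat u_n$ lands in $\ker(T)$ and hence \emph{inside} $\range(R)$, so that $E\lambda_n$ stays in the closed set $\range(R)$. The same conclusion can be phrased abstractly: $\ker(T)\subseteq\range(R)$ guarantees that the image of $\range(R)$ under the quotient map $U\to U/\ker(T)$ is closed, and \ref{ass:B2} makes the induced map $U/\ker(T)\to\Lambda$ a topological isomorphism, which carries this closed set to $\range(TR)$; the sequence argument above is just the concrete realization of this, and it connects directly to Proposition~\ref{prop:rangeRbyRLambda} and Lemma~\ref{lem:RLambda}, where $\range(TR)=\range(R_\Lambda)$.
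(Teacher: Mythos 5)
Your proof is correct and follows essentially the same route as the paper: the paper also constructs a bounded right inverse $T^\dag$ of $T$ (using that $\range(T)=\Lambda$ is closed by \ref{ass:B2}), applies it to a convergent sequence $TR\widehat u^{(k)}$, observes that $T^\dag TR\widehat u^{(k)} - R\widehat u^{(k)}\in\ker(T)\subseteq U_B\subseteq\range(R)$ by \ref{ass:B1}, and concludes via closedness of $\range(R)$ from \ref{ass:A1}. Your operator $E$ and correction $P=I-ET$ are just an explicit realization of that same argument.
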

\begin{proof}
  Due to \ref{ass:B2}, $\range(T)$ is closed, so there exists a bounded right-inverse
	\[
	  T^\dag \colon \range(T) \to U, \qquad		T T^\dag T = T.
	\]
	Let $(\widehat u^{(k)})$ be an arbitrary but fixed sequence in $\widehat U$
	with the property that $T R \widehat u^{(k)} \stackrel{k \to \infty}{\longrightarrow} \lambda^{(\infty)} \in \Lambda$.
	Since $\range(T)$ is closed, also $\lambda^{(\infty)} \in \range(T)$. We apply $T^\dag$:
	\[
	  \underbrace{T^\dag T R \widehat u^{(k)}}_{=: w^{(k)}} \stackrel{k \to \infty}{\longrightarrow} \underbrace{T^\dag \lambda^{(\infty)}}_{=: w^{(\infty)}} \in U.
	\]
	Since $T w^{(k)} = T R \widehat u^{(k)}$ or equivalently, $w^{(k)} - R \widehat u^{(k)} \in \ker(T)$,
	we can conclude from \ref{ass:B1} that $w^{(k)} = R \widehat u^{(k)} + w_B^{(k)}$ for some bubble function $w_B^{(k)} \in U_B \subseteq \range(R)$.
	Therefore, $w^{(k)} \in \range(R)$.
	Since $(w^{(k)})$ converges and $\range(R)$ is closed (due to~\ref{ass:A1}), it follows that $w^{(\infty)} \in \range(R)$.
	Observing that $\lambda^{(\infty)} = T w^{(\infty)}$, we conclude that $\lambda^{(\infty)} \in \range(T R)$.
	Summarizing, the limit of the arbitrary sequence $T R \widehat u^{(k)}$ is again in $\range(T R)$, so $\range(T R)$ must be closed.
\end{proof}

\begin{lemma}
\label{lem:ELambda}
  Let \ref{ass:A1}, \ref{ass:B1}, and \ref{ass:B2} hold. Then
	\begin{enumerate}
	\item[(i)] $\range(R_\Lambda)$ is closed,
	\item[(ii)] there exists a bounded linear, real-valued operator $E_\Lambda \colon \Lambda \to \widehat \Lambda$ such that $E_\Lambda R_\Lambda = I$,
	\item[(iii)] for any operator $E_\Lambda$ with the properties of (ii),
	  the reflection operator $\mathcal{X} := 2 R_\Lambda E_\Lambda - I$ fulfills Assumption~\ref{ass:A2}.
	\end{enumerate}
\end{lemma}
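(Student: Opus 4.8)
The plan is to dispatch the three parts in order, leaning heavily on the structural results already established for $R_\Lambda$ and $\range(TR)$. For part~(i) there is essentially nothing to prove beyond citation: by Lemma~\ref{lem:RLambda}(ii) one has $\range(R_\Lambda) = \range(TR)$, and Proposition~\ref{prop:rangeTR} guarantees (under \ref{ass:A1}, \ref{ass:B1}, \ref{ass:B2}) that $\range(TR)$ is closed. Hence $\range(R_\Lambda)$ is closed.

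For part~(ii) I would argue as follows. By Lemma~\ref{lem:RLambda}(i) the operator $R_\Lambda$ is injective, and by part~(i) its range is a closed subspace of the Hilbert space $\Lambda$. Thus $R_\Lambda$ is a continuous bijection from $\widehat\Lambda$ onto $\range(R_\Lambda)$, and the bounded inverse theorem supplies a bounded inverse $R_\Lambda^{-1} \colon \range(R_\Lambda) \to \widehat\Lambda$. Let $P \colon \Lambda \to \range(R_\Lambda)$ denote the orthogonal projection onto the closed subspace $\range(R_\Lambda)$, and set $E_\Lambda := R_\Lambda^{-1} P$. Then $E_\Lambda$ is bounded, and since $P$ fixes every element of $\range(R_\Lambda)$ we have $P R_\Lambda x = R_\Lambda x$, whence $E_\Lambda R_\Lambda x = R_\Lambda^{-1} R_\Lambda x = x$, i.e.\ $E_\Lambda R_\Lambda = I$. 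To secure the real-valuedness in the complexified case I would, exactly as in the proof of Lemma~\ref{lem:RLambda}, carry out this construction first in the underlying real Hilbert spaces, where $R_\Lambda$ is real by Lemma~\ref{lem:RLambda}(iv), and then complexify $E_\Lambda$; alternatively one observes that $\range(R_\Lambda)$ is a complexified subspace, so the orthogonal projection $P$ and the real inverse $R_\Lambda^{-1}$ compose to a real-valued operator.

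For part~(iii) fix any $E_\Lambda$ with $E_\Lambda R_\Lambda = I$ and put $P_\Lambda := R_\Lambda E_\Lambda$. The key observation is that $P_\Lambda$ is a projection, since $P_\Lambda^2 = R_\Lambda (E_\Lambda R_\Lambda) E_\Lambda = R_\Lambda E_\Lambda = P_\Lambda$, and that $\range(P_\Lambda) = \range(R_\Lambda)$, because $E_\Lambda R_\Lambda = I$ forces $E_\Lambda$ to be surjective onto $\widehat\Lambda$. By Remark~\ref{rem:XP} the reflection $\mathcal{X} = 2 P_\Lambda - I$ is then automatically an involution, $\mathcal{X}^2 = I$, which is property~(i) of \ref{ass:A2}. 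For property~(ii) I would use $I - \mathcal{X} = 2(I - P_\Lambda)$ to reduce to $\ker((I-\mathcal{X})T) = \ker((I - P_\Lambda)T)$, and then note that $(I - P_\Lambda) T u = 0$ is equivalent to $T u \in \range(P_\Lambda) = \range(R_\Lambda)$. Proposition~\ref{prop:rangeRbyRLambda} converts this last condition into $u \in \range(R)$, so $\ker((I-\mathcal{X})T) = \range(R)$, which is precisely property~(ii) of \ref{ass:A2}.

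I do not expect a genuine obstacle: all the analytic content --- closedness of $\range(TR)$ and the characterization $T u \in \range(R_\Lambda) \Leftrightarrow u \in \range(R)$ --- has been front-loaded into the preceding propositions, so part~(iii) becomes pure bookkeeping with projections. The only points demanding any care are the routine appeal to the open mapping theorem for the boundedness of $R_\Lambda^{-1}$, and keeping $E_\Lambda$ real-valued in the complexified setting; since $R_\Lambda$ is real, the resulting $\mathcal{X} = 2 R_\Lambda E_\Lambda - I$ is real as well, which incidentally also yields \ref{ass:AXR} at no extra cost.
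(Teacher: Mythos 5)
Your proof is correct and follows essentially the same route as the paper: part (i) by citing Lemma~\ref{lem:RLambda} and Proposition~\ref{prop:rangeTR}, part (ii) by combining injectivity with closedness of the range to obtain a bounded left inverse (the paper merely asserts existence of a pseudo-inverse with $R_\Lambda E_\Lambda R_\Lambda = R_\Lambda$, while you exhibit one explicitly as $R_\Lambda^{-1}P$ with $P$ the orthogonal projection), and part (iii) by checking that $R_\Lambda E_\Lambda$ is a projection with range $\range(R_\Lambda)$ and invoking Proposition~\ref{prop:rangeRbyRLambda}. Your extra care about real-valuedness in the complexified case is a welcome addition that the paper's proof leaves implicit.
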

\begin{proof}
  Part~(i): Due to Lemma~\ref{lem:RLambda}, $\range(R_\Lambda) = \range(T R)$, which is closed (Proposition~\ref{prop:rangeTR}).\\
	Part~(ii):
	Since $\range(R_\Lambda)$ is closed, there exists a pseudo-inverse $E_\Lambda \colon \Lambda \to \widehat\Lambda$
	(one out of many) such that $R_\Lambda E_\Lambda R_\Lambda = R_\Lambda$.
	Since $R_\Lambda$ is injective, also $E_\Lambda R_\Lambda = I$.\\
	Part~(iii): From~(ii), we find that $P:= R_\Lambda E_\Lambda \colon \Lambda \to \Lambda$ is a \emph{projection} (i.e., $P^2 = P$)
	and that $\range(P) = \range(R_\Lambda)$.
	The operator $\mathcal{X} := 2 P - I$ is the \emph{reflection} corresponding to $P$, and one checks easily that $\mathcal{X}^2 = I$.
	Moreover,
	\[
	  \ker\big(\tfrac{1}{2}(I - \mathcal{X})\big) = \ker(I - P) = \range(P) = \range(R_\Lambda),
	\]
	and so also $\ker((I - \mathcal{X})) = \range(R_\Lambda)$.
	Finally, Proposition~\ref{prop:rangeRbyRLambda} leads to the conclusion that
	\[
	  u \in \range(R) \quad \Longleftrightarrow \quad T u \in \ker(I - \mathcal{X}),
	\]
	which is another way of expressing that $\range(R) = \ker((I - \mathcal{X})T)$.
\end{proof}

Recall that in Sect.~\ref{sect:convGeneral} we assumed \ref{ass:A4}, i.e., $\mathcal{X}^\top M \mathcal{X} = M$,
which means a certain restriction on $M$ and $\mathcal{X}$.
The following lemma provides a construction of an operator $\mathcal{X}$ \emph{depending} on $M$.

\begin{lemma}
\label{lem:ELambdaM}
  Let \ref{ass:A1}, \ref{ass:B1}, \ref{ass:B2}, and \ref{ass:A6} hold.
	Then $E_\Lambda \colon \Lambda \to \widehat\Lambda$ given by
	\[
	  E_\Lambda = (R_\Lambda^\top M R_\Lambda)^{-1} R_\Lambda^\top M
	\]
	is well-defined and the operator $\mathcal{X} := 2 R_\Lambda E_\Lambda - I$ fulfills Assumptions~\ref{ass:A2}
	and~\ref{ass:A4}.
	Moreover, $\mathcal{X}$ is real-valued, i.e., Assumption~\ref{ass:AXR} holds.
\end{lemma}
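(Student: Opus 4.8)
The plan is to verify the three claims in the stated order, concentrating the real work on Assumption~\ref{ass:A4}. First I would establish that $E_\Lambda$ is well-defined, i.e. that the Gram-type operator $R_\Lambda^\top M R_\Lambda \colon \widehat\Lambda \to \widehat\Lambda^*$ is boundedly invertible. For $\widehat\lambda \in \widehat\Lambda$, using the definition of the transpose and the fact that $R_\Lambda$ is real-valued (Lemma~\ref{lem:RLambda}(iv)), one computes
\[
  \langle R_\Lambda^\top M R_\Lambda \widehat\lambda, \overline{\widehat\lambda} \rangle
  = \langle M R_\Lambda \widehat\lambda, \overline{R_\Lambda \widehat\lambda} \rangle
  = \| R_\Lambda \widehat\lambda \|_M^2 .
\]
Since $R_\Lambda$ is injective with closed range (Lemma~\ref{lem:RLambda}(i), Lemma~\ref{lem:ELambda}(i)), it is bounded below, and since $M$ is positively bounded from below (Assumption~\ref{ass:A6}), the right-hand side dominates $c \|\widehat\lambda\|_{\widehat\Lambda}^2$ for some $c>0$. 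Thus $R_\Lambda^\top M R_\Lambda$ is symmetric and coercive, hence invertible. A direct computation gives $E_\Lambda R_\Lambda = (R_\Lambda^\top M R_\Lambda)^{-1} R_\Lambda^\top M R_\Lambda = I$, so $E_\Lambda$ is an admissible choice in Lemma~\ref{lem:ELambda}(ii); consequently Lemma~\ref{lem:ELambda}(iii) already delivers Assumption~\ref{ass:A2} for $\mathcal{X} = 2 R_\Lambda E_\Lambda - I$ with no extra argument.

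Real-valuedness (Assumption~\ref{ass:AXR}) is immediate: $R_\Lambda$ is real-valued (Lemma~\ref{lem:RLambda}(iv)) and $M$ is real-valued (Assumption~\ref{ass:A6}), hence so are $E_\Lambda$ and $\mathcal{X} = 2 R_\Lambda E_\Lambda - I$.

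The central step is Assumption~\ref{ass:A4}, $\mathcal{X}^\top M \mathcal{X} = M$. The key observation is that $P := R_\Lambda E_\Lambda = R_\Lambda (R_\Lambda^\top M R_\Lambda)^{-1} R_\Lambda^\top M$ is exactly the $M$-orthogonal projection onto $\range(R_\Lambda)$, so that $\mathcal{X}$ is the associated $M$-orthogonal reflection and therefore an $M$-isometry; the identity $\mathcal{X}^\top M \mathcal{X} = M$ is merely the operator form of this isometry (which uses that $\mathcal{X}$ is real-valued). Concretely, I would establish the $M$-self-adjointness relation $M P = P^\top M$. Using $M^\top = M$ together with the symmetry of $R_\Lambda^\top M R_\Lambda$ (and hence of its inverse), one gets
\[
  P^\top M
  = E_\Lambda^\top R_\Lambda^\top M
  = M R_\Lambda (R_\Lambda^\top M R_\Lambda)^{-1} R_\Lambda^\top M
  = M P .
\]

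With $MP = P^\top M$ and $P^2 = P$ (from Lemma~\ref{lem:ELambda}(iii)) in hand, the result follows by expansion:
\[
  \mathcal{X}^\top M \mathcal{X}
  = (2P^\top - I) M (2P - I)
  = 4 P^\top M P - 2 P^\top M - 2 M P + M .
\]
Substituting $P^\top M = MP$ yields $P^\top M P = M P^2 = MP$, so the first three terms collapse to $4MP - 2MP - 2MP = 0$ and $\mathcal{X}^\top M \mathcal{X} = M$. The only genuinely nontrivial point is the self-adjointness relation $MP = P^\top M$, which rests entirely on the symmetry of $M$; everything else is routine algebra once that relation is secured.
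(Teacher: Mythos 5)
Your proposal is correct and follows essentially the same route as the paper: coercivity of $R_\Lambda^\top M R_\Lambda$ for well-definedness, Lemma~\ref{lem:ELambda} for Assumption~\ref{ass:A2}, and the $M$-self-adjointness identity $P^\top M = M P$ (equivalently $E_\Lambda^\top R_\Lambda^\top M = M R_\Lambda E_\Lambda$) as the key step for Assumption~\ref{ass:A4}. The only cosmetic difference is that the paper deduces $\mathcal{X}^\top M = M\mathcal{X}$ and then right-multiplies by $\mathcal{X}$ using $\mathcal{X}^2 = I$, whereas you expand $(2P^\top - I)M(2P - I)$ directly; these are the same computation.
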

\begin{proof}
  Firstly, $(R_\Lambda^\top M R_\Lambda) \colon \widehat\Lambda \to \widehat\Lambda^*$
	is easily seen to be real-valued, symmetric, and positively bounded from below, which is why it has a bounded inverse.
	So, $E_\Lambda$ is well-defined and real-valued. By construction, $E_\Lambda R_\Lambda = I$.
	Secondly, the operator $\mathcal{X}$ as defined above is real-valued and fulfills Assumption~\ref{ass:A2}, see Lemma~\ref{lem:ELambda}.
	Lastly, using the symmetry of $M$, one easily verifies the identity $E_\Lambda^\top R_\Lambda^\top M = M R_\Lambda E_\Lambda$,
	which implies $\mathcal{X}^\top M = M \mathcal{X}$.
	The proof is concluded by applying $\mathcal{X}$ from the right.
\end{proof}

\begin{remark}
  For $E_\Lambda$, $\mathcal{X}$ given as in Lemma~\ref{lem:ELambdaM},
	$P := R_\Lambda E_\Lambda = \tfrac{1}{2}(I + \mathcal{X})$
	is the $M$-orthogonal projector to $\range(R_\Lambda) \subseteq \Lambda$,
	cf.~\cite[Sect.~4]{ClaeysParolin:Preprint2020}.
\end{remark}

\begin{proposition}
\label{prop:XLambdaM}
  Let \ref{ass:A1}, \ref{ass:B1}, \ref{ass:B2}, and \ref{ass:A6} hold. Moreover, let $\mathcal{X} \colon \Lambda \to \Lambda$
	be an operator fulfilling \ref{ass:A2} and \ref{ass:A4}.
	Then $\mathcal{X} = 2 R_\Lambda E_\Lambda - I$ with $E_\Lambda$ from Lemma~\ref{lem:ELambdaM}.
\end{proposition}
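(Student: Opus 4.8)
The plan is to identify the given $\mathcal{X}$ with the specific reflection $\mathcal{X}_0 := 2 R_\Lambda E_\Lambda - I$ of Lemma~\ref{lem:ELambdaM} by showing that the two associated projections $P := \tfrac{1}{2}(I+\mathcal{X})$ and $P_0 := R_\Lambda E_\Lambda = \tfrac{1}{2}(I+\mathcal{X}_0)$ coincide. As in the proof of Lemma~\ref{lem:RLambda}, I would reduce to the real case first and complexify at the very end. By Lemma~\ref{lem:X}, $P$ is a projection with $\range(P) = \ker(I-\mathcal{X})$, and by the remark following Lemma~\ref{lem:ELambdaM}, $P_0$ is the $M$-orthogonal projection onto $\range(R_\Lambda)$. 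It therefore suffices to prove that $P$ has the same range and the same ($M$-orthogonal) complement as $P_0$.

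First I would pin down $\range(P)$. Combining Assumption~\ref{ass:A2}(ii), which reads $\range(R) = \ker((I-\mathcal{X})T)$, with Proposition~\ref{prop:rangeRbyRLambda}, which gives $u \in \range(R) \Leftrightarrow Tu \in \range(R_\Lambda)$, I obtain $Tu \in \ker(I-\mathcal{X}) \Leftrightarrow Tu \in \range(R_\Lambda)$ for every $u \in U$. Since $T$ is surjective by Assumption~\ref{ass:B2}, the trace $Tu$ ranges over all of $\Lambda$, and hence $\range(P) = \ker(I-\mathcal{X}) = \range(R_\Lambda) = \range(P_0)$.

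Next I would exploit Assumption~\ref{ass:A4}. From $\mathcal{X}^\top M \mathcal{X} = M$ together with $\mathcal{X}^2 = I$ (Assumption~\ref{ass:A2}(i)) one derives $\mathcal{X}^\top M = M\mathcal{X}$, and therefore $P^\top M = \tfrac{1}{2}(M + \mathcal{X}^\top M) = \tfrac{1}{2}(M + M\mathcal{X}) = MP$, i.e.\ $P$ is $M$-self-adjoint; the analogous identity $P_0^\top M = MP_0$ holds by construction. Setting $Q := P - P_0$, both projections map into and restrict to the identity on the common range $\range(R_\Lambda)$, so $\range(Q) \subseteq \range(R_\Lambda) \subseteq \ker(Q)$, which gives $Q^2 = 0$, while $MQ = Q^\top M$. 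Then $\langle M Q\lambda, Q\lambda\rangle = \langle M\lambda, Q^2\lambda\rangle = 0$ for all $\lambda$, and the positive-definiteness of $M$ from Assumption~\ref{ass:A6} forces $Q = 0$. Hence $P = P_0$ and $\mathcal{X} = 2P - I = \mathcal{X}_0$, after which I complexify.

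I expect the main obstacle to be the first step, the identity $\ker(I-\mathcal{X}) = \range(R_\Lambda)$: this is exactly the point where all three structural hypotheses (Assumption~\ref{ass:A2}(ii), Proposition~\ref{prop:rangeRbyRLambda}, and the surjectivity~\ref{ass:B2}) must be combined to transport the range condition from the broken space $U$ to the trace space $\Lambda$. The self-adjointness computation and the concluding uniqueness argument are then routine; the only point requiring care is the passage between the bilinear pairing $\langle M\,\cdot\,,\cdot\rangle$ and genuine $M$-orthogonality, which is why I would carry out the core of the argument over the reals and complexify only afterwards.
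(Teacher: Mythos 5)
Your first step is identical to the paper's: both proofs combine Assumption~\ref{ass:A2}(ii), Proposition~\ref{prop:rangeRbyRLambda}, and the surjectivity $\range(T)=\Lambda$ to conclude $\ker(I-\mathcal{X})=\range(R_\Lambda)$. After that the routes diverge. The paper argues computationally: from $\mathcal{X}R_\Lambda=R_\Lambda$ and $\mathcal{X}^\top M\mathcal{X}=M$ it gets $R_\Lambda^\top M\mathcal{X}=R_\Lambda^\top M$, writes $(I+\mathcal{X})\lambda=R_\Lambda\widehat\lambda$ using $\range(I+\mathcal{X})=\range(R_\Lambda)$, and solves $R_\Lambda^\top MR_\Lambda\widehat\lambda=2R_\Lambda^\top M\lambda$ to read off $\mathcal{X}\lambda=(2R_\Lambda E_\Lambda-I)\lambda$ directly. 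You instead prove a uniqueness statement for $M$-self-adjoint projections with prescribed range: $P=\tfrac12(I+\mathcal{X})$ and $P_0=R_\Lambda E_\Lambda$ have the same range, both satisfy $P^\top M=MP$, and their difference $Q$ is nilpotent ($Q^2=0$) and $M$-self-adjoint, hence zero. This is a valid and arguably more structural argument; the paper's version is shorter because it never needs the identity $\mathcal{X}^\top M=M\mathcal{X}$, only its composition with $R_\Lambda^\top$.

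There is, however, one point in your plan that does not work as stated: the reduction to the real case. The proposition does \emph{not} assume \ref{ass:AXR}, so the given $\mathcal{X}$ (and hence $P$ and $Q$) need not preserve the real subspace of $\Lambda$, and you cannot restrict the argument to it and complexify afterwards --- indeed, the real-valuedness of $\mathcal{X}$ is part of the \emph{conclusion}, since $2R_\Lambda E_\Lambda-I$ is real-valued by Lemma~\ref{lem:ELambdaM}. The repair is small but must be done in the complex setting: Assumption~\ref{ass:A6} gives positivity of $\langle M\mu,\overline\mu\rangle$, so you need $\langle MQ\lambda,\overline{Q\lambda}\rangle=\langle M\lambda,Q\overline{Q\lambda}\rangle=0$, which requires $\overline{Q\lambda}\in\ker(Q)$ rather than $Q^2=0$. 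This holds because $R_\Lambda$ is real-valued (Lemma~\ref{lem:RLambda}(iv)), so $\range(R_\Lambda)$ is invariant under conjugation, and $\overline{Q\lambda}\in\range(R_\Lambda)\subseteq\ker(Q)$. With that substitution your argument closes; without it, the final display $\langle MQ\lambda,Q\lambda\rangle=0$ does not by itself imply $Q\lambda=0$, since the bilinear form $\langle M\cdot,\cdot\rangle$ is not definite on a complex space.
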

\begin{proof}
  Due to Assumption~\ref{ass:A2} and Proposition~\ref{prop:rangeRbyRLambda},
	\[
	  u \in \range(R) \quad \Longleftrightarrow \quad (I - \mathcal{X})T u = 0 \quad \Longleftrightarrow \quad T u \in \range(R_\Lambda).
	\]
	Since $\range(T) = \Lambda$, it follows that $\range(R_\Lambda) = \ker(I - \mathcal{X})$.
	As an immediate consequence, $\mathcal{X} R_\Lambda = R_\Lambda$.
	Applying $R_\Lambda^\top$ to the identity $\mathcal{X}^\top M \mathcal{X} = M$ of \ref{ass:A4} yields
	\[
	  R_\Lambda^\top M \mathcal{X} = R_\Lambda^\top M.
	\]
	Lemma~\ref{lem:X} implies that $\range(I + \mathcal{X}) = \range(R_\Lambda)$.
	Let $\lambda \in \Lambda$ be arbitrary but fixed. Then there exists $\widehat \lambda \in \widehat\Lambda$
	such that $(I + \mathcal{X}) \lambda = R_\Lambda \widehat\lambda$.
	Combination with the above identity yields
	\[
	  R_\Lambda^\top M \underbrace{(I + \mathcal{X}) \lambda}_{R_\Lambda \widehat\lambda} = 2 R_\Lambda^\top M \lambda.
	\]
  Due to \ref{ass:A6} and the fact that $\range(R_\Lambda)$ is closed, the operator $R_\Lambda^\top M R_\Lambda$
	has a bounded inverse and so
	\[
	  \widehat\lambda = 2 (R_\Lambda^\top M \lambda R_\Lambda)^{-1} R_\Lambda^\top M \lambda = 2 E_\Lambda \lambda.
	\]
	Finally, $\mathcal{X}\lambda = (I + \mathcal{X})\lambda - \lambda = R_\lambda \widehat\lambda - \lambda
	= (2 R_\Lambda E_\Lambda - I)\lambda$.
\end{proof}

\begin{remark}
\label{rem:ELambdaM}
  The statements of Lemma~\ref{lem:ELambdaM} and Proposition~\ref{prop:XLambdaM} also hold
	if we replace \ref{ass:A6} by the weaker set of assumptions that
	(i) $M^\top = M$ and (ii) $R_\Lambda^\top M R_\Lambda$ has a bounded inverse.
	But then, in Lemma~\ref{lem:ELambdaM} $\mathcal{X}$ is no more guaranteed to be real-valued and $P = R_\Lambda E_\Lambda$
	is not necessarily an orthogonal projection either.
\end{remark}

\begin{remark}
\label{rem:HermitianImp}
  The convergence analysis from Sect.~\ref{sect:convergence} (in particular Thm.~\ref{thm:convGeneral}, Thm.~\ref{thm:convSchwarzSurjTrace})
	can without much effort also be generalized to the case of Hermitian impedance operators $M = M^\herm$.
	For this purpose, one has to work with the slightly adapted interface flux formulation
	\begin{align}
	\label{eq:tracefluxH}
	\begin{aligned}
		\text{find } (u, \tau) \in U \times \Lambda^* \colon \quad
		A u - T^\top \tau & = f,\\
		(I - \mathcal{X}) T u & = 0,\\
		(I + \mathcal{X}^\herm) \tau & = 0,
	\end{aligned}
	\end{align}
	instead of \eqref{eq:traceflux}. Assumption~\ref{ass:A3} has to be replaced by the assumption that $M + \mathcal{X}^\herm M \mathcal{X}$
	has a bounded inverse (see also Remark~\ref{rem:RobinH}),
	then the interface impedance trace formulation~\eqref{eq:waveRobinGeneral} takes the form
  \begin{align}
  \label{eq:waveRobinGeneralH}
    \text{find } (u, \lambda) \in U \times \Lambda^* \colon \quad
	  \begin{bmatrix} (A + \alpha T^\top M T) & - T^\top \\ -\alpha \mathcal{X}^\herm (M + \mathcal{X}^\herm M \mathcal{X}) T & (I + \mathcal{X}^\herm) \end{bmatrix}
	  \begin{bmatrix} u \\ \lambda \end{bmatrix}
	  = \begin{bmatrix} f \\ 0 \end{bmatrix}.
  \end{align}
	Assumption~\ref{ass:A4} has to be replaced by the assumption that $\mathcal{X}^\herm M \mathcal{X} = M$,
	Assumption~\ref{ass:A6} can be weakened to requiring $M_i$ only needs Hermitian ($M_i^\herm = M_i$) and bounded positively from below
	(still constituting a norm), and Assumption~\ref{ass:AXR} can be dropped.
	In that case, the exchange operator is an isometry by construction.
	Moreover, if $\mathcal{X}$ is constructed as in Lemma~\ref{lem:ELambdaM},
	i.e., $\mathcal{X} = 2 R_\Lambda (R_\Lambda^\top M R_\Lambda)^{-1} R_\Lambda^\top M$,
	then the identity $\mathcal{X}^\herm M \mathcal{X} = M$ holds. The exchange operator, however, is in general not real-valued anymore.
	Note that for standard wave propagation problems, the (complex-valued) operator $A$ is \emph{symmetric},
	which means that the augmented operator $(A + \complexi \alpha T^\top M T)$, being a combination of a symmetric and a Hermitian operator,
	looses such structural property.
	A treatment of even more general impedance operators (operators with positive definite Hermitian part) can be found in \cite{Claeys:2021Preprint}.
\end{remark}

\subsection{Local and quasi-local impedance operators}
\label{sect:globLocImp}

The definition of $E_\Lambda$ in Lemma~\ref{lem:ELambdaM}
rewrites as $E_\lambda \lambda = \widehat\lambda \in \widehat\Lambda$,
where
\begin{align}
\label{eq:globalInterfaceProblem}
  (R_\Lambda^\top M R_\Lambda) \widehat \lambda = R_\Lambda^\top M \lambda,
\end{align}
i.e., the application of $E_\Lambda$ requires the solution of an coercive problem on the continuous interface space $\widehat\Lambda$.
Unless the decomposition is free of cross points (such that the geometric interface splits into individual components not touching each other),
this is a \emph{global} problem.
Still, one can say that one has reduced the original non-coercive wave propagation problem to a sequence of local wave problems and a global coercive problem.
But from an algorithmic point of view, (i)~this global problem must be solved in each step of the Schwarz iteration and
(ii)~if \eqref{eq:globalInterfaceProblem} is not solved exactly, a refined convergence analysis would actually be necessary.
Fortunately, in the discrete case this drawback can be overcome using local or quasi-local impedance operators.

\subsubsection{Glob-local impedance operators}
\label{sect:globLocalImp}

Let the assumptions of Sect.~\ref{sect:discreteMatrixNotation}
hold and let $\Lambda$ and $T$ be constructed via the glob system, see Sect.~\ref{sect:globs}.
Assume furthermore that each $M_i \colon \Lambda_i \to \Lambda_i^*$ in Assumption~\ref{ass:A6} has the form
\begin{align}
  (M_i \lambda)_{iG} = M_{iG} \lambda_{iG}\,,
\end{align}
i.e., $M_i$ is \emph{block-diagonal} with respect to the glob partition.
Then, one can show that $E_\Lambda$ from Lemma~\ref{lem:ELambdaM} has the form
\begin{align}
\label{eq:ELambdaFormulaGlob}
  (E_\Lambda \lambda)_{G} = \widehat M_G^{-1} \sum_{i \in \mathcal{N}_G} M_{iG} \lambda_{iG}\,,
	\quad \text{where } \widehat M_G := \sum_{j \in \mathcal{N}_G} M_{jG}\,,
\end{align}
where we use the convention that $\widehat\Lambda = \productspace_{G \in \mathcal{G}} U_G$ and
$\widehat\lambda_G$ denotes the component of $\widehat\lambda \in \widehat\Lambda$ corresponding to glob $G$,
see also Figure~\ref{fig:TRcommDiag}.
With the definition from Remark~\ref{rem:ED}, $R_\Lambda E_\Lambda = E_D$ with weight matrices $D_{jG} = \widehat M_G^{-1} M_{jG}$.

Apparently, making the matrices $\widehat M_G^{-1}$ available is quite an affordable operation
because it requires next neighbor communication only. Similar procedures are used in FETI-DP and BDDC methods with \emph{deluxe scaling}
\cite{BeiraoDaVeigaPavarinoScacchiWidlundZampini:2014a,DohrmannWidlund:2016a,PechsteinDohrmann:2017a}.
The investigation of Robin-Schwarz methods with glob-local impedance operators is yet a topic of future research,
in particular their performance in practice and their convergence analysis with respect to the discretization parameter.
Clearly, enforcing $M_i$ to be block-diagonal with respect to the glob partition comes at the price that this operator
has no continuous counterpart anymore, so the convergence rate is likely to be no more independent of the discretization parameter.
The goal, of course, would be an impedance operator with a rate depending only very mildly on the discretization parameter.

In the following cases, the operator $E_\Lambda$ becomes fully local.
If for every glob $G$, the impedance operators have the special form
\[
  M_{iG} = k_{iG} \check M_G\,,
\]
with scalars $k_{iG}$ and an operator $\check M_G$ independent of the subdomain index, then formula~\eqref{eq:ELambdaFormulaGlob} simplifies to
\begin{align}
  (E_\Lambda \lambda)_{G} = \sum_{i \in \mathcal{N}_G} \frac{k_{iG}}{\sum_{j \in \mathcal{N}_G} k_{jG}} \lambda_{iG}\,.
\end{align}
Finally, if for every glob $G$, the impedence operators $M_{iG}$ are the same,
i.e., $M_{iG} = \check M_G$, for all $i \in \mathcal{N}_G$,
then
\begin{align}
\label{eq:ElambdaMult}
  (E_\Lambda \lambda)_G = \frac{1}{\cardinality{\mathcal{N}_G}} \sum_{i \in \mathcal{N}_G} \lambda_{iG}\,,
\end{align}
i.e., $R_\Lambda E_\Lambda$ is the same as the multiplicity projector $E_m$ from Sect.~\ref{sect:globs}.
The associated exchange operator $\mathcal{X} = 2 R_\Lambda E_\Lambda - I$ fulfills Assumption~\ref{ass:A4}
but has the simple form
\begin{align}
\label{eq:XlambdaMult}
  (\mathcal{X} \lambda)_{iG} = \Big( \frac{2}{\cardinality{\mathcal{N}_G}} - 1 \Big) \lambda_{iG}
	  + \sum_{j \in \mathcal{N}_G \setminus \{ i \}} \frac{2}{\cardinality{\mathcal{N}_G}} \lambda_{jG}\,.
\end{align}
For a glob shared by two subdomains, the operator $\mathcal{X}$ simply \emph{swaps} the two associated functions.

\subsubsection{Diagonal impedance operators}

Let the assumptions of Sect.~\ref{sect:discreteMatrixNotation}
hold and let $\Lambda$ and $T$ be constructed via the glob system, see Sect.~\ref{sect:globs}.
Assume furthermore that $M_i \colon \Lambda_i \to \Lambda_i^*$ in Assumption~\ref{ass:A6}
is a \emph{diagonal} matrix with diagonal entries $( m^{(i)}_\ell )_{\ell = 1}^{\text{dim}(\Lambda_i)}$.
Then, one can show that $E_\Lambda$ from Lemma~\ref{lem:ELambdaM} has the form
\begin{align}
  (E_\Lambda \lambda)_{k}
	  = \widehat m_k^{-1}
		\sum_{i \in \widehat{\mathcal{N}}_k} m^{(i)}_{\widehat{\mathsf{g}}_i^{-1}(k)} \lambda_{i,\widehat{\mathsf{g}}_i^{-1}(k)}\,,
	\quad \text{where } \widehat m_k := \sum_{j \in \widehat{\mathcal{N}}_k} m^{(j)}_{\widehat{\mathsf{g}}_j^{-1}(k)},
\end{align}
where the global interface dof $k = 1,\ldots, \text{dim}(\widehat\Lambda)$
is shared by subdomains $\widehat{N}_k$ and corresponds to 
the local interface dof $\ell = \widehat{\mathsf{g}}_i^{-1}(k)$ of subdomain $i$.

If for all global interface dofs $k$ all subdomain impedance values
$\{ m^{(i)}_{\widehat{\mathsf{g}}_i^{-1}(k)} \}_{i \in \widehat{\mathcal{N}}_k}$
take the same value independently of the subdomain index $i$, but only depending on $k$,
then,
like in \eqref{eq:ElambdaMult},
$R_\Lambda E_\Lambda = E_m$, where $E_m$ is the multiplicity projector from Sect.~\ref{sect:globs},
which appears in Loisel's method \cite{Loisel:2013a} (therein denoted by $K$).
The exchange operator $\mathcal{X} = 2 R_\Lambda E_\Lambda - I$ takes the simple form
\eqref{eq:XlambdaMult}.
%
%
Indeed, Loisel \cite{Loisel:2013a} assumes a diagonal impedance operator with identical values.
Moreover, an inspection of \cite[Sect.~4]{GanderSantugini:2016a} reveals that the \emph{complete communication method}
suggested by Gander and Santugini follows the same principle. 
Summarizing, Sect.~\ref{sect:interfaceExchange}
can be viewed as a generalization of the method proposed by Claeys \cite{Claeys:2021a}
and the discrete methods from \cite{Loisel:2013a} and \cite[Sect.~4]{GanderSantugini:2016a}.

\subsection{An exceptional interface exchange operator}

While certainly not of any practical value,
the following proposition marks a theoretical corner case
where the interface exchange operator is chosen in a way that leads to instantaneous convergence.

\begin{proposition}
\label{prop:algebrConvOneStep}
  Let \ref{ass:A1} hold and assume moreover that
	\begin{enumerate}
	\item $T = I$, i.e., $\Lambda = U$ and in particular $\range(T) = \Lambda$,
	\item $A^\top = A$,
	\item $A$ has a bounded inverse,
	\item we choose $M = A$, $\alpha = 1$.
	\end{enumerate}
	Then the only possible interface exchange operator $\mathcal{X}$ fulfilling \ref{ass:A2}
	and \ref{ass:A4}, i.e., $\mathcal{X}^\top A \mathcal{X} = A$,
	is given by $\mathcal{X} = 2 R \widehat A^{-1} R^\top A - I$.
	With such a choice of $\mathcal{X}$,
	the undamped Robin-Schwarz iteration~\eqref{eq:RobinSchwarzRichardon} (with $\beta = 1$) fulfills
	\[
	  u^{(1)} = R \widehat u.
	\]
\end{proposition}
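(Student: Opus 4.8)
The plan is to treat the two assertions separately: first the identification of $\mathcal{X}$, then the one-step convergence. The key simplification throughout is that $T = I$ forces $\Lambda = U$, so that $T^\top = I$ and $\range(T) = \Lambda$; moreover $\ker(T) = \{0\} \subseteq U_B$, so Assumptions~\ref{ass:B1} and~\ref{ass:B2} hold automatically. In the notation of Section~\ref{sect:interfaceExchange} I would take $\widehat\Lambda = \widehat U$ and $R_\Lambda = R$ (note $\range(TR) = \range(R) = \range(R_\Lambda)$ and that $R$ is injective by~\ref{ass:A1}). This choice is legitimate because the reflection $2 R_\Lambda E_\Lambda - I$ depends only on the projection $R_\Lambda E_\Lambda$ onto $\range(R)$, which is independent of the chosen complement.

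For the identification of $\mathcal{X}$, I would invoke Proposition~\ref{prop:XLambdaM}. The subtlety is that $A$ is only assumed symmetric with bounded inverse, \emph{not} positive definite, so Assumption~\ref{ass:A6} need not hold. I would therefore route the argument through Remark~\ref{rem:ELambdaM}, whose weaker hypotheses are met here: $M = A$ satisfies $M^\top = M$ by hypothesis, and $R_\Lambda^\top M R_\Lambda = R^\top A R = \widehat A$ has a bounded inverse by the well-posedness of~\eqref{eq:global}. Hence every $\mathcal{X}$ fulfilling~\ref{ass:A2} and~\ref{ass:A4} equals $2 R_\Lambda E_\Lambda - I$ with $E_\Lambda = (R_\Lambda^\top M R_\Lambda)^{-1} R_\Lambda^\top M$. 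Substituting $R_\Lambda = R$, $M = A$, and $R^\top A R = \widehat A$ gives $E_\Lambda = \widehat A^{-1} R^\top A$ and therefore $\mathcal{X} = 2 R \widehat A^{-1} R^\top A - I$, as claimed.

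For the one-step convergence I would first check that Proposition~\ref{prop:scattering} applies: Assumption~\ref{ass:A5} holds since $A + \alpha T^\top M T = 2A$ is invertible, and~\ref{ass:A3} holds because under~\ref{ass:A4} we have $M + \mathcal{X}^\top M \mathcal{X} = 2A$. The scattering operator then collapses: $S = -I + 2\alpha M T (A + \alpha T^\top M T)^{-1} T^\top = -I + 2A(2A)^{-1} = 0$, while $d = 2\alpha \mathcal{X}^\top M T (A + \alpha T^\top M T)^{-1} f = \mathcal{X}^\top f$. Consequently the undamped iteration~\eqref{eq:RobinSchwarzRichardon} yields $\lambda^{(1)} = \mathcal{X}^\top S \lambda^{(0)} + d = \mathcal{X}^\top f$, \emph{independently} of the initial guess $\lambda^{(0)}$. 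Plugging this into~\eqref{eq:unDef} and using $\mathcal{X}^\top = 2 A R \widehat A^{-1} R^\top - I$ (by symmetry of $A$ and of $\widehat A = R^\top A R$), so that $I + \mathcal{X}^\top = 2 A R \widehat A^{-1} R^\top$, I obtain $u^{(1)} = (2A)^{-1}(f + \mathcal{X}^\top f) = \tfrac12 A^{-1}(I + \mathcal{X}^\top) f = R \widehat A^{-1} R^\top f = R \widehat A^{-1} \widehat f = R \widehat u$, where I used $\widehat f = R^\top f$ and $\widehat u = \widehat A^{-1}\widehat f$.

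The main obstacle is the identification step, not the computation: one must resist citing Proposition~\ref{prop:XLambdaM} verbatim, since its stated hypotheses include the positive-definiteness in~\ref{ass:A6}, which is unavailable for a possibly indefinite $A$, and instead appeal to the weakened hypotheses of Remark~\ref{rem:ELambdaM}, which are exactly tailored to a symmetric, invertible $M = A$. Once $S = 0$ is recognized, the convergence claim reduces to a one-line calculation.
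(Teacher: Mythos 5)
Your proof is correct. The second half (verifying \ref{ass:A3}, \ref{ass:A5}, computing $S=0$, $d=\mathcal{X}^\top f$, and then $u^{(1)} = \tfrac12 A^{-1}(I+\mathcal{X}^\top)f = R\widehat A^{-1}R^\top f = R\widehat u$) coincides with the paper's argument step for step. The only substantive difference lies in how you establish uniqueness of $\mathcal{X}$: the paper argues directly and self-containedly --- from Lemma~\ref{lem:X} it gets $\mathcal{X}R = R$ and $\range(I+\mathcal{X})=\range(R)$, applies $R^\top$ to $\mathcal{X}^\top A\mathcal{X}=A$ to obtain $R^\top A(I+\mathcal{X}) = 2R^\top A$, writes $(I+\mathcal{X})\lambda = R\widehat v$ and solves $\widehat A\widehat v = 2R^\top A\lambda$ --- whereas you delegate this to Proposition~\ref{prop:XLambdaM} via Remark~\ref{rem:ELambdaM} with $R_\Lambda = R$, $\widehat\Lambda = \widehat U$. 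Since the paper's direct computation is exactly the proof of Proposition~\ref{prop:XLambdaM} specialized to $T=I$, $M=A$, the two routes are mathematically the same; your version is shorter but imports more machinery, and your observation that one must invoke Remark~\ref{rem:ELambdaM} rather than the proposition's literal hypotheses (since an indefinite $A$ violates \ref{ass:A6}) is exactly the right precaution --- the paper sidesteps this issue entirely by not citing the proposition. One cosmetic point: the existence direction (that $2R\widehat A^{-1}R^\top A - I$ does satisfy \ref{ass:A2} and \ref{ass:A4}) is stated explicitly in the paper ("obviously fulfills") and is only implicit in your appeal to Lemma~\ref{lem:ELambdaM}; it would be worth one sentence to make it explicit, since the one-step claim is vacuous without it.
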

\begin{remark}
  One may say that under the stated assumptions, the iteration \emph{converges after the first step}.
	More precisely, the proposition states the \emph{algebraic property} that the first iterate already reproduces to the solution,
	without the need of any estimate on the \emph{norm} of the error.
	This property does not come as a surprise, since the application of $\mathcal{X}$ involves $\widehat A^{-1}$, i.e., the solution of the original problem.
\end{remark}
\begin{proof}[Proof of Proposition~\ref{prop:algebrConvOneStep}]
  Firstly, the choice $\mathcal{X} = 2 R \widehat A^{-1} R^\top A - I$ obviously fulfills
	\ref{ass:A2} and $\mathcal{X}^\top A \mathcal{X} = A$.
	Secondly, assume that \ref{ass:A2} and $\mathcal{X}^\top A \mathcal{X} = A$ hold. Then, due to Lemma~\ref{lem:X},
	\begin{enumerate}
	\item[(i)] $\tfrac{1}{2}(I + \mathcal{X})$ is a projection,
	\item[(ii)] $\ker(I - \mathcal{X}) = \range(I + \mathcal{X}) = \range(R)$.
	\end{enumerate}
	Because of that, $\mathcal{X} R = R$.
	Applying $R^\top$ to the identity $\mathcal{X}^\top A \mathcal{X} = A$ therefore yields 
	\[
	  R^\top A (I + \mathcal{X}) = 2 R^\top A.
	\]
	For given $\lambda \in \Lambda$, due to (ii), $(I + \mathcal{X}) \lambda = R \widehat v$ for some $\widehat v \in \widehat U$, so
	\[
	  \underbrace{R^\top A R}_{\widehat A} \widehat v = 2 R^\top A \lambda.
	\]
	Therefore, $\widehat v = 2 \widehat A^{-1} R^\top A \lambda$, from which we deduce $\mathcal{X} = 2 R \widehat A^{-1} R^\top A - I$.
	
  Together with the other stated assumptions, we find that \ref{ass:A3} holds.
	Moreover, since $T = I$, $\alpha = 1$, $M = A$, and $A$ has a bounded inverse,
	also $(A + \alpha T^\top M A) = 2A$ has a bounded inverse, so \ref{ass:A5} holds, and since $M = A$, \ref{ass:A4} holds.
	Proposition~\ref{prop:scattering} and a straightforward calculation shows that $S = 0$ and $d = \mathcal{X}^\top f$, from which we deduce that
	\[
	  \lambda^{(1)} = \mathcal{X}^\top f, \qquad u^{(1)} = \tfrac{1}{2} A^{-1} (f + \lambda^{(1)}).
	\]
	Insertion of the formula for $\mathcal{X}$ and substitution yields
	\[
	  u^{(1)} = \tfrac{1}{2} A^{-1} (f + 2 A R \widehat A^{-1} R^\top f - f) = R \widehat A^{-1} \underbrace{ R^\top f}_{\widehat f}
		= R \widehat u. \qedhere
	\]
\end{proof}

\section{Related formulations}
\label{sect:relatedFormulations}

In this section, a couple of formulations have been collected from the literature that are related to
some of the formulations above, and they are displayed using the same compact notation.

\subsection{A three-field formulation}

A different way of reformulating the subdomain flux formulation~\eqref{eq:subdflux}
in terms of equations is the \emph{three-field domain decomposition method}
introduced by Brezzi and Marini in \cite{BrezziMarini:DD6}, see also \cite{QuarteroniValli:Book}.
Let Assumption~\ref{ass:A1} and Assumption~\ref{ass:B1} hold.
By Proposition~\ref{prop:rangeRbyRLambda},
\[
  u \in \range(R) \quad \Longleftrightarrow \quad T u = R_\Lambda \varphi
\]
for some $\varphi \in \widehat\Lambda$. Following the proof of Lemma~\ref{lem:RLambda},
we can write $\range(R) = \ker(T) \oplus \mathcal{W}$ with $\mathcal{W} = T^\dag(\range(R_\Lambda))$, where $T^\dag$ is some not necessarily bounded right-inverse of $T$
such that $T T^\dag = I$.
Therefore,
$\ker(R^\top) = \range(R)^\circ = \ker(T)^\circ \cap \mathcal{W}^\circ = \overline{\range(T^\top)} \cap \mathcal{W}^\circ$ and so
\begin{align*}
  \ker(R^\top) & = \overline{\{ T^\top \tau \colon \tau \in \Lambda^*,\ \langle T^\top \tau, T^\dag R_\Lambda \psi \rangle = 0 \quad \forall \psi \in \widehat\Lambda \}}
	= \overline{\{ T^\top \tau \colon \tau \in \Lambda^*,\ R_\Lambda^\top \tau = 0 \}}.
\end{align*}
Collecting the equations yields the three-field formulation
\begin{align}
\label{eq:threefield}
  \text{find } (u, \varphi, \tau) \in U \times \widehat\Lambda \times \Lambda^* \colon
	\begin{bmatrix}
	  A & 0 & - T^\top \\
		0 & 0 & R_\Lambda^\top \\
		- T & R_\Lambda & 0 
	\end{bmatrix}
	\begin{bmatrix} u \\ \varphi \\ \tau \end{bmatrix}
	= \begin{bmatrix} f \\ 0 \\ 0 \end{bmatrix}.
\end{align}
Similarly to Theorem~\ref{thm:traceflux}, we have the following result:

\begin{proposition}
  Let Assumptions~\ref{ass:A1} and~\ref{ass:B1} hold. Then
	\begin{enumerate}
	\item[(i)] If $(u, \varphi, \tau)$ solves \eqref{eq:threefield} then $u = R \widehat u$ where $\widehat u$ is the unique solution of \eqref{eq:global}.
	\item[(ii)] If $\widehat u$ solves \eqref{eq:global} and if either 
	  (a)~all spaces are finite-dimensional, or (b)~$\range(T) = \Lambda$ or (c)~$A R \widehat u - f \in \range(T^\top)$,
		then there exists $\varphi \in \widehat\Lambda$ and $\tau \in \Lambda^*$ such that $(R \widehat u, \varphi, \tau)$ solves \eqref{eq:threefield}.
  \item[(iii)] In cases~(a) and~(b), there exists a bounded solution operator for \eqref{eq:threefield}.
	\end{enumerate}
\end{proposition}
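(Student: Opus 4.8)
The plan is to prove the three-field formulation \eqref{eq:threefield} is equivalent to the original problem \eqref{eq:global} by reducing it to the already-established interface flux formulation \eqref{eq:traceflux}, which Theorem~\ref{thm:traceflux} handles. The key structural observation is that the second line $R_\Lambda^\top \tau = 0$ together with Lemma~\ref{lem:RLambda}(ii) should play exactly the role that the condition $(I+\mathcal{X}^\top)\tau=0$ plays in \eqref{eq:traceflux}, and that the coupling block $T u = R_\Lambda \varphi$ (third line) is the three-field analogue of $(I-\mathcal{X})Tu=0$, via Proposition~\ref{prop:rangeRbyRLambda}.

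For part~(i), I would start from a solution $(u,\varphi,\tau)$ of \eqref{eq:threefield}. The third line gives $Tu = R_\Lambda \varphi \in \range(R_\Lambda)$, so Proposition~\ref{prop:rangeRbyRLambda} yields $u \in \range(R)$, hence $u = R\widehat u$ for a unique $\widehat u$ by \ref{ass:A1}. The second line $R_\Lambda^\top\tau=0$ means $\tau \in \ker(R_\Lambda^\top) = \range(R_\Lambda)^0 = \range(TR)^0$ (using Lemma~\ref{lem:RLambda}(ii)). From this I want to conclude $T^\top\tau \in \range(R)^0 = \ker(R^\top)$: indeed $\langle T^\top\tau, R\widehat v\rangle = \langle \tau, TR\widehat v\rangle = 0$ for all $\widehat v$. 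Then the first line $Au - T^\top\tau = f$ with $T^\top\tau \in \ker(R^\top)$ shows $(u, T^\top\tau)$ solves the subdomain flux formulation \eqref{eq:subdflux}, so applying $R^\top$ recovers \eqref{eq:global} and pins down $\widehat u$ as its unique solution (as in Lemma~\ref{lem:subdflux}).

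For part~(ii), given a solution $\widehat u$ of \eqref{eq:global}, I would set $u := R\widehat u$ and note $t := AR\widehat u - f \in \ker(R^\top)$. The task is to produce $\varphi \in \widehat\Lambda$ and $\tau \in \Lambda^*$ with $Tu = R_\Lambda\varphi$ and $R_\Lambda^\top \tau = 0$ and $t = T^\top\tau$. The existence of $\varphi$ is immediate: $Tu = TR\widehat u \in \range(TR) = \range(R_\Lambda)$ with $R_\Lambda$ injective (Lemma~\ref{lem:RLambda}(i)), so $\varphi$ is unique. For $\tau$, in each of cases (a)/(b)/(c) I would invoke the same machinery used in Theorem~\ref{thm:traceflux}: I must represent $t \in \ker(R^\top) = \overline{\range(T^\top)} \cap (\text{constraint})$ as $T^\top\tau$ with $\tau \in \ker(R_\Lambda^\top)$. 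The parametrization $\ker(R^\top) = \overline{\{T^\top\tau : R_\Lambda^\top\tau = 0\}}$ computed in the paragraph preceding \eqref{eq:threefield} is exactly what is needed; in the finite-dimensional case (a) and the surjective case (b) the closure is superfluous (as in Lemma~\ref{lem:Tsurjective}), and case (c) uses a density/regularity argument analogous to Lemma~\ref{lem:H1}. Part~(iii) then follows by composing the bounded solution operator $\mathcal{S}$ of Lemma~\ref{lem:subdflux} with the bounded right-inverse of $T^\top$ restricted to $\ker(R^\top)$ (available in cases (a),(b)), together with the bounded inverse of $R_\Lambda$ on its closed range to recover $\varphi$, mirroring the construction of $\mathcal{S}^{(\tau)}$ in the proof of Theorem~\ref{thm:traceflux}.

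The main obstacle I anticipate is the annihilator computation $\ker(R^\top) = \overline{\{T^\top\tau : R_\Lambda^\top\tau=0\}}$ and controlling the closure in the infinite-dimensional cases, since it hinges on the identity $\mathcal{W}^0 \cap \ker(T)^0 = \overline{\range(T^\top)} \cap \ker(R_\Lambda^\top\text{-image})$ and on the non-bounded right-inverse $T^\dag$. Establishing that the constraint $\langle T^\top\tau, T^\dag R_\Lambda\psi\rangle = 0$ for all $\psi$ is genuinely equivalent to $R_\Lambda^\top\tau = 0$ requires that $T^\top T^\dag$ acts as the identity on the relevant subspace, which must be justified carefully; this is precisely the delicate step already flagged in the text preceding \eqref{eq:threefield}. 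The rest reduces to bookkeeping that parallels Theorem~\ref{thm:traceflux} almost verbatim.
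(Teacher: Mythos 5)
Your proof is correct and follows essentially the route the paper intends: the paper states this proposition without a written proof, relying on the derivation preceding \eqref{eq:threefield} (the annihilator identity for $\ker(R^\top)$ and Proposition~\ref{prop:rangeRbyRLambda}) together with the explicit analogy to Theorem~\ref{thm:traceflux} and Lemma~\ref{lem:subdflux}, which is exactly what you reconstruct. One minor remark: in case~(c) the argument is in fact slightly easier than in Lemma~\ref{lem:H1}, since $T^\top\tau\in\ker(R^\top)$ directly yields $\langle\tau,R_\Lambda\psi\rangle=0$ for all $\psi\in\widehat\Lambda$ via $\range(TR)=\range(R_\Lambda)$, so no density assumption $\overline{\range(T)}=\Lambda$ is needed there.
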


\subsection{A hybridized DG-like formulation}
\label{sect:HDG}

In this section, a technique is reviewed and generalized that was originally proposed in 
\cite{MonkSinwelSchoeberl:2010a} for a Raviart-Thomas discretization of the Helmholtz equation
(which is shown to be a transformed version of the ultra-weak variational formulation of \cite{CessenantDespres:1998a})
and further discussed in \cite{HuberPechsteinSchoeberl:DD20,Huber:PhD} for Maxwell's equations.
As a difference to the original works, the technique is applied on the subdomain rather than on the element level.

Suppose that Assumption~\ref{ass:A1} holds
and assume further that $T$ and $\mathcal{X}$ are based on an admissible facet system $\mathcal{F}$
where all interior facets are bilateral (cf.\ Sect.~\ref{sect:facetSystems}),
such that Assumption~\ref{ass:A2} holds as well.
The special structure allows the definition of a \emph{single facet space} $U_\mathcal{F} := \productspace_{F \in \mathcal{F}} U_F$
and of the \emph{one-sided, signed jump operator} $J_\mathcal{F} \colon \Lambda \to U_\mathcal{F}$, given by
\begin{align}
\label{eq:oneSideJump}
  (J_\mathcal{F} \tau)_F = \begin{cases}
	  \tau_{iF} - \tau_{jF}    & \text{for bilateral facets } F \text{ with } \mathcal{N}_F = \{i, j\}, i > j, \\
		2 \tau_{iF}              & \text{for exterior Dirichlet facets } F \text{ with } \mathcal{N}_F = \{ i \}, \\
		0                        & \text{for exterior auxiliary facets } F \text{ with } \mathcal{N}_F = \{ i \},
	\end{cases}
	\qquad \text{for } \tau \in \Lambda
\end{align}
(the factor $2$ for the Dirichlet facets may be spared).
In addition, we can define the \emph{distribution} operator
$D_\mathcal{F} \colon U_\mathcal{F} \to \Lambda$ by
\begin{align}
  (D_\mathcal{F} u_\mathcal{F})_{iF} = \begin{cases}
	  u_F & \text{for bilateral facets } F \in \mathcal{F}_i\,,\\
		0   & \text{for exterior Dirichlet facets } F \in \mathcal{F}_i\,,\\
		u_F & \text{for exterior auxiliary facets } F \in \mathcal{F}_i\,,
	\end{cases}
	\qquad \text{for } u_\mathcal{F} \in U_\mathcal{F}\,.
\end{align}

\begin{proposition}
\label{prop:JDX}
  In addition to the above, let Assumptions~\ref{ass:A3}--\ref{ass:A4} hold. Then
	\begin{itemize}
	\item $\ker(J_\mathcal{F}) = \range(D_\mathcal{F}) = \range(I + \mathcal{X}) = \ker(I - \mathcal{X})$,
	\item $\range(J_\mathcal{F}^\top) = \ker(D_\mathcal{F}^\top) = \ker(I + \mathcal{X}^\top) = \range(I - \mathcal{X}^\top)$, and
	\item the operator $D_\mathcal{F}^\top M D_\mathcal{F}$ is invertible.
	\end{itemize}
\end{proposition}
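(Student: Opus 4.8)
The plan is to treat the two displayed chains of equalities first and to save the invertibility of $D_\mathcal{F}^\top M D_\mathcal{F}$ for last. In each chain, the two ``outer'' identities $\range(I+\mathcal{X})=\ker(I-\mathcal{X})$ and $\ker(I+\mathcal{X}^\top)=\range(I-\mathcal{X}^\top)$ are immediate from Lemma~\ref{lem:X}, since $\mathcal{X}$ is an involution (all interior facets are bilateral, so Assumption~\ref{ass:A2} holds). Thus the real content is to identify $\ker(J_\mathcal{F})$ and $\range(D_\mathcal{F})$ with $\ker(I-\mathcal{X})$, and $\range(J_\mathcal{F}^\top)$ and $\ker(D_\mathcal{F}^\top)$ with $\ker(I+\mathcal{X}^\top)$. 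All four operators act facet-by-facet, so the whole matter reduces to a book-keeping comparison of conditions on each facet block, which I would not write out in full.

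For the first chain I would read off, from the definition of $\mathcal{X}$ in \eqref{eq:XBilateral}, that $\lambda\in\ker(I-\mathcal{X})$ means $\lambda_{iF}=\lambda_{jF}$ on bilateral facets $F=\{i,j\}$, $\lambda_{iF}=0$ on Dirichlet facets, and no condition on auxiliary facets. The definition of $J_\mathcal{F}$ yields exactly the same conditions for $\ker(J_\mathcal{F})$ (vanishing jump, resp.\ vanishing doubled Dirichlet value), while $\range(D_\mathcal{F})$ consists precisely of the $\lambda$ obtained by copying one value $u_F$ to both sides of each bilateral facet, putting $0$ on Dirichlet facets and $u_F$ on auxiliary facets---again the same set. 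This gives $\ker(J_\mathcal{F})=\range(D_\mathcal{F})=\ker(I-\mathcal{X})$. For the second chain I would compute $\mathcal{X}^\top$ (the same swap/negate/identity facet action as $\mathcal{X}$), then $D_\mathcal{F}^\top$ (sending $\tau$ to $\tau_{iF}+\tau_{jF}$ on bilateral, $0$ on Dirichlet, $\tau_{iF}$ on auxiliary facets) and $J_\mathcal{F}^\top$ (distributing $\psi_F$ with opposite signs to the two sides of a bilateral facet). Matching conditions gives $\ker(D_\mathcal{F}^\top)=\ker(I+\mathcal{X}^\top)$, and $\range(J_\mathcal{F}^\top)=\ker(D_\mathcal{F}^\top)$ follows from the explicit bounded construction of a preimage $\psi$ of any $\tau\in\ker(D_\mathcal{F}^\top)$ (take $\psi_F:=\tau_{iF}$ with $i>j$ on bilateral facets, using $\tau_{iF}+\tau_{jF}=0$); since all these maps are facet-local, no closure issues arise.

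For the invertibility of $D_\mathcal{F}^\top M D_\mathcal{F}$ the key observation is that $\range(D_\mathcal{F})=\ker(I-\mathcal{X})$ gives $\mathcal{X} D_\mathcal{F}=D_\mathcal{F}$, whence $D_\mathcal{F}^\top(M+\mathcal{X}^\top M\mathcal{X})D_\mathcal{F}=2\,D_\mathcal{F}^\top M D_\mathcal{F}$. Writing $N:=M+\mathcal{X}^\top M\mathcal{X}$, Assumption~\ref{ass:A3} makes $N\colon\Lambda\to\Lambda^*$ a bounded bijection, and $\mathcal{X}^2=I$ gives $\mathcal{X}^\top N\mathcal{X}=N$; from this one checks that $N$ maps $\ker(I\mp\mathcal{X})$ into $\ker(I\mp\mathcal{X}^\top)$, so, being block-diagonal for the eigenspace splittings $\Lambda=\ker(I-\mathcal{X})\oplus\ker(I+\mathcal{X})$ and $\Lambda^*=\ker(I-\mathcal{X}^\top)\oplus\ker(I+\mathcal{X}^\top)$, it restricts to an isomorphism $\ker(I-\mathcal{X})\to\ker(I-\mathcal{X}^\top)$. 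Since $D_\mathcal{F}\colon U_\mathcal{F}\to\ker(I-\mathcal{X})$ is an isomorphism and $D_\mathcal{F}^\top$ restricts to an isomorphism $\ker(I-\mathcal{X}^\top)\to U_\mathcal{F}^*$ (its kernel being the complementary summand $\ker(I+\mathcal{X}^\top)$, and $D_\mathcal{F}^\top$ being surjective), the composite $D_\mathcal{F}^\top N D_\mathcal{F}$, and hence $D_\mathcal{F}^\top M D_\mathcal{F}$, is an isomorphism.

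The main obstacle I would flag explicitly is that $D_\mathcal{F}\colon U_\mathcal{F}\to\ker(I-\mathcal{X})$ is an isomorphism only when $D_\mathcal{F}$ is injective, i.e.\ when there are no exterior Dirichlet facets (on which $D_\mathcal{F}$ vanishes and thus contributes a trivial block to $D_\mathcal{F}^\top M D_\mathcal{F}$). This matches the convention that the facet theory of Sect.~\ref{sect:bilateralDiscrFacetSys} carries no exterior facets; in their presence the invertibility claim should be read on the single facet space built from the non-Dirichlet facets only, while the first two chains are unaffected. The remaining care is routine Hilbert-space bookkeeping: $\range(D_\mathcal{F})=\ker(I-\mathcal{X})$ is closed by Lemma~\ref{lem:X}, so the open mapping theorem and the closed range theorem justify treating $D_\mathcal{F}$ and $D_\mathcal{F}^\top$ as the claimed isomorphisms between the relevant closed subspaces.
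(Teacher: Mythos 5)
The paper states Proposition~\ref{prop:JDX} without proof, so there is nothing to compare against; your proposal supplies a proof, and it is correct. The facet-by-facet identification of $\ker(J_\mathcal{F})$, $\range(D_\mathcal{F})$ and $\ker(I-\mathcal{X})$ (and dually of $\range(J_\mathcal{F}^\top)$, $\ker(D_\mathcal{F}^\top)$ and $\ker(I+\mathcal{X}^\top)$) checks out against the definitions \eqref{eq:XBilateral}, \eqref{eq:oneSideJump} and that of $D_\mathcal{F}$, and the outer equalities are indeed just Lemma~\ref{lem:X} applied to the involutions $\mathcal{X}$ and $\mathcal{X}^\top$. Your argument for the invertibility of $D_\mathcal{F}^\top M D_\mathcal{F}$ is also sound: $\mathcal{X} D_\mathcal{F}=D_\mathcal{F}$ gives $D_\mathcal{F}^\top N D_\mathcal{F}=2D_\mathcal{F}^\top M D_\mathcal{F}$ with $N=M+\mathcal{X}^\top M\mathcal{X}$, the identity $\mathcal{X}^\top N\mathcal{X}=N$ (which needs only $\mathcal{X}^2=I$) makes $N$ block-diagonal for the two eigenspace splittings, and the restriction of a bounded bijection to a diagonal block is again a bijection; note that under Assumption~\ref{ass:A4} this is even simpler, since then $N=2M$ and $M$ itself is invertible by Assumption~\ref{ass:A3}. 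Two small points: for Dirichlet facets the preimage under $J_\mathcal{F}^\top$ is $\psi_F=\tau_{iF}/2$ rather than $\tau_{iF}$, which you omit; and, more substantively, your caveat about exterior Dirichlet facets is a genuine observation about the \emph{statement}, not just your proof --- since $D_\mathcal{F}$ annihilates the Dirichlet components of $U_\mathcal{F}$, the operator $D_\mathcal{F}^\top M D_\mathcal{F}$ cannot be injective on the full single facet space if such facets are present, so the third bullet must be read on the single facet space built from the non-Dirichlet facets (consistent with $u_\mathcal{F}$ being the continuous Dirichlet trace, which vanishes there). The first two chains of equalities are unaffected by this.
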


Starting with the interface impedance trace formulation~\eqref{eq:waveRobinGeneral}
and performing the change of variables
\begin{align}
\label{eq:HDGchangeVar}
  \lambda^+ := J_\mathcal{F}^\top v_\mathcal{F} + \alpha M D_\mathcal{F} u_\mathcal{F},
  \qquad
  \text{for } u_\mathcal{F} \in U_\mathcal{F},\ v_\mathcal{F} \in U_\mathcal{F}^*,
\end{align}
one obtains
\begin{align*}
  (A + \alpha T^\top M T) u - T^\top J_\mathcal{F}^\top v_\mathcal{F} - \alpha T^\top M D_\mathcal{F} v_\mathcal{F} & = f,\\
	-2 \alpha M T u + 2 M D_\mathcal{F} u_\mathcal{F} & = 0.
\end{align*}
Multiplying the last line by $\tfrac{1}{2} J_\mathcal{F} M^{-1}$ and another time by $\tfrac{1}{2} D_\mathcal{F}^\top$,
we arrive at the structurally symmetric system
\begin{align}
\label{eq:HDG}
  \begin{bmatrix}
    (A + \alpha T^\top M T) & - T^\top J_\mathcal{F}^\top & - \alpha T^\top M D_\mathcal{F} \\
    - J_\mathcal{F} T & 0 & 0 \\
    - \alpha D_\mathcal{F}^\top M T & 0 & \alpha D_\mathcal{F}^\top M D_\mathcal{F}
  \end{bmatrix}
  \begin{bmatrix}
    u \\ v_\mathcal{F} \\ u_\mathcal{F}
  \end{bmatrix}
  =
  \begin{bmatrix}
    f \\ 0 \\ 0
  \end{bmatrix}.
\end{align}
Whereas $\lambda^+ \in \Lambda^*$ represents the two (generalized) Robin traces on each facet,
the pair $(u_\mathcal{F}, v_\mathcal{F}) \in U_\mathcal{F} \times U_\mathcal{F}^*$
stands for the Dirichlet and Neumann trace.
The second line of \eqref{eq:HDG} enforces
\[
  J_\mathcal{F} T u = 0,
\]
i.e., the continuity of $u$ across facets. This implies that $T u \in \range(D_\mathcal{F})$.
The third line of \eqref{eq:HDG} can be rewritten as
\[
  \alpha D_\mathcal{F}^\top M (T u - D_\mathcal{F} u_\mathcal{F}) = 0.
\]
Since $T u \in \range(D_\mathcal{F})$ and since $D_\mathcal{F}^\top M D_\mathcal{F}$ is invertible, this implies
\[
  T u = D_\mathcal{F} u_\mathcal{F}\,,
\]
i.e., $u_\mathcal{F}$ is the continuous Dirichlet trace.
Using Proposition~\ref{prop:JDX} one can show that \eqref{eq:HDGchangeVar} is a bijective transformation of variables,
and so Formulation~\eqref{eq:HDG} is equivalent to the interface impedance trace formulation~\eqref{eq:waveRobinGeneral}.
Under Assumption~\ref{ass:A5}, the broken primal variable $u$ can be eliminated from the system.
Preconditioners for the reduced system are discussed in \cite{MonkSinwelSchoeberl:2010a,HuberPechsteinSchoeberl:DD20,Huber:PhD}.

\subsection{The FETI-H formulation}
\label{sect:FETIH}

The FETI-H method \cite{FarhatMacedoTezaur:DD11,FarhatMacedoLesoinne:2000a,FarhatMacedoLesoinneRouxMagoulesDeLaBourdonnaie:2000a}
was originally introduced to overcome the \emph{internal resonance problem} of domain decomposition methods for the Helmholtz equation.
To get the main idea, let us assume a \emph{bilateral} facet system.
As a first ingredient, we need a \emph{subdomain sign pattern} $(\sigma_i)_{i=1}^N$ and a selection of facets $\mathcal{F}^\perp \subset \mathcal{F}$
such that
\begin{enumerate}
\item[(i)] each subdomain $i$ has either a plus sign ($\sigma_i = +1$) or a minus sign ($\sigma_i = -1$),
\item[(ii)] for each facet $F \in \mathcal{F}^\perp$ with $\mathcal{N}_F = \{ i, j \}$ the sign changes, i.e., $\sigma_i = - \sigma_j$, and
\item[(iii)] for each subdomain $i$, the set $\mathcal{F}_i^\perp := \mathcal{F}_i \cap \mathcal{F}^\perp$ is non-empty, i.e.,
  each subdomain has at least one facet where the neighboring subdomain has opposite sign.
\end{enumerate}
These properties can be achieved by constructing a minimal spanning tree for the connectivity graph (with subdomains as nodes and facets as edges),
starting with $+1$ at the root, and alternating the sign when going up the tree.

The second ingredient are modified subdomain operators $\widetilde A_i \colon U_i \to U_i^*$.
For each facet $F \in \mathcal{F}^\perp$, let $M_F \colon U_F \to U_F^*$ be a fixed impedance operator and define
\begin{align}
\label{eq:FETIHAaug}
  \widetilde A_i := A_i + \complexi \sigma_i \sum_{F \in \mathcal{F}_i^\perp} T_{iF}^\top M_F T_{iF}\,,
\end{align}
as well as $\widetilde A := \diag(\widetilde A_i)_{i=1}^N \colon U \to U^*$.

\begin{proposition}
\label{prop:assemblingFETIH}
  The modified subdomain operators $(\widetilde A_i)_{i=1}^N$ satisfy the assembling property
	\[
	  \widehat A = \sum_{i=1}^N R_i^\top \widetilde A_i R_i\,.
	\]
\end{proposition}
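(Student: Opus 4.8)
The plan is to show that the extra impedance terms added to each subdomain operator cancel out exactly when assembled via the restriction operators, leaving only the original assembling property from Definition~\ref{def:assemblingProp}. First I would expand the sum using the definition~\eqref{eq:FETIHAaug}:
\[
  \sum_{i=1}^N R_i^\top \widetilde A_i R_i
  = \sum_{i=1}^N R_i^\top A_i R_i
    + \complexi \sum_{i=1}^N \sigma_i \sum_{F \in \mathcal{F}_i^\perp} R_i^\top T_{iF}^\top M_F T_{iF} R_i\,.
\]
The first sum equals $\widehat A$ by the assembling property~\eqref{eq:assemblingProp}, so it remains to show that the second sum vanishes.

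The key observation is the consistency relation~\eqref{eq:traceConsistency}, namely $T_{iF} R_i = \widehat T_F$ for every $i \in \mathcal{N}_F$. Using this, each summand becomes $R_i^\top T_{iF}^\top M_F T_{iF} R_i = \widehat T_F^\top M_F \widehat T_F$, which is \emph{independent of the subdomain index} $i$. The plan is then to regroup the double sum by facets rather than by subdomains. Since $\mathcal{F}_i^\perp = \mathcal{F}_i \cap \mathcal{F}^\perp$ and each interior facet $F \in \mathcal{F}^\perp$ is bilateral with $\mathcal{N}_F = \{i, j\}$, the facet $F$ appears exactly twice in the double sum — once for $i$ and once for $j$ — contributing
\[
  \sigma_i\, \widehat T_F^\top M_F \widehat T_F + \sigma_j\, \widehat T_F^\top M_F \widehat T_F
  = (\sigma_i + \sigma_j)\, \widehat T_F^\top M_F \widehat T_F\,.
\]
By property~(ii) of the sign pattern, $\sigma_i = -\sigma_j$ for each $F \in \mathcal{F}^\perp$, so the factor $(\sigma_i + \sigma_j)$ is zero and every facet's contribution cancels.

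Putting this together, the regrouped double sum reads $\sum_{F \in \mathcal{F}^\perp} (\sigma_i + \sigma_j) \widehat T_F^\top M_F \widehat T_F = 0$, which gives $\sum_{i=1}^N R_i^\top \widetilde A_i R_i = \widehat A$ as claimed. The main subtlety to check carefully is that the regrouping is legitimate: one must verify that $F \in \mathcal{F}_i^\perp$ holds precisely for the two subdomains $i, j \in \mathcal{N}_F$, which follows directly from the definition $\mathcal{F}_i^\perp = \mathcal{F}_i \cap \mathcal{F}^\perp$ together with the fact that $\mathcal{F}_i$ collects exactly the facets adjacent to subdomain $i$. I do not expect any genuine obstacle here; the only thing requiring attention is ensuring the bilateral assumption is invoked so that each facet in $\mathcal{F}^\perp$ has exactly two adjacent subdomains with opposite signs, making the pairwise cancellation exact rather than merely approximate.
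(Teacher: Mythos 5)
Your proposal is correct and follows essentially the same route as the paper, which simply notes that expanding the definition of $\widetilde A_i$ makes the facet terms cancel due to the opposite signs; you merely spell out the two ingredients the paper leaves implicit, namely the consistency relation $T_{iF} R_i = \widehat T_F$ (so the two contributions on a facet coincide) and the regrouping of the double sum by facets. No gap.
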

\begin{proof}
  Expanding the definition of $\widetilde A_i$ one finds that the terms on the facets $F \in \mathcal{F}^\perp$ cancel due to the opposite signs.
\end{proof}

Apparently, if $A_i$ is loss-free (i.e., if $A_i = A_{i,0} - A_{i,2}$ for real-valued and non-negative operators $A_{i,0}$, $A_{i,2}$, cf.\ Assumption~\ref{ass:A7}),
then $\widetilde A_i$ corresponds to a more or less classical Robin problem. More details will be discussed below.
Using the jump operator $B := J_\mathcal{F} T \colon U \to U_\mathcal{F}$ with $J_\mathcal{F}$ from \eqref{eq:oneSideJump},
one derives the

\medskip

\noindent\fbox{\parbox{\textwidth}{
\emph{FETI-H formulation:}
\begin{align}
\label{eq:FETIHformulation}
  \text{find } (u, \lambda_\mathcal{F}) \in U \times U_\mathcal{F}^* \colon \qquad
	\begin{bmatrix} \widetilde A & B^\top \\ B & 0 \end{bmatrix}
	\begin{bmatrix} u \\ \lambda_\mathcal{F} \end{bmatrix} = \begin{bmatrix} f \\ 0 \end{bmatrix}
\end{align}}}

\medskip

Compared to Formulation~\eqref{eq:waveRobinGeneral} (and recalling that we assume bilateral facets) there is only one set of Lagrange parameters per facet,
which is a Robin-type trace. The second line of~\eqref{eq:FETIHformulation} still couples the Dirichlet traces
whereas~\eqref{eq:waveRobinGeneral} couples the two Robin traces.
In this light, \eqref{eq:waveRobinGeneral} is a Robin-Robin scheme and \eqref{eq:FETIHformulation} a Dirichlet-Robin scheme.

\begin{proposition}
  Under Assumptions~\ref{ass:A1}--\ref{ass:A2}, the following statements hold.
	\begin{enumerate}
	\item[(i)] If $(u, \lambda_\mathcal{F})$ solves \eqref{eq:FETIHformulation} then $u = R \widehat u$ where $\widehat u$ is the unique solution of \eqref{eq:global}.
	\item[(ii)] If $\widehat u$ solves \eqref{eq:global} and, in addition, either
	  \begin{enumerate}
		\item[(a)] all spaces are finite-dimensional, or
		\item[(b)] $\range(T) = \Lambda$, or
		\item[(c)] $\overline{\range(T)} = \Lambda$ and $A R \widehat u - f \in \range(T^\top)$,
		\end{enumerate}
    then there exists $\lambda_\mathcal{F} \in U_\mathcal{F}^*$ such that $(R \widehat u, \lambda_\mathcal{F})$ solves \eqref{eq:FETIHformulation}.
		In cases~(b) and (c), $\lambda_\mathcal{F}$ is guaranteed to be unique,
		whereas in the finite-dimensional case (a), $\lambda_\mathcal{F}$ is only unique up to an element from $\ker(B^\top)$,
		which is related to $\mathcal{Z}$, see Proposition~\ref{prop:uniqueTau}.
	\item[(iii)]
			In cases~(a) and (b), there exists a bounded linear solution operator for \eqref{eq:FETIHformulation}.
	\end{enumerate}
\end{proposition}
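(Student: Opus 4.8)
The plan is to follow the proof of Theorem~\ref{thm:traceflux} almost verbatim, using Proposition~\ref{prop:JDX} as the dictionary that translates the jump operator $B = J_\mathcal{F} T$ into the language of the exchange operator $\mathcal{X}$. The two translations I would record first are: (1) since $\ker(J_\mathcal{F}) = \ker(I - \mathcal{X})$, the constraint $Bu = 0$ is equivalent to $(I-\mathcal{X})Tu = 0$, hence by Assumption~\ref{ass:A2} to $u \in \range(R)$; and (2) since $\range(J_\mathcal{F}^\top) = \ker(I + \mathcal{X}^\top)$, the range of $B^\top = T^\top J_\mathcal{F}^\top$ is exactly $\{T^\top\tau : (I + \mathcal{X}^\top)\tau = 0,\ \tau \in \Lambda^*\}$, the very set appearing in Lemma~\ref{lem:kerRT}. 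Together with the assembling property $\widehat A = R^\top \widetilde A R$ from Proposition~\ref{prop:assemblingFETIH}, these reduce~\eqref{eq:FETIHformulation} to a subdomain-flux problem $\widetilde A u - t = f$ with $u \in \range(R)$ and $t = -B^\top\lambda_\mathcal{F} \in \ker(R^\top)$.

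With this in hand, part~(i) is immediate: applying $R^\top$ to the first line and using $B^\top\lambda_\mathcal{F} \in \range(B^\top) \subseteq \ker(R^\top)$ yields $R^\top\widetilde A u = \widehat f$; combined with $u = R\widehat u$ (from $Bu=0$) this gives $\widehat A\widehat u = \widehat f$, whose solution is unique. For part~(ii) I would set $t := f - \widetilde A R\widehat u$ and first check $t \in \ker(R^\top)$ by applying $R^\top$ and using the assembling property. The task is then to show $t \in \range(B^\top)$, which by translation~(2) is the set from Lemma~\ref{lem:kerRT}. In case~(a) this set is closed and equals $\ker(R^\top)$; in case~(b) the same identity holds by Lemma~\ref{lem:Tsurjective}; in both cases $t \in \ker(R^\top) = \range(B^\top)$, done.

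The delicate case is~(c), and I expect it to be the main obstacle, because the hypothesis is phrased with the \emph{original} operator $A$ (namely $AR\widehat u - f \in \range(T^\top)$) whereas the formulation uses the augmented operator $\widetilde A$. The key observation that unlocks this is that the FETI-H augmentation factors through the trace operator: writing out~\eqref{eq:FETIHAaug} and collecting the facet terms shows $\widetilde A = A + \complexi\, T^\top \Theta T$ for a suitable block-diagonal operator $\Theta$, so that $\widetilde A R\widehat u - f = (A R\widehat u - f) + T^\top(\complexi\,\Theta\, TR\widehat u)$ still lies in $\range(T^\top)$. Since it also lies in $\ker(R^\top)$, Lemma~\ref{lem:H1} (whose hypothesis $\overline{\range(T)}=\Lambda$ is assumed in case~(c)) produces $\tau$ with $\widetilde A R\widehat u - f = T^\top\tau$ and $(I+\mathcal{X}^\top)\tau = 0$; this $\tau$ lies in $\range(J_\mathcal{F}^\top)$, so $-\tau = J_\mathcal{F}^\top\lambda_\mathcal{F}$ for some $\lambda_\mathcal{F}$, giving $t = B^\top\lambda_\mathcal{F}$ as required.

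Finally, for uniqueness and part~(iii) I would argue as in Theorem~\ref{thm:traceflux}. Two admissible $\lambda_\mathcal{F}$ differ by an element of $\ker(B^\top)$; since $J_\mathcal{F}^\top$ is injective (its range $\ker(I+\mathcal{X}^\top)$ is attained bijectively, once auxiliary facets, on which $J_\mathcal{F}$ is trivial, are excluded from $U_\mathcal{F}$), one has $J_\mathcal{F}^\top(\ker(B^\top)) = \ker(T^\top)\cap\ker(I+\mathcal{X}^\top) = \mathcal{Z}$, which explains the relation to $\mathcal{Z}$ in case~(a) and vanishes in cases~(b),(c) because $\overline{\range(T)}=\Lambda$ forces $\ker(T^\top)=\{0\}$, yielding the stated uniqueness. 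For the bounded solution operator, in cases~(a),(b) the range $\range(B^\top)=\ker(R^\top)$ is closed, so $B^\top$ admits a bounded right inverse $Q$; composing $Q$ (with a sign) with the manifestly bounded map $f\mapsto(R\widehat A^{-1}R^\top f,\ \widetilde A R\widehat A^{-1}R^\top f - f)$, which I would verify solves the $\widetilde A$-version of~\eqref{eq:subdflux}, produces the desired bounded solution operator $f\mapsto(u,\lambda_\mathcal{F})$.
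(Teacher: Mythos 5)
Your proposal is correct and follows essentially the same route as the paper: both proofs rest on Proposition~\ref{prop:assemblingFETIH}, the algebraic identities of Proposition~\ref{prop:JDX} translating $B=J_\mathcal{F}T$ into the $\mathcal{X}$-language, and the interface-flux machinery (Lemma~\ref{lem:kerRT}, Lemma~\ref{lem:Tsurjective}, Lemma~\ref{lem:H1}, Theorem~\ref{thm:traceflux}). The only presentational difference is in case~(c): the paper takes the flux $\tau$ from \eqref{eq:traceflux} for the \emph{unaugmented} $A$ and shifts the multiplier explicitly facet-by-facet, whereas you note that $\widetilde A - A = \complexi\,T^\top\Theta T$ factors through $T^\top$ so the regularity condition transfers to the augmented operator directly --- the same underlying observation, organized differently.
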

\begin{proof}
  (i) Assume that $(u, \lambda_\mathcal{F})$ solves \eqref{eq:FETIHformulation}.
	Then $J_\mathcal{F}^\top \lambda_\mathcal{F} \in \range(J_\mathcal{F}^\top) = \ker(I + \mathcal{X}^\top)$,
	and so by Lemma~\ref{lem:kerRT},
	\begin{align}
	\label{eq:BtopKerRTop}
	  \range(B^\top) = \range(T^\top J_\mathcal{F}^\top) \subset \ker(R^\top), \qquad \overline{\range(B^\top)} = \ker(R^\top).
	\end{align}
	Moreover, from Proposition~\ref{prop:JDX} and Assumption~\ref{ass:A2} we obtain
	$\ker(B) = \ker(J_\mathcal{F} T) = \range(R)$,
	which is why there exists $\widehat u$ such that
	$R \widehat u = u$.
	Multiplying the first line of \eqref{eq:FETIHformulation} by $R^T$ from the left, we obtain
	(using Proposition~\ref{prop:assemblingFETIH} and \eqref{eq:BtopKerRTop})
	\[
	  \underbrace{(R^\top \widehat A R)}_{= \widehat A} \widehat u + \underbrace{ R^T B^T \lambda_\mathcal{F} }_{= 0 }
		= \underbrace{ R^\top f }_{ = \widehat f}\,.
	\]
	(ii) Under the stated assumptions, there exists a solution $(u, \tau)$ of the interface flux formulation~\eqref{eq:traceflux}.
	Since $\tau \in \ker(I + \mathcal{X}^\top) = \range(J_\mathcal{F}^\top)$, there exists $\tau_\mathcal{F} \in U_\mathcal{F}^*$ with $\tau = J_\mathcal{F}^\top \tau_\mathcal{F}$
	and so
	\[
	  A u + T^\top J_F^\top \tau_\mathcal{F} = f
	\]
	with $u = R \widehat u$.
	We define $\lambda_\mathcal{F} \in U_\mathcal{F}^*$ by
	\[
	  \lambda_F := \begin{cases}
		  \tau_{iF} - \complexi \sigma_i M_F T_{iF} R_i \widehat u  & \text{for } F \in \mathcal{F}^\perp \text{ with } \mathcal{N}_F = \{i, j\}, i > j,\\
		  \tau_{iF}                                                 & \text{for } F \in \mathcal{F} \setminus \mathcal{F}^\perp \text{ with } \mathcal{N}_F = \{i, j\}, i > j.\\
			\end{cases}
	\]
	A short computation using \eqref{eq:FETIHAaug} and the definition of $J_\mathcal{F}$ reveals that, indeed,
	$\widetilde A u + T^\top J_\mathcal{F}^\top \lambda_\mathcal{F} = f$.
	The rest of the proof is straightforward.
\end{proof}

In the original FETI-H method, system~\eqref{eq:FETIHformulation} is further reduced by forming the Schur complement.
To this end, one has to assume that the modifed subdomain operators $\widetilde A_i$ have bounded inverses.
This assumption is similar to Assumption~\ref{ass:A5} in its nature, and some tools are provided in Appendix~\ref{apx:invertibility}.
However, it becomes apparent that if the original operator $A_i$ has losses, i.e., $A_i = A_{i,0} + \complexi A_{i,1} - A_{i,2}$
with non-trivial $A_{i,1}$, and if $\sigma_i = -1$, then there is a mismatch of signs in the loss terms and the theory breaks down.
As a matter of fact, the FETI-H method was originally proposed for loss-free problems
(see e.g., \cite[Sect.~2.1]{FarhatMacedoLesoinne:2000a} where the system matrices $\mathbf{K}^s - k^2 \mathbf{M}^s$ are real-valued,
corresponding to the boundary conditions in \cite[Sect.~2.2, Eqn.~(11)]{FarhatMacedoLesoinne:2000a}
of Dirichlet and Neumann type).

Under the stated assumption, the resulting Schur complement system is
\begin{align}
  \text{find } \lambda_\mathcal{F} \in U_\mathcal{F}^* \colon \qquad
	\underbrace{ B^\top \widetilde A^{-1} B }_{=:F} \lambda_\mathcal{F} = \underbrace{ B^\top \widetilde{A}^{-1} f }_{=:d}.
\end{align}
In the original FETI-H method, this equation is solved by a Krylov method,
including a projection such that the residual is orthogonal to precomputed interface modes based on plane waves, see \cite[Sect.~4.1]{FarhatMacedoLesoinne:2000a}.

\subsection{Multi-trace formulations}

This section deals with formulations of multi-trace type, related to
\cite{ClaeysDoleanGander:2019a,ClaeysHiptmair:2012a,ClaeysHiptmairJerezHanckes:2013a,
  HiptmairJerezHanckes:2012a,HiptmairJerezHanckesLeePeng:DD21,LeePeng:Book,LeeVouvakisLee:2005a,PengLee:2010a}.
The involved variables are \emph{two pairs} per interface,
namely the Dirichlet and Neumann trace on either side.

Let Assumptions~\ref{ass:A1} and~\ref{ass:A2} hold
and let the interface flux formulation \eqref{eq:traceflux} be the starting point.
Recall from Theorem~\ref{thm:traceflux} that if we either have finite dimensions, $\range(T) = \Lambda$, or regularity,
then \eqref{eq:traceflux} is equivalent to the original formulation~\eqref{eq:global}.
Suppose, in addition, that Assumption~\ref{ass:A3} holds,
such that Lemma~\ref{lem:Robin} allows us to rewrite \eqref{eq:traceflux} as
\begin{align}
\label{eq:MT:facetFluxMatrix}
  \text{find } (u, \tau) \in U \times \Lambda^* \colon \quad
  \begin{bmatrix}
    A & - T^\top \\
    \alpha M (I - \mathcal{X}) T & (I + \mathcal{X}^\top)
  \end{bmatrix}
  \begin{bmatrix} u \\ \tau \end{bmatrix}
  \ = \
  \begin{bmatrix} f \\ 0 \end{bmatrix}.
\end{align}
Separating the terms involving $\mathcal{X}$, we obtain
\begin{align}
\label{eq:MT:facetFluxSeparated}
  \bigg(\underbrace{\begin{bmatrix} A & - T^\top \\ \alpha M T & I \end{bmatrix}}_{
        \displaystyle =: \mathcal{A}}
  + \underbrace{\begin{bmatrix} 0 & 0 \\ - \alpha M \mathcal{X} T & \mathcal{X}^\top \end{bmatrix}}_{
        \displaystyle =: \mathcal{C}} \bigg)
  \begin{bmatrix} u \\ \tau \end{bmatrix}
  \ = \
  \begin{bmatrix} f \\ 0 \end{bmatrix}.
\end{align}
Next, let us assume that the augmented operator
$\widetilde A: = A + \alpha T^\top M T$ is invertible (Assumption~\ref{ass:A5}).
Then operator $\mathcal{A}$ is invertible: a block factorization shows that
\[
  \mathcal{A}^{-1} = \begin{bmatrix} I & 0 \\ - \alpha M T & I \end{bmatrix}
  \begin{bmatrix} \widetilde A^{-1} & 0 \\ 0 & I \end{bmatrix}
  \begin{bmatrix} I & T^\top \\ 0 & I \end{bmatrix}.
\]
We multiply \eqref{eq:MT:facetFluxSeparated} by $\mathcal{A}^{-1}$, like applying a preconditoner:
\begin{align}
  \big( I + \mathcal{A}^{-1} \mathcal{C} \big) \begin{bmatrix} u \\ \tau \end{bmatrix}
  \ = \ \mathcal{A}^{-1} \begin{bmatrix} f \\ 0 \end{bmatrix}.
\end{align}
Since $\mathcal{C}$ depends, besides $\tau$, only on $T u$ we can introduce
\begin{align}
   \gamma := T u \in \Lambda
\end{align}
as a new variable and multiply the first equation by $T$. This yields
\[
  \left( I + \begin{bmatrix} T & 0 \\ 0 & I \end{bmatrix} \mathcal{A}^{-1}
  \begin{bmatrix} 0 & 0 \\ - \alpha M \mathcal{X} & \mathcal{X}^\top \end{bmatrix}
  \right) \begin{bmatrix} \gamma \\ \tau \end{bmatrix}
  = \begin{bmatrix} T & 0 \\ 0 & I \end{bmatrix} \mathcal{A}^{-1} \begin{bmatrix} f \\ 0 \end{bmatrix}
\]
or, more explicitly,
\begin{align*}
  \left( I + \begin{bmatrix} T & 0 \\ - \alpha M T & I \end{bmatrix}
    \begin{bmatrix} \widetilde A^{-1} & 0 \\ 0 & I \end{bmatrix}
    \begin{bmatrix} - \alpha T^\top M & T^\top \\ - \alpha M & I \end{bmatrix}
    \begin{bmatrix} \mathcal{X} & 0 \\ 0 & \mathcal{X}^\top \end{bmatrix} \right)
  \begin{bmatrix} \gamma \\ \tau \end{bmatrix} 
  = \begin{bmatrix} T & 0 \\ - \alpha M T & I \end{bmatrix}
    \begin{bmatrix} \widetilde A^{-1} f \\ 0 \end{bmatrix}.
\end{align*}
The unknowns are now $(\gamma, \tau) \in \Lambda \times \Lambda^*$,
i.e., if $\Lambda$ is based on a facet system, we have two pairs of unknowns per facet.
The original solution $u$ can be obtained by solving
\[
  \widetilde A u = f + T^\top (\tau + \alpha M \gamma)
\]
separately on each subdomain.
Rearranging the above formulation reveals more structure:
\begin{align}
\label{eq:MT:PXdDef}
  \Bigg( I - 
	\underbrace{\begin{bmatrix} T & 0 \\ - \alpha M T & I \end{bmatrix}
    \begin{bmatrix} \widetilde A^{-1} & 0 \\ 0 & I \end{bmatrix}
    \begin{bmatrix} \alpha T^\top M & T^\top \\ \alpha M & I \end{bmatrix}
	}_{=: \mathsf{P}}
  \underbrace{ \begin{bmatrix} \mathcal{X} & 0 \\ 0 & -\mathcal{X}^\top \end{bmatrix} }_{=: \mathsf{X}}
	\Bigg)
  \begin{bmatrix} \gamma \\ \tau \end{bmatrix} 
  = \underbrace{\begin{bmatrix} I \\ - \alpha M \end{bmatrix}
         T \widetilde A^{-1} f}_{=: \mathsf{d}}.
\end{align}
As will be shown below, the operator
$\mathsf{P} \colon \Lambda \times \Lambda^* \to \Lambda \times \Lambda^*$
is an analog of the Cald\'eron projector (cf.\ e.g.\ \cite{McLean:Book,Steinbach:Book2008,ClaeysHiptmairJerezHanckes:2013a}).
Note that if $M$ is block-diagonal, then so is $\mathsf{P}$.
The operator $\mathsf{X} \colon \Lambda \times \Lambda^* \to \Lambda \times \Lambda^*$
exchanges the (candidates for the) Cauchy traces and flips the sign of the interface fluxes,
and it fulfills $\mathsf{X}^2 = I$.
We summarize:

\medskip

\noindent%
\fbox{\parbox{\textwidth}{
\textsl{Local multi-trace formulation I:}
\begin{align}
\label{eq:MT:localMTFormulation}
  \text{find } \begin{bmatrix} \gamma \\ \tau \end{bmatrix} \in \Lambda \times \Lambda^* \colon
	\qquad (I - \mathsf{P} \mathsf{X}) \begin{bmatrix} \gamma \\ \tau \end{bmatrix}
	= \mathsf{d},
\end{align}
with $\mathsf{P}$, $\mathsf{X}$, and $\mathsf{d}$ defined in \eqref{eq:MT:PXdDef}.
}}

\medskip

The above formulation is of the same form as the one in \cite[Sect.~3.1]{ClaeysDoleanGander:2019a}.

\begin{proposition}
\label{prop:MT:equiv}
  Let Assumptions~\ref{ass:A1}--\ref{ass:A3} and~\ref{ass:A5} be fulfilled, then the following statements hold:
	\begin{enumerate}
	\item[(i)] If $(\gamma, \tau) \in \Lambda \times \Lambda^*$
	  is a solution of the local multi-trace formulation~\eqref{eq:MT:localMTFormulation},
		then there exists $u \in U$ such that
		\[
		  \gamma = T u, \qquad A u - T^\top \tau = f,
		\]
		and $(u, \tau)$ is a solution of the interface flux formulation~\eqref{eq:traceflux}.
	\item[(ii)] Conversely, if $(u, \tau) \in U \times \Lambda^*$
	  is a solution of the interface flux formulation~\eqref{eq:traceflux},
		then $(T u, \tau)$ solves the local multi-trace formulation~\eqref{eq:MT:localMTFormulation}.
	\end{enumerate}
\end{proposition}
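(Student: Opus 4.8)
The plan is to read \eqref{eq:MT:localMTFormulation} as exactly the $(\gamma,\tau)$-reduction of the interface flux formulation written in the form \eqref{eq:MT:facetFluxMatrix}, and to invoke Lemma~\ref{lem:Robin} (valid under \ref{ass:A2}--\ref{ass:A3}), which makes \eqref{eq:MT:facetFluxMatrix} equivalent to \eqref{eq:traceflux}. The one algebraic fact I would isolate first is that $\mathsf{P}$ and $\mathsf{d}$ both factor through $\mathcal{A}^{-1}$, namely
\[
  \mathsf{P} = \begin{bmatrix} T & 0 \\ 0 & I \end{bmatrix} \mathcal{A}^{-1} \begin{bmatrix} 0 & 0 \\ \alpha M & I \end{bmatrix}, \qquad
  \mathsf{d} = \begin{bmatrix} T & 0 \\ 0 & I \end{bmatrix} \mathcal{A}^{-1} \begin{bmatrix} f \\ 0 \end{bmatrix},
\]
where $\mathcal{A}^{-1}$ is the explicit block factorization already displayed (available because \ref{ass:A5} makes $\widetilde A$, hence $\mathcal{A}$, invertible). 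This I would verify by multiplying the three factors of $\mathcal{A}^{-1}$ against $\mathrm{diag}(T,I)$ on the left and the lower-triangular factor $\begin{bmatrix} 0 & 0 \\ \alpha M & I \end{bmatrix}$ on the right, recovering precisely the $L\,D\,G$ product defining $\mathsf{P}$ in \eqref{eq:MT:PXdDef}. Together with $\begin{bmatrix} 0 & 0 \\ \alpha M & I \end{bmatrix}\mathsf{X} = \begin{bmatrix} 0 & 0 \\ \alpha M\mathcal{X} & -\mathcal{X}^\top \end{bmatrix}$, this shows that the equation $(I-\mathsf{P}\mathsf{X})[\gamma;\tau]=\mathsf{d}$ is literally
\[
  \begin{bmatrix} \gamma \\ \tau \end{bmatrix} = \begin{bmatrix} T & 0 \\ 0 & I \end{bmatrix} \mathcal{A}^{-1}\!\left( \begin{bmatrix} f \\ 0 \end{bmatrix} - \mathcal{C}'\begin{bmatrix} \gamma \\ \tau \end{bmatrix} \right), \qquad \mathcal{C}'\begin{bmatrix} \gamma \\ \tau \end{bmatrix} := \begin{bmatrix} 0 \\ -\alpha M\mathcal{X}\gamma + \mathcal{X}^\top\tau \end{bmatrix}.
\]

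For part~(ii) I would run this correspondence forward. Given $(u,\tau)$ solving \eqref{eq:traceflux}, Lemma~\ref{lem:Robin} yields $(\mathcal{A}+\mathcal{C})[u;\tau]=[f;0]$; applying $\mathcal{A}^{-1}$ and setting $\gamma := Tu$ turns $\mathcal{C}[u;\tau]$ into $\mathcal{C}'[\gamma;\tau]$, since $\mathcal{C}$ depends on $u$ only through $Tu$. Left-multiplying by $\mathrm{diag}(T,I)$ and comparing with the boxed identity above gives exactly $(I-\mathsf{P}\mathsf{X})[Tu;\tau]=\mathsf{d}$, so $(Tu,\tau)$ solves \eqref{eq:MT:localMTFormulation}. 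This direction is essentially the derivation read backwards and requires no new idea.

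For part~(i), the harder direction, I would reconstruct $u$ and then prove consistency. Given a solution $(\gamma,\tau)$ of \eqref{eq:MT:localMTFormulation}, I define $[u;\tau'] := \mathcal{A}^{-1}([f;0]-\mathcal{C}'[\gamma;\tau])$, equivalently $\widetilde A u = f + T^\top(\tau + \alpha M\gamma)$. The crux — and the step I expect to be the main obstacle — is to show $Tu=\gamma$: introducing $\gamma$ as an independent unknown and then left-multiplying the first block by $T$ is a priori lossy, so this cannot be checked by a blind expansion of $Tu$. With the factorization in hand, however, the two block rows of the rewritten multi-trace equation read $\gamma = Tu$ and $\tau = \tau'$ at once, because $\mathrm{diag}(T,I)[u;\tau'] = [Tu;\tau']$. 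Once $Tu=\gamma$ is established, $\mathcal{C}'[\gamma;\tau]=\mathcal{C}[u;\tau]$, so $(\mathcal{A}+\mathcal{C})[u;\tau]=[f;0]$; reading its first block gives $Au - T^\top\tau = f$, and Lemma~\ref{lem:Robin} lifts \eqref{eq:MT:facetFluxMatrix} back to \eqref{eq:traceflux}. Hence $(u,\tau)$ solves the interface flux formulation with $\gamma = Tu$, completing the proof.
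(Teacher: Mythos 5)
Your proof is correct, and for part~(i) it takes a genuinely different route from the paper's. The factorization you isolate,
\[
  \mathsf{P} = \begin{bmatrix} T & 0 \\ 0 & I \end{bmatrix} \mathcal{A}^{-1} \begin{bmatrix} 0 & 0 \\ \alpha M & I \end{bmatrix}, \qquad
  \mathsf{d} = \begin{bmatrix} T & 0 \\ 0 & I \end{bmatrix} \mathcal{A}^{-1} \begin{bmatrix} f \\ 0 \end{bmatrix},
\]
does check out against \eqref{eq:MT:PXdDef} and the displayed block factorization of $\mathcal{A}^{-1}$, and it collapses the equation $(I-\mathsf{P}\mathsf{X})[\gamma;\tau]=\mathsf{d}$ into the single identity $[\gamma;\tau]=\mathrm{diag}(T,I)\,[u;\tau']$ with $[u;\tau']:=\mathcal{A}^{-1}([f;0]-\mathcal{C}'[\gamma;\tau])$, from which $\gamma=Tu$ and $\tau=\tau'$ drop out simultaneously; the rest is Lemma~\ref{lem:Robin}, exactly as in the paper. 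The paper instead splits the same algebra into three structural lemmas: $\range(\mathsf{P})=\mathcal{CP}(0)$ and $\mathsf{d}\in\mathcal{CP}(f)$ produce the lift $u$ with $Tu=\gamma$ and $Au-T^\top\tau=f$, and the left-annihilation identities $[\alpha M\ \ I]\mathsf{P}=[\alpha M\ \ I]$, $[\alpha M\ \ I]\mathsf{d}=0$ recover the Robin transmission condition $\alpha M(I-\mathcal{X})\gamma+(I+\mathcal{X}^\top)\tau=0$. The two decompositions encode the same information ($\mathrm{diag}(T,I)\mathcal{A}^{-1}$ generates Cauchy pairs; the right factor $[0\ 0;\ \alpha M\ \ I]$ is what makes $[\alpha M\ \ I]$ a left eigenvector of $\mathsf{P}$), but your version is more compact and avoids introducing the Cauchy-pair manifolds, while the paper's version buys reusable facts about the Cald\'eron-type projector $\mathsf{P}$ ($\mathsf{P}^2=\mathsf{P}$, characterizations of range and kernel) that it exploits again for the second multi-trace formulation~\eqref{eq:MT:localMTFormulation2}. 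Part~(ii) of your argument coincides with the paper's, which simply reads the derivation backwards.
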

Part~(ii) has already been shown when deriving the local multi-trace formulation. 
The proof of Part~(i) is given below and requires a couple of results on the properties of $\mathsf{P}$ and $\mathsf{d}$.

\begin{lemma}
\label{lem:MT:Pproj}
  $\begin{bmatrix} \alpha M & I \end{bmatrix} \mathsf{P} = \begin{bmatrix} \alpha M & I \end{bmatrix}$
  and $\mathsf{P}^2 = \mathsf{P}$.
\end{lemma}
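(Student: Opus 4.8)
The plan is to reduce $\mathsf{P}$ to an explicit $2\times 2$ block operator matrix and then verify both identities by direct block multiplication, exploiting only Assumption~\ref{ass:A5} (which makes $\widetilde A^{-1}$, hence $\mathsf{P}$, well-defined and bounded). The first step is to multiply out the three-factor product defining $\mathsf{P}$ in~\eqref{eq:MT:PXdDef}. Abbreviating $K := T \widetilde A^{-1} T^\top \colon \Lambda^* \to \Lambda$, a routine computation gives
\[
  \mathsf{P} = \begin{bmatrix} \alpha K M & K \\ \alpha M - \alpha^2 M K M & I - \alpha M K \end{bmatrix}.
\]
With this closed form in hand, everything else is elementary algebra of operators.

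For the first identity I would left-multiply the block form by the row $\begin{bmatrix} \alpha M & I \end{bmatrix}$. The first column contributes $\alpha M \cdot \alpha K M + (\alpha M - \alpha^2 M K M) = \alpha M$ after the $\alpha^2 M K M$ terms cancel, and the second column contributes $\alpha M \cdot K + (I - \alpha M K) = I$ after the $\alpha M K$ terms cancel. This yields $\begin{bmatrix} \alpha M & I \end{bmatrix} \mathsf{P} = \begin{bmatrix} \alpha M & I \end{bmatrix}$ directly.

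For idempotency $\mathsf{P}^2 = \mathsf{P}$ I would compute the four blocks of $\mathsf{P}^2$ from the explicit form and check each equals the corresponding block of $\mathsf{P}$. In every block the quadratic-in-$K$ contributions (words of the form $K M K M$ or $M K M K M$) cancel in pairs, leaving exactly the linear expression appearing in $\mathsf{P}$; for instance the top-left block gives $\alpha^2 K M K M + \alpha K M - \alpha^2 K M K M = \alpha K M$, and similarly for the other three.

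The only point requiring genuine care — and thus the main obstacle, such as it is — is that $M$ and $K$ do not commute, so the ordering of factors must be preserved throughout. This is not a difficulty but a bookkeeping hazard: I expect that all cancellations occur between identical ordered words (e.g. the two copies of $\alpha^3 M K M K M$ in the bottom-left block), so commutativity of $M$ and $K$ is never invoked and the computation goes through verbatim. No deeper structural property (symmetry of $M$, Assumption~\ref{ass:A4}, or properties of $\mathcal{X}$) is needed for this lemma.
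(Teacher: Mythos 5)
Your proposal is correct: the explicit block form $\mathsf{P} = \begin{bmatrix} \alpha K M & K \\ \alpha M - \alpha^2 M K M & I - \alpha M K \end{bmatrix}$ with $K = T\widetilde A^{-1}T^\top$ is right, and all the stated cancellations go through without ever commuting $M$ and $K$. The paper does the same algebra but keeps $\mathsf{P}$ in the factored form $\mathsf{P} = L\,\begin{bmatrix}\alpha M & I\end{bmatrix}$ (equation~\eqref{eq:MT:Pform}), observes that $\begin{bmatrix}\alpha M & I\end{bmatrix}\begin{bmatrix} T & 0 \\ -\alpha M T & I\end{bmatrix} = \begin{bmatrix} 0 & I\end{bmatrix}$, and reads off $\begin{bmatrix}\alpha M & I\end{bmatrix} L = I$; the first identity is then a one-line cancellation, and $\mathsf{P}^2 = L\bigl(\begin{bmatrix}\alpha M & I\end{bmatrix} L\bigr)\begin{bmatrix}\alpha M & I\end{bmatrix} = \mathsf{P}$ follows immediately, with no need to check four blocks separately. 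Your expanded route costs a bit more bookkeeping for idempotency but proves exactly the same thing under exactly the same hypotheses, so there is no gap.
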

\begin{proof}
We can write $\mathsf{P}$ as
\begin{align}
\label{eq:MT:Pform}
  \mathsf{P} = 
  \begin{bmatrix} T & 0 \\ - \alpha M T & I \end{bmatrix}
    \begin{bmatrix} \widetilde A^{-1} & 0 \\ 0 & I \end{bmatrix}
    \begin{bmatrix} T^\top \\ I \end{bmatrix}
    \begin{bmatrix} \alpha M & I \end{bmatrix}.
\end{align}
A short calculation shows
\begin{align*}
  \begin{bmatrix} \alpha M & I \end{bmatrix} \mathsf{P}
  = \underbrace{\begin{bmatrix} \alpha M & I \end{bmatrix}
    \begin{bmatrix} T & 0 \\ - \alpha M T & I \end{bmatrix}}_{
      \begin{bmatrix} 0 & I \end{bmatrix}
    }
    \begin{bmatrix} \widetilde A^{-1} & 0 \\ 0 & I \end{bmatrix}
    \begin{bmatrix} T^\top \\ I \end{bmatrix}
    \begin{bmatrix} \alpha M & I \end{bmatrix}
  =  \begin{bmatrix} \alpha M & I \end{bmatrix}.
\end{align*}
With the above representation of $\mathsf{P}$, we see immediately that $\mathsf{P}^2 = \mathsf{P}$.
\end{proof}


\begin{definition}
  Given $g \in U^*$, we define the linear manifold
  \[
    \mathcal{CP}(g) =: 
    \left\{ \begin{bmatrix} \gamma \\ \sigma \end{bmatrix} \in \Lambda \times \Lambda^*
    \colon \exists u \in U \colon \gamma = T u, A u - T^\top \sigma = g \right\}
    \subset \Lambda \times \Lambda^*,
  \]
	which is the set of \emph{Cauchy pairs} for the linear equation involving $A$ and the right-hand side $g$.
  Apparently, $\mathcal{CP}(g_1) + \mathcal{CP}(g_2) = \mathcal{CP}(g_1 + g_2)$.
\end{definition}

The following lemma shows that the Calder\'on projector $\mathsf{P}$ maps to the space of Cauchy pairs for the homogeneous equation.

\begin{lemma}
\label{lem:MT:rangeP}
  $\range(\mathsf{P}) = \left\{ \begin{bmatrix} \gamma \\ \sigma \end{bmatrix} \in \Lambda \times \Lambda^* \colon
     \exists u \in U \colon \gamma = T u,\ A u - T^\top \sigma = 0 \right\}
   = \mathcal{CP}(0)$.
\end{lemma}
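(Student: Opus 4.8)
The plan is to prove the set equality $\range(\mathsf{P}) = \mathcal{CP}(0)$ by double inclusion, exploiting the factored form of $\mathsf{P}$ established in \eqref{eq:MT:Pform} together with the projection property from Lemma~\ref{lem:MT:Pproj}.

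For the inclusion $\range(\mathsf{P}) \subseteq \mathcal{CP}(0)$, I would take an arbitrary element $\begin{bmatrix} \eta \\ \pi \end{bmatrix} \in \Lambda \times \Lambda^*$ and compute $\begin{bmatrix} \gamma \\ \sigma \end{bmatrix} = \mathsf{P} \begin{bmatrix} \eta \\ \pi \end{bmatrix}$ explicitly using \eqref{eq:MT:Pform}. Setting $\psi := \alpha M \eta + \pi \in \Lambda^*$ and $u := \widetilde A^{-1} T^\top \psi \in U$, where $\widetilde A = A + \alpha T^\top M T$ as usual, the factorization directly gives $\gamma = T \widetilde A^{-1} T^\top \psi = T u$ and $\sigma = -\alpha M T u + \psi$. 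I would then verify that this $u$ witnesses membership in $\mathcal{CP}(0)$: compute $A u - T^\top \sigma = A u - T^\top(\psi - \alpha M T u) = (A + \alpha T^\top M T) u - T^\top \psi = \widetilde A u - T^\top \psi = 0$ by the definition of $u$. Hence $(\gamma, \sigma) = (T u, \sigma)$ satisfies both defining conditions of $\mathcal{CP}(0)$.

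For the reverse inclusion $\mathcal{CP}(0) \subseteq \range(\mathsf{P})$, I would take $\begin{bmatrix} \gamma \\ \sigma \end{bmatrix} \in \mathcal{CP}(0)$, so there exists $u \in U$ with $\gamma = T u$ and $A u - T^\top \sigma = 0$. Since $\mathsf{P}$ is a projection (Lemma~\ref{lem:MT:Pproj}), it suffices to show $\mathsf{P} \begin{bmatrix} \gamma \\ \sigma \end{bmatrix} = \begin{bmatrix} \gamma \\ \sigma \end{bmatrix}$, as any fixed point of a projection lies in its range. Using \eqref{eq:MT:Pform} I would apply $\mathsf{P}$ to $(\gamma, \sigma)$; the rightmost factor produces $\alpha M \gamma + \sigma = \alpha M T u + \sigma$. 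The key algebraic identity to exploit is $A u - T^\top \sigma = 0$, which rewrites as $\widetilde A u = T^\top(\alpha M T u + \sigma)$, i.e.\ $u = \widetilde A^{-1} T^\top(\alpha M T u + \sigma)$. Substituting this into the factored form yields first component $T u = \gamma$ and second component $-\alpha M T u + (\alpha M T u + \sigma) = \sigma$, confirming the fixed-point property.

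The main obstacle, though a mild one, is bookkeeping the $2\times 2$ block multiplications carefully and not confusing the roles of $\gamma = Tu$ (the primal trace) with the auxiliary quantity $\alpha M \gamma + \sigma$ that $\mathsf{P}$ reads off its input. The crucial structural fact making both directions work is that $\mathsf{P}$ depends on its input only through the single combination $\alpha M \gamma + \sigma$ (visible from the trailing factor $\begin{bmatrix} \alpha M & I \end{bmatrix}$ in \eqref{eq:MT:Pform}), so the proof reduces to tracking how this scalar-like quantity is fed through $\widetilde A^{-1}$ and reconstituted. Assumption~\ref{ass:A5} (invertibility of $\widetilde A$) is what guarantees $\mathsf{P}$ is well-defined throughout, and no further hypotheses are needed for this purely algebraic identity.
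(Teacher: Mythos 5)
Your proof is correct and follows essentially the same route as the paper's: the forward inclusion constructs $u = \widetilde A^{-1}T^\top(\alpha M\eta + \pi)$ and checks $Au = T^\top\sigma$, and the reverse inclusion shows $(\gamma,\sigma)$ is a fixed point of $\mathsf{P}$ via the identity $u = \widetilde A^{-1}T^\top(\alpha M\gamma+\sigma)$. The only cosmetic difference is that you invoke the projection property of $\mathsf{P}$ for the reverse direction, whereas the paper simply observes that $\mathsf{P}(\gamma,\sigma)=(\gamma,\sigma)$ already exhibits $(\gamma,\sigma)$ as an element of the range.
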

\begin{proof}
  ``$\subseteq$'': Let $(\widetilde\gamma, \widetilde\sigma) \in \Lambda \times \Lambda^*$ be arbitrary but fixed and set
  \[
    \begin{bmatrix} \gamma \\ \sigma \end{bmatrix}
    = \mathsf{P} \begin{bmatrix} \widetilde\gamma \\ \widetilde\sigma \end{bmatrix}
    = \begin{bmatrix}
        T \widetilde A^{-1} T^\top (\alpha M \widetilde\gamma
            + \widetilde\sigma) \\[0.3ex]
        (I - \alpha M T \widetilde A^{-1} T^\top)
          (\alpha M \widetilde\gamma + \widetilde\sigma)
      \end{bmatrix}.
  \]
  We define
  \[
	  u := \widetilde A^{-1} T^\top (\alpha M \widetilde\gamma + \widetilde\sigma).
  \]
  Following the definition of $\mathsf{P}$,
  we find that indeed $T u = \gamma$. Moreover,
  \begin{align*}
   A u =
   (\widetilde A - \alpha T^\top M T) u
   & = (I - \alpha T^\top M T \widetilde A^{-1}) T^\top
       (\alpha M \widetilde\gamma + \widetilde \sigma)\\
   & = T^\top (I - \alpha M T \widetilde A^{-1} T^\top)
       (\alpha M \widetilde\gamma + \widetilde \sigma)
     = T^\top \sigma.
  \end{align*}
  ``$\supseteq$'': Assume that we have $\gamma \in \Lambda$, $\sigma \in \Lambda^*$ and $u \in U$
  with $\gamma = T u$ and $A u - T^\top \sigma = 0$. Then
  \[
    \widetilde A^{-1} T^\top (\alpha M \gamma + \sigma)
    = \widetilde A^{-1} (T^\top \alpha M T u + T^\top \sigma)
    = \widetilde A^{-1} \big[ (\widetilde A - A) u + T^\top \sigma \big]
    = u.
  \]
  Therefore
  \[
    \mathsf{P} \begin{bmatrix} \gamma \\ \sigma \end{bmatrix}
    = \begin{bmatrix} T \widetilde A^{-1} T^\top (\alpha M \gamma + \sigma) \\[0.3ex]
         (I - \alpha M T \widetilde A^{-1} T^\top)
           (\alpha M \gamma + \sigma) \end{bmatrix}
    = \begin{bmatrix} T u \\
         \alpha M \gamma + \sigma - \alpha M T u \end{bmatrix}
    = \begin{bmatrix} \gamma \\ \sigma \end{bmatrix},
  \]
  which shows that $(\gamma, \sigma)$ is in the range of $\mathsf{P}$.
\end{proof}

\begin{lemma}
  $\ker(\mathsf{P}) = \left\{ \begin{bmatrix} \gamma \\ \tau \end{bmatrix} \in \Lambda \times \Lambda^* \colon \alpha M \gamma + \tau = 0 \right\}$.
\end{lemma}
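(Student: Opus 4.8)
The plan is to characterize $\ker(\mathsf{P})$ directly from the explicit block-factored form of $\mathsf{P}$ established in Lemma~\ref{lem:MT:Pproj}, namely
\[
  \mathsf{P} =
  \begin{bmatrix} T & 0 \\ - \alpha M T & I \end{bmatrix}
    \begin{bmatrix} \widetilde A^{-1} & 0 \\ 0 & I \end{bmatrix}
    \begin{bmatrix} T^\top \\ I \end{bmatrix}
    \begin{bmatrix} \alpha M & I \end{bmatrix}.
\]
The key observation is that the leftmost factor $\left[\begin{smallmatrix} T & 0 \\ -\alpha M T & I \end{smallmatrix}\right]$ is injective (its second block-row recovers $\tau$ from the image since the $(2,2)$-entry is $I$), as is the middle factor $\left[\begin{smallmatrix}\widetilde A^{-1} & 0 \\ 0 & I\end{smallmatrix}\right]$ by Assumption~\ref{ass:A5}. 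Therefore a pair $(\gamma,\tau)$ lies in $\ker(\mathsf{P})$ if and only if it is annihilated already after applying the two rightmost factors, i.e.
\[
  \begin{bmatrix} T^\top \\ I \end{bmatrix}
  \begin{bmatrix} \alpha M & I \end{bmatrix}
  \begin{bmatrix} \gamma \\ \tau \end{bmatrix} = 0.
\]
This product equals $\left[\begin{smallmatrix} T^\top(\alpha M\gamma + \tau) \\ \alpha M\gamma + \tau \end{smallmatrix}\right]$, and since the second block-row is simply $\alpha M\gamma+\tau$, the whole vector vanishes precisely when $\alpha M\gamma+\tau=0$ (the first row $T^\top(\alpha M\gamma+\tau)$ then vanishes automatically). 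That gives the claimed kernel.

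Concretely, I would first prove the inclusion ``$\supseteq$'': if $\alpha M\gamma+\tau=0$, then $\left[\begin{smallmatrix}\alpha M & I\end{smallmatrix}\right]\left[\begin{smallmatrix}\gamma\\\tau\end{smallmatrix}\right]=\alpha M\gamma+\tau=0$, so applying the remaining factors of $\mathsf{P}$ yields $0$, hence $(\gamma,\tau)\in\ker(\mathsf{P})$. This direction is immediate and needs no invertibility. For ``$\subseteq$'', suppose $\mathsf{P}\left[\begin{smallmatrix}\gamma\\\tau\end{smallmatrix}\right]=0$. Reading off the second block-component of $\mathsf{P}$ from its expanded form (as displayed in \eqref{eq:MT:PXdDef}), namely $(I-\alpha M T\widetilde A^{-1}T^\top)(\alpha M\gamma+\tau)$, and the first block-component $T\widetilde A^{-1}T^\top(\alpha M\gamma+\tau)$, I would combine them: adding $\alpha M$ times the first component to the second gives exactly $\alpha M\gamma+\tau$ (the $\alpha M T\widetilde A^{-1}T^\top$ terms cancel). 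Since both components are zero, the combination is zero, so $\alpha M\gamma+\tau=0$.

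I expect the main subtlety — really the only thing requiring care — to be making sure Assumption~\ref{ass:A5} is invoked so that $\widetilde A^{-1}$ exists and the block factorization of Lemma~\ref{lem:MT:Pproj} is legitimate; the algebraic cancellation itself is routine. An equivalent and perhaps cleaner route avoids the explicit entries entirely: from $\mathsf{P}\left[\begin{smallmatrix}\gamma\\\tau\end{smallmatrix}\right]=0$, left-multiply by $\left[\begin{smallmatrix}\alpha M & I\end{smallmatrix}\right]$ and use $\left[\begin{smallmatrix}\alpha M & I\end{smallmatrix}\right]\mathsf{P}=\left[\begin{smallmatrix}\alpha M & I\end{smallmatrix}\right]$ from Lemma~\ref{lem:MT:Pproj} to conclude $\alpha M\gamma+\tau=\left[\begin{smallmatrix}\alpha M & I\end{smallmatrix}\right]\left[\begin{smallmatrix}\gamma\\\tau\end{smallmatrix}\right]=\left[\begin{smallmatrix}\alpha M & I\end{smallmatrix}\right]\mathsf{P}\left[\begin{smallmatrix}\gamma\\\tau\end{smallmatrix}\right]=0$. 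I would favor this second argument as the primary proof, since it reuses the already-established identity and sidesteps any manipulation of the individual blocks, with the first-component computation relegated to confirming the reverse inclusion.
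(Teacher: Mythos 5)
Your preferred argument is exactly the paper's proof: the inclusion $\supseteq$ is read off from the factored form \eqref{eq:MT:Pform}, and $\subseteq$ follows by applying $\begin{bmatrix} \alpha M & I \end{bmatrix}$ to $\mathsf{P}\mathsf{v}=0$ and invoking the identity $\begin{bmatrix} \alpha M & I \end{bmatrix}\mathsf{P}=\begin{bmatrix} \alpha M & I \end{bmatrix}$ from Lemma~\ref{lem:MT:Pproj}. The proposal is correct; the longer block-by-block computation you sketch first is a valid but unnecessary detour.
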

\begin{proof}
  ``$\supseteq$'' follows from \eqref{eq:MT:Pform}.\\
	``$\subseteq$'': Suppose that $\mathsf{P} \mathsf{v} = 0$,
	then $\begin{bmatrix} \alpha M & I \end{bmatrix} \mathsf{P} \mathsf{v} = 0$.
	Lemma~\ref{lem:MT:Pproj} implies that $\begin{bmatrix} \alpha M & I \end{bmatrix} \mathsf{v} = 0$.
\end{proof}

\begin{lemma}
\label{lem:MT:d}
  $\mathsf{d} \in \mathcal{CP}(f)$
  and $\begin{bmatrix} \alpha M & I \end{bmatrix} \mathsf{d} = 0$.
\end{lemma}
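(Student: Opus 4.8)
The plan is to prove both assertions by direct verification, the only creative step being the choice of the right primal witness for the first claim. Recalling from \eqref{eq:MT:PXdDef} that $\mathsf{d} = \begin{bmatrix} I \\ -\alpha M \end{bmatrix} T \widetilde A^{-1} f$, its two components are $\gamma_{\mathsf{d}} = T \widetilde A^{-1} f \in \Lambda$ and $\sigma_{\mathsf{d}} = -\alpha M T \widetilde A^{-1} f \in \Lambda^*$, where $\widetilde A = A + \alpha T^\top M T$ is invertible by Assumption~\ref{ass:A5}. I would state these two components explicitly at the outset, since both parts of the lemma are then read off from them.

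For the first claim $\mathsf{d} \in \mathcal{CP}(f)$, I would propose the witness $u := \widetilde A^{-1} f \in U$ and check the two defining conditions of $\mathcal{CP}(f)$. The condition $\gamma_{\mathsf{d}} = T u$ holds by construction. For $A u - T^\top \sigma_{\mathsf{d}} = f$, I would compute $A u - T^\top \sigma_{\mathsf{d}} = A \widetilde A^{-1} f + \alpha T^\top M T \widetilde A^{-1} f = (A + \alpha T^\top M T)\widetilde A^{-1} f = \widetilde A \widetilde A^{-1} f = f$, which uses nothing beyond the definition of $\widetilde A$ and its invertibility. This exhibits $u$ as the primal function realizing $\mathsf{d}$ as a Cauchy pair for the right-hand side $f$.

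For the second claim the computation is immediate: $\begin{bmatrix} \alpha M & I \end{bmatrix} \mathsf{d} = (\alpha M - \alpha M)\, T \widetilde A^{-1} f = 0$. This is the same structural cancellation already exploited in Lemma~\ref{lem:MT:Pproj} (where $\begin{bmatrix} \alpha M & I \end{bmatrix}$ annihilates the relevant block), and I would simply point to that row-operator identity rather than repeating it.

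I expect no substantive obstacle here: both statements collapse to the single identity $A + \alpha T^\top M T = \widetilde A$ together with the invertibility granted by Assumption~\ref{ass:A5}. The only thing to \emph{discover} is the witness $u = \widetilde A^{-1} f$, which is precisely the $u$ appearing implicitly in the first block row of $\mathsf{d}$; once it is named, both verifications are one-line calculations.
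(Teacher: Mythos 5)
Your proof is correct and follows essentially the same route as the paper: the same witness $u = \widetilde A^{-1} f$, the same one-line computation $A u - T^\top \sigma_{\mathsf{d}} = \widetilde A \widetilde A^{-1} f = f$, and the same immediate cancellation for $\begin{bmatrix} \alpha M & I \end{bmatrix} \mathsf{d} = 0$.
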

\begin{proof}
  Recall that
  $\mathsf{d} = \begin{bmatrix} T \widetilde A^{-1} f \\
          - \alpha M T \widetilde A^{-1} f \end{bmatrix}
    =: \begin{bmatrix} \gamma \\ \sigma \end{bmatrix}$.
  By setting $u := \widetilde A^{-1} f$, we find that $\gamma = T u$ and
  \[
    A u = A \widetilde A^{-1} f = (\widetilde A - T^\top \alpha M T) \widetilde A^{-1} f
        = f + T^\top \sigma.
  \]
  The second relation follows immediately from the definition of $\mathsf{d}$.
\end{proof}

\begin{remark}
  If $M$ is block-diagonal, i.e., $M = \diag(M_i)_{i=1}^N$, then $\mathsf{P} = \diag(\mathsf{P}_i)_{i=1}^N$
	with $\mathsf{P}_i^2 = \mathsf{P}_i$ and
	\begin{align*}
	  \range(\mathsf{P}_i) & = \left\{ \begin{bmatrix} \gamma_i \\ \sigma_i \end{bmatrix} \in \Lambda_i \times \Lambda_i^* \colon
		  \exists u_i \in U_i \colon \gamma_i = T_i u_i\,,\ A_i u_i - T_i^\top \sigma_i = 0 \right\},\\
		\ker(\mathsf{P}_i) & = \left\{ \begin{bmatrix} \gamma_i \\ \sigma_i \end{bmatrix} \in \Lambda_i \times \Lambda_i^* \colon
		  \alpha M_i \gamma_i + \sigma_i = 0 \right\}.
	\end{align*}
	If $U_i = \Lambda_i$ and $\alpha = 1$, and if $M_i$ is the exterior operator w.r.t.\ to $A_i$,
	then $\widetilde A_i^{-1}$ plays the role of the fundamental solution and $\mathsf{P}_i$ is indeed the classical Cald\'eron projector.
\end{remark}

\begin{proof}[Proof of Proposition~\ref{prop:MT:equiv}, Part~(i)]
	Assume that $(\gamma, \tau)$ solves~\eqref{eq:MT:localMTFormulation}, i.e.,
  \[
    \begin{bmatrix} \gamma \\ \tau \end{bmatrix}
    - \mathsf{P} \mathsf{X} \begin{bmatrix} \gamma \\ \tau \end{bmatrix}
    = \mathsf{d}.
  \]
  Due to Lemma~\ref{lem:MT:rangeP} and Lemma~\ref{lem:MT:d},
  \[
    \begin{bmatrix} \gamma \\ \tau \end{bmatrix} \in \mathcal{CP}(f) + \mathcal{CP}(0) = \mathcal{CP}(f),
  \]
  so there exists $u \in U$ with $T u = \gamma$ and $A u - T^\top \tau = f$.
  Next we apply $\begin{bmatrix} \alpha M & I \end{bmatrix}$ to the system:
  \[
    \underbrace{\begin{bmatrix} \alpha M & I \end{bmatrix}
      \begin{bmatrix} \gamma \\ \tau \end{bmatrix}}_{= \alpha M \gamma + \tau}
    - \underbrace{\begin{bmatrix} \alpha M & I \end{bmatrix} \mathsf{P}}_{=
      \begin{bmatrix} \alpha M & I \end{bmatrix}} \mathsf{X}
       \begin{bmatrix} \gamma \\ \tau \end{bmatrix}
    = \underbrace{\begin{bmatrix} \alpha M & I \end{bmatrix} \mathsf{d}}_{=0},
  \]
  where we have used Lemma~\ref{lem:MT:Pproj} and Lemma~\ref{lem:MT:d}.
  Employing the definition of $\mathsf{X}$ we obtain the condition
  $\alpha M (I - \mathcal{X}) \gamma + (I + \mathcal{X}^\top) \tau = 0$,
	which is, due to Lemma~\ref{lem:Robin}, equivalent to
  \[
    (I - \mathcal{X}) \gamma = 0, \qquad (I + \mathcal{X}^\top) \tau = 0.
  \]
  Since $\gamma = T u$,
	we end up with the interface flux formulation~\eqref{eq:traceflux}.
\end{proof}

We have seen that $\gamma = T u$ and $A u - T^\top \tau = f$ can be expressed by
\begin{align}
  (I - \mathsf{P}) \begin{bmatrix} \gamma \\ \tau \end{bmatrix} = \mathsf{d}
\end{align}
and that $(I - \mathcal{X}) \gamma = 0$ and $(I + \mathcal{X}^\top) \tau$ can be written as
\begin{align}
  (I - \mathsf{X}) \begin{bmatrix} \gamma \\ \tau \end{bmatrix} = 0.
\end{align}
Combining the two latter conditions using a complex number $\theta \neq 0$ leads to the following formulation,
which is related to~\cite{HiptmairJerezHanckesLeePeng:DD21}.

\medskip

\noindent%
\fbox{\parbox{\textwidth}{
\textsl{Local multi-trace formulation II:}
\begin{align}
\label{eq:MT:localMTFormulation2}
  \text{find } \begin{bmatrix} \gamma \\ \tau \end{bmatrix} \in \Lambda \times \Lambda^* \colon \qquad
  \Big((I - \mathsf{P}) + \theta (I - \mathsf{X}) \Big) \begin{bmatrix} \gamma \\ \tau \end{bmatrix} = \mathsf{d}.
\end{align}
}}

\begin{proposition}
  Let Assumptions~\ref{ass:A1}--\ref{ass:A3} and~\ref{ass:A5} be fulfilled and let $\theta \neq 0$,
	then the following statements hold:
	\begin{enumerate}
	\item[(i)] If $(\gamma, \tau) \in \Lambda \times \Lambda^*$
	  is a solution of the local multi-trace formulation~\eqref{eq:MT:localMTFormulation2},
		then there exists $u \in U$ such that
		\[
		  \gamma = T u, \qquad A u - T^\top \tau = f,
		\]
		and $(u, \tau)$ is a solution of the interface flux formulation~\eqref{eq:traceflux}.
	\item[(ii)] Conversely, if $(u, \tau) \in U \times \Lambda^*$
	  is a solution of the interface flux formulation~\eqref{eq:traceflux},
		then $(T u, \tau)$ solves the local multi-trace formulation~\eqref{eq:MT:localMTFormulation2}.
	\end{enumerate}
\end{proposition}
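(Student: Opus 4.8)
```latex
The plan is to prove the equivalence between the local multi-trace formulation~\eqref{eq:MT:localMTFormulation2}
and the interface flux formulation~\eqref{eq:traceflux} by mimicking the proof of Proposition~\ref{prop:MT:equiv},
exploiting the structural facts about $\mathsf{P}$, $\mathsf{X}$, and $\mathsf{d}$ already established in
Lemmas~\ref{lem:MT:Pproj}--\ref{lem:MT:d}.
First I would dispose of Part~(ii), which is essentially immediate: if $(u, \tau)$ solves \eqref{eq:traceflux},
then on the one hand $(T u, \tau) \in \mathcal{CP}(f)$, so by Lemma~\ref{lem:MT:rangeP} and Lemma~\ref{lem:MT:d}
we have $(I - \mathsf{P})(T u, \tau)^\top = \mathsf{d}$; on the other hand, the conditions
$(I - \mathcal{X}) T u = 0$ and $(I + \mathcal{X}^\top) \tau = 0$ translate, via the definition of $\mathsf{X}$
in \eqref{eq:MT:PXdDef}, precisely into $(I - \mathsf{X})(T u, \tau)^\top = 0$.
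Adding the first identity to $\theta$ times the second yields \eqref{eq:MT:localMTFormulation2}.

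For Part~(i), I would follow the same two-pronged strategy as in the proof of Proposition~\ref{prop:MT:equiv}(i).
Suppose $(\gamma, \tau)$ solves \eqref{eq:MT:localMTFormulation2}.
The first task is to recover a function $u \in U$ with $\gamma = T u$ and $A u - T^\top \tau = f$,
i.e.\ to show $(\gamma, \tau) \in \mathcal{CP}(f)$.
The natural route is to apply the projector complement $(I - \mathsf{P})$ to the equation:
since $\mathsf{P}^2 = \mathsf{P}$ (Lemma~\ref{lem:MT:Pproj}), $(I - \mathsf{P})$ annihilates $\range(\mathsf{P})$,
and one computes $(I - \mathsf{P})\bigl[(I - \mathsf{P}) + \theta(I - \mathsf{X})\bigr]$ acting on $(\gamma,\tau)^\top$.
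The second task is to recover the Robin-compatible transmission conditions, which is accomplished by applying
the row operator $\begin{bmatrix} \alpha M & I \end{bmatrix}$, using that it is left-invariant under $\mathsf{P}$
(Lemma~\ref{lem:MT:Pproj}) and annihilates $\mathsf{d}$ (Lemma~\ref{lem:MT:d}); this isolates
$\alpha M(I - \mathcal{X})\gamma + (I + \mathcal{X}^\top)\tau = 0$, which by Lemma~\ref{lem:Robin}
is equivalent to the pair $(I - \mathcal{X})\gamma = 0$, $(I + \mathcal{X}^\top)\tau = 0$.
Combining with $\gamma = T u$ then delivers \eqref{eq:traceflux}.

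The main obstacle I anticipate is the extra $(I - \mathsf{P})$ step that distinguishes this proof from that of
Proposition~\ref{prop:MT:equiv}: in formulation~I the membership $(\gamma,\tau) \in \mathcal{CP}(f)$ followed directly
from $\range(\mathsf{P}) = \mathcal{CP}(0)$ together with $\mathsf{d} \in \mathcal{CP}(f)$, whereas here the
Cald\'eron projector and the exchange reflection appear \emph{added} rather than composed, so I must verify that
applying $(I - \mathsf{P})$ does not lose information carried by the $\theta(I - \mathsf{X})$ term.
Concretely, the delicate point is to check that $(I - \mathsf{P})(I - \mathsf{X})(\gamma,\tau)^\top$ and
$(I - \mathsf{P})\mathsf{d}$ combine so that $(I - \mathsf{P})(\gamma,\tau)^\top = \mathsf{d}$ still holds;
since $\begin{bmatrix} \alpha M & I \end{bmatrix}\mathsf{d} = 0$ and
$\begin{bmatrix} \alpha M & I \end{bmatrix}(I - \mathsf{X})$ encodes exactly the Robin condition,
one shows that the two projected conditions decouple cleanly, and that $\theta \neq 0$ is precisely what is needed
to scale away the coupling. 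Once this decoupling is made rigorous, the remainder reduces to the already-proven
arguments, and the equivalence follows.
```
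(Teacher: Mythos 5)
Your Part~(ii) matches the paper exactly, and your Part~(i) assembles all the right ingredients (Lemmas~\ref{lem:MT:Pproj}, \ref{lem:MT:rangeP}, \ref{lem:MT:d}, and Lemma~\ref{lem:Robin} via the row operator $\begin{bmatrix} \alpha M & I \end{bmatrix}$), but you organize them in an order that manufactures a difficulty the paper never encounters. You propose to apply $(I - \mathsf{P})$ first and then worry about the cross term $(I - \mathsf{P})(I - \mathsf{X})(\gamma,\tau)^{\top}$ --- the ``delicate point'' you leave open. That step is unnecessary. The paper's proof applies $\begin{bmatrix} \alpha M & I \end{bmatrix}$ \emph{first}: since this row operator is invariant under $\mathsf{P}$ it annihilates the $(I - \mathsf{P})$ term, and it annihilates $\mathsf{d}$, so what survives is $\theta \begin{bmatrix} \alpha M & I \end{bmatrix}(I - \mathsf{X})(\gamma,\tau)^{\top} = 0$; dividing by $\theta \neq 0$ and invoking Lemma~\ref{lem:Robin} gives $(I - \mathsf{X})(\gamma,\tau)^{\top} = 0$ outright. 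One then \emph{substitutes this back into the original equation}~\eqref{eq:MT:localMTFormulation2}, which collapses to $(I - \mathsf{P})(\gamma,\tau)^{\top} = \mathsf{d}$, whence $(\gamma,\tau) \in \mathcal{CP}(f)$ by Lemmas~\ref{lem:MT:rangeP} and~\ref{lem:MT:d} and the existence of $u$ follows. No application of $(I - \mathsf{P})$ to the equation is ever needed, so there is no coupling to ``decouple'' and nothing further to make rigorous. Your write-up as it stands is incomplete because it defers exactly this point; reorder the two prongs (row operator first, then substitution) and the proof closes immediately, coinciding with the paper's.
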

\begin{proof}
  (ii) has already been shown.
	For (i), we apply $\begin{bmatrix} \alpha M & I \end{bmatrix}$ to \eqref{eq:MT:localMTFormulation2}
	and use Lemmas~\ref{lem:MT:Pproj} and~\ref{lem:MT:d} to obtain
	\begin{align*}
	  \theta \begin{bmatrix} \alpha M & I \end{bmatrix} (I - \mathsf{X}) \begin{bmatrix} \gamma \\ \tau \end{bmatrix} = 0.
	\end{align*}
	From Lemma~\ref{lem:Robin}, we obtain that $(I - \mathsf{X}) \begin{bmatrix} \gamma \\ \tau \end{bmatrix} = 0$.
	Insertion into \eqref{eq:MT:localMTFormulation2} yields
	$(I - \mathsf{P}) \begin{bmatrix} \gamma \\ \tau \end{bmatrix} = \mathsf{d}$,
	from which we conclude that there exists $u \in U$ with $A u - T^\top \tau = f$ and $\gamma = T u$.
	Summarizing, $(u, \tau)$ fulfills \eqref{eq:traceflux}.
\end{proof}

\begin{remark}
  The multi-trace formulation~\eqref{eq:MT:localMTFormulation} can be derived from~\eqref{eq:MT:localMTFormulation2}
  setting $\theta = -1$ and applying the bijective transformation~$(\gamma, \tau) \mapsto \mathsf{X} (\gamma, \tau)$.
\end{remark}

\begin{remark}
  In the framework of boundary integral methods, the Cald\'eron projector is based on choosing $M$ as the exterior operator to $A$
	(supposing that $\Lambda = \range(T)$).
  Then $\mathsf{P} = \tfrac{1}{2} I + \mathsf{A}$ with a block-operator $\mathsf{A}$ involving boundary integral operators,
	cf.\ \cite{HiptmairJerezHanckes:2012a,ClaeysHiptmairJerezHanckes:2013a}.
	In that case, the choice $\theta = -\tfrac{1}{2}$ leads to
	\[
	  \Big( \mathsf{A} - \frac{1}{2} \mathsf{X} \Big) \begin{bmatrix} \gamma \\ \tau \end{bmatrix} = -\mathsf{d},
	\]
	which is of the same form as in	\cite{HiptmairJerezHanckes:2012a}, \cite[Sect.~6]{ClaeysHiptmairJerezHanckes:2013a}.
\end{remark}

\subsection*{Acknowledgement}

  The author would like to express his thanks to Ortwin Farle, Timo Euler, Sabine Zaglmayr, and Hermann Schneider (Dassault Syst\`emes)
	as well as to Xavier Claeys (UPMC Paris) and Martin Gander (Universit\'e de Gen\`eve)
	for a series of fruitful discussions.
	Some helpful hints were also provided by Patrick Joly (ENSTA Paris), Ivan Graham (University of Bath),
	Sebastian Sch\"ops (TU Darmstadt), Clemens Hofreither (RICAM Linz), and Herbert Egger (that time TU Darmstadt, now JKU Linz).

\begin{appendix}

\section{Proof of Theorem~\ref{thm:Zcharact}}
\label{apx:ProofZCharact}

  Beforehand, observe that
	since none of the operators $T$, $\mathcal{X}$ couples subdomain dofs or trace dofs that correspond to two \emph{different} global dofs,
	we can treat one global interface dof at a time.
	So without loss of generality, we may assume that $\mathcal{D}_\Gamma$ consists of a single global interface dof $k$,
	such that our goal is a proof for $\dim(\mathcal{Z}) = \ell_k$.
	
	Let $\mathcal{E}_k'$ be the edges of a fixed minimal spanning tree of the connectivity graph $\mathcal{C}_k$
	(see Fig.~\ref{fig:graphCycles}, left)
	and let us collect the remaining edges $\mathcal{E}_k \setminus \mathcal{E}_k'$ in a sequence $(e_1\,,\ldots,e_m)$.
	By classical graph theory,
	$\cardinality{\mathcal{E}_k'} = \cardinality{\mathcal{N}_k} - 1$ and
	for each remaining edge $e_i$, $i=1,\ldots,m$,
	there exists an associated cycle $\mathcal{L}_i$ consisting of edges from $\mathcal{E}_k' \cup \{ e_1,\ldots,e_i \}$,
	such that the number of independent cycles in $\mathcal{C}_k$ is given by $\ell_k = m$
	(see Fig.~\ref{fig:graphCycles}, middle).

	As one observes, $\mu \in \Lambda^*$ has two values per facet (one for each subdomain),
	so accordingly, two values per edge of the connectivity graph. Since each subdomain corresponds to a node of the graph,
	we can think of these values as being associated with the endpoints of the edges.
	Under that perspective,
	the operator $T^\top$ sums up the all values per \emph{node}, while the operator $(I + \mathcal{X}^\top)$
	sums up the two values per \emph{edge}.

	For each edge $e_i$, we define an element $\widehat\mu_i \in \mathcal{Z}$
	by putting values $\pm 1$ along the associated cycle $\mathcal{L}_i$ as illustrated in Figure~\ref{fig:graphCycles} (right)
	and zero elsewhere.
	Apparently, $T^\top \widehat\mu_i = 0$ because the two non-zero values associated with each node within the cycle
	have opposite sign. At the same time, the two values associated with each edge within the cycle sum up to zero as well,
	so $(I + \mathcal{X}^\top) \widehat\mu_i = 0$. Altogether, $\widehat\mu_i \in \mathcal{Z}$.
  Moreover,	
	the element $\widehat\mu_i$ is linearly independent from $\{ \widehat \mu_j \}_{j=1}^{i-1}$
	because the latter are not supported on the edge $e_i$. Therefore $\dim(\mathcal{Z}) \ge m$.
	
	In order to see that $\dim(\mathcal{Z}) \le m$, we let $\mu \in \mathcal{Z}$ be arbitrary but fixed.
	Recall the sequence $(e_1,\ldots,e_m)$ of remaining edges and let us start with the edge $e_m$.
	Since $(I - \mathcal{X}^\top)\mu = 0$, the two values of $\mu$ on edge $e_m$ must have opposite sign.
	Therefore, we can find a coefficient $\alpha_m$ such that $\mu_m := \mu - \alpha_m \widehat\mu_m$
	has vanishing values on edge $e_m$.
	We proceed inductively.
	For $i > 1$, suppose that $\mu_i \in \mathcal{Z}$ has vanishing values on all edges $e_i,\ldots,e_m$.
	Then, since $(I - \mathcal{X}^\top)\mu_i = 0$,
	the two values of $\mu_i$ on edge $e_{i-1}$ must have opposite sign.
	So there exists a coefficient $\alpha_{i-1}$ such that $\mu_{i-1} := \mu_i - \alpha_{i-1} \widehat\mu_{i-1}$
	has vanishing values on edge $e_{i-1}$. Since $\widehat\mu_{i-1}$ has vanishing values on all the edges $e_i,\ldots,e_m$,
	the function $\mu_{i-1}$ vanishes on all the edges $e_{i-1},\ldots,e_m$ and $\mu_{i-1} \in \mathcal{Z}$.
	The inductive process stops with $\mu_1 \in \mathcal{Z}$ vanishing entirely on all remaining edges $e_1,\ldots,e_m$,
	and so the only possible non-zero values of $\mu_1$ are located at the edges of the minimal spanning tree.
	This spanning tree, however, must have nodes with just one edge attached. The condition $T^\top \mu_1 = 0$
	implies that the values of $\mu_1$ at these node and the attached edges is zero.
	Using the condition $(I + \mathcal{X}^\top) \mu_1 = 0$
	along the edges allows to show, eventually, that $\mu_1 = 0$. Therefore, $\dim(\mathcal{Z}) = m$
	and $\mathcal{Z} = \text{span}\{ \widehat\mu_1, \ldots, \widehat\mu_m \}$.

\begin{figure}
\begin{center}
  \begin{tikzpicture}
	  [x={(1cm,0cm)}, y={(0.3cm,0.4cm)}, z={(0cm,1cm)}]
		
		\pgftransformscale{0.4}
		
	  \definecolor{black}{rgb}{0.0, 0.0, 0.0}
		
		\begin{scope}[canvas is xz plane at y=0]
			\draw[color=black,fill=black] (0,0) circle (0.1);
			\draw[line width=1,color=black,fill=white] (6,0) circle (0.1);
			\draw[color=black,fill=black] (0,6) circle (0.1);
			\draw[color=black,fill=black] (6,6) circle (0.1);
		\end{scope}

		\begin{scope}[canvas is xz plane at y=6]
			\draw[color=black,fill=black] (0,0) circle (0.1);
			\draw[line width=1,color=black,fill=white] (6,0) circle (0.1);
			\draw[color=black,fill=black] (0,6) circle (0.1);
			\draw[line width=1,color=black,fill=white] (6,6) circle (0.1);
		\end{scope}
		
		\draw[line width=0.75pt,color=black] (0.5, 0.0, 6.0)--(5.5, 0.0, 6.0);
		\draw[line width=0.75pt,color=black] (0.5, 0.0, 0.0)--(5.5, 0.0, 0.0);
		\draw[line width=0.75pt,color=black] (0.0, 0.5, 0.0)--(0.0, 5.5, 0.0);
		\draw[line width=0.75pt,color=black] (0.5, 6.0, 0.0)--(5.5, 6.0, 0.0);
		\draw[line width=0.75pt,color=black] (0.0, 6.0, 0.5)--(0.0, 6.0, 5.5);
		\draw[line width=0.75pt,color=black] (6.0, 0.5, 6.0)--(6.0, 5.5, 6.0);
		\draw[line width=0.75pt,color=black] (0.0, 0.5, 6.0)--(0.0, 5.5, 6.0);
		
  \end{tikzpicture}
	\hspace{0.5cm}
  \begin{tikzpicture}
	  [x={(1cm,0cm)}, y={(0.3cm,0.4cm)}, z={(0cm,1cm)}]
		
		\pgftransformscale{0.4}
		
	  \definecolor{dred}{rgb}{0.9, 0.0, 0.0}
		\definecolor{dgreen}{rgb}{0.0, 0.7, 0.0}
		\definecolor{dblue}{rgb}{0.0, 0.0, 0.8}
		\definecolor{dorange}{rgb}{0.8, 0.3, 0.0}
		\definecolor{dmagenta}{rgb}{0.6, 0.0, 0.6}
		\definecolor{black}{rgb}{0.0, 0.0, 0.0}
		
		\begin{scope}[canvas is xz plane at y=0]
			\draw[color=black,fill=black] (0,0) circle (0.1);
			\draw[color=black,fill=black] (6,0) circle (0.1);
			\draw[color=black,fill=black] (0,6) circle (0.1);
			\draw[color=black,fill=black] (6,6) circle (0.1);
		\end{scope}

		\begin{scope}[canvas is xz plane at y=6]
			\draw[color=black,fill=black] (0,0) circle (0.1);
			\draw[color=black,fill=black] (6,0) circle (0.1);
			\draw[color=black,fill=black] (0,6) circle (0.1);
			\draw[color=black,fill=black] (6,6) circle (0.1);
		\end{scope}
		
		\draw[line width=0.75pt,color=black] (0.5, 0.0, 6.0)--(5.5, 0.0, 6.0);
		\draw[line width=0.75pt,color=black] (0.5, 0.0, 0.0)--(5.5, 0.0, 0.0);
		\draw[line width=0.75pt,color=black] (0.0, 0.5, 0.0)--(0.0, 5.5, 0.0);
		\draw[line width=0.75pt,color=black] (0.5, 6.0, 0.0)--(5.5, 6.0, 0.0);
		\draw[line width=0.75pt,color=black] (0.0, 6.0, 0.5)--(0.0, 6.0, 5.5);
		\draw[line width=0.75pt,color=black] (6.0, 0.5, 6.0)--(6.0, 5.5, 6.0);
		\draw[line width=0.75pt,color=black] (0.0, 0.5, 6.0)--(0.0, 5.5, 6.0);
		
		\draw[line width=1.2pt,color=dblue]    (6, 1, 0)--(6, 5, 0); 
		\node[text=dblue] at (6.8,3,0) {$e_1$};
		\draw[line width=1.2pt,color=dred]     (0, 0, 1)--(0, 0, 5); 
		\node[text=dred] at (-0.6,0,3) {$e_2$};
		\draw[line width=1.2pt,color=dgreen]   (1, 6, 6)--(5, 6, 6); 
		\node[text=dgreen] at (3,7,6) {$e_3$};
		\draw[line width=1.2pt,color=dmagenta] (6, 0, 1)--(6, 0, 5); 
		\node[text=dmagenta] at (5.4,0,3) {$e_4$};
		\draw[line width=1.2pt,color=dorange]  (6, 6, 1)--(6, 6, 5); 
		\node[text=dorange] at (6.6,6,3) {$e_5$};
				
		\begin{scope}[canvas is xy plane at z=0]
		  \draw[->,color=dblue] (4,3) arc (0:320:1);
		\end{scope}

		\begin{scope}[canvas is zy plane at x=0]
		  \draw[->,color=dred] (4,3) arc (0:320:1);
		\end{scope}

		\begin{scope}[canvas is xy plane at z=6]
		  \draw[->,color=dgreen] (4,3) arc (0:320:1);
		\end{scope}

		\begin{scope}[canvas is zx plane at y=0]
		  \draw[->,color=dmagenta] (4,3) arc (0:320:1);
		\end{scope}

		\begin{scope}[canvas is yz plane at x=6]
		  \draw[->,color=dorange] (4,3) arc (0:320:1);
		\end{scope}
  \end{tikzpicture}
	\hspace{0.5cm}
  \begin{tikzpicture}
	  [x={(1cm,0cm)}, y={(0.3cm,0.4cm)}, z={(0cm,1cm)}]
		
		\pgftransformscale{0.4}
		
	  \definecolor{dred}{rgb}{0.9, 0.0, 0.0}
		\definecolor{dgreen}{rgb}{0.0, 0.7, 0.0}
		\definecolor{dblue}{rgb}{0.0, 0.0, 0.8}
		\definecolor{dorange}{rgb}{0.8, 0.3, 0.0}
		\definecolor{dmagenta}{rgb}{0.6, 0.0, 0.6}
		\definecolor{black}{rgb}{0.0, 0.0, 0.0}
		
		\begin{scope}[canvas is xz plane at y=0]
			\draw[color=black,fill=black] (0,0) circle (0.1);
			\draw[color=black,fill=black] (6,0) circle (0.1);
			\draw[color=black,fill=black] (0,6) circle (0.1);
			\draw[color=black,fill=black] (6,6) circle (0.1);
		\end{scope}

		\begin{scope}[canvas is xz plane at y=6]
			\draw[color=black,fill=black] (0,0) circle (0.1);
			\draw[color=black,fill=black] (6,0) circle (0.1);
			\draw[color=black,fill=black] (0,6) circle (0.1);
			\draw[color=black,fill=black] (6,6) circle (0.1);
		\end{scope}
		
		\draw[line width=0.75pt,color=black] (0.5, 0.0, 6.0)--(5.5, 0.0, 6.0);
		\draw[line width=0.75pt,color=black] (0.5, 0.0, 0.0)--(5.5, 0.0, 0.0);
		\draw[line width=0.75pt,color=black] (0.0, 0.5, 0.0)--(0.0, 5.5, 0.0);
		\draw[line width=0.75pt,color=black] (0.5, 6.0, 0.0)--(5.5, 6.0, 0.0);
		\draw[line width=0.75pt,color=black] (0.0, 6.0, 0.5)--(0.0, 6.0, 5.5);
		\draw[line width=0.75pt,color=black] (0.0, 0.5, 6.0)--(0.0, 5.5, 6.0);		
		\draw[line width=0.75pt,color=black] (0.5, 6.0, 6.0)--(5.5, 6.0, 6.0);
		\draw[line width=0.75pt,color=black] (0.0, 0.0, 0.5)--(0.0, 0.0, 5.5);
		
		\draw[line width=1.2pt,color=dorange] (6, 0, 1)--(6, 0, 5); 
		\draw[line width=1.2pt,color=dorange] (6, 1, 0)--(6, 5, 0); 
		\draw[line width=1.2pt,color=dorange] (6, 6, 1)--(6, 6, 5); 
		\draw[line width=1.2pt,color=dorange] (6, 1, 6)--(6, 5, 6);
		
		\begin{scope}[canvas is yz plane at x=6]
		  \draw[->,color=dorange] (4,3) arc (0:320:1);
		\end{scope}
		
		\node[text=dorange] at (7,1,0) {$-1$};
		\node[text=dorange] at (7,5,0) {$+1$};

		\node[text=dorange] at (5,1.5,6) {$+1$};
		\node[text=dorange] at (5,4.5,6) {$-1$};

		\node[text=dorange] at (5,0,1) {$+1$};
		\node[text=dorange] at (5,0,5) {$-1$};

		\node[text=dorange] at (7,6,1) {$-1$};
		\node[text=dorange] at (7,6,5) {$+1$};
		
  \end{tikzpicture}
  \caption{\label{fig:graphCycles}%
	  Illustration of the proof Theorem~\ref{thm:Zcharact}.
		\emph{Left:} minimal spanning tree, $\circ$ nodes with just one edge attached.
		\emph{Middle:} black edges: minimal spanning tree, colored: sequence of remaining edges with associated cycles.
		\emph{Right:} Values of $\widehat \mu_5$ associated with the cycle of $e_5$.
	}
\end{center}
\end{figure}
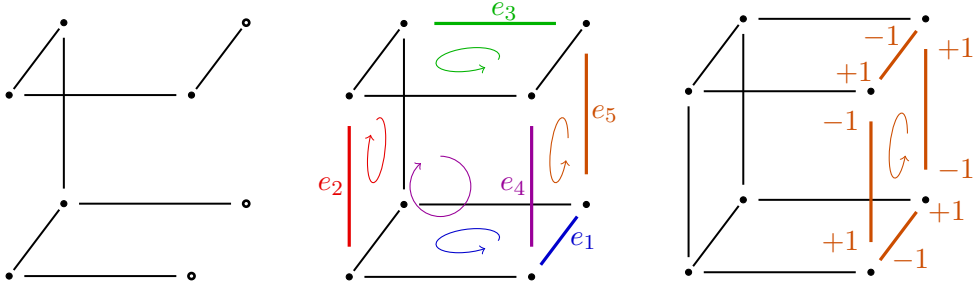

\section{Invertibility of generalized Robin problems}
\label{apx:invertibility}

In this section, we investigate the invertibility of the augmented operator $A + \alpha T^\top M T$,
cf.\ Assumption~\ref{ass:A5}.
As for classical wave propagation, the two building blocks are the \emph{Fredholm property} and the \emph{injectivity},
which altogether ensure a bounded inverse, see e.g.\ \cite{Monk:2003a,Hiptmair:2015a}.
In discrete case, invertibility is usually proved either via injectivity alone
or via an inf-sup condition derived from the continuous counterpart, cf.\ e.g.\ \cite{Hiptmair:2015a}.

We begin with the injectivity (Sect.~\ref{apx:injectivity}),
visit some general tools on the Fredholm property (Sect.~\ref{apx:FredholmTools})
and apply these for standard as well as generalized Robin problems (Sect.~\ref{apx:invStdRobin}
and Sect.~\ref{apx:invGenRobin}).
As will be noted, for some constellations in the case of Maxwell's equations, the Fredholm property remains an open problem.

\subsection{Injectivity}
\label{apx:injectivity}

We start with the assumption that the operator $M$ is block-diagonal,
which allows to treat one subdomain at a time.

\medskip

\noindent%
\fbox{%
\begin{minipage}[c]{0.985\textwidth}
\begin{assumptc}
\label{ass:C1}
  The operator $M$ from \ref{ass:A3} has the block-diagonal form $M = \diag(M_i)_{i=1}^N$,
  where each operator $M_i \colon \Lambda_i \to \Lambda_i^*$ is real-valued, symmetric, non-negative, and definite,
  i.e., $\langle M_i \lambda_i, \overline\lambda_i \rangle = 0 \implies \lambda_i = 0$
  for all $\lambda_i \in \Lambda_i$.
\end{assumptc}
\end{minipage}}

\medskip

Note that Assumption~\ref{ass:A6} implies~\ref{ass:C1} but not vice versa.

\medskip

\noindent%
\fbox{%
\begin{minipage}[c]{0.985\textwidth}
\begin{assumptc}
\label{ass:C2}
  In accordance with Assumption~\ref{ass:A7}, the following holds:
  \begin{enumerate}
  \item[(i)] In the \emph{coercive case} (with $\alpha = 1$):
  \[
    \big[ A_i v_i = 0 \text{ and } T_i v_i = 0 \big]
  	\implies v_i = 0 \qquad \forall v_i \in U_i\,.
  \]
  \item[(ii)] In the \emph{wave propagation case} (with $\alpha = \complexi$):
  \[
    \big[ (A_{i,0} - A_{i,2})v_i = 0 \text{ and } A_{i,1} v_i = 0 \text{ and } T_i v_i = 0 \big]
		\implies v_i = 0 \qquad \forall v_i \in U_i\,.
  \]
  \end{enumerate}
\end{assumptc}
\end{minipage}}

\medskip

Assumption~\ref{ass:C2} can be seen as an abstract version of Holmgreen's theorem: $A_i v = 0$ (in case~(i))
implies that the Neumann trace on the interface is zero, $T_i v = 0$ means that the Dirichlet trace on the interface is zero.
Inside the subdomain, $v$ fulfills the homogeneous PDE, so $v$ must vanish entirely;
see also \cite[Thm.~1]{FarhatMacedoLesoinne:2000a} or \cite[Thm.~A.1]{FarhatMacedoLesoinneRouxMagoulesDeLaBourdonnaie:2000a}.
In the typical coercive cases, $A_i$ has a finite-dimensional kernel (constant functions, rigid body modes),
which is fixed by the Dirichlet condition.
For Maxwell's equations, Part~(ii) of~\ref{ass:C2} is widely known as the \emph{continuation principle},
cf.\ e.g.\ \cite[Sect.~4.6]{Monk:2003a}.

In the discrete case, the following proposition, which is essentially \cite[Lemma~5.1]{Claeys:2021Preprint},
allows to derive~\ref{ass:C2} from the assumptions on the original operator $\widehat A$.

\begin{proposition}
  Let Assumption~\ref{ass:A7} hold, assume that $\ker(\widehat A) = \{ 0 \}$ (as stated in~Sect.~\ref{sect:globalProblem}),
	and let, in addition, Assumption~\ref{ass:B1} be fulfilled.
	Then Assumption~\ref{ass:C2} holds al well.
\end{proposition}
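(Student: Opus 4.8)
The plan is to reduce both cases of Assumption~\ref{ass:C2} to the single implication ``$A_i v_i = 0$ and $T_i v_i = 0$ imply $v_i = 0$'', and then to exploit the inclusion $\ker(T) \subseteq U_B$ together with the injectivity of $\widehat A$. First I would observe that in the wave propagation case the two separate hypotheses $(A_{i,0} - A_{i,2}) v_i = 0$ and $A_{i,1} v_i = 0$ combine, via the splitting $A_i = (A_{i,0} - A_{i,2}) + \complexi A_{i,1}$ from Assumption~\ref{ass:A7}(ii), to give $A_i v_i = (A_{i,0} - A_{i,2}) v_i + \complexi A_{i,1} v_i = 0$. Thus both the coercive case ($\alpha = 1$, where $A_i v_i = 0$ is assumed directly) and the wave case amount to the hypotheses $A_i v_i = 0$ and $T_i v_i = 0$, and it suffices to treat these two conditions uniformly.

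Next I would embed the local datum into the broken space. Given $v_i \in U_i$ with $T_i v_i = 0$, define $v \in U$ by placing $v_i$ in the $i$-th component and zero in all others. Since $T = \diag(T_j)_{j=1}^N$, one has $T v = 0$, so $v \in \ker(T)$. Assumption~\ref{ass:B1} then forces $v \in U_B = \productspace_{j=1}^N U_{j,B}$, and in particular $v_i \in U_{i,B}$. By Definition~\ref{def:bubble} there exists $\widehat v \in \widehat U$ with $R_i \widehat v = v_i$ and $R_j \widehat v = 0$ for all $j \neq i$; that is, $v_i$ extends by zero to a genuine global function.

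Finally I would push the local equation $A_i v_i = 0$ up to the global operator. Using the assembling property $\widehat A = \sum_{k=1}^N R_k^\top A_k R_k$ and the fact that $R_k \widehat v = 0$ for $k \neq i$, the sum collapses to $\widehat A \widehat v = R_i^\top A_i (R_i \widehat v) = R_i^\top A_i v_i = 0$. Since $\ker(\widehat A) = \{ 0 \}$ by hypothesis, this yields $\widehat v = 0$, whence $v_i = R_i \widehat v = 0$, which is exactly the desired implication in both cases.

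The argument is short and I do not expect a genuine obstacle; in fact it proves slightly more than asked, namely that $A_i v_i = 0$ and $T_i v_i = 0$ alone already force $v_i = 0$, the separate vanishing of the three operator terms in the wave case being merely a (stronger) sufficient hypothesis natural for the pseudo-energy estimates that consume~\ref{ass:C2}. The only point requiring care is the bookkeeping of the two sub-cases of~\ref{ass:A7}: one must check that real-valuedness of the $A_{i,k}$ enters only through the clean splitting $A_i = (A_{i,0} - A_{i,2}) + \complexi A_{i,1}$, so that the separate hypotheses indeed entail $A_i v_i = 0$, and that the extend-by-zero construction of $\widehat v$ respects the complexified structure when $\widehat U$ is complexified.
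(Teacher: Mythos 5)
Your proof is correct and follows essentially the same route as the paper: use \ref{ass:B1} to extend $v_i$ by zero to some $\widehat v \in \widehat U$, observe via the assembling property that $\widehat A \widehat v = R_i^\top A_i v_i = 0$, and conclude from $\ker(\widehat A) = \{0\}$. The paper leaves the reduction of the wave-propagation hypotheses to $A_i v_i = 0$ and the collapse of the assembled sum implicit, whereas you spell both out; this is a presentational difference only.
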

\begin{proof}
	Assume without loss of generality, that we are in Case~(ii) of Assumption~\ref{ass:A7} (the proof of Case~(i) is analogous)
	and that $(A_{i,0} - A_{i,2}) v_i = 0$ and $A_{i,1} v_i = 0$ and $T_i v_i = 0$.
  Assumption~\ref{ass:B1} guarantees that $\ker(T_i) \subseteq U_{i,B}$, and so for $T_i v_i = 0$ there exists a function $\widehat v \in \widehat U$
	such that $v_i = R_i \widehat v$ and $R_j \widehat v = 0$ for all $j \neq i$.
	From this and our initial assumptions, we can conclude that $\widehat A \widehat v = 0$, which implies $\widehat v = 0$ and therefore $v_i = 0$.
\end{proof}

\begin{proposition}[injectivity]
  Let Assumptions~\ref{ass:A7}, \ref{ass:C1}, and \ref{ass:C2} hold. Then the operator $\widetilde A_i$ is injective.
\end{proposition}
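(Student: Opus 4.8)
The plan is to prove injectivity directly: assuming $\widetilde A_i v_i = 0$ for some $v_i \in U_i$, I would show $v_i = 0$ by first extracting $T_i v_i = 0$, then reducing to the unaugmented operator $A_i$ and invoking Assumption~\ref{ass:C2}.

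First I would test the equation $\widetilde A_i v_i = 0$ against the conjugate $\overline{v_i}$, obtaining $\langle A_i v_i, \overline{v_i}\rangle + \alpha\langle T_i^\top M_i T_i v_i, \overline{v_i}\rangle = 0$. Because $T_i$ is real-valued, $T_i \overline{v_i} = \overline{T_i v_i}$, so the second term equals $\alpha \langle M_i T_i v_i, \overline{T_i v_i}\rangle$; by Assumption~\ref{ass:C1} the quantity $m := \langle M_i T_i v_i, \overline{T_i v_i}\rangle$ is a non-negative real number that vanishes only if $T_i v_i = 0$. Each quadratic form $\langle A_{i,k} v_i, \overline{v_i}\rangle$ built from a real-valued symmetric operator is likewise real (the imaginary cross terms cancel by symmetry) and non-negative. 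In the coercive case ($\alpha = 1$) the identity reads $\langle A_i v_i, \overline{v_i}\rangle + m = 0$ with both summands non-negative, forcing $m = 0$; in the wave case ($\alpha = \complexi$), splitting into real and imaginary parts gives $\langle(A_{i,0} - A_{i,2})v_i, \overline{v_i}\rangle = 0$ and $\langle A_{i,1}v_i, \overline{v_i}\rangle + m = 0$, whence again $m = 0$. Either way, Assumption~\ref{ass:C1} yields $T_i v_i = 0$.

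The decisive simplification comes next: once $T_i v_i = 0$, the augmentation term $\alpha T_i^\top M_i T_i v_i$ vanishes as an element of $U_i^*$, so the hypothesis collapses to $A_i v_i = 0$. In the coercive case this, together with $T_i v_i = 0$, is exactly the premise of Assumption~\ref{ass:C2}(i), giving $v_i = 0$. In the wave case I would additionally use the vanishing of the imaginary part, $\langle A_{i,1}v_i, \overline{v_i}\rangle = 0$: since $A_{i,1}$ is real-valued, symmetric, and non-negative, a Cauchy--Schwarz argument for the associated positive semidefinite form upgrades this to the operator statement $A_{i,1} v_i = 0$. Combined with $A_i v_i = (A_{i,0} - A_{i,2})v_i + \complexi A_{i,1}v_i = 0$, this isolates $(A_{i,0} - A_{i,2})v_i = 0$, and Assumption~\ref{ass:C2}(ii) then gives $v_i = 0$.

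The only non-cosmetic step is the passage in the wave case from the scalar condition $\langle A_{i,1} v_i, \overline{v_i}\rangle = 0$ to the operator condition $A_{i,1} v_i = 0$, and the subsequent decoupling of the real combination $A_{i,0} - A_{i,2}$ from $A_{i,1}$; both hinge on the positive semidefiniteness of the real symmetric forms, via $|\langle A_{i,1} v_i, \overline{w}\rangle|^2 \le \langle A_{i,1}v_i, \overline{v_i}\rangle\,\langle A_{i,1}w, \overline{w}\rangle = 0$ for all $w \in U_i$. The remaining ingredients --- reality and non-negativity of the quadratic forms, and the collapse of the augmentation term once $T_i v_i = 0$ --- are routine.
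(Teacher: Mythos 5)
Your proof is correct and follows essentially the same route as the paper: test against $\overline{v_i}$, split into real and imaginary parts, use non-negativity to force the quadratic forms to vanish, extract $T_i v_i = 0$ (and $A_{i,1}v_i = 0$ in the wave case), and conclude via Assumption~(C2). The only difference is cosmetic --- you make explicit the Cauchy--Schwarz step for the semidefinite form $A_{i,1}$ and recover $A_i v_i = 0$ directly from the operator equation once the augmentation term collapses, details the paper leaves implicit.
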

\begin{proof}
  For arbitrary but fixed $v \in U_i$ with $\widetilde A_i v = 0$, we show that $v = 0$.\\
	\emph{Coercive case} ($\alpha = 1$):
	$\widetilde A_i = A_i + T_i^\top M_i T_i$ with $A_i$ real-valued, symmetric, and non-negative.
  Due to our assumptions,
	\[
	  \langle A_i v, \overline v \rangle + \langle M_i T_i v, T_i \overline v \rangle = 0,
	\]
	and since both terms are non-negative, both must vanish. From the assumptions on $A_i$ and $M_i$,
	this implies $A_i v = 0$ and $T_i = 0$. Assumption~\ref{ass:C2}(i) guarantees that $v = 0$.\\
	\emph{Wave propagation case} ($\alpha = \complexi$):
	\[
	  \widetilde A_i = A_{i,0} + \complexi \widetilde A_{i,1} - A_{i,2}\,, \qquad \text{where }
		\widetilde A_{i,1} = A_{i,1} + T_i^\top M_i T_i\,.
	\]
	Due to our assumptions,
	\[
	  \langle (A_{i,0} - A_{i,2}) v, \overline v \rangle
		  + \complexi \Big( \underbrace{ \langle A_{i,1} v, \overline v \rangle }_{ \ge 0 }
		                  + \underbrace{ \langle M_i T_i v, T_i \overline v \rangle }_{ \ge 0 } \Big) = 0.
	\]
	Both the real and imaginary part must vanish. Since all the summands in the imaginary part are non-negative,
	\[
	  \langle A_{i,1} v, \overline v \rangle = 0 \qquad \text{and} \qquad \langle M_i T_i v, T_i v \rangle = 0.
	\]
	From the assumptions on $A_{i,1}$ and on $M_i$, it follows that $A_{i,1} v = 0$ and $T_i v = 0$.
	Recalling that $\widetilde A_i v = 0$, this implies also that $(A_{i,0} - A_{i,2})v = 0$.
	Assumption~\ref{ass:C2}(ii) guarantees that $v = 0$.
\end{proof}

\subsection{Technical Tools for Fredholm Operators}
\label{apx:FredholmTools}

\begin{lemma}
\label{lem:generalGarding}
  Let $V$ be a real or complexified Hilbert space and $A \colon V \to V^*$ 
	a bounded linear operator that fulfills a generalized G\r{a}rding inequality
	with respect to an isomorphism $\mathsf{F} \colon V \to V$
	and a compact bounded linear operator $C \colon V \to V^*$, i.e.,
	there exists a constant $\gamma > 0$ such that
	\begin{align}
	\label{eq:generalizedGarding}
	  \big| \langle A v, \mathsf{F} \overline v \rangle + \langle C v, \overline v \rangle \big|
		\ge \gamma \| v \|_V^2
		\qquad \forall v \in V,
	\end{align}
  Then $A$ is Fredholm with index zero.
\end{lemma}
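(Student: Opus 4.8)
The plan is to peel off the isomorphism $\mathsf{F}$, absorb the compact term into an invertible operator, and then invoke the standard fact that a compact perturbation of an isomorphism is Fredholm of index zero. First I would reduce to a pure coercivity statement. Since $\mathsf{F}\colon V\to V$ is an isomorphism, so is its transpose $\mathsf{F}^\top\colon V^*\to V^*$, and using $\langle A v,\mathsf{F}\overline v\rangle = \langle \mathsf{F}^\top A v,\overline v\rangle$ the hypothesis \eqref{eq:generalizedGarding} rewrites as
\[
  \bigl|\langle (\widetilde A + C) v,\overline v\rangle\bigr|\ge\gamma\|v\|_V^2
  \qquad\forall v\in V,\qquad \text{where } \widetilde A := \mathsf{F}^\top A.
\]
Because composition with the isomorphism $\mathsf{F}^\top$ preserves both the Fredholm property and the index (as $A = (\mathsf{F}^\top)^{-1}\widetilde A$), it suffices to prove that $\widetilde A$ is Fredholm of index zero; and since $C$ is compact and compact perturbations leave the Fredholm property and the index unchanged, it is enough to show that $B := \widetilde A + C$ is an isomorphism.

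Next I would establish that $B$ is bounded below. Boundedness of the duality pairing gives $\gamma\|v\|_V^2\le|\langle B v,\overline v\rangle|\le\|Bv\|_{V^*}\|v\|_V$, hence $\|Bv\|_{V^*}\ge\gamma\|v\|_V$ for all $v$. Consequently $B$ is injective and has closed range. The remaining task is surjectivity, which is the crux of the argument. By Banach's closed range theorem together with \eqref{eq:polarKerRange}, the closed subspace $\range(B)\subseteq V^*$ equals the pre-annihilator of $\ker(B^\top)$, where reflexivity of $V$ lets us read $B^\top$ as a map $V\to V^*$; thus $B$ is surjective if and only if $\ker(B^\top)=\{0\}$. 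The key observation is that $B^\top$ inherits the very same lower bound: from $\langle B^\top w, v\rangle = \langle B v, w\rangle$, the relation $\overline{\overline w}=w$, and the isometry $\|\overline w\|_V=\|w\|_V$, the substitution $v=\overline w$ yields
\[
  \bigl|\langle B^\top w,\overline w\rangle\bigr|
  = \bigl|\langle B\overline w, w\rangle\bigr|
  = \bigl|\langle B\overline w, \overline{\overline w}\rangle\bigr|
  \ge \gamma\|\overline w\|_V^2 = \gamma\|w\|_V^2.
\]
Hence $B^\top$ is injective as well, so $\ker(B^\top)=\{0\}$ and therefore $\range(B)=V^*$. An injective, surjective, bounded linear operator between Banach spaces is an isomorphism by the open mapping theorem, so $B$ is invertible; consequently $\widetilde A = B - C$ and then $A=(\mathsf{F}^\top)^{-1}\widetilde A$ are Fredholm of index zero.

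The step I expect to be the main obstacle is precisely the surjectivity argument, specifically the verification that the transpose $B^\top$ satisfies the same modulus bound. This hinges on the reflexivity of $V$ (needed to interpret $B^\top$ as an operator into $V^*$) and, in the complexified setting, on the compatibility of the transpose with complex conjugation together with the isometry $\|\overline w\|_V=\|w\|_V$; in the real case conjugation is trivial and the identical computation goes through verbatim. Everything else---the reduction via $\mathsf{F}^\top$, the bounded-below estimate, and the invariance of the Fredholm index under both isomorphisms and compact perturbations---is routine and can be dispatched by citing the standard closed-range and Fredholm theory.
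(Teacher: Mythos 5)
Your proof is correct and follows essentially the same route as the paper: rewrite the hypothesis as a lower bound for $\mathsf{F}^\top A + C$, conclude that this operator is an isomorphism, and then transfer the Fredholm property of index zero back to $A$ via compact perturbation and composition with the isomorphism $\mathsf{F}^\top$. The only difference is that where the paper cites a complex-modulus version of the Lax--Milgram lemma for the isomorphism step, you prove it from first principles (bounded below $\Rightarrow$ injective with closed range, plus the same bound for the transpose $\Rightarrow$ surjective), and that argument, including the conjugation bookkeeping, is carried out correctly.
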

\begin{proof}
  We find that the (possibly complex-valued) operator $\mathsf{F}^\top A + C$ is positive bounded from below in the sense that
	\[
	  \big| \langle (\mathsf{F}^\top A + C) v, \overline v \rangle \big|
		\ge \gamma \| v \|_V^2 \qquad \forall v \in V.
	\]
	Since $\mathsf{F}^\top A + C$ is obviously bounded, a suitable version of the Lax-Milgram lemma
	(see e.g.\ \cite[Lemma~2.21]{Monk:2003a} implies that $\mathsf{F}^\top A + C$
	is an isomorphism. In particular, $\mathsf{F}^\top A + C$ is Fredholm with index zero.
	Since $C$ is compact, a standard result (see e.g.\ \cite[Thm~2.26]{McLean:Book})
	implies that $\mathsf{F}^\top A$ is Fredholm with index zero.
	Since $\mathsf{F}^\top$ is an isomorphism,
	another standard argument (see e.g.\ \cite[Thm.~2.21]{McLean:Book}) yields that
	$A$ itself is Fredholm with index zero.
\end{proof}

\begin{lemma}
\label{lem:abstractWaveFredholm}
  Let $V$ be a complexified Hilbert space
	and $A \colon V \to V^*$ a bounded linear operator of the form
	\[
	  A = A_0 + \complexi \sigma A_1 - A_2\,,
	\]
	with $\sigma \in \{ +1, -1 \}$
	and with linear, bounded, real-valued, symmetric, and non-negative operators $A_i \colon V \to V^*$.
	Moreover, assume real-valued projection operators $\mathsf{N}$ and $\mathsf{R} \colon V \to V$
	with $\mathsf{N} + \mathsf{R} = I$ such that
	\begin{enumerate}
	\item[(i)] $A_0 \mathsf{N} = 0$,
	\item[(ii)] $A_2 \mathsf{R}$ is compact,
	\item[(iii)] either (a) $A_1 \mathsf{N}$ is compact or (b) $A_1 \mathsf{R}$ is compact, and
	\item[(iv)] there exists a constant $c > 0$ such that
	  $\langle (A_0 + A_1 + A_2) v, \overline v \rangle \ge c\, \| v \|_V^2$ for all $v \in V$.
	\end{enumerate}
	Then $A$ is Fredholm with index zero.
	The same holds if (iii) and (iv) are replaced by the alternative conditions
	\begin{enumerate}
	\item[(iii')] $\mathsf{N}^\top A_1 \mathsf{R}$ is compact, and
	\item[(iv')] there exists a constant $c > 0$ such that
	  $\langle (A_0 + A_2) v, \overline v \rangle \ge c\, \| v \|_V^2$ for all $v \in V$.
	\end{enumerate}
\end{lemma}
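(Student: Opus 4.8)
The plan is to apply Lemma~\ref{lem:generalGarding} with a cleverly chosen isomorphism $\mathsf{F}$ and compact perturbation $C$, constructed from the projections $\mathsf{N}$ and $\mathsf{R}$. The guiding idea is that on the range of $\mathsf{N}$ the operator $A_0$ vanishes, so ellipticity there must come from the imaginary part $\complexi\sigma A_1$ (hence the ``$A_1$ sees $\mathsf{N}$'' conditions), whereas on the range of $\mathsf{R}$ the term $A_0$ is available and $A_2\mathsf{R}$ is only a compact nuisance. The natural candidate is a \emph{sign-flipping isomorphism} that rotates the components differently, for instance $\mathsf{F} := \mathsf{R} - \complexi\sigma\,\mathsf{N}$ (or $\mathsf{R} + \complexi\sigma\mathsf{N}$), which is real-linear on the underlying real space after complexification and is invertible since $\mathsf{N}+\mathsf{R}=I$ are complementary projections. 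The point of multiplying by such an $\mathsf{F}$ is to align the phases so that the dominant contributions $\langle A_0 v, \overline v\rangle$, $\langle A_1 v,\overline v\rangle$, and $\langle A_2 v,\overline v\rangle$ all add up with a common sign, reproducing the coercive combination $\langle (A_0+A_1+A_2)v,\overline v\rangle$ from hypothesis~(iv), modulo cross terms.

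First I would expand $\langle A v,\mathsf{F}\overline v\rangle$ using $A = A_0+\complexi\sigma A_1 - A_2$ and $v = \mathsf{N} v + \mathsf{R} v$. The terms fall into two classes: the ``diagonal'' terms, where the projection hitting $v$ matches the component selected by $\mathsf{F}$, and the ``off-diagonal'' cross terms. The hypotheses (i)--(iii) are precisely engineered so that every off-diagonal term, as well as the unwanted $A_2$ contribution on $\range(\mathsf{R})$, is either identically zero ($A_0\mathsf{N}=0$) or of the form $\langle K v,\overline v\rangle$ with $K$ compact ($A_2\mathsf{R}$ compact, and $A_1\mathsf{N}$ or $A_1\mathsf{R}$ compact). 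I would collect all of these into a single compact operator $C$, so that the main part of $\langle A v,\mathsf{F}\overline v\rangle + \langle C v,\overline v\rangle$ becomes, after the phase alignment induced by $\mathsf{F}$, something comparable to $\langle (A_0+A_1+A_2)v,\overline v\rangle$. Invoking hypothesis~(iv) then yields the lower bound $\gamma\|v\|_V^2$ required in \eqref{eq:generalizedGarding}, and Lemma~\ref{lem:generalGarding} delivers the Fredholm-index-zero conclusion directly. The alternative conditions (iii')--(iv') should be handled by the same template, using that the adjoint-type compactness $\mathsf{N}^\top A_1\mathsf{R}$ compact kills the relevant cross term while the weaker coercivity $\langle(A_0+A_2)v,\overline v\rangle \ge c\|v\|_V^2$ suffices once the $A_1$ cross terms are absorbed into $C$.

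The main obstacle I anticipate is the precise bookkeeping of the phase factors and signs so that the diagonal terms genuinely reinforce rather than cancel. Because $\complexi\sigma A_1$ carries an imaginary unit and $\mathsf{F}$ introduces a second factor of $\complexi\sigma$ on the $\mathsf{N}$-block, one must check that $\complexi\sigma\cdot(-\complexi\sigma) = 1$ produces the correct positive sign on $\langle A_1 \mathsf{N} v, \mathsf{N}\overline v\rangle$, and simultaneously that the $A_0$ and $A_2$ blocks land with compatible signs on $\range(\mathsf{R})$. The subtlety is that $\mathsf{N},\mathsf{R}$ need not be \emph{orthogonal} projections (they are only complementary), so $\langle A_k \mathsf{N} v,\mathsf{R}\overline v\rangle$ cannot be discarded for free; it must be verified to be compact under the stated hypotheses or shown to cancel against its partner. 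I would therefore carry out the computation blockwise, isolating the four pieces $\langle A_k (\cdot)v,(\cdot)\overline v\rangle$ for each projection pair, and verify term by term that exactly the pieces named in (i)--(iii) (respectively (iii')) are the ones needing compactness, leaving a residual dominated from below by~(iv) (respectively~(iv')). Once this sign-and-compactness audit is complete, the two assertions follow from the same application of Lemma~\ref{lem:generalGarding}.
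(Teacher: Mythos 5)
Your proposal is correct and takes essentially the same route as the paper: both reduce the claim to Lemma~\ref{lem:generalGarding} by choosing an isomorphism $\mathsf{F}$ that acts differently on the $\mathsf{N}$- and $\mathsf{R}$-blocks and by absorbing every cross term and every term declared compact in (i)--(iii) (resp.\ (iii')) into the perturbation $C$. The only difference is the multiplier --- the paper uses the real reflection $\mathsf{F}=\mathsf{R}-\mathsf{N}$, which flips $-A_2$ to $+A_2$ on the $\mathsf{N}$-block so the surviving form is $\langle (A_0+A_2)v,\overline v\rangle \pm \complexi\sigma\langle A_1 v,\overline v\rangle$, whereas your $\mathsf{F}=\mathsf{R}-\complexi\sigma\mathsf{N}$ rotates that block onto the imaginary axis; both work, provided you pick $C$ so that the residual main part equals such a combination \emph{exactly} before taking the modulus (a coercive form plus a leftover compact form is no longer coercive, so ``comparable modulo compact terms'' is not quite enough at that last step).
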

\begin{remark}
  The case $\mathsf{N} = 0$ is admitted.
	In that case, only (ii) and (iv) are required.
\end{remark}
\begin{proof}[Proof of Lemma~\ref{lem:abstractWaveFredholm}]
	We define $\mathsf{F} := \mathsf{R} - \mathsf{N}$, which is an isomorphism:
	\[
	  \mathsf{F}^2 = (2\mathsf{R} - I)^2 = 4\mathsf{R}^2 - 4\mathsf{R} + I = I.
	\]
	Using property~(i)
	and the relations $\mathsf{F} = 2 \mathsf{R} - I = I - 2 \mathsf{N}$,
	we find that
	\begin{align*}
	  \langle A v, \mathsf{F} w \rangle
		& = \langle A_0 v, (\mathsf{R} - \mathsf{N}) w \rangle
		  - \langle A_2 v, (\mathsf{R} - \mathsf{N}) w \rangle
		  + \complexi \sigma \langle A_1 v, (\mathsf{R} - \mathsf{N}) w \rangle\\
		& = \langle A_0 v, (\mathsf{R} + \mathsf{N}) w \rangle
		  + \langle A_2 v, (I - 2 \mathsf{R}) w \rangle
		  + \complexi \sigma \langle A_1 v, (2 \mathsf{R} - I) w \rangle\\
		& = \langle (A_0 + A_2) v, w \rangle
		  - \complexi \sigma \langle A_1 v, w \rangle
		  - 2 \langle A_2 v, \mathsf{R} w \rangle
			+ 2 \complexi \sigma \langle A_1 v, \mathsf{R} w \rangle,
	\end{align*}
	which will be used for Case~(b).
	Alternatively, we have
	\begin{align*}
	  \langle A v, \mathsf{F} w \rangle
		& = \langle A_0 v, (\mathsf{R} + \mathsf{N}) w \rangle
		  + \langle A_2 v, (I - 2 \mathsf{R}) w \rangle
		  + \complexi \sigma \langle A_1 v, (I - 2 \mathsf{N}) w \rangle\\
		& = \langle (A_0 + A_2) v, w \rangle
		  + \complexi \sigma \langle A_1 v, w \rangle
		  - 2 \langle A_2 v, \mathsf{R} w \rangle
			- 2 \complexi \sigma \langle A_1 v, \mathsf{N} w \rangle,
	\end{align*}
	which will be used for Case~(a).
	We define $C \colon V \to V^*$ by
	\[
	  \langle C v, w \rangle := 
		  \begin{cases}
				2 \langle A_2 v, \mathsf{R} w \rangle + 2 \complexi \sigma \langle A_1 v, \mathsf{N} w \rangle & \text{in Case~(a),}\\
  		  2 \langle A_2 v, \mathsf{R} w \rangle - 2 \complexi \sigma \langle A_1 v, \mathsf{R} w \rangle & \text{in Case~(b),}
			\end{cases}
	\]
	which is a compact operator by property~(ii) and (iii).
	Then,
	\begin{align*}
	  \big| \langle (A + C) v, \mathsf{F} \overline v \rangle \big|
		= \big| \underbrace{\langle (A_0 + A_2) v, \overline v \rangle}_{\in \mathbb{R}_0^+}
		    {} + \complexi \underbrace{\delta \sigma}_{= \pm 1}
				  \underbrace{\langle A_1 v, \overline v \rangle}_{\in \mathbb{R}_0^+} \big|
		= \langle (A_0 + A_1 + A_2) v, \overline v \rangle
		    \ge c\, \| v \|_V^2\,,
	\end{align*}
	with $\delta = 1$ in Case~(a) and $\delta = -1$ is Case~(b).
	An application of Lemma~\ref{lem:generalGarding} shows that $\widetilde A$ is Fredholm.
	Under the conditions (iii') and (iv'), we can use
	\begin{align*}
	  & \langle A v, \mathsf{F} w \rangle
		= \langle A_0 v, (\mathsf{R} + \mathsf{N}) w \rangle + \langle A_2 v, (I - 2\mathsf{R}) w \rangle
		  + \complexi\sigma \langle A_1 (\mathsf{R} + \mathsf{N}) v, (\mathsf{R} - \mathsf{N}) w \rangle\\
		& = \langle (A_0 + A_2) v , w \rangle
		  - 2 \langle A_2 v, \mathsf{R} w \rangle
		  + \complexi\sigma \big( \langle A_1 \mathsf{R} v, \mathsf{R} w \rangle - \langle A_1 \mathsf{N} v, \mathsf{N} w \rangle
			  - \langle A_1 \mathsf{R} v, \mathsf{N} w \rangle + \langle A_1 \mathsf{N} v, \mathsf{R} w \rangle \big).
	\end{align*}
	We define $C \colon V \to V^*$ by
	$\langle C v, w \rangle := 2 \langle A_2 v, \mathsf{R} w \rangle
	  + \complexi\sigma \langle A_1 \mathsf{R} v, \mathsf{N} w \rangle
		- \complexi\sigma \langle A_1 \mathsf{N} v, \mathsf{R} w \rangle$,
	which is a compact operator by properties~(ii) and~(iii').
	Due to (iv')
	\begin{align*}
	  \big| \langle (A + C) v, \mathsf{F} \overline v \rangle \big|
		= \big| \underbrace{ \langle (A_0 + A_2) v, \overline v \rangle }_{\in \mathbb{R}_0^+} + \complexi \sigma \!
		  \underbrace{ \big( \langle A_1 \mathsf{R} v, \mathsf{R} \overline v \rangle \! - \! \langle A_1 \mathsf{N} v, \mathsf{N} \overline v \rangle \big)}_{ \in \mathbb{R} }
			\big|
		\ge \langle (A_0 + A_2) v, \overline v \rangle \ge c \| v \|_V^2\,,
	\end{align*}
	and so again Lemma~\ref{lem:generalGarding} implies that $\widetilde A$ is Fredholm.
\end{proof}

\subsection{Standard Robin problems}
\label{apx:invStdRobin}

For the following, let us assume that
$\Omega \subset \mathbb{R}^3$ is a bounded Lipschitz domain and $\Gamma_D$, $\Gamma_N$, $\Gamma_R \subset \partial\Omega$
disjoint surfaces such that $\partial\Omega = \overline{\Gamma_D \cup \Gamma_N \cup \Gamma_R}$
and such that $\Gamma_R$ has non-vanishing surface measure.
Note, however, that $\Gamma_D$ and/or $\Gamma_N$ are allowed to be empty.
Moreover, any of the sets $\partial\Gamma_D$, $\partial\Gamma_N$, $\partial\Gamma_R$ (unless empty)
should fulfill the requirements stated in \cite[Sect.~2]{HiptmairPechstein:2020a},
in particular being the union of closed curves that are piecewise $C^1$.
Later on, it is further assumed that $\Omega$ is a curvilinear Lipschitz polyhedron.

\subsubsection{The primal Helmholtz equation}

Let $\widehat U := H^1_D(\Omega) = \{ v \in H^1(\Omega) \colon v = 0 \text{ on } \Gamma_D \}$
and let $\widehat A \colon \widehat U \to \widehat U^*$ be given by
$\widehat A = \widehat A_0 + \complexi \widehat A_1 - \widehat A_2$ with
\begin{align*}
  \langle \widehat A_0 \widehat u, \widehat v \rangle = \int_\Omega \nabla \widehat u \cdot \nabla \widehat v \, dx, \qquad
	\langle \widehat A_1 \widehat u, \widehat v \rangle = \int_{\Gamma_R} \eta\, \widehat u\, \widehat v \, ds, \qquad
	\langle \widehat A_2 \widehat u, \widehat v \rangle = \int_\Omega \kappa^2 \widehat u\, \widehat v \, dx.
\end{align*}
We set $\widehat{\mathsf{N}} = 0$ and $\widehat{\mathsf{R}} = I$.
Since $\widehat A_2$ is compact and since
\[
  \langle (\widehat A_0 + \widehat A_1 + \widehat A_2) \widehat v, \overline{\widehat v} \rangle
	\ge \min(1, \kappa_{\min}^2) \| \widehat v \|_{H^1(\Omega)}^2\,,
\]
where $\kappa_{\min}$ is a positive lower bound for the coefficient $\kappa$ in $\Omega$,
Lemma~\ref{lem:abstractWaveFredholm} guarantees that $\widehat A$ is Fredholm.

\subsubsection{The dual Helmholtz equation}

Let $\widehat U := \{ \vv \in \Hv_{\Gamma_N}(\opdiv, \Omega) \colon \vv_{n|\Gamma_R} \in L^2(\Gamma_R) \}$,
where $\vv_n$ denotes the normal trace of $\vv$ in $H^{-1/2}(\partial\Omega)$.
The restriction $\vv_{n|\Gamma_R}$ is well-defined in $H^{-1/2}_{00}(\Gamma_R)$.
We use the norm
$\| \vv \|_{\widehat U}^2 = \| \vv \|_{\Lv^2(\Omega)}^2 + \| \opdiv \vv \|_{L^2(\Omega)}^2 + \| \vv_{n|\Gamma_R} \|_{L^2(\Gamma_R)}^2$.
The operator $\widehat A \colon \widehat U \to \widehat U^*$ is given by
$\widehat A = \widehat A_0 + \complexi \widehat A_1 - \widehat A_2$ with
\begin{align*}
  \langle \widehat A_0 \widehat \uv, \widehat \vv \rangle = \int_\Omega \kappa^{-2} \opdiv \widehat\uv\, \opdiv\widehat\vv \, dx, \qquad
	\langle \widehat A_1 \widehat \uv, \widehat \vv \rangle = \int_{\Gamma_R} \eta^{-1}\, \widehat \uv_n \widehat \vv_n \, ds, \qquad
	\langle \widehat A_2 \widehat \uv, \widehat \vv \rangle = \int_\Omega \widehat \uv \cdot \widehat \vv \, dx.
\end{align*}
Due to the \emph{regular decomposition} result in \cite{HiptmairPechstein:2020a},
there exist bounded, linear, and real-valued projections
$\underline{\mathsf{N}} \colon \Hv_{\Gamma_N}(\opdiv, \Omega) \to \Hv_{\Gamma_N}(\opdiv 0, \Omega)$
and
$\underline{\mathsf{R}} \colon \Hv_{\Gamma_N}(\opdiv, \Omega) \to \Hv^1_{\Gamma_N}(\Omega)$
with $\underline{\mathsf{N}} + \underline{\mathsf{R}} = I$ in $\Hv_{\Gamma_N}(\opdiv, \Omega)$.
For $\widehat\vv \in \widehat U$,
\[
  \widehat \vv_{n|\Gamma_R}
	  = (\underline{\mathsf{R}} \widehat\vv )_{n|\Gamma_R} + (\underline{\mathsf{N}} \widehat\vv)_{n|\Gamma_R} \in L^2(\Gamma_R).
\]
Assuming that $\Omega$ is a curvilinear polyhedron (cf.\ \cite{BuffaCiarlet:2001a}), the outer normal $\nv$ is piecewise smooth.
Therefore, since $\underline{\mathsf{R}} \widehat\vv \in \Hv^1_{\Gamma_N}(\Omega)$, we find that
$(\underline{\mathsf{R}} \widehat\vv)_{n|\Gamma_R} = (\underline{\mathsf{R}}\widehat\vv)_{|\Gamma_R} \cdot \nv
   \in H^{1/2}_{\text{pw}}(\Gamma_R) \subset L^2(\Gamma_R)$.
This shows that $(\underline{\mathsf{N}} \widehat\vv)_{n|\Gamma_R} \in L^2(\Gamma_R)$ as well.
Hence, we can restrict $\underline{\mathsf{R}}$, $\underline{\mathsf{N}}$
to operators $\widehat{\mathsf{R}}$, $\widehat{\mathsf{N}} \colon \widehat U \to \widehat U$,
and we meet the prerequisites of Lemma~\ref{lem:abstractWaveFredholm}:
\begin{itemize}
\item $\widehat{\mathsf{R}}$, $\widehat{\mathsf{N}}$ are projectors and
      $\widehat{\mathsf{R}} + \widehat{\mathsf{N}} = I$ in $\widehat U$,
\item $\widehat A_0 \widehat{\mathsf{N}} = 0$ since $\underline{\mathsf{N}}$ maps to $\Hv_{\Gamma_N}(\opdiv 0, \Omega)$,
\item $\widehat A_2 \widehat{\mathsf{R}}$ is compact since $\underline{\mathsf{R}}$ maps to $\Hv^1_{\Gamma_N}(\Omega)$
  which is compactly embedded in $\Lv^2(\Omega)$,
\item $\widehat A_1 \widehat{\mathsf{R}}$ is compact since
  $(\underline{\mathsf{R}} \widehat\vv)_{n|\Gamma_R} \in H^{1/2}_{\text{pw}}(\Gamma_R)$
	which is compactly embedded in $L^2(\Gamma_R)$,
\item
  $\langle (\widehat A_0 + \widehat A_1 + \widehat A_2) \widehat\vv, \overline{\widehat\vv} \rangle
    \ge \min(1, \kappa_{\max}^{-2}, \eta_{\max}^{-1})
	    \big( \| \opdiv \widehat\vv \|_{L^2(\Omega)}^2
			    + \| \widehat \vv \|_{\Lv^2(\Omega)}^2
					+ \| \widehat\vv_{n|\Gamma_R} \|_{L^2(\Gamma_R)}^2 \big)$,
\end{itemize}
where $\kappa_{\max}$, $\eta_{\max}$ are finite upper bounds for the coefficients $\kappa$, $\eta$ in $\Omega$.

\subsubsection{Maxwell's equations}

To avoid complications, it is again assumed that $\Omega$ is a curvilinear Lipschitz polyhedron (cf.\ \cite{BuffaCiarlet:2001a}).
Let $\widehat U := \{ \vv \in \Hv_{\Gamma_D}(\opcurl, \Omega) \colon \vv_{\tau|\Gamma_R} \in L^2(\Gamma_R) \}$,
where $\vv_\tau = \vv \times \nv$ denotes the tangential trace of $\vv$ in $\Hv^{-1/2}(\partial\Omega)$
and the restriction $\vv_{\tau|\Gamma_R}$ is well-defined,
for details see \cite{BuffaCiarlet:2001a,Monk:2003a}.
We use the norm
$\| \vv \|_{\widehat U}^2 = \| \vv \|_{\Lv^2(\Omega)}^2 + \| \opcurl \vv \|_{\Lv^2(\Omega)}^2
   + \| \vv_{\tau|\Gamma_R} \|_{\Lv^2(\Gamma_R)}^2$.
The operator $\widehat A \colon \widehat U \to \widehat U^*$ is given by
$\widehat A = \widehat A_0 + \complexi \widehat A_1 - \widehat A_2$ with
\begin{align*}
  \langle \widehat A_0 \widehat \uv, \widehat \vv \rangle = \! \int_\Omega  \!\! \mu^{-1} \opcurl\widehat\uv \cdot \opcurl\widehat\vv \, dx, \quad
	\langle \widehat A_1 \widehat \uv, \widehat \vv \rangle = \! \int_{\Gamma_R} \!\!\! \omega \eta^{-1}\, \widehat \uv_\tau \cdot \widehat \vv_\tau \, ds, \quad
	\langle \widehat A_2 \widehat \uv, \widehat \vv \rangle = \! \int_\Omega \!\! \omega^2 \varepsilon \widehat \uv \cdot \widehat \vv \, dx.
\end{align*}
There exist bounded, linear, and real-valued projectors
$\underline{\mathsf{N}} \colon \Hv_{\Gamma_D}(\opcurl, \Omega) \to \Hv_{\Gamma_D}(\opcurl 0, \Omega)$
and
$\underline{\mathsf{R}} \colon \Hv_{\Gamma_D}(\opcurl, \Omega) \to \Hv^1_{\Gamma_D}(\Omega)$
with $\underline{\mathsf{R}} + \underline{\mathsf{N}} = I$ in $\Hv_{\Gamma_D}(\opcurl, \Omega)$,
see \cite{HiptmairPechstein:2020a}.
For $\widehat\vv \in \widehat U$,
\[
  \widehat \vv_{\tau|\Gamma_R}
	  = (\underline{\mathsf{R}} \widehat\vv )_{\tau|\Gamma_R} + (\underline{\mathsf{N}} \widehat\vv)_{\tau|\Gamma_R} \in \Lv^2(\Gamma_R).
\]
Due to the assumptions on $\Omega$, the normal $\nv$ is piecewise smooth.
Therefore, since $\underline{\mathsf{R}} \widehat\vv \in \Hv^1_{\Gamma_D}(\Omega)$, we find that
$(\underline{\mathsf{R}} \widehat\vv)_{\tau|\Gamma_R} = (\underline{\mathsf{R}}\widehat\vv)_{|\Gamma_R} \times \nv
   \in \Hv^{1/2}_{\text{pw}}(\Gamma_R) \subset \Lv^2(\Gamma_R)$.
This shows that $(\underline{\mathsf{N}} \widehat\vv)_{\tau|\Gamma_R} \in \Lv^2(\Gamma_R)$ as well.
Hence, we can restrict $\underline{\mathsf{N}}$, $\underline{\mathsf{R}}$
to operators $\widehat{\mathsf{N}}$, $\widehat{\mathsf{R}} \colon \widehat U \to \widehat U$,
and we meet the prerequisites of Lemma~\ref{lem:abstractWaveFredholm}:
\begin{itemize}
\item $\widehat{\mathsf{N}}$, $\widehat{\mathsf{R}}$ are projectors and
      $\widehat{\mathsf{N}} + \widehat{\mathsf{R}} = I$ in $\widehat U$,
\item $\widehat A_0 \widehat{\mathsf{N}} = 0$ since $\underline{\mathsf{N}}$ maps to $\Hv_{\Gamma_D}(\opcurl 0, \Omega)$,
\item $\widehat A_2 \widehat{\mathsf{R}}$ is compact since $\underline{\mathsf{R}}$ maps to $\Hv^1_{\Gamma_D}(\Omega)$
  which is compactly embedded in $\Lv^2(\Omega)$,
\item $\widehat A_1 \widehat{\mathsf{R}}$ is compact since
  $(\underline{\mathsf{R}} \widehat\vv)_{\tau|\Gamma_R} \in \Hv^{1/2}_{\text{pw}}(\Gamma_R)$
	which is compactly embedded in $\Lv^2(\Gamma_R)$,
\item
  $\langle (\widehat A_0 + \widehat A_1 + \widehat A_2) \widehat\vv, \overline{\widehat\vv} \rangle
    \ge \min(\mu_{\max}^{-1}, \omega^2 \varepsilon_{\min}^2, \omega \eta_{\min})
	    \big( \| \opcurl \widehat\vv \|_{L^2(\Omega)}^2
			    + \| \widehat \vv \|_{\Lv^2(\Omega)}^2
					+ \| \widehat\vv_{\tau|\Gamma_R} \|_{\Lv^2(\Gamma_R)}^2 \big)$,
\end{itemize}
where $\mu_{\max}$, $\varepsilon_{\min}$, $\eta_{\min}$ are finite upper/positive lower bounds for the coefficients
$\mu$, $\varepsilon$, $\eta$ in $\Omega$.

\subsection{Generalized Robin problems}
\label{apx:invGenRobin}

In this section, we investigate whether the subdomain operator
\[
  \widetilde A_i := A_i + \alpha T_i^\top M_i T_i
\]
is Fredholm with index zero.

\subsubsection{Wave propagation case}

In accordance with Assumption~\ref{ass:A7} (with $\alpha = \complexi$),
we \emph{assume} that there exist projectors $\mathsf{N}_i$, $\mathsf{R}_i$ such that
\begin{enumerate}
\item[(i)] $A_{i,0} \mathsf{N}_i = 0$,
\item[(ii)] $A_{i,2} \mathsf{R}_i$ is compact,
\item[(iii)] either (a) $A_{i,1} \mathsf{N}_i$ is compact or (b) $A_{i,1} \mathsf{R}_i$ is compact, and
\item[(iv)] there exists a constant $c_i > 0$ such that
  $\langle (A_{i,0} + A_{i,1} + A_{i,2}) v, \overline v \rangle \ge c \| v \|_{U_i}^2$ for all $v \in U_i$.
\end{enumerate}
such that Lemma~\ref{lem:abstractWaveFredholm} implies that $A_i$ is Fredholm with index zero.
With the definitions
\[
  \widetilde A_i = A_{i,0} + \complexi \widetilde A_{i,1} - A_{i,2}\,, \qquad \text{where }
	\widetilde A_{i,1} = A_{i,1} + T_i^\top M_i T_i\,,
\]
Lemma~\ref{lem:abstractWaveFredholm} would imply that $\widetilde A_i$ is Fredholm with index zero as well
if, in addition to the above:
\begin{enumerate}
\item[(v)] either (a) $\widetilde A_{i,1} \mathsf{N}_i$ is compact or (b) $\widetilde A_{i,1} \mathsf{R}_i$ is compact, and
\item[(vi)] there exists a constant $c_i > 0$ such that
  $\langle (A_{i,0} + \widetilde A_{i,1} + A_{i,2}) v, \overline v \rangle \ge c \| v \|_{U_i}^2$ for all $v \in U_i$.
\end{enumerate}
The inequality (vi) follows from (iv) because $M_i$ is non-negative. We are left with the question
whether the operator $T_i^\top M_i T_i \mathsf{N}_i$ (in case~(a)) or $T_i^\top M_i T_i \mathsf{R}_i$ (in case~(b)) is compact:
\begin{itemize}
\item For the primal Helmholtz formulation, we can use $\mathsf{N}_i = 0$.
\item If the trace operator $T_i$ itself is compact (cf.\ Theorem~\ref{thm:convGeneral}), we are also done.
\item For the dual Helmholtz formulation, (iii) holds with case~(b), $\range(T_i \mathsf{R}_i) \subset H^{1/2}_{\text{pw}}(\Gamma_i)$,
  where $\Gamma_i$ is the chosen interface, possibly split into facets.
	The latter space is compactly embedded in $L^2(\Gamma_i)$,
	and thus also compactly embedded in any chosen trace space $\Lambda_i$ (which requires at most $H^{-1/2}$-regularity).
	Therefore, $T_i \mathsf{R}_i \colon U_i \to \Lambda_i$ is compact, and so (v) holds with case~(b).
\end{itemize}

The Maxwell case is to a large extent open, at least if $T_i$ is not compact.
However, one exceptional situation shall be mentioned:
If $A_{i,1} = 0$ and $M_i$ is \emph{orthogonal} with respect to the regular decomposition, i.e.,
$\mathsf{N}_i^\top T_i^\top M_i T_i \mathsf{R}_i = 0$,
then using the alternative conditions (iii'), (iv') in Lemma~\ref{lem:abstractWaveFredholm},
one can show that $\widetilde A_i$ is Fredholm with index zero.
For more results see \cite[Sect.~3.4.2]{Parolin:PhD}.

\subsubsection{Coercive case}

In accordance with Assuption~\ref{ass:A7} (with $\alpha = 1$),
we \emph{assume} that there exists a compact operator\footnote{In the typical applications, the existence of such an operator
  is shown using Poincar\'e-, Friedrichs-, or Korn-type inequalities.}
$C_i$ such that $A_i + C_i$ is bounded positively from below,
such that $A_i$ is Fredholm with index zero.
Since, by assumption $M_i$ is non-negative,
\[
  \langle (\widetilde A_i + C_i) v, \overline v \rangle
	= \langle (A_i + C_i) v, \overline v \rangle + \langle M_i T_i v, T_i \overline v \rangle
	\ge \gamma_i \| v \|_{U_i}^2,
\]
for some constant $\gamma_i > 0$ such that $\widetilde A_i$ is Fredholm.

\end{appendix}

\bibliographystyle{abbrv}
\bibliography{DDFormulations}

\end{document}